\documentclass[11pt,oneside]{article}
\usepackage[letterpaper]{geometry}
\usepackage{amsmath}
\usepackage{amsthm}
\usepackage{amssymb}
\usepackage{mathrsfs}
\usepackage{dsfont}
\usepackage{bbm}
\usepackage{lineno}
\usepackage{xcolor}
\usepackage[normalem]{ulem}
\usepackage{charter}
\usepackage{amsfonts}
\usepackage{graphicx}
\usepackage{natbib}

\usepackage{caption}
\usepackage{subcaption}

\makeatletter
\theoremstyle{plain}
\newtheorem{thm}{\protect\theoremname}[section]
\newtheorem{lem}[thm]{\protect\lemmaname}
\newtheorem{cor}[thm]{\protect\corollaryname}
\newtheorem{prop}[thm]{Proposition}

\theoremstyle{definition}
\newtheorem{defn}[thm]{\protect\definitionname}
\newtheorem{assumption}[thm]{\protect\assumptionname}
\newtheorem{example}[thm]{\protect\examplename}

\newtheorem{axiom}{Axiom}

\theoremstyle{remark}
\newtheorem{rem}[thm]{\protect\remarkname}

\theoremstyle{plain}
\newtheorem*{question*}{\protect\questionname}

\providecommand{\assumptionname}{Assumption}
\providecommand{\examplename}{Example}
\providecommand{\definitionname}{Definition}
\providecommand{\corollaryname}{Corollary}
\providecommand{\lemmaname}{Lemma}
\providecommand{\questionname}{Question}
\providecommand{\remarkname}{Remark}
\providecommand{\theoremname}{Theorem}

\renewcommand{\Re}{\mathbb{R}}

\newcommand{\indpart}[1]{\smallskip\noindent\emph{#1.}\ }

\title{History-Dependent Recursive Preferences in Markov Decision Processes}
\author{William B. Haskell}
\date{\today}

\begin{document}

\maketitle

\begin{abstract}
In finite horizon dynamic programming with history-dependent preferences, the relevant state may be the entire realized history, even when the physical state is Markov. This paper develops a behavioral state-reduction theory for such Markov decision processes.
Under behavioral axioms and a certainty-equivalent richness condition, the full-history problem admits a recursive representation composed of time and risk aggregators.
We then derive a canonical preference-augmented (PA) state by quotienting histories that have the same current physical Markov state, are indifferent under every common continuation plan, and remain equivalent after every common one-step extension.
This canonical PA state is minimal among reachable recursive factorizations of the underlying preferences. Under Markov feasibility and standard dynamic-programming regularity, a PA Bellman selector induces an optimal full-history policy.
With additional rectangularity and exhaustiveness conditions, we reparameterize the preference memory into distinct belief and taste coordinates, and obtain a separated representation and Bellman recursion.
We give a taxonomy of examples to illustrate the scope of our framework.
\end{abstract}

\section{Introduction}

We study finite-horizon Markov decision processes (MDPs) where the decision maker's (DM's) current time preferences and risk attitudes depend on the history of past states and consumption.
For example, DM's preferences may change as his wealth or habit stock changes. Preference models based only on the current physical Markov state ignore such behavior. Under unrestricted history dependence, the state dimension grows with the horizon and dynamic programming (DP) becomes increasingly difficult.

We develop a class of dynamic preferences between history independence and unrestricted full-history dependence.
We first identify when DM's preferences endogenously determine a preference-augmented (PA) state that may be substantially smaller than the entire history.
Under additional rectangularity and exhaustiveness assumptions on the preference-memory component, we then obtain a separated preference-augmented (SPA) state with distinct belief and taste coordinates.
Our framework is also constructive: a PA or SPA state and recursive structure induce full-history preferences.

\subsection{Contributions}

\textit{Axiomatic foundation.} Under behavioral axioms and a certainty-equivalent richness condition, we derive a recursive representation for full-history preferences (Theorem~\ref{thm:recursive_utility}).
We identify time and risk aggregators on compact effective domains and prove that these effective aggregators admit jointly continuous monotone global extensions, which constitute the recursive representation. This representation underpins the rest of our development.

\textit{Endogenous memory augmentation.} We derive the preference-memory component of the PA state, rather than choosing it exogenously (e.g., wealth, a habit stock, a posterior belief). The endogenous quotient construction aggregates histories with the same current physical state that are behaviorally indistinguishable.
It also yields compact metrizable PA-state spaces, continuous transition maps, and a recursive factorization of DM's preferences (Theorem~\ref{thm:recursive_memory}). Furthermore, the canonical quotient is the minimal preference-memory augmentation over the physical Markov state among reachable recursive factorizations (Proposition~\ref{prop:minimality}).

\textit{Dynamic programming.} When feasibility factors through a PA state and additional continuity and compactness conditions on the consumption constraints hold, the optimal value functions satisfy a Bellman recursion on this PA state (Theorem~\ref{thm:DP_Bellman}). A PA Bellman selector with respect to this recursion induces a full-history optimal policy (Theorem~\ref{thm:DP_verification}).

\textit{Belief and taste separation.} DM's beliefs and tastes may be entangled because the time and risk aggregators can both depend on the PA state.
Under additional rectangularity and exhaustiveness conditions, we separate the preference-memory component into distinct belief and taste coordinates, yielding separated preference-augmented (SPA) states.
The belief coordinate contains the memory relevant for subjective prediction, risk evaluation, and ambiguity evaluation, while the taste coordinate contains the memory relevant for current utility and time preference.
We derive a separated recursive representation (Theorem~\ref{thm:memory_recursive_separated}) and Bellman recursion (Corollary~\ref{cor:DP_separated}).

\textit{Taxonomy.} Our PA and SPA framework is modular: it accommodates general time and risk aggregators and a variety of memory structures.
Our taxonomy covers five groups of dynamic preference models: history-independent preferences; SPA preferences with only one active memory coordinate (belief or taste but not both); SPA preferences with decoupled transitions; SPA preferences with coupled transitions; and entangled PA preferences, where the same preference-memory state enters both the time and risk aggregators.

\subsection{Related Literature}

This paper draws from four literature streams: (i) recursive preferences; (ii) history-dependent preferences; (iii) risk-aware and robust DP; and (iv) information states in stochastic control.

\emph{Recursive preferences.}
Recursive utility models decompose intertemporal preferences into current consumption and future continuation utility.
Koopmans~\cite{koopmans1960stationary} initiated the study for deterministic consumption streams, and Kreps and Porteus~\cite{kreps1978temporal} extended it to temporal payoff lotteries.
Subsequent work studies further separations of risk, time, and ambiguity attitudes, including \cite{epstein1989substitution,chew1991recursive,hayashi2005intertemporal,maccheroni2006dynamic,klibanoff2009recursive,strzalecki2011axiomatic,sarver2018dynamic}.
Ren and Stachurski~\cite{ren2018dynamic} develop DP methods for recursive preferences, and Stachurski and Zhang~\cite{stachurski2021dynamic} treat state-dependent discounting.
Bommier et al.~\cite{bommier2017monotone} characterize recursive preferences that are monotone with respect to statewise dominance. Notably, they show that imposing this kind of monotonicity greatly restricts admissible certainty equivalents.
Our modular Bellman recursion with general monotone aggregators fits the monotone abstract DP framework~\cite{bertsekas2012dynamic,sargent2023completely}.

Several recent works on recursive preferences emphasize ambiguity attitudes.
Dynamic ambiguity-averse preferences include the multiple-priors model \cite{epstein2003recursive} and dynamically consistent choice under ambiguity~\cite{siniscalchi2011dynamic}. Strzalecki~\cite{strzalecki2013temporal} shows how ambiguity aversion interacts with preferences for the temporal resolution of uncertainty. Cerreia-Vioglio et al.~\cite{cerreia2022ambiguity} study wealth-dependent ambiguity attitudes.
Marinacci et al.~\cite{marinacci2026recursive} relate recursive and ex-ante representations of ambiguity preferences through generalized rectangularity conditions. These works all take the underlying state space as a primitive; we take a complementary approach and derive it endogenously from preferences.

\emph{History dependence.}
Recursive preferences may depend, for example, on past consumption~\citep{campbell1999force,rozen2010foundations} or private information~\citep{fernandes2000recursive,marcet2019recursive}. Schroder and Skiadas~\cite{schroder2002isomorphism} and Backus et al.~\cite{backus2004exotic} convert history-dependent asset-pricing problems into recursive state-variable models.
Dillenberger and Rozen~\cite{dillenberger2015history} axiomatize risk attitudes that depend on the history of past disappointment and elation relative to endogenously formed targets. Tserenjigmid~\cite{tserenjigmid2019history} studies history-dependent risk aversion, and Frick et al.~\cite{frick2019dynamic} characterize history dependence in dynamic stochastic choice models.
These studies examine specific channels of history dependence; we ask how general history dependence compresses into a preference-memory state.

\emph{Risk-aware MDPs.}
Risk-sensitive control of MDPs originated in \cite{howard1972risk}.
The risk-aware MDP literature later studies dynamic risk mappings and their connection to robustness; see, e.g., \cite{ruszczynski2006conditional,ruszczynski2010risk,Iyengar_2005,Nilim_Ghaoui_2005,mannor2016robust,osogami2011iterated,chow2015risk,li2017quantile}, the survey of time consistency in \cite{bielecki2017survey}, the temporal decomposition of risk functionals in \cite{pflug2016time}, and the monographs \cite{shapiro2021lectures,dentcheva2024risk}.

Bäuerle and Jaśkiewicz \cite{bauerle2024riskSensitiveOverview} recently survey risk-sensitive MDPs based on optimized certainty equivalents, and solve them using state augmentation. Bäuerle and Ott~\cite{bauerle2011markov} augment the state with the cumulative reward; similar augmentations appear in \cite{chow2015risk,li2017quantile}. Our construction identifies the coarsest preference-memory augmentation required over the physical Markov state, among recursive factorizations.

\emph{Information states and state aggregation.}
The belief state of a partially observed MDP is the classical example of a derived state that restores the Markov property to a sequential optimization problem \cite{astrom1965optimal,smallwood1973optimal}. Subramanian et al.~\cite{subramanian2022approximate} develop a theory of information states as compressions of history that are sufficient for prediction and control. In the fully observed setting, state aggregation can be done by bisimulation \cite{givan2003equivalence}.
The PA state is effectively an information state for preference evaluation; the canonical PA state is the coarsest derived state that supports a recursive representation.

\subsection{Organization}

Section~\ref{sec:preliminaries} formalizes the objects of choice, DM's dynamic preferences, and the MDP model.
Section~\ref{sec:history} establishes the fully history-dependent recursive representation.
Section~\ref{sec:PA} derives the PA state and preference memory for a class of history-dependent preferences. Section~\ref{sec:DP} develops a Bellman recursion on the PA state.
In Section~\ref{sec:separation}, we separate beliefs and tastes and discuss the corresponding separated representation and Bellman recursion.
Section~\ref{sec:examples} proposes a taxonomy of PA and SPA preferences, and applies our framework to several examples.
The paper concludes in Section~\ref{sec:conclusion}, and all proofs are in the appendices.

\section{Preliminaries}
\label{sec:preliminaries}

For any integer $n \geq 1$, we let $[n] \triangleq \{0, 1, \ldots, n\}$ denote the running index starting from zero.
For integers $n_1 \leq n_2$, we let $[n_1 : n_2] \triangleq \{n_1, n_1 + 1, \ldots, n_2\}$ denote the running index from $n_1$ to $n_2$ (inclusive).

For a set ${\cal X}$, we let ${\rm id}_{\cal X} : {\cal X} \rightarrow {\cal X}$ denote the identity operator.
For a correspondence ${\cal A} : {\cal E} \rightrightarrows {\cal C}$ on ${\cal E} \subseteq {\cal X}$, we let ${\rm Sel}({\cal A})$ be the set of all selectors $g:{\cal E}\to{\cal C}$ with $g(e)\in{\cal A}(e)$ for all $e\in{\cal E}$.

\subsection{Plans}

The planning horizon is $[T] \triangleq \{0, 1, \ldots, T\}$, and terminal utility is evaluated at $T+1$.
The initial period $t=0$ state $\bar{s}_0$ is fixed; ${\cal S}$ is a finite set of exogenous states that can occur in periods $t \in [1,T] \triangleq \{1,2,\ldots,T\}$; and $\star$ is a dummy period $T+1$ state.
We let ${\cal S}_0 = \{\bar{s}_0\}$, ${\cal S}_t = {\cal S}$ for $t \in [1,T]$, and ${\cal S}_{T+1} = \{\star\}$ be the state spaces for all $t \in [T+1]$, and let $s_t \in {\cal S}_t$ be the state observed at the beginning of period $t \in [T+1]$.
For simplicity, we just write ${\cal S}$ where possible.
We suppose that $\{s_t\}_{t=0}^T$ is a Markov chain, and there is a reference transition kernel $\{q_t(\cdot \vert s_t)\}_{t=0}^{T-1}$ for the evolution of $\{s_t\}_{t=0}^T$.
Let ${\cal C} = [0, \bar{c}] \subset \mathbb{R}_{\geq 0}$ be a non-empty compact set of admissible nonnegative consumption levels, where $c_t \in {\cal C}$ is the consumption realized during period $t$.

Let ${\cal H}_0 = \{\bar{s}_0\}$ be the period $t=0$ history.
Let ${\cal H}_t = {\cal H}_{t-1} \times {\cal C} \times {\cal S}_t$ denote the set of all period $t \in [1, T]$ histories, with typical element $h_t = (\bar{s}_0, c_0, \ldots, s_{t-1}, c_{t-1}, s_t)$, which includes the current period $t$ state.
The set of terminal histories ${\cal H}_{T+1} = {\cal H}_T \times {\cal C}$ is the set of all histories $h_{T+1} = (\bar{s}_0, c_0, \ldots, s_T, c_T)$, and does not include the dummy terminal state $\star$.
The spaces of histories are equipped with the product metric inherited from $\Re$ (via ${\cal C}$) and the finite set ${\cal S}$.
Consequently, each ${\cal H}_t$ is compact and metrizable.
The one-step history-extension map $\iota_t:{\cal H}_t\times{\cal C}\times{\cal S}_{t+1}\to{\cal H}_{t+1}$ is defined by $\iota_t(h_t,c,s)=(h_t,c,s)$ for $t \in [T-1]$, and by $\iota_T(h_T,c,\star)=(h_T,c)$ for $t=T$. All $\{\iota_t\}_{t=0}^T$ are continuous.

The choice objects are consumption plans. The same analysis applies to other payoff or resource-allocation streams.
Plans are defined recursively. There is no consumption in period $T+1$ so ${\cal F}_{T+1} \triangleq \{\emptyset\}$, then ${\cal F}_t \triangleq {\cal C} \times ({\cal F}_{t+1})^{\cal S}$ for $t \in [T]$.
A continuation plan $f \in {\cal F}_t$ is then a sequence of decision rules $f = \{f_j\}_{j=0}^{T-t}$, where each $f_j : {\cal S}^j \rightarrow {\cal C}$ is a mapping from realized sequences of future states to consumption levels.
Since ${\cal S}^0$ is a singleton, $f_0 \in {\cal C}$ is the immediate consumption; $f_1(s')$ is the consumption choice in the next period after observing $s'$; $f_2(s', s'')$ is the consumption choice two periods ahead after observing $(s', s'')$; etc.
All ${\cal F}_t$ are compact by compactness of ${\cal C}$ and finiteness of ${\cal S}$.
We let $c_{t:T} = (c_t, \ldots, c_T) \in {\cal F}_t$ denote a deterministic consumption plan which is independent of the states.

For $t \in [T-1]$, we can decompose a plan $f \in {\cal F}_t$ into $f = (c, f_+)$ consisting of the head $c$ (current period consumption) and the tail $f_+ : {\cal S} \rightarrow {\cal F}_{t+1}$ (next-period continuation plan).
The tail is explicitly $f_+ = \{f_+(s)\}_{s \in {\cal S}}$, where $f_+(s) \in {\cal F}_{t+1}$ is the continuation plan for each possible state $s \in {\cal S}$.
We suppress the time dependence when writing $f = (c, f_+)$.

The grand domain of choice for each period $t \in [T]$ is ${\cal D}_t \triangleq {\cal H}_t \times {\cal F}_t$, equipped with the product topology, with typical element $(h_t, f) \in {\cal D}_t$. The domain ${\cal D}_t$ consists of {\it life paths} which include the history up to period $t$, and then the continuation plan in ${\cal F}_t$ from period $t$ forward.

\subsection{Preferences}

Let $\sigma_t^s : {\cal H}_t \rightarrow {\cal S}_t$ be the physical-state projection where $\sigma_t^s(h_t) = s_t$ for $h_t = (\bar{s}_0, c_0, \ldots, c_{t-1}, s_t) \in {\cal H}_t$ and $t \in [T]$.
For $t = T+1$, set $\sigma_{T+1}^s(h_{T+1})=\star$ for every $h_{T+1}\in{\cal H}_{T+1}$.
We formalize the evolution of the information in the full-history model as follows.

\begin{defn}[Full-history state system]
\label{defn:full_history_state_system}
A full-history state system is a tuple
\[
\mathfrak{H} = (\{{\cal H}_t\}_{t=0}^{T+1}, \{\sigma_t^s\}_{t=0}^{T+1}, \{\iota_t\}_{t=0}^{T}),
\]
where ${\cal H}_t$ is the space of period-$t$ histories, $\sigma_t^s:{\cal H}_t\to{\cal S}_t$ is the physical-state projection, and $\iota_t:{\cal H}_t\times{\cal C}\times{\cal S}_{t+1}
\to{\cal H}_{t+1}$ is the history concatenation map.
\end{defn}
\noindent
Since ${\cal C}$ is compact and ${\cal S}$ is finite, each ${\cal H}_t$ is nonempty, compact, and metrizable; each $\sigma_t^s$ and $\iota_t$ is continuous; and we have the identity
\[
\sigma_{t+1}^s(\iota_t(h_t,c,s'))=s',\, \forall h_t \in {\cal H}_t, c \in {\cal C}, s' \in {\cal S}_{t+1}, t \in [T].
\]

We now define the grand preference over the grand domain of choice. All other preference relations in this work are derived from it.
We specify that the grand preference is defined with respect to $\mathfrak{H}$ (a specific state system of evolving information). This sets our convention for this paper: preferences are defined with respect to an appropriate state system.

\begin{defn}[Grand preference]
\label{defn:preference}
A \emph{grand preference} on $\mathfrak{H}$ is a tuple ${\cal P} = (\{\succeq_{(t)}\}_{t=0}^T, V_{T+1})$ where:

(i) For each $t \in [T]$, $\succeq_{(t)}$ is a binary relation on ${\cal D}_t$.

(ii) $V_{T+1} : {\cal H}_{T+1} \rightarrow \Re$ is continuous.
\end{defn}
\noindent
The preference relations $\{\succeq_{(t)}\}_{t=0}^T$ allow us to compare counterfactuals on different histories. For instance, we can compare the same continuation plan from a traumatic history versus an elating one.
The terminal value function $V_{T+1}$ is a primitive which anchors the backward recursion.
Our goal is to optimize an underlying MDP with respect to ${\cal P}$.

We obtain history-dependent conditional preferences from ${\cal P}$, which are used to evaluate a plan's utility on a specific history.

\begin{defn}[Conditional preferences]
\label{defn:conditional_preference}
For each $h_t \in {\cal H}_t$, the conditional preference $\succeq_{h_t}$ is defined on ${\cal F}_t$ by $f \succeq_{h_t} g$ if and only if $(h_t, f) \succeq_{(t)} (h_t, g)$, for all $f, g \in {\cal F}_t$.
The conditional preferences are the collection $\{\succeq_{h_t}\}$.
\end{defn}
\noindent
The conditional preferences $\{\succeq_{h_t}\}$ are local restrictions of $\{\succeq_{(t)}\}$.
If we had started with $\{\succeq_{h_t}\}$ as the primitive, we would not be able to easily compare plans across different histories without additional structure. Each $\succeq_{h_t}$ could potentially be arbitrary at different nodes $h_t$, and there would be no continuous way to stitch them together.

\begin{defn}[Compatible utility system]
\label{defn:utility_system}
A utility system is a family ${\bf U}=\{U_t\}_{t=0}^{T+1}$, where $U_t:{\cal H}_t\times{\cal F}_t\to\Re$ is continuous for each $t\in[T]$, and $U_{T+1}:{\cal H}_{T+1}\to\Re$ is continuous.
The utility system ${\bf U}$ is compatible with ${\cal P}$ if $U_{T+1}=V_{T+1}$; if $U_T(h_T,c)=U_{T+1}(\iota_T(h_T,c,\star))$ for all $h_T\in{\cal H}_T$ and $c\in{\cal C}$; and if $U_t(h_t,f)\geq U_t(h_t',g)$ if and only if $(h_t,f)\succeq_{(t)}(h_t',g)$ for all $h_t, h_t' \in {\cal H}_t$, $f, g \in {\cal F}_t$, and $t\in[T]$.
\end{defn}
\noindent
A compatible utility system represents ${\cal P}$.
It also induces a representation for $\{\succeq_{h_t}\}$ given by $U_{h_t}(\cdot) \equiv U_t(h_t, \cdot)$.

Wherever it appears below, ${\bf U}=\{U_t\}_{t=0}^{T+1}$ denotes a fixed compatible utility system for ${\cal P}$. All objects defined from its cardinal scale---including attainable utility sets, effective aggregators, quotient utilities, and recursive factorizations---are relative to this fixed choice.

\subsection{Markov Decision Processes}

DM's preferences are expressed by a compatible utility system, which gives the objective for the full-history MDP.
Feasibility is determined by history-dependent consumption constraints through the correspondence ${\cal A}_t : {\cal H}_t \rightrightarrows {\cal C}$, where ${\cal A}_t(h_t) \subset {\cal C}$ is the set of feasible immediate consumption levels on history $h_t$.
At each history $h_t$, the policy must select a feasible current consumption level from ${\cal A}_t(h_t)$.

\begin{defn}
\label{defn:policy}
A history-dependent policy $\varphi = (\varphi_t)_{t=0}^T$ is a collection of mappings $\varphi_t : {\cal H}_t \rightarrow {\cal C}$ such that $\varphi_t(h_t) \in {\cal A}_t(h_t)$ for all $h_t \in {\cal H}_t$. Let $\Phi = \prod_{t=0}^T {\rm Sel}({\cal A}_t)$ denote the set of all feasible history-dependent policies.
\end{defn}

Let $\varphi=(\varphi_\tau)_{\tau=t}^T$ be a feasible policy from period $t$ onward, where $\varphi_\tau(h_\tau)\in{\cal A}_\tau(h_\tau)$ is the feasible consumption level dictated by the policy for each $\tau \in [t, T]$.
Given a history $h_t$, let $f_t^\varphi[h_t]\in{\cal F}_t$ denote the consumption plan induced by following $\varphi$ starting from $h_t$ in period $t \in [T]$.

Given a compatible utility system ${\bf U}$ that represents DM's preferences, define $U_t^\varphi(h_t)\triangleq U_t(h_t,f_t^\varphi[h_t])$ to be the utility of $f_t^\varphi[h_t]$ starting from period $t$.
We then introduce the optimal value functions $J_t : {\cal H}_t \rightarrow \Re$ for all $t \in [T]$, which are defined by:
\begin{align*}
    J_T(h_T) \triangleq & \sup_{c \in {\cal A}_T(h_T)} U_{T+1}(\iota_T(h_T,c,\star)),\, \forall h_T \in {\cal H}_T,\\
    J_t(h_t) \triangleq & \sup_{\varphi \in \Phi} U_t^{\varphi}(h_t),\, \forall h_t \in {\cal H}_t,\, t \in [T-1].
\end{align*}
There is no continuation policy at the terminal period, so it just optimizes over current consumption.
The core difficulty with evaluating $\{J_t\}_{t=0}^T$ is that the state spaces are ${\cal H}_t$ for history-dependent preferences, and the dimension of ${\cal H}_t$ grows with time.
The central preference-reduction question of this paper is: what is the coarsest augmentation of the physical Markov state that supports recursive evaluation of preferences? When feasibility factors through this augmented state, it supports a Bellman recursion under regularity conditions.

\section{Full-History Recursive Benchmark}
\label{sec:history}

This section establishes a recursive representation for the full-history problem.
It gives conditions under which the utility system admits a recursive representation through history-dependent time and risk aggregators.
We first describe behavioral axioms for DM's preferences, then characterize the desired recursive structure, and conclude with our recursive representation theorem.
This representation result is the foundation for the state reduction developed subsequently.

\subsection{Axioms}

Our desired recursive representation is characterized by several behavioral axioms. The following two are standard; they establish the existence of utility functions which represent $\{\succeq_{(t)}\}$.

\begin{axiom}
\label{axiom:weak_order}
(Weak order) For each $t \in [T]$, $\succeq_{(t)}$ is complete and transitive on ${\cal D}_t$.
\end{axiom}

\begin{axiom}
\label{axiom:continuity}
(Continuity) For each $t \in [T]$ and $(h_t, f) \in {\cal H}_t \times {\cal F}_t$, the sets $\{(h_t', g) : (h_t', g) \succeq_{(t)} (h_t, f)\}$ and $\{(h_t', g) : (h_t', g) \preceq_{(t)} (h_t, f)\}$ are closed in ${\cal D}_t$.
\end{axiom}
\noindent
Axiom~\ref{axiom:continuity} is a joint continuity axiom over entire life paths in the grand domain ${\cal D}_t$. This requirement is stronger than the usual continuity axiom for history-independent preferences, which only requires continuity over continuation plans ${\cal F}_t$ for a fixed $h_t$.
A compatible utility system exists under Axiom~\ref{axiom:weak_order}, Axiom~\ref{axiom:continuity}, and Axiom~\ref{axiom:terminal_compatibility} (see Lemma~\ref{lem:existence}), but it is generally not unique.

The next axiom guarantees coherence of preferences over time (i.e., DM always agrees with his future self after the next-period state is realized).

\begin{axiom}
\label{axiom:dynamic_consistency}
(Dynamic consistency) For all $t \in [T-1]$, $h_t \in {\cal H}_t$, and $c \in {\cal C}$, if
\[
(\iota_t(h_t, c, s), f_+(s)) \succeq_{(t+1)} (\iota_t(h_t, c, s), g_+(s)),\, \forall s \in {\cal S},
\]
then $(h_t, (c, f_+)) \succeq_{(t)} (h_t, (c, g_+))$ for all $f = (c, f_+), g = (c, g_+) \in {\cal F}_t$.
\end{axiom}
\noindent
Applying Axiom~\ref{axiom:dynamic_consistency} in both directions gives the indifference version of dynamic consistency.
For any $t \in [T-1]$, $h_t \in {\cal H}_t$, and $f, g \in {\cal F}_t$ with $f = (c, f_+)$ and $g = (c, g_+)$, if $(\iota_t(h_t, c, s), f_+(s)) \sim_{(t+1)} (\iota_t(h_t, c, s), g_+(s))$ for all $s \in {\cal S}$, then $(h_t, f) \sim_{(t)} (h_t, g)$.

Axiom~\ref{axiom:dynamic_consistency} is only defined up to period $T-1$, since there is no continuation plan in period $T$. Period $T$ is handled by the next axiom which connects preferences $\succeq_{(T)}$ to the exogenous terminal value function $V_{T+1}$.

\begin{axiom}
\label{axiom:terminal_compatibility}
(Terminal compatibility) For all $h_T, h_T' \in {\cal H}_T$ and $c, c' \in {\cal C}$, $(h_T, c) \succeq_{(T)} (h_T', c')$ if and only if $V_{T+1}((h_T, c)) \geq V_{T+1}((h_T', c'))$.
\end{axiom}

The next axiom concerns monotonicity in current consumption. It isolates the direct value of higher current consumption, for fixed continuation utilities.
For example, in a habit model, raising current consumption may change the next-period habit stock, so the same future consumption rule need not deliver the same continuation utility.

\begin{axiom}
\label{axiom:monotonicity}
(Compensated consumption monotonicity) 
For $t\in[T-1]$, $h_t\in{\cal H}_t$, and $c>c'$, let  $f_+,g_+\in({\cal F}_{t+1})^{\cal S}$. 
If $(\iota_t(h_t,c,s),f_+(s)) \sim_{(t+1)} (\iota_t(h_t,c',s),g_+(s))$ for all $s\in{\cal S}$, then $(h_t,(c,f_+)) \succ_{(t)} (h_t,(c',g_+))$.
\end{axiom}

The following axiom separates current consumption from the ranking of risky continuation-utility vectors. It ensures that the induced risk comparison does not depend on the current consumption level at which the continuation vectors are realized.

\begin{axiom}
\label{axiom:weak_separability}
(Weak separability) For $t\in[T-1]$, $h_t\in{\cal H}_t$, and $c,c'\in{\cal C}$, let $f_+,g_+,f_+',g_+'\in({\cal F}_{t+1})^{\cal S}$. If $(\iota_t(h_t,c,s),f_+(s))\sim_{(t+1)}(\iota_t(h_t,c',s),f_+'(s))$ and $(\iota_t(h_t,c,s),g_+(s))\sim_{(t+1)}(\iota_t(h_t,c',s),g_+'(s))$ for all $s\in{\cal S}$, then $(h_t,(c,f_+))\succeq_{(t)}(h_t,(c,g_+))$ if and only if $(h_t,(c',f_+'))\succeq_{(t)}(h_t,(c',g_+'))$.
\end{axiom}

\subsection{Recursive Structure}

Fix a compatible utility system. We ask whether it admits recursive structure.
We start by defining the components of the full-history recursive structure, starting with the time aggregator.

\begin{defn}[Time aggregator]
\label{defn:time_aggregator}
A \emph{time aggregator} for period $t\in[T-1]$ is a function $W_t:{\cal H}_t\times{\cal C}\times\Re\to\Re$ such that $v\to W_t(h_t,c,v)$ is nondecreasing for every $(h_t,c)\in{\cal H}_t\times{\cal C}$.
\end{defn}
\noindent
The time aggregator evaluates the period $t$ utility of current consumption $c$, followed by next-period continuation utility $v$.

Next we introduce the risk aggregator.
Let ${\cal L}$ be the space of bounded random variables $\tilde{u} : {\cal S} \rightarrow \Re$ on ${\cal S}$.
This space models the vector of continuation utilities indexed by the next-period state.
Since ${\cal S}$ is finite, we have ${\cal L} \cong \Re^{\cal S}$.
We equip ${\cal L}$ with the supremum norm $\|\tilde u\|_\infty\triangleq\max_{s\in{\cal S}}|\tilde u(s)|$, and write $\tilde{u} \geq \tilde{v}$ when $\tilde{u}(s) \geq \tilde{v}(s)$ for all $s \in {\cal S}$ (i.e., pointwise dominance).
For $v\in\Re$, let ${\bf v}\in{\cal L}$ denote the constant random variable satisfying ${\bf v}(s)=v$ for all $s\in{\cal S}$.
Risk aggregators act on ${\cal L}$ and quantify DM's uncertainty about the continuation utility.

\begin{defn}[Risk aggregator]
\label{defn:risk_aggregator}
A {\it risk aggregator} for period $t \in [T-1]$ is a function $M_t : {\cal H}_t \times {\cal L} \rightarrow \Re$ such that: (i) if $\tilde{u} \leq \tilde{v}$, then $M_t(h_t, \tilde{u}) \leq M_t(h_t, \tilde{v})$ for all $h_t \in {\cal H}_t$; and (ii) $M_t(h_t, {\bf v}) = v$ for all $h_t \in {\cal H}_t$ and $v \in \Re$.
\end{defn}
\noindent
The risk aggregators $\{M_t\}_{t=0}^{T-1}$ can also embed DM's ambiguity attitudes, e.g., when there is ambiguity about the distribution of the next-period state.

We now combine these components into a full-history recursive structure.

\begin{defn}[Full-history recursive structure]
\label{defn:recursive_utility}
A {\it full-history recursive structure} on $\mathfrak{H}$ is a tuple ${\cal V} = (\{W_t\}_{t=0}^{T-1}, \{M_t\}_{t=0}^{T-1}, V_{T+1}^{\cal V})$ where
\begin{enumerate}
    \item $W_t : {\cal H}_t \times {\cal C} \times \Re \to \Re$ is a jointly continuous time aggregator for all $t\in[T-1]$.
    \item $M_t : {\cal H}_t \times {\cal L} \to \Re$ is a jointly continuous risk aggregator for all $t\in[T-1]$.
    \item $V_{T+1}^{\cal V} : {\cal H}_{T+1}\to\Re$ is continuous.
\end{enumerate}
The pair $(\mathfrak{H}, {\cal V})$ generates a utility system ${\bf U}^{\cal V} = \{U_t^{\cal V}\}_{t=0}^{T+1}$ according to $U_{T+1}^{\cal V} = V_{T+1}^{\cal V}$ and:
\begin{subequations}
\label{eq:recursive_utility}
\begin{align}
U_T^{\cal V}(h_T, c) = & U_{T+1}^{\cal V}(\iota_T(h_T,c,\star)),\, \forall h_T \in {\cal H}_T, c \in {\cal C},\\
U_t^{\cal V}(h_t, f) = & W_t(h_t, c, M_t(h_t, \tilde{u}_{t+1}^{\cal V}(h_t, f))),\, \forall h_t \in {\cal H}_t,\, f = (c, f_+) \in {\cal F}_t, t \in [T-1],
\end{align}
\end{subequations}
where $\tilde{u}_{t+1}^{\cal V}(h_t, f) \in {\cal L}$ is defined by $[\tilde{u}_{t+1}^{\cal V}(h_t, f)](s) \triangleq U_{t+1}^{\cal V}(\iota_t(h_t, c, s), f_+(s))$ for all $s \in {\cal S}$.
\end{defn}

We connect $\mathfrak{H}$ and a full-history recursive structure ${\cal V}$ back to the original grand preference ${\cal P}$, as defined below.

\begin{defn}[Full-history representation]
\label{defn:recursive_utility_representation}
The pair $(\mathfrak{H}, {\cal V})$ \emph{represents} ${\cal P}$ if its generated utility system ${\bf U}^{\cal V}$ is compatible with ${\cal P}$.
\end{defn}

\subsection{Main Result}

The next theorem establishes sufficient conditions for our desired history-dependent recursive representation.
We first describe the effective range of continuation utilities where risk preferences are behaviorally defined.

\begin{defn}[Attainable one-step utility vectors]
\label{defn:attainable_vectors}
For the fixed compatible utility system ${\bf U}$, define:

(i) For all $t\in[T-1]$, $h_t \in {\cal H}_t$, and $f=(c,f_+)\in{\cal F}_t$, the \emph{one-step continuation-utility vector} $\tilde{u}_{t+1}(h_t,f)\in{\cal L}$ is defined by $[\tilde{u}_{t+1}(h_t,f)](s) \triangleq U_{t+1}(\iota_t(h_t,c,s),f_+(s))$ for all $s \in {\cal S}$.

(ii) For all $t \in [T-1]$, $h_t \in {\cal H}_t$, and $c \in {\cal C}$, let $\Lambda_t^{\bf U}(h_t, c) \triangleq \{\tilde{u}_{t+1}(h_t,(c,f_+)) : f_+\in({\cal F}_{t+1})^{\cal S}\}$, $\Lambda_t^{\bf U}(h_t) \triangleq \bigcup_{c\in{\cal C}}\Lambda_t^{\bf U}(h_t, c)$, and $\Lambda_t^{\bf U} \triangleq \bigcup_{h_t \in {\cal H}_t} \Lambda_t^{\bf U}(h_t)$.
\end{defn}
\noindent
The sets $\Lambda_t^{\bf U}(h_t, c)$ of attainable continuation utilities defined above depend on the cardinal scale set by ${\bf U}$, so they are not preference primitives.
The effective domain of DM's risk preferences is then $D_t^M \triangleq \left\{ (h_t,\tilde{u}): \tilde{u}\in\Lambda_t^{\bf U}(h_t) \right\}$.
We assume there is a continuous, monotone, internal, certainty-equivalent completion on this effective domain.

\begin{assumption}[Certainty-equivalent richness]
\label{assu:CE_richness}
For the fixed compatible utility system ${\bf U}$, suppose that for every $t\in[T-1]$ there is a jointly continuous functional $M_t^0 : D_t^M \rightarrow \Re$, such that for each $h_t\in{\cal H}_t$:

(i) \textnormal{(Risk-ranking representation and constant normalization)} $M_t^0(h_t,\cdot)$ represents the conditional risk ranking---for plans $f=(c,f_+),\,g=(c,g_+)\in{\cal F}_t$ with the same current consumption $c$, $f \succeq_{h_t} g$ if and only if
\[
M_t^0(h_t, \tilde{u}_{t+1}(h_t,f)) \ge M_t^0(h_t, \tilde{u}_{t+1}(h_t,g))
\]
and $M_t^0(h_t,{\bf v})=v$ whenever ${\bf v} \in \Lambda_t^{\bf U}(h_t)$.

(ii) \textnormal{(Internality)} $\min_{s \in {\cal S}}\tilde{u}(s)\le M_t^0(h_t,\tilde{u})\le\max_{s \in {\cal S}}\tilde{u}(s)$ for every $\tilde{u} \in \Lambda_t^{\bf U}(h_t)$.

(iii) \textnormal{(Deterministic solvability)} For every $f=(c,f_+)\in{\cal F}_t$, let $v=M_t^0(h_t,\tilde{u}_{t+1}(h_t,f))$, then there is some $(c,g_+)\in{\cal F}_t$ such that ${\bf v} = \tilde{u}_{t+1}(h_t, (c, g_+))$.

(iv) \textnormal{(Monotone completion)}
If $\tilde{u},\tilde{v}\in\Lambda_t^{\bf U}(h_t)$ and $\tilde{u}\leq\tilde{v}$, then $M_t^0(h_t,\tilde{u})\leq M_t^0(h_t,\tilde{v})$.
\end{assumption}

Our full-history recursive representation result for ${\cal P}$ is next.

\begin{thm}[Full-history recursive representation]
\label{thm:recursive_utility}
Let ${\cal P}$ be a grand preference on $\mathfrak H$, and let ${\bf U}$ be a compatible utility system for ${\cal P}$.
Suppose that ${\cal P}$ satisfies
Axioms~\ref{axiom:weak_order}--\ref{axiom:weak_separability}
and that the fixed utility system ${\bf U}$ satisfies
Assumption~\ref{assu:CE_richness}. Then there exists a full-history
recursive structure ${\cal V}$ on $\mathfrak H$ whose generated utility
system equals ${\bf U}$. In particular,
$(\mathfrak H,{\cal V})$ represents ${\cal P}$.
\end{thm}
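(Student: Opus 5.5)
The plan is to build the aggregators directly from ${\bf U}$ and $M_t^0$: first on compact effective domains, then extended globally. Set $V_{T+1}^{\cal V}=V_{T+1}$; compatibility already gives $U_T(h_T,c)=U_{T+1}(\iota_T(h_T,c,\star))$, so the terminal step matches. For $t\in[T-1]$, take the effective risk aggregator to be $M_t^0$ on $D_t^M$. Define the effective time aggregator on
\[
D_t^W\triangleq\{(h_t,c,v): v=M_t^0(h_t,\tilde u),\ \tilde u\in\Lambda_t^{\bf U}(h_t,c)\}
\]
by $W_t^0(h_t,c,v)\triangleq U_t(h_t,(c,g_+))$, where $(c,g_+)$ is any plan with $\tilde u_{t+1}(h_t,(c,g_+))={\bf v}$. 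Such a plan exists by Assumption~\ref{assu:CE_richness}(iii).

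Well-definedness and the identity $U_t(h_t,f)=W_t^0(h_t,c,M_t^0(h_t,\tilde u_{t+1}(h_t,f)))$ come next.
\begin{itemize}
\item If two tails both give the constant vector ${\bf v}$, then the continuation utilities agree statewise. By compatibility they are $\sim_{(t+1)}$ statewise, so the indifference form of Axiom~\ref{axiom:dynamic_consistency} makes the two plans indifferent at $h_t$. Hence $W_t^0$ does not depend on the choice of tail.
\item For a general $f=(c,f_+)$, let $v=M_t^0(h_t,\tilde u_{t+1}(h_t,f))$ and let $g=(c,g_+)$ be the deterministic-solvability plan. By (i) and constant normalization, $M_t^0(h_t,\tilde u_{t+1}(h_t,g))=v$, so $f\sim_{h_t}g$ by (i). Then $U_t(h_t,f)=U_t(h_t,g)=W_t^0(h_t,c,v)$.
\end{itemize}

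Next I establish monotonicity of $W_t^0$ in $v$, and strictly in $c$ at fixed $v$ where both points lie in the domain.
\begin{itemize}
\item For $v\le v'$ at the same $c$, the constant vectors ${\bf v}\le{\bf v}'$ are ranked by $M_t^0$ through constant normalization. Condition (i) then ranks the corresponding plans, giving $W_t^0(h_t,c,v)\le W_t^0(h_t,c,v')$.
\item For fixed $v$, Axiom~\ref{axiom:monotonicity} applied to the constant-${\bf v}$ tails gives strict increase in $c$. This is not required by Definition~\ref{defn:time_aggregator}, but it will help the extension.
\item Axiom~\ref{axiom:weak_separability} ensures the risk ranking encoded by $M_t^0$ is consistent across $c$. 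This is what lets a single $M_t^0(h_t,\cdot)$ on $\Lambda_t^{\bf U}(h_t)$ serve for every $c$.
\end{itemize}

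For compactness and continuity, each $\Lambda_t^{\bf U}(h_t,c)$ is the continuous image of the compact set $({\cal F}_{t+1})^{\cal S}$. Hence $D_t^M$ and $D_t^W$ are compact, being continuous images of compact sets under $(h_t,c,f_+)\mapsto(h_t,\tilde u_{t+1})$ and $(h_t,c,f_+)\mapsto(h_t,c,M^0_t(h_t,\tilde u_{t+1}))$. For continuity of $W_t^0$, I would use that $U_t$ is continuous and that the map $q:(h_t,c,f_+)\mapsto(h_t,c,M_t^0(\cdot))$ is a continuous surjection from a compact space onto $D_t^W$, hence a closed quotient map. Since $U_t=W_t^0\circ q$ with $U_t$ continuous, $W_t^0$ is continuous.

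The main obstacle is extending $M_t^0$ and $W_t^0$ to jointly continuous functions on ${\cal H}_t\times{\cal L}$ and ${\cal H}_t\times{\cal C}\times\Re$, while preserving monotonicity and exact constant normalization $M_t(h_t,{\bf v})=v$ for all $v$.
\begin{itemize}
\item For $M_t$, I would first try the monotone envelope
\[
\overline M(h,\tilde u)=\sup\{M_t^0(h,\tilde w)+\text{penalty}: \tilde w\in\Lambda_t^{\bf U}(h),\ \tilde w\le\tilde u\}.
\]
Rather than this, I would use a McShane-type monotone, continuous construction: take the average of the lower and upper envelopes, clipped to $[\min\tilde u,\max\tilde u]$. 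Internality (ii) and monotone completion (iv) guarantee the clipping agrees with $M_t^0$ on $D_t^M$. Clipping to $[\min,\max]$ forces $M_t(h,{\bf v})=v$ and preserves monotonicity, since $\min$ and $\max$ are monotone.
\item Joint continuity in $h_t$ is the delicate point, because the domain $\Lambda_t^{\bf U}(h_t)$ moves with $h_t$. I expect to need upper and lower hemicontinuity of $h_t\mapsto\Lambda_t^{\bf U}(h_t)$, which follows from continuity of $U_{t+1}$ and compactness. I would then replace the sharp sup/inf envelopes by regularized versions using the sup-norm distance, so that the result is continuous.
\item For $W_t$, on each fiber in $v$ I would extend the nondecreasing continuous function on the compact set $\{v:(h,c,v)\in D_t^W\}$ by linear interpolation across gaps and constant extension outside, and then use the same hemicontinuity argument for joint continuity.
\end{itemize}

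Finally, since $M_t$ extends $M_t^0$ and $W_t$ extends $W_t^0$, the recursion \eqref{eq:recursive_utility} reproduces $U_t$ by backward induction from $U_{T+1}=V_{T+1}$. So ${\bf U}^{\cal V}={\bf U}$, which is compatible with ${\cal P}$.
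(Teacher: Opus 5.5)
Your construction of the effective aggregators is the paper's own (Proposition~\ref{prop:separation} and Lemma~\ref{lem:effective_time_continuity}, which the paper packages through Theorem~\ref{thm:abstract_recursive} on an abstract state--transition system), and your closing backward induction is also the same. The real difference is the extension step.

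The paper treats extension as a black box: it invokes an order-theoretic Tietze theorem for compact spaces with a closed preorder (Theorem~\ref{thm:monotone_extension}). Corollary~\ref{cor:risk_extension} then obtains exact normalization by adjoining constant vectors to the domain and adding the correction $\frac{1}{|{\cal S}|}\sum_s\rho(\tilde u(s))$. Your clipping $M_t=\mathrm{med}(\min_s\tilde u(s),\Phi,\max_s\tilde u(s))$ is a simpler normalization: it is isotone, continuous, exact on constants, and by internality it does not change values on $D_t^M$.

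Your envelope idea does give an elementary proof of the extension theorem for this particular order. It needs one thing you left vague: the penalty must be one-sided and built from a modulus of continuity. A Lipschitz penalty is unavailable, since $M_t^0$ is only continuous. A symmetric penalty in $\|\tilde w-\tilde u\|_\infty$ breaks isotonicity. One envelope suffices:
\[
\Phi(h,\tilde u)=\sup_{(h',\tilde w)\in D_t^M}\Big\{M_t^0(h',\tilde w)-\bar\omega\big(d(h,h')+\|(\tilde w-\tilde u)^+\|_\infty\big)\Big\}.
\]
Here $\bar\omega$ is a continuous concave majorant, with $\bar\omega(0)=0$, of
\[
\omega(\delta)\triangleq\sup\{(M_t^0(h',\tilde w)-M_t^0(h,\tilde u))^+:(h,\tilde u),(h',\tilde w)\in D_t^M,\ d(h,h')+\|(\tilde w-\tilde u)^+\|_\infty\le\delta\}.
\]
The facts $\omega(0)=0$ and $\omega(0^+)=0$ follow from compactness of $D_t^M$, continuity, and monotone completion (iv). Because $d(h,h')$ sits inside the penalty, joint continuity is immediate, so you do not need hemicontinuity of $h\mapsto\Lambda_t^{\bf U}(h)$.

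For $W_t$ you can skip interpolation across gaps. The space $({\cal F}_{t+1})^{\cal S}$ is a cube, so each fiber of $D_t^W$ is an interval $[a(h,c),b(h,c)]$ with $a,b$ continuous. Then $W_t(h,c,v)\triangleq W_t^0(h,c,\min\{b(h,c),\max\{a(h,c),v\}\})$ works.

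The paper's route is shorter and covers arbitrary closed preorders uniformly for both aggregators. Yours is self-contained and constructive.
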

\noindent
The proof of Theorem~\ref{thm:recursive_utility} is based on the following chain of reasoning.
Dynamic consistency first reduces continuation plans to vectors of next-period continuation utilities. Weak separability, compensated monotonicity, and Assumption~\ref{assu:CE_richness} then give behaviorally identified time and risk aggregators on their effective domains. 
We then extend the effective aggregators to global continuous monotone aggregators, which constitute the recursive structure ${\cal V}$.
Finally, backward induction verifies that the resulting recursion reproduces the original utility system.
Proposition~\ref{prop:extension_invariance} shows that the choice of global extensions does not affect the evaluation of feasible plans.

The content of this section can be read in two directions. In the constructive direction, a full-history recursive structure ${\cal V}$ on $\mathfrak{H}$ satisfying the preceding definitions directly generates a recursive utility system and grand preference. Theorem~\ref{thm:recursive_utility} is the complementary direction: starting from a primitive grand preference ${\cal P}$ on $\mathfrak{H}$, it gives conditions under which some recursive structure ${\cal V}$ represents ${\cal P}$.

\section{Preference-Augmented States}
\label{sec:PA}

Theorem~\ref{thm:recursive_utility} provides a recursive evaluation of history-dependent preferences, but it does not by itself reduce the state dimension. We now investigate how the history dependence can be summarized by augmenting the physical Markov state with the preference memory, to obtain a potentially smaller PA state.
To clarify, we only consider reduction of the preference memory; the physical Markov state is not subject to reduction.
In tractable cases, the PA state is low-dimensional or otherwise structured.
We first define the desired representation on a PA state.
Then, we verify that this representation exists and how it is determined endogenously by ${\cal P}$.

\subsection{Representation}

We begin with the definition of a generic PA-state system. It consists of a set of state spaces and transition maps that depend on the current PA state, current consumption, and next period physical state.
The state system specifies the state spaces, physical-state projection, and transition maps.

\begin{defn}[PA-state system]
\label{defn:memory}
A preference-augmented state system is a tuple
\[
\mathfrak{X} = (\{{\cal X}_t\}_{t=0}^{T+1}, \{\varsigma_t\}_{t=0}^{T+1}, \{\Gamma_t\}_{t=0}^{T})
\]
such that:

(i) For all $t \in [T+1]$, ${\cal X}_t$ is a non-empty compact metrizable PA-state space.

(ii) For all $t \in [T+1]$, $\varsigma_t:{\cal X}_t\to{\cal S}_t$ is continuous.

(iii) For all $t\in[T]$, $\Gamma_t:{\cal X}_t\times{\cal C}\times{\cal S}_{t+1}\to{\cal X}_{t+1}$ is a continuous transition map satisfying
\[
\varsigma_{t+1}(\Gamma_t(x_t,c,s'))=s',\, \forall x_t \in {\cal X}_t, c \in {\cal C}, s' \in {\cal S}_{t+1}.
\]
\end{defn}
\noindent
The original full-history system is in fact an instance of Definition~\ref{defn:memory} where ${\cal X}_t = {\cal H}_t$ and $\varsigma_t = \sigma_t^s$ for all $t \in [T+1]$, and $\Gamma_t = \iota_t$ for all $t \in [T]$.

We now modify the definitions of time and risk aggregators for a given PA-state system $\mathfrak{X}$.

\begin{defn}[PA time aggregator]
\label{defn:memory_time_aggregator}
A \emph{PA time aggregator} for period $t\in[T-1]$ is a function $W_t^*:{\cal X}_t\times{\cal C}\times\Re\to\Re$ such that, for every $(x_t,c)\in{\cal X}_t\times{\cal C}$, the mapping $v \to W_t^*(x_t,c,v)$ is nondecreasing.
\end{defn}

\begin{defn}[PA risk aggregator]
\label{defn:memory_risk_aggregator}
A {\it PA risk aggregator} for period $t \in [T-1]$ is a function $M_t^* : {\cal X}_t \times {\cal L} \rightarrow \Re$ such that: (i) if $\tilde{u} \leq \tilde{v}$, then $M_t^*(x_t, \tilde{u}) \leq M_t^*(x_t, \tilde{v})$ for all $x_t \in {\cal X}_t$; and (ii) $M_t^*(x_t, {\bf v}) = v$ for all $v \in \Re$ and $x_t \in {\cal X}_t$.
\end{defn}

A recursive structure is defined with respect to a fixed PA-state system. This convention separates the state and its evolution from the preference recursion.

\begin{defn}[PA recursive structure]
\label{defn:PA_recursive}
A {\it PA recursive structure} on a PA-state system $\mathfrak{X}$ is a tuple
\[
{\cal V}^* = (\{W_t^*\}_{t=0}^{T-1}, \{M_t^*\}_{t=0}^{T-1}, V_{T+1}^*)
\]
where:
\begin{enumerate}
    \item $W_t^* : {\cal X}_t \times {\cal C} \times \Re \rightarrow \Re$ is a jointly continuous PA time aggregator for all $t \in [T-1]$.
    \item $M_t^* : {\cal X}_t \times {\cal L} \rightarrow \Re$ is a jointly continuous PA risk aggregator for all $t \in [T-1]$.
    \item $V_{T+1}^* : {\cal X}_{T+1} \rightarrow \Re$ is continuous.
\end{enumerate}
The pair $(\mathfrak{X}, {\cal V}^*)$ generates a utility system $\{U_t^*\}_{t=0}^{T+1}$ where $U_{T+1}^* = V_{T+1}^*$ and $U_t^* : {\cal X}_t \times {\cal F}_t \rightarrow \Re$ are defined by:
\begin{subequations}
\label{eq:memory_recursive}
\begin{align}
U_T^*(x_T, c) = & U_{T+1}^*(\Gamma_T(x_T, c, \star)),\, \forall x_T \in {\cal X}_T, c \in {\cal C},\\
U_t^*(x_t, f) = & W_t^*(x_t, c, M_t^*(x_t, \tilde{u}_{t+1}^*(x_t, f))),\, \forall x_t \in {\cal X}_t, f = (c, f_+) \in {\cal F}_t, t \in [T-1],
\end{align}
\end{subequations}
where the next period random continuation utility on the PA state $\tilde{u}_{t+1}^*(x_t, f) \in {\cal L}$ for $t \in [T-1]$ is defined by $[\tilde{u}_{t+1}^*(x_t, f)](s) \triangleq U_{t+1}^*(\Gamma_t(x_t, c, s), f_+(s))$ for all $s \in {\cal S}$.
\end{defn}
\noindent
In many examples in Section~\ref{sec:examples}, history enters the recursion only through a low-dimensional memory coordinate.

The next condition describes when the original preferences can be represented on a PA state.

\begin{defn}[Recursive factorization]
\label{defn:memory_representation}
Let $\mathfrak{X}$ be a PA-state system, let ${\cal V}^*$ be a recursive structure on $\mathfrak{X}$, and let $\{U_t^*\}_{t=0}^{T+1}$ be the utility system generated by $(\mathfrak{X}, {\cal V}^*)$.
Then $(\mathfrak{X}, {\cal V}^*)$ \emph{factorizes} a compatible utility system ${\bf U}$ when there exist continuous summary maps $\sigma_t : {\cal H}_t \rightarrow {\cal X}_t$ satisfying $\varsigma_t\circ\sigma_t=\sigma_t^s$ for all $t \in [T+1]$ such that:
 
(i) (Utility factorization) The generated utilities $\{U_t^*\}_{t=0}^{T+1}$ satisfy:
\begin{subequations}
\label{eq:factorization}
\begin{align}
U_{T+1}(h_{T+1}) = & U_{T+1}^*(\sigma_{T+1}(h_{T+1})),\, \forall h_{T+1} \in {\cal H}_{T+1},\\
U_t(h_t, f) = & U_t^*(\sigma_t(h_t), f),\, \forall h_t \in {\cal H}_t,\, f \in {\cal F}_t,\, t \in [T].
\end{align}
\end{subequations}

(ii) (Transition consistency) The summary maps are consistent with the transition maps:
\begin{equation}
\label{eq:transition_consistency}
\sigma_{t+1}(\iota_t(h_t,c,s))= \Gamma_t(\sigma_t(h_t),c,s),\, \forall h_t\in{\cal H}_t,\, c\in{\cal C},\,  s\in{\cal S}_{t+1},\, t\in[T].
\end{equation}
\end{defn}
\noindent
If $(\mathfrak X,{\cal V}^*)$ factorizes a compatible utility system ${\bf U}$, it represents ${\cal P}$.
The clause in Definition~\ref{defn:memory_representation} that $\varsigma_t\circ\sigma_t=\sigma_t^s$ is essential for our definition of factorization. A utility-only factorization can pool histories with different current Markov states.

\subsection{Construction from Preferences}

The previous subsection described an idealized recursive representation on a PA-state system. Now, we derive this representation endogenously from ${\cal P}$ through an equivalence relation on histories.
Two histories are canonically equivalent when they have the same current physical state, give the same evaluation to every common continuation plan, and remain equivalent after every common one-step extension.
Throughout this subsection, we continue to use the fixed compatible utility system ${\bf U}$.

\begin{defn}[Canonical PA-state equivalence]
\label{defn:equivalence}
For terminal histories, define $\odot_{T+1} \subseteq {\cal H}_{T+1} \times {\cal H}_{T+1}$ by $h_{T+1} \odot_{T+1} h_{T+1}'$ if and only if $U_{T+1}(h_{T+1})=U_{T+1}(h_{T+1}')$.
Define $\odot_t\subseteq{\cal H}_t\times{\cal H}_t$ by $h_t \odot_t h_t'$ if and only if: (i) $\sigma_t^s(h_t)=\sigma_t^s(h_t')$; (ii) $(h_t,f)\sim_{(t)}(h_t',f)$ for all $f\in{\cal F}_t$; and (iii) $(\iota_t(h_t,c,s),\iota_t(h_t',c,s))\in \odot_{t+1}$ for all $c\in{\cal C}$ and $s\in{\cal S}_{t+1}$.
\end{defn}
\noindent
Condition (i) preserves the physical state among equivalence classes. Condition (ii) identifies histories that give the same evaluation to every common continuation plan.
Condition (iii) imposes forward stability, ensuring that common one-step extensions remain equivalent. Together, these conditions make the quotient a valid recursive state rather than merely a collection of preference classes.

We next verify that $\{\odot_t\}_{t=0}^{T+1}$ are indeed equivalence relations on the space of histories.

\begin{lem}
\label{lem:odot_equivalence}
For all $t\in[T+1]$, $\odot_t$ is an equivalence relation on ${\cal H}_t$.
\end{lem}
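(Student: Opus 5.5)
We argue by backward induction on $t$, from $t=T+1$ down to $t=0$. The claim is that $\odot_t$ is reflexive, symmetric, and transitive on ${\cal H}_t$.

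\emph{Base case $t=T+1$.} Here $\odot_{T+1}$ is the kernel of the function $U_{T+1}:{\cal H}_{T+1}\to\Re$: $h\odot_{T+1}h'$ iff $U_{T+1}(h)=U_{T+1}(h')$. The kernel of any function is an equivalence relation, since equality of real numbers is reflexive, symmetric, and transitive.

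\emph{Inductive step.} Fix $t\in[T]$ and assume $\odot_{t+1}$ is an equivalence relation on ${\cal H}_{t+1}$. Note that the definition of $\odot_t$ only refers to $\odot_{t+1}$, so this recursion is well founded. Each of the three clauses of Definition~\ref{defn:equivalence} is itself an equivalence relation, and $\odot_t$ is their intersection.

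Clause (i) is the kernel of $\sigma_t^s$, so it is an equivalence relation.

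Clause (ii) requires $(h_t,f)\sim_{(t)}(h_t',f)$ for all $f\in{\cal F}_t$, where $\sim_{(t)}$ is the symmetric part of $\succeq_{(t)}$. By Axiom~\ref{axiom:weak_order}, $\succeq_{(t)}$ is complete and transitive, so $\sim_{(t)}$ is an equivalence relation on ${\cal D}_t$; completeness gives reflexivity. Quantifying over all $f$ preserves reflexivity, symmetry, and transitivity: if $h\sim h'$ and $h'\sim h''$ for every $f$, chain them $f$ by $f$. Alternatively, with the fixed compatible ${\bf U}$, clause (ii) reads $U_t(h_t,f)=U_t(h_t',f)$ for all $f$, which is again a kernel condition.

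Clause (iii) requires $\iota_t(h_t,c,s)\odot_{t+1}\iota_t(h_t',c,s)$ for all $(c,s)$. For each fixed $(c,s)$ this is the pullback of the equivalence relation $\odot_{t+1}$ under the map $h\mapsto\iota_t(h,c,s)$, hence an equivalence relation by the induction hypothesis. Intersecting over all $(c,s)$ keeps it an equivalence relation.

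An intersection of equivalence relations is an equivalence relation, so $\odot_t$ is one, which completes the induction.

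The argument is routine and has no real obstacle. The one point needing care is the order of the argument: $\odot_t$ is defined in terms of $\odot_{t+1}$, so the induction must run backward from $T+1$. At $t=T$, clause (iii) uses $\iota_T(\cdot,c,\star)$ and $\odot_{T+1}$, which is the kernel of $U_{T+1}$; this is covered by the base case. Reflexivity of $\sim_{(t)}$ also needs the completeness part of Axiom~\ref{axiom:weak_order}, or equivalently the representation by ${\bf U}$.
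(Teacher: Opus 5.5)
Your proof is correct and follows the same backward induction as the paper: the base case is the kernel of $U_{T+1}$, and the inductive step uses the representation of $\sim_{(t)}$ by $U_t$ for clause (ii) and the induction hypothesis on $\odot_{t+1}$ for clause (iii). The only difference is presentation: the paper checks reflexivity, symmetry, and transitivity directly, while you write $\odot_t$ as an intersection of kernels and pullbacks of equivalence relations, which packages the same argument more compactly.
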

\noindent
The canonical PA-state system is defined by taking the quotient of the space of histories with respect to $\{\odot_t\}_{t=0}^{T+1}$.
Let the finite set ${\cal S}$ be endowed with the discrete topology, and let $\tau_{{\cal H}_t}$ denote the topology on ${\cal H}_t$ inherited from ${\cal S}$ and ${\cal C}$.

We first define the necessary data, and then verify that it actually forms a PA-state system.
The canonical PA-state system is determined by the preference-equivalence classes of ${\cal P}$. The quotient utilities, effective aggregators, and recursive factorizations considered below are relative to the fixed compatible utility system ${\bf U}$.

\begin{defn}[Canonical PA data]
\label{defn:memory_canonical}
The canonical PA quotient data consist of:
\begin{enumerate}
    \item For all $t\in[T+1]$, let ${\cal X}_t\triangleq{\cal H}_t/\odot_t$ be the canonical PA-state space with the canonical quotient projection $\sigma_t:{\cal H}_t\to{\cal X}_t$.
    \item For all $t\in[T+1]$, let $\varsigma_t:{\cal X}_t\to{\cal S}_t$ satisfy $\varsigma_t\circ\sigma_t=\sigma_t^s$.
    \item For all $t\in[T]$, let $\Gamma_t : {\cal X}_t \times {\cal C} \times {\cal S}_{t+1} \rightarrow {\cal X}_{t+1}$ satisfy
    \[
    \Gamma_t(\sigma_t(h_t),c,s') \triangleq \sigma_{t+1}(\iota_t(h_t,c,s')),\, \forall h_t\in{\cal H}_t,\ c\in{\cal C},\ s'\in{\cal S}_{t+1}.
    \]
\end{enumerate}
\end{defn}
\noindent
The map $\varsigma_t$ introduced above exists by Theorem~\ref{thm:quotient_universal_property} because $\sigma_t^s$ is continuous and constant on $\odot_t$-classes.

The following lemma establishes that the canonical PA-state space has the necessary topological structure.

\begin{lem}[Topology of the canonical quotient]
\label{lem:quotient_topology}
(i) For all $t\in[T+1]$, the relation $\odot_t$ is closed in ${\cal H}_t\times{\cal H}_t$, and ${\cal X}_t={\cal H}_t/\odot_t$ is compact and metrizable in the quotient topology.

(ii) For all $t\in[T+1]$, the canonical projection $\sigma_t : {\cal H}_t\to{\cal X}_t$ is continuous, closed, and a quotient map. Moreover, the readout map $\varsigma_t$ is well-defined and continuous.
\end{lem}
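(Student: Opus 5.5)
We argue by backward induction on $t$, from $t=T+1$ down to $t=0$, that $\odot_t$ is a closed subset of ${\cal H}_t\times{\cal H}_t$. We then invoke the standard fact that the quotient of a compact metrizable space by a closed equivalence relation is compact Hausdorff, and metrizable since the quotient map is closed; this handles (i). For (ii), continuity of $\sigma_t$ and the quotient-map property hold by definition of the quotient topology. Closedness of $\sigma_t$ follows from compactness: a closed $F\subseteq{\cal H}_t$ is compact, so its saturation $\sigma_t^{-1}(\sigma_t(F))=\pi_2\bigl(\odot_t\cap(F\times{\cal H}_t)\bigr)$ is the projection of a compact set and hence closed. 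Equivalently, $\sigma_t(F)$ is compact in a Hausdorff space. Finally, $\varsigma_t$ is well defined and continuous by the universal property (Theorem~\ref{thm:quotient_universal_property}), because $\sigma_t^s$ is continuous and constant on classes by condition (i) of Definition~\ref{defn:equivalence}.

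For the base case, $\odot_{T+1}=\{(h,h'):U_{T+1}(h)=U_{T+1}(h')\}$ is the preimage of the diagonal under the continuous map $(h,h')\mapsto(U_{T+1}(h),U_{T+1}(h'))$, so it is closed. For $t\le T$ we write $\odot_t=A\cap B\cap C$, one set for each condition of Definition~\ref{defn:equivalence}.

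\emph{Set $A$.} Condition (i) gives $A=\{\sigma_t^s(h)=\sigma_t^s(h')\}$, which is closed because $\sigma_t^s$ is continuous into a discrete finite set.

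\emph{Set $B$.} By compatibility, condition (ii) reads $B=\bigcap_{f\in{\cal F}_t}\{(h,h'):U_t(h,f)=U_t(h',f)\}$. Each set in this intersection is closed by continuity of $U_t$, so $B$ is closed. For $t=T$ the same holds, using $U_T(h_T,c)=U_{T+1}(\iota_T(h_T,c,\star))$.

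\emph{Set $C$.} Condition (iii) gives $C=\bigcap_{c,s}\{(h,h'):(\iota_t(h,c,s),\iota_t(h',c,s))\in\odot_{t+1}\}$. Each set here is the preimage of the closed set $\odot_{t+1}$, closed by the induction hypothesis, under the continuous map $(h,h')\mapsto(\iota_t(h,c,s),\iota_t(h',c,s))$. So $C$ is closed.

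Intersections of closed sets are closed, which completes the induction. Arbitrary intersections are harmless, so no compactness or uniformity in $f$ or $c$ is needed at this stage.

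The main obstacle is the metrizability step, which needs a careful argument rather than an appeal to a folklore fact. I would first show that ${\cal X}_t$ is Hausdorff. Take distinct classes $x\neq x'$; their fibres $K=\sigma_t^{-1}(x)$ and $K'=\sigma_t^{-1}(x')$ are disjoint closed sets, closed because they are sections of the closed relation $\odot_t$. By normality of ${\cal H}_t$ they have disjoint open neighbourhoods $O$ and $O'$. Closedness of $\sigma_t$ then yields saturated open sets ${\cal H}_t\setminus\sigma_t^{-1}(\sigma_t({\cal H}_t\setminus O))\supseteq K$, and similarly for $K'$, and these are disjoint. For metrizability I would use that ${\cal X}_t$ is a compact Hausdorff space that is a continuous closed image of a second-countable space. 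The images of finite unions of basic open sets from a countable base then give a countable network, and this can be upgraded to a countable base via the usual closed-map argument (the complements $\sigma_t({\cal H}_t\setminus \text{finite union})$). The space is therefore second countable, and Urysohn's metrization theorem applies. Alternatively, I would cite the theorem that a perfect image of a metrizable space is metrizable (Hanai--Morita--Stone).
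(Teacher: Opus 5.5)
Your proof is correct and follows the paper's route: the same backward induction writing $\odot_t$ as the intersection of the physical-state set, the common-plan indifference set, and the forward-stability preimage of $\odot_{t+1}$, each closed. It then uses the closed-equivalence-relation quotient theorem and the universal property for $\varsigma_t$. The only difference is that you prove the quotient facts the paper cites from Engelking (closedness of $\sigma_t$, Hausdorffness, second countability). One caution: your aside ``equivalently, $\sigma_t(F)$ is compact in a Hausdorff space'' would be circular at that point, since your Hausdorff argument uses closedness of $\sigma_t$; your primary projection argument does not depend on it.
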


The canonical transition maps are well-defined and continuous in the quotient topology, established in the next lemma.

\begin{lem}[Canonical PA transition]
\label{lem:quotient_transition_continuity}
For each $t\in[T]$, the map $\Gamma_t:{\cal X}_t\times{\cal C}\times{\cal S}_{t+1}
\to{\cal X}_{t+1}$ defined by
\[
\Gamma_t(\sigma_t(h_t),c,s') = \sigma_{t+1}(\iota_t(h_t,c,s')),\, \forall h_t \in {\cal H}_t, c \in {\cal C}, s' \in {\cal S}_{t+1},
\]
is well-defined and continuous. Moreover, $\varsigma_{t+1}(\Gamma_t(x_t,c,s'))=s'$ for every $x_t\in{\cal X}_t$, $c\in{\cal C}$, and $s'\in{\cal S}_{t+1}$.
\end{lem}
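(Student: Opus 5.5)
The proof splits into three parts: well-definedness, continuity, and the physical-state identity. Well-definedness should follow directly from condition (iii) of Definition~\ref{defn:equivalence}. If $\sigma_t(h_t)=\sigma_t(h_t')$, then $h_t\odot_t h_t'$, and clause (iii) gives $\iota_t(h_t,c,s')\odot_{t+1}\iota_t(h_t',c,s')$ for every $c\in{\cal C}$ and $s'\in{\cal S}_{t+1}$. Hence $\sigma_{t+1}(\iota_t(h_t,c,s'))=\sigma_{t+1}(\iota_t(h_t',c,s'))$. Since $\sigma_t$ is surjective, this defines $\Gamma_t$ on all of ${\cal X}_t\times{\cal C}\times{\cal S}_{t+1}$. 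For $t=T$, clause (iii) applies with $s'=\star$ and the terminal relation $\odot_{T+1}$, so no separate argument is needed there.

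For continuity, I would consider the product map $\pi\triangleq\sigma_t\times{\rm id}_{\cal C}\times{\rm id}_{{\cal S}_{t+1}}:{\cal H}_t\times{\cal C}\times{\cal S}_{t+1}\to{\cal X}_t\times{\cal C}\times{\cal S}_{t+1}$. By construction $\Gamma_t\circ\pi=\sigma_{t+1}\circ\iota_t$, and the right-hand side is continuous because $\iota_t$ is continuous and $\sigma_{t+1}$ is continuous by Lemma~\ref{lem:quotient_topology}(ii).

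The main obstacle is showing that $\pi$ is a quotient map; products of quotient maps need not be quotient maps in general. Here, however, $\sigma_t$ is a continuous closed surjection by Lemma~\ref{lem:quotient_topology}(ii), and ${\cal C}\times{\cal S}_{t+1}$ is compact. I would argue that $\pi$ is itself closed:
\begin{itemize}
\item ${\rm id}_{{\cal C}\times{\cal S}_{t+1}}$ is trivially a perfect map.
\item Since ${\cal H}_t$ is compact and ${\cal X}_t$ is Hausdorff (metrizable), $\sigma_t$ is a continuous map from a compact space to a Hausdorff space.
\item Therefore $\pi$ is a continuous map from the compact space ${\cal H}_t\times{\cal C}\times{\cal S}_{t+1}$ onto the Hausdorff space ${\cal X}_t\times{\cal C}\times{\cal S}_{t+1}$, and such a map is closed.
\end{itemize}
A continuous closed surjection is a quotient map. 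Given that $\pi$ is a quotient map and $\Gamma_t\circ\pi$ is continuous, the universal property (Theorem~\ref{thm:quotient_universal_property}) yields continuity of $\Gamma_t$. This compact-to-Hausdorff argument sidesteps the general product-of-quotients issue entirely.

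Finally, for the identity, take any $x_t=\sigma_t(h_t)$. Then
\[
\varsigma_{t+1}(\Gamma_t(x_t,c,s'))=\varsigma_{t+1}(\sigma_{t+1}(\iota_t(h_t,c,s')))=\sigma^s_{t+1}(\iota_t(h_t,c,s'))=s'.
\]
The middle equality uses $\varsigma_{t+1}\circ\sigma_{t+1}=\sigma^s_{t+1}$ (Definition~\ref{defn:memory_canonical} and Lemma~\ref{lem:quotient_topology}), and the last is the identity for the full-history state system.
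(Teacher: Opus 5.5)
Your proof is correct, and its skeleton matches the paper's: forward stability (clause (iii) of Definition~\ref{defn:equivalence}) for well-definedness, factoring $\sigma_{t+1}\circ\iota_t$ through a quotient map for continuity, and surjectivity of $\sigma_t$ for the readout identity.

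The one real difference is how you show that $\sigma_t\times{\rm id}_{\cal C}\times{\rm id}_{{\cal S}_{t+1}}$ is a quotient map.
\begin{itemize}
\item The paper uses Whitehead's product theorem (Theorem~\ref{thm:whitehead}). It needs only that $\sigma_t$ is a quotient map and that ${\cal C}\times{\cal S}_{t+1}$ is locally compact Hausdorff.
\item You apply the compact-to-Hausdorff criterion (Theorem~\ref{thm:compact_to_hausdorff}) directly to the product map.
\end{itemize}

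Your route is more elementary, and it yields the extra fact that the product map is closed. It is the same device the paper itself uses elsewhere, for example in Lemma~\ref{lem:effective_time_continuity} and in Step~4 of the minimality proof.

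Your route does, however, rely on more structure. It needs compactness of ${\cal H}_t$ and Hausdorffness of ${\cal X}_t$. The latter comes from Lemma~\ref{lem:quotient_topology}(i) and ultimately from closedness of $\odot_t$. Whitehead's theorem needs neither, so the paper's argument would survive with noncompact histories or a non-Hausdorff quotient. In the present compact setting both routes are valid.

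Your first two bullets are unnecessary: the remark that the identity is perfect, and the compact-to-Hausdorff property of $\sigma_t$ alone. The third bullet, applied to the whole product, already does all the work.
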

\noindent
Consequently, the canonical PA-state system is $\mathfrak{X}^* = (\{{\cal X}_t\}_{t=0}^{T+1}, \{\varsigma_t\}_{t=0}^{T+1}, \{\Gamma_t\}_{t=0}^{T})$.

We can now factorize a compatible utility system ${\bf U}$ based on the endogenous construction of Definition~\ref{defn:memory_canonical}. We propose a collection $\{U_t^\circ\}_{t=0}^{T+1}$ of quotient utility functions $U_t^\circ : {\cal X}_t \times {\cal F}_t \rightarrow \Re$ for $t \in [T]$ and $U_{T+1}^\circ : {\cal X}_{T+1} \rightarrow \Re$, defined as follows:
\begin{subequations}
\label{eq:canonical_utility}
\begin{align}
    U_{T+1}^\circ(\sigma_{T+1}(h_{T+1})) \triangleq & U_{T+1}(h_{T+1}),\, \forall h_{T+1} \in {\cal H}_{T+1},\\
    U_t^\circ(\sigma_t(h_t),f) \triangleq & U_t(h_t,f),\, \forall h_t\in{\cal H}_t, f\in{\cal F}_t, t \in [T].
\end{align}
\end{subequations}

\begin{lem}[Quotient utilities]
\label{lem:quotient_utilities}
The utility functions $\{U_t^\circ\}_{t=0}^{T+1}$ from Eq.~\eqref{eq:canonical_utility} are well-defined and continuous.
\end{lem}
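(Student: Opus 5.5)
Well-definedness and continuity are handled separately, and continuity is reduced to the universal property of quotient maps (Theorem~\ref{thm:quotient_universal_property}) applied to the continuous maps $U_t$.

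\emph{Well-definedness.} For $t=T+1$, if $\sigma_{T+1}(h_{T+1})=\sigma_{T+1}(h_{T+1}')$ then $h_{T+1}\odot_{T+1}h_{T+1}'$, which by Definition~\ref{defn:equivalence} means $U_{T+1}(h_{T+1})=U_{T+1}(h_{T+1}')$. For $t\in[T]$, if $\sigma_t(h_t)=\sigma_t(h_t')$ then condition (ii) of Definition~\ref{defn:equivalence} gives $(h_t,f)\sim_{(t)}(h_t',f)$ for every $f\in{\cal F}_t$. Compatibility of ${\bf U}$ (Definition~\ref{defn:utility_system}) turns $\sim_{(t)}$ into equality of utilities: apply the representation property in both directions to get $U_t(h_t,f)\ge U_t(h_t',f)$ and the reverse, hence $U_t(h_t,f)=U_t(h_t',f)$. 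So $U_t^\circ$ is independent of the representative.

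\emph{Continuity.} For $t=T+1$, $U_{T+1}$ is continuous on ${\cal H}_{T+1}$ and constant on fibres of the quotient map $\sigma_{T+1}$ (Lemma~\ref{lem:quotient_topology}(ii)). The universal property then gives continuity of $U_{T+1}^\circ$.

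For $t\in[T]$ the domain is ${\cal X}_t\times{\cal F}_t$, and the relevant map is $\sigma_t\times{\rm id}_{{\cal F}_t}$, which need not be a quotient map in general; this is the main point to check. I would use that a product of a quotient map with the identity on a locally compact Hausdorff space is a quotient map. Here ${\cal F}_t$ is compact metrizable, so this applies. More directly, Lemma~\ref{lem:quotient_topology}(ii) shows $\sigma_t$ is a closed continuous surjection. Since ${\cal F}_t$ is compact, the projection ${\cal H}_t\times{\cal F}_t\to{\cal H}_t$ is closed. This gives that $\sigma_t\times{\rm id}$ is a closed continuous surjection, hence a quotient map.

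As a fallback that avoids quotient-topology subtleties, I would argue sequentially. ${\cal X}_t\times{\cal F}_t$ is metrizable, so it suffices to take $(x_n,f_n)\to(x,f)$ and show $U_t^\circ(x_n,f_n)\to U_t^\circ(x,f)$. Pick $h_n\in\sigma_t^{-1}(x_n)$. By compactness of ${\cal H}_t$, every subsequence has a further subsequence with $h_n\to h$. Continuity of $\sigma_t$ gives $\sigma_t(h)=x$. Then continuity of $U_t$ gives $U_t^\circ(x_n,f_n)=U_t(h_n,f_n)\to U_t(h,f)=U_t^\circ(x,f)$. Since every subsequence has a sub-subsequence with the same limit, the whole sequence converges.
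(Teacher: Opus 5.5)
Your argument is correct and is essentially the paper's. Well-definedness comes from condition (ii) of $\odot_t$ plus compatibility of ${\bf U}$, and continuity comes from the universal property applied to $\sigma_{T+1}$ and to $\sigma_t\times{\rm id}_{{\cal F}_t}$, which is a quotient map by Whitehead's theorem; your sequential fallback is also a valid, more elementary alternative.

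One caveat concerns your ``more directly'' aside. Closedness of $\sigma_t$ and of the projection ${\cal H}_t\times{\cal F}_t\to{\cal H}_t$ does not by itself make $\sigma_t\times{\rm id}_{{\cal F}_t}$ closed. A closed map times the identity on a compact space need not be closed; for example, take ${\mathbb N}\to\{\ast\}$ times $[0,1]$. The right justification is that ${\cal H}_t\times{\cal F}_t$ is compact and ${\cal X}_t\times{\cal F}_t$ is Hausdorff, so Theorem~\ref{thm:compact_to_hausdorff} applies.
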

\noindent
The above quotient utilities are the bridge between the canonical PA-state system and the recursive preferences defined on it.

\subsection{Main Result}

The canonical quotient has the required topology and transition structure.
The quotient utilities induce a well-defined grand preference on the canonical
PA-state system. Lemma~\ref{lem:axiom_inheritance} shows that this preference inherits the behavioral axioms from ${\cal P}$, so the abstract recursive representation theorem applies on the quotient.

\begin{thm}[Canonical PA recursive representation]
\label{thm:recursive_memory}
Let ${\bf U}$ be a compatible utility system for ${\cal P}$.
Suppose that ${\cal P}$ satisfies
Axioms~\ref{axiom:weak_order}--\ref{axiom:weak_separability}
and that ${\bf U}$ satisfies
Assumption~\ref{assu:CE_richness}. Then the canonical PA-state system $\mathfrak X^*$ admits a PA recursive structure ${\cal V}^*$ such that $(\mathfrak X^*,{\cal V}^*)$ factorizes ${\bf U}$.
Consequently, $(\mathfrak{X}^*,{\cal V}^*)$ represents ${\cal P}$.
\end{thm}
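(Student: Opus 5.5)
The plan is to transfer everything to the quotient and then rerun the argument of Theorem~\ref{thm:recursive_utility} there. Lemmas~\ref{lem:quotient_topology} and~\ref{lem:quotient_transition_continuity} already make $\mathfrak X^*$ a PA-state system. Lemma~\ref{lem:quotient_utilities} supplies continuous quotient utilities $U_t^\circ$ with $U_t=U_t^\circ(\sigma_t(\cdot),\cdot)$. Transition consistency \eqref{eq:transition_consistency} holds by the definition of $\Gamma_t$, and $\varsigma_t\circ\sigma_t=\sigma_t^s$ holds by Lemma~\ref{lem:quotient_topology}. So the remaining task is to produce jointly continuous $W_t^*$, $M_t^*$ and $V_{T+1}^*$ on $\mathfrak X^*$ whose generated utility system is exactly $\{U_t^\circ\}$. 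Composing with $\sigma_t$ then gives Eq.~\eqref{eq:factorization}, and representation of ${\cal P}$ follows because ${\bf U}$ is compatible.

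\textbf{Step 1: define the quotient preference and check the axioms.} I would define a grand preference ${\cal P}^\circ$ on $\mathfrak X^*$ by $(x,f)\succeq^\circ_{(t)}(x',g)$ iff $U_t^\circ(x,f)\ge U_t^\circ(x',g)$, with terminal value $V_{T+1}^\circ=U_{T+1}^\circ$. The axioms transfer as follows, which is the content of Lemma~\ref{lem:axiom_inheritance}.
\begin{itemize}
\item Weak order is immediate.
\item Continuity follows from continuity of $U_t^\circ$.
\item Dynamic consistency, compensated monotonicity and weak separability pull back through $\sigma_t$. Given $x_t=\sigma_t(h_t)$, the hypothesis at $(\Gamma_t(x_t,c,s),f_+(s))$ is the same as the hypothesis at $(\iota_t(h_t,c,s),f_+(s))$ in ${\cal P}$, because $U^\circ_{t+1}(\Gamma_t(\sigma_t h_t,c,s),\cdot)=U_{t+1}(\iota_t(h_t,c,s),\cdot)$.
\item Terminal compatibility uses $U_T=U_{T+1}\circ\iota_T$.
\end{itemize}

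\textbf{Step 2: transfer certainty-equivalent richness.} The one-step vectors satisfy $\tilde u^\circ_{t+1}(\sigma_t(h_t),f)=\tilde u_{t+1}(h_t,f)$, so the attainable sets satisfy $\Lambda^\circ_t(x_t)=\Lambda_t^{\bf U}(h_t)$ for any $h_t\in\sigma_t^{-1}(x_t)$. I would define $M_t^{0,\circ}(x_t,\tilde u)\triangleq M_t^0(h_t,\tilde u)$.

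This step is the main obstacle, because it requires two things.

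First, $M_t^{0,\circ}$ must be well defined. If $h_t\odot_t h_t'$, then $M_t^0(h_t,\cdot)$ and $M_t^0(h_t',\cdot)$ must agree on the common domain. I would argue this as follows. Take $\tilde u\in\Lambda_t^{\bf U}(h_t)$ and, by deterministic solvability, $v=M_t^0(h_t,\tilde u)$ with ${\bf v}$ attained by some $(c,g_+)$. Internality and the risk ranking give $(c,f_+)\sim_{h_t}(c,g_+)$ whenever $\tilde u=\tilde u_{t+1}(h_t,(c,f_+))$. Condition (ii) of $\odot_t$ transfers this indifference to $h_t'$, and condition (iii) ensures the same plans produce the same vectors at $h_t'$. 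Constant normalization then forces $M_t^0(h_t',\tilde u)=v$. One subtlety is that $\tilde u$ may be attained with a different $c$; there I would use weak separability to move to a common consumption level, since the vector set is the same at both histories.

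Second, $M_t^{0,\circ}$ must be jointly continuous on $D^{M,\circ}_t$. Here I would use that $\sigma_t\times{\rm id}$ is a closed continuous surjection from $D_t^M$, which is closed because the utilities are continuous and ${\cal F}$ is compact. Closed surjections are quotient maps, and a function constant on fibres of a quotient map is continuous on the image. This is the same argument as in Lemma~\ref{lem:quotient_utilities}.

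Properties (i)--(iv) of Assumption~\ref{assu:CE_richness} then hold verbatim.

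\textbf{Step 3: apply the representation theorem on the quotient.} I would apply Theorem~\ref{thm:recursive_utility}, or more precisely its proof, with $\mathfrak H$ replaced by the abstract system $\mathfrak X^*$. That proof only uses compactness, metrizability, continuity of the transition maps and the identity $\varsigma_{t+1}\circ\Gamma_t={\rm id}$ on the new state. It yields a PA recursive structure ${\cal V}^*$ whose generated system equals $\{U_t^\circ\}$, and which is jointly continuous and monotone by the global-extension step. If I cannot invoke that proof verbatim for a general state system, I would instead restate the extension lemma on ${\cal X}_t$ directly.
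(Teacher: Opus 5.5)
Your proposal is correct and follows essentially the same route as the paper. You induce ${\cal P}^\circ$ from the quotient utilities, check axiom inheritance (Lemma~\ref{lem:axiom_inheritance}) and descent of certainty-equivalent richness (Lemma~\ref{lem:CE_descent}, with continuity via a compact-to-Hausdorff quotient map), then apply the abstract representation theorem on ${\cal G}^{\cal X}$ and read off the factorization from the canonical projections. Two small points in your well-definedness argument for $M_t^{0,\circ}$: the step you attribute to internality actually uses constant normalization, and the weak-separability detour is unnecessary, because any single $c$ at which $\tilde u$ is attained already forces $M_t^0(h_t',\tilde u)=v$.
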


Theorem~\ref{thm:recursive_memory} uses the PA framework in its reductive
direction: starting from primitive preferences over full histories, it derives
the canonical PA-state system and a recursive factorization of the
fixed compatible utility system ${\bf U}$.

\begin{rem}[Constructive use of PA states]
For the constructive direction, let
\[
\mathfrak X = (\{{\cal X}_t\}_{t=0}^{T+1}, \{\varsigma_t\}_{t=0}^{T+1},\{\Gamma_t\}_{t=0}^{T})
\]
be a PA-state system, and let ${\cal V}^*$ be a recursive structure on $\mathfrak X$. Choose $x_0\in{\cal X}_0$ satisfying $\varsigma_0(x_0)=\bar s_0$, and define $\widehat\sigma_0(\bar s_0)=x_0$. Recursively, set $\widehat\sigma_{t+1}(\iota_t(h_t,c,s'))=\Gamma_t(\widehat\sigma_t(h_t),c,s')$. Continuity of $\{\widehat\sigma_t\}_{t=0}^{T+1}$ and the identities $\varsigma_t \circ \widehat\sigma_t = \sigma_t^s$ follow from continuity of $\{\Gamma_t\}_{t=0}^{T}$ and the readout identities $\varsigma_{t+1}(\Gamma_t(x_t,c,s'))=s'$.
The pair $(\mathfrak{X}, {\cal V}^*)$ then induces a full-history utility system $\widehat{\bf U} = \{\widehat U_t\}_{t=0}^{T+1}$ by
\begin{align*}
    \widehat U_{T+1}(h_{T+1}) \triangleq & U_{T+1}^*(\widehat\sigma_{T+1}(h_{T+1})),\, \forall h_{T+1} \in {\cal H}_{T+1},\\
    \widehat U_t(h_t,f) \triangleq & U_t^*(\widehat\sigma_t(h_t),f),\, \forall h_t \in {\cal H}_t,\, f \in {\cal F}_t,\, t\in[T].
\end{align*}
The induced summary maps $\{\widehat\sigma_t\}_{t=0}^{T+1}$ make $(\mathfrak X,{\cal V}^*)$ a factorization of $\widehat{\bf U}$.
\end{rem}

\subsection{Minimality}

The canonical PA-state system is minimal in the following sense. Any other reachable recursive factorization of the same compatible utility system ${\bf U}$ refines the canonical quotient.
We compare another PA-state system to the canonical one in the following definition.
Let $\mathfrak{X}^* = (\{{\cal X}_t\}_{t=0}^{T+1}, \{\varsigma_t\}, \{\Gamma_t\}_{t=0}^{T})$ be the canonical PA-state system, with canonical projections $\{\sigma_t\}_{t=0}^{T+1}$, and quotient utilities $\{U_t^\circ\}_{t=0}^{T+1}$ from Lemma~\ref{lem:quotient_utilities}.

\begin{defn}[Memory correspondence]
\label{defn:minimality}
Fix the compatible utility system ${\bf U}$. Let
\[
\mathfrak{X}' = (\{{\cal X}_t'\}_{t=0}^{T+1}, \{\varsigma_t'\}_{t=0}^{T+1}, \{\Gamma_t'\}_{t=0}^{T})
\]
be a PA-state system, let ${\cal V}' = (\{W_t'\}_{t=0}^{T-1}, \{M_t'\}_{t=0}^{T-1}, V_{T+1}')$ be a recursive structure on $\mathfrak{X}'$, and let ${\bf U}'=\{U_t'\}_{t=0}^{T+1}$ be the utility system generated by $(\mathfrak{X}',{\cal V}')$.
Suppose that $(\mathfrak{X}',{\cal V}')$ factorizes ${\bf U}$ through summary maps $\boldsymbol{\sigma}' = \{\sigma_t'\}_{t=0}^{T+1}$.
For each $t\in[T+1]$, let $\hat{\cal X}_t' \triangleq \sigma_t'({\cal H}_t)$ be endowed with the subspace topology inherited from ${\cal X}_t'$.
A \emph{memory correspondence} is a collection of continuous maps $\theta_t:\hat{\cal X}_t'\to{\cal X}_t$ for $t \in [T+1]$ such that:
\begin{enumerate}
\item $\theta_t\circ\sigma_t'=\sigma_t$ for all $t\in[T+1]$.
\item $\Gamma_t'(x_t',c,s)\in\hat{\cal X}_{t+1}'$ and $\theta_{t+1}(\Gamma_t'(x_t',c,s))=\Gamma_t(\theta_t(x_t'),c,s)$, for all $x_t'\in\hat{\cal X}_t'$, $c\in{\cal C}$, $s\in{\cal S}_{t+1}$, and $t\in[T]$.
\item $U_t'(x_t',f)=U_t^\circ(\theta_t(x_t'),f)$ for all $x_t'\in\hat{\cal X}_t'$, $f\in{\cal F}_t$, and $t\in[T]$. In addition, $U_{T+1}'(x_{T+1}')=U_{T+1}^\circ(\theta_{T+1}(x_{T+1}'))$ for all $x_{T+1}'\in\hat{\cal X}_{T+1}'$.
\end{enumerate}
\end{defn}
\noindent
The maps $\{\theta_t\}_{t=0}^{T+1}$ send reachable states of $\mathfrak X'$ onto the canonical PA states and preserve summaries, transitions, and utilities.

\begin{prop}[Minimality and uniqueness]
\label{prop:minimality}
Fix a compatible utility system ${\bf U}$ for ${\cal P}$.
Let $\mathfrak{X}'$ be a PA-state system and let ${\cal V}'$ be a
recursive structure on $\mathfrak{X}'$. Suppose that
$(\mathfrak{X}',{\cal V}')$ factorizes
${\bf U}$ through summary maps $\boldsymbol\sigma' = \{\sigma_t'\}_{t=0}^{T+1}$.
Then there exists a unique memory correspondence associated with this factorization. If, in addition, $h_t\odot_t h_t'$ implies $\sigma_t'(h_t)=\sigma_t'(h_t')$ for every $t\in[T+1]$, then each $\theta_t:\hat{\cal X}_t'\to{\cal X}_t$ is a homeomorphism.
\end{prop}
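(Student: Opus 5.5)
The plan is to define $\theta_t$ directly through the factorization by setting $\theta_t(\sigma_t'(h_t))\triangleq\sigma_t(h_t)$ for $h_t\in{\cal H}_t$. With this definition, property 1 of Definition~\ref{defn:minimality} holds by construction. Uniqueness is also immediate: $\sigma_t'$ maps ${\cal H}_t$ onto $\hat{\cal X}_t'$ by definition of $\hat{\cal X}_t'$, so any memory correspondence is determined on all of $\hat{\cal X}_t'$ by the requirement $\theta_t\circ\sigma_t'=\sigma_t$. The substance of the proof is showing that this prescription is well defined and continuous, and then checking properties 2 and 3.

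\emph{Well-definedness.} I will prove by backward induction on $t=T+1,T,\ldots,0$ that $\sigma_t'(h_t)=\sigma_t'(h_t')$ implies $h_t\odot_t h_t'$.

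For $t=T+1$, the factorization gives $U_{T+1}(h_{T+1})=U_{T+1}'(\sigma_{T+1}'(h_{T+1}))$. Hence equal summaries give equal terminal utilities, which is exactly $\odot_{T+1}$.

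For $t\le T$, suppose $\sigma_t'(h_t)=\sigma_t'(h_t')$. I verify the three conditions of Definition~\ref{defn:equivalence} in turn.
\begin{enumerate}
\item Condition (i) follows from $\varsigma_t'\circ\sigma_t'=\sigma_t^s$.
\item Condition (ii) follows from utility factorization: $U_t(h_t,f)=U_t'(\sigma_t'(h_t),f)=U_t(h_t',f)$ for every $f$. By compatibility of ${\bf U}$, this gives $(h_t,f)\sim_{(t)}(h_t',f)$.
\item Condition (iii) follows from transition consistency: $\sigma_{t+1}'(\iota_t(h_t,c,s))=\Gamma_t'(\sigma_t'(h_t),c,s)=\sigma_{t+1}'(\iota_t(h_t',c,s))$. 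The induction hypothesis then yields $\iota_t(h_t,c,s)\odot_{t+1}\iota_t(h_t',c,s)$.
\end{enumerate}
Consequently $\sigma_t(h_t)=\sigma_t(h_t')$, so $\theta_t$ is well defined. I expect this induction to be the main point of the proof, since it is where the forward-stability clause (iii) of $\odot_t$ is matched against transition consistency of the alternative factorization.

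\emph{Continuity.} The space ${\cal H}_t$ is compact, and $\hat{\cal X}_t'\subseteq{\cal X}_t'$ is Hausdorff because ${\cal X}_t'$ is metrizable. Hence the continuous surjection $\sigma_t':{\cal H}_t\to\hat{\cal X}_t'$ is a closed map and therefore a quotient map. Since $\theta_t\circ\sigma_t'=\sigma_t$ is continuous by Lemma~\ref{lem:quotient_topology}, $\theta_t$ is continuous by the universal property of quotient maps (Theorem~\ref{thm:quotient_universal_property}).

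\emph{Properties 2 and 3.} For property 2, write $x_t'=\sigma_t'(h_t)$. Transition consistency gives $\Gamma_t'(x_t',c,s)=\sigma_{t+1}'(\iota_t(h_t,c,s))\in\hat{\cal X}_{t+1}'$. Applying $\theta_{t+1}$ and Definition~\ref{defn:memory_canonical} (equivalently Lemma~\ref{lem:quotient_transition_continuity}) gives
\[
\theta_{t+1}(\Gamma_t'(x_t',c,s))=\sigma_{t+1}(\iota_t(h_t,c,s))=\Gamma_t(\sigma_t(h_t),c,s)=\Gamma_t(\theta_t(x_t'),c,s).
\]
For property 3, combine the factorization with the definition \eqref{eq:canonical_utility} of the quotient utilities:
\[
U_t'(\sigma_t'(h_t),f)=U_t(h_t,f)=U_t^\circ(\sigma_t(h_t),f)=U_t^\circ(\theta_t(\sigma_t'(h_t)),f).
\]
The terminal case $t=T+1$ is handled in the same way.

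\emph{Homeomorphism under the additional hypothesis.} Assume now that $h_t\odot_t h_t'$ implies $\sigma_t'(h_t)=\sigma_t'(h_t')$. Then $\sigma_t(h_t)=\sigma_t(h_t')$ forces $\sigma_t'(h_t)=\sigma_t'(h_t')$, so $\theta_t$ is injective. It is surjective because $\sigma_t$ is onto ${\cal X}_t$. The domain $\hat{\cal X}_t'$ is compact as the continuous image of ${\cal H}_t$, and ${\cal X}_t$ is Hausdorff by Lemma~\ref{lem:quotient_topology}. A continuous bijection from a compact space to a Hausdorff space is a homeomorphism, so each $\theta_t$ is a homeomorphism.
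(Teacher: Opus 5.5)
Your proposal is correct and follows essentially the same route as the paper's proof. Both arguments use the same backward-induction refinement showing $\sigma_t'(h_t)=\sigma_t'(h_t')\Rightarrow h_t\odot_t h_t'$, define $\theta_t$ through $\theta_t\circ\sigma_t'=\sigma_t$, get continuity from the compact-to-Hausdorff quotient argument, verify the transition and utility identities directly, and obtain the homeomorphism from injectivity plus compactness. The only difference is ordering: the paper proves the refinement as a separate step before constructing $\theta_t$, whereas you fold it into the well-definedness check.
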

\noindent
Proposition~\ref{prop:minimality} shows that the canonical quotient is the coarsest reachable recursive factorization of the fixed compatible utility system ${\bf U}$.
Moreover, if a factorization separates exactly the canonical preference-equivalence classes, then its reachable state spaces are homeomorphic to the corresponding canonical PA-state spaces.

\section{Dynamic Programming}
\label{sec:DP}

We now proceed to develop a Bellman recursion for our original MDP with respect to the PA-state system.
With our representation result for fully history-dependent preferences from Theorem~\ref{thm:recursive_utility}, we can compute the utility $U_t^{\varphi}(h_t)$ of a policy $\varphi \in \Phi$ according to $U_{T+1}^{\varphi} = U_{T+1}$ and:
\begin{subequations}
\label{eq:recursive_utility_policy}
\begin{align}
U_T^{\varphi}(h_T) = & U_{T+1}^{\varphi}(\iota_T(h_T, \varphi_T(h_T),\star)),\, \forall h_T \in {\cal H}_T,\\
U_t^{\varphi}(h_t) = & W_t(h_t, \varphi_t(h_t), M_t(h_t, \tilde{u}_{t+1}^{\varphi}(h_t, \varphi_t(h_t)))),\, \forall h_t \in {\cal H}_t,\, t \in [T-1],
\end{align}
\end{subequations}
where $\tilde{u}_{t+1}^{\varphi}(h_t, c) \in {\cal L}$ is defined by $[\tilde{u}_{t+1}^{\varphi}(h_t, c)](s) \triangleq U_{t+1}^{\varphi}(\iota_t(h_t, c, s))$ for all $s \in {\cal S}$.

For this section, we fix a PA-state system $\mathfrak{X}$, a recursive structure ${\cal V}^*$ on $\mathfrak{X}$, and summary maps $\{\sigma_t\}_{t=0}^{T+1}$ such that $(\mathfrak{X},{\cal V}^*)$ factorizes a compatible utility system ${\bf U}$ for ${\cal P}$.
This $(\mathfrak{X},{\cal V}^*)$ is not necessarily the canonical PA-state system. We assume the summary maps for the recursive factorization are onto. Otherwise, we can replace each ${\cal X}_t$ by the compact reachable image $\sigma_t({\cal H}_t)$ and then restrict $\Gamma_t$.

We next suppose that the consumption feasibility constraints only depend on the PA state.

\begin{assumption}[Markov feasibility]
\label{assu:Markov_feasibility}
If $\sigma_t(h_t)=\sigma_t(h_t')$, then ${\cal A}_t(h_t)={\cal A}_t(h_t')$, and we write ${\cal A}_t(x_t) \triangleq {\cal A}_t(h_t)$ for all $h_t \in \sigma_t^{-1}(x_t)$.
\end{assumption}
\noindent
This assumption precludes additional history dependence beyond the PA state.
Without this assumption, two histories could be preference equivalent but have different feasible consumption sets.
If feasibility depends on additional history that is not preference-relevant, the Bellman state must be enlarged by the corresponding feasibility state.

We now define the class of Markov policies with respect to the PA state.

\begin{defn}[PA Markov policy]
\label{defn:Markov_policy}
A \emph{PA Markov policy} is a sequence $\pi = \{\pi_t\}_{t=0}^T$ where $\pi_t : {\cal X}_t \rightarrow {\cal C}$ such that $\pi_t(x_t) \in {\cal A}_t(x_t)$ for all $x_t \in {\cal X}_t$ and $t \in [T]$. We let $\Pi = \prod_{t=0}^T {\rm Sel}({\cal A}_t)$ denote the set of all feasible Markov policies.
\end{defn}
\noindent
Any assignment of a feasible consumption to every PA state and time defines an admissible Markov policy.

We let $U_t^{*\pi} : {\cal X}_t \rightarrow \Re$ be the period $t \in [T+1]$ policy utility function of a Markov policy $\pi \in \Pi$. By the recursive structure on $\mathfrak{X}$ and Definition~\ref{defn:PA_recursive}, $\{U_t^{*\pi}\}$ have the recursive representation where $U_{T+1}^{*\pi}\triangleq U_{T+1}^*$ and:
\begin{subequations}
\label{eq:memory_recursive_policy}
\begin{align}
U_T^{*\pi}(x_T) = & U_{T+1}^{*\pi}(\Gamma_T(x_T, \pi_T(x_T), \star)),\, \forall x_T \in {\cal X}_T,\\
U_t^{*\pi}(x_t) = & W_t^*(x_t, \pi_t(x_t), M_t^*(x_t, \tilde{u}_{t+1}^{*\pi}(x_t, \pi_t(x_t)))),\, \forall x_t \in {\cal X}_t,\, t \in [T-1],
\end{align}
\end{subequations}
where $\tilde{u}_{t+1}^{*\pi}(x_t, c) \in {\cal L}$ is defined by $[\tilde{u}_{t+1}^{*\pi}(x_t, c)](s) \triangleq U_{t+1}^{*\pi}(\Gamma_t(x_t, c, s))$ for all $s \in {\cal S}$.

We impose additional regularity conditions on the feasibility correspondence with respect to the PA state. These conditions are sufficient for continuity of the optimal value function and attainment of an optimal consumption level in every state.

\begin{assumption}[DP regularity]
\label{assu:DP_regular}
For all $t\in[T]$, the correspondence ${\cal A}_t:{\cal X}_t\rightrightarrows{\cal C}$ is continuous with nonempty compact values.
\end{assumption}

Let $J_t^* : {\cal X}_t \rightarrow \Re$ be the optimal period $t \in [T]$ PA value function, where $\{J_t^*\}_{t=0}^T$ are defined via:
\[
J_t^*(x_t) = \sup_{\pi \in \Pi} U_t^{*\pi}(x_t),\, \forall x_t \in {\cal X}_t, t \in [T].
\]
For $t \in [T-1]$, $x_t \in {\cal X}_t$, and $c \in {\cal C}$, the optimal PA continuation utility $\tilde{j}_{t+1}^*(x_t, c) \in {\cal L}$ is defined by $[\tilde{j}_{t+1}^*(x_t, c)](s) \triangleq J_{t+1}^*(\Gamma_t(x_t, c, s))$ for all $s \in {\cal S}$.

We now establish the principle of optimality and show that the optimal value functions $\{J_t^*\}_{t=0}^T$ satisfy a Bellman recursion.

\begin{thm}[Bellman recursion]
\label{thm:DP_Bellman}
Suppose $(\mathfrak{X},{\cal V}^*)$, with summary maps $\{\sigma_t\}_{t=0}^{T+1}$, factorizes a compatible utility system ${\bf U}$ for ${\cal P}$, and Assumptions~\ref{assu:Markov_feasibility} and~\ref{assu:DP_regular} hold. Then $\{J_t^*\}_{t=0}^T$ satisfy:
\begin{subequations}
\label{eq:Bellman}
\begin{align}
J_T^*(x_T) = & \max_{c \in {\cal A}_T(x_T)} U_{T+1}^*(\Gamma_T(x_T, c, \star)),\, \forall x_T \in {\cal X}_T,\\
J_t^*(x_t) = & \max_{c \in {\cal A}_t(x_t)} W_t^*(x_t, c, M_t^*(x_t, \tilde{j}_{t+1}^*(x_t, c))),\, \forall x_t \in {\cal X}_t,\, t \in [T-1].
\end{align}
\end{subequations}
The maxima are attained. There exists a single $\bar\pi\in\Pi$ which attains the displayed maxima at every state and time, and satisfies $U_t^{*\bar\pi}(x_t)=J_t^*(x_t)$ for every $x_t\in{\cal X}_t$ and $t\in[T]$.
Moreover, each $J_t^*$ is continuous.
\end{thm}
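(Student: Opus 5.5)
The plan is backward induction on $t$, working entirely on the PA-state system $\mathfrak X$. We construct continuous functions $\{\bar J_t\}_{t=0}^T$ by the right-hand side of \eqref{eq:Bellman}, together with a measurable-free (arbitrary) selector $\bar\pi$ attaining the maxima, and then show $\bar J_t=J_t^*$.

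\textbf{Terminal step.} Set $\bar J_T(x_T)=\max_{c\in{\cal A}_T(x_T)}U_{T+1}^*(\Gamma_T(x_T,c,\star))$. The objective $(x,c)\mapsto U_{T+1}^*(\Gamma_T(x,c,\star))$ is jointly continuous, since $V_{T+1}^*$ and $\Gamma_T$ are continuous. Assumption~\ref{assu:DP_regular} makes ${\cal A}_T$ continuous with nonempty compact values. Berge's maximum theorem then gives that the maximum is attained, $\bar J_T$ is continuous, and the argmax correspondence is nonempty. We let $\bar\pi_T$ be any selector of it; no measurability is required, because ${\cal S}$ is finite and policies are arbitrary selectors.

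\textbf{Inductive step.} Assume $\bar J_{t+1}$ is continuous on ${\cal X}_{t+1}$. Define the vector $\tilde{\bar j}_{t+1}(x,c)(s)=\bar J_{t+1}(\Gamma_t(x,c,s))$. Each coordinate is continuous in $(x,c)$, so $(x,c)\mapsto\tilde{\bar j}_{t+1}(x,c)\in{\cal L}$ is continuous in the sup norm, using finiteness of ${\cal S}$. Joint continuity of $M_t^*$ and $W_t^*$ makes $(x,c)\mapsto W_t^*(x,c,M_t^*(x,\tilde{\bar j}_{t+1}(x,c)))$ jointly continuous. Berge again yields attainment, continuity of $\bar J_t$, and an argmax selector $\bar\pi_t$. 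This produces a single $\bar\pi\in\Pi$.

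\textbf{Identification $\bar J_t=J_t^*$.} This also proceeds by backward induction.

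First, $U_t^{*\bar\pi}=\bar J_t$. Comparing \eqref{eq:memory_recursive_policy} with the construction, if $U_{t+1}^{*\bar\pi}=\bar J_{t+1}$ then $\tilde u^{*\bar\pi}_{t+1}(x,\bar\pi_t(x))=\tilde{\bar j}_{t+1}(x,\bar\pi_t(x))$. Hence $U_t^{*\bar\pi}(x)=\bar J_t(x)$, which gives $J_t^*\ge\bar J_t$.

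Second, for any $\pi\in\Pi$ we show $U_t^{*\pi}\le\bar J_t$. Assuming $U_{t+1}^{*\pi}\le\bar J_{t+1}$ pointwise, we get $\tilde u_{t+1}^{*\pi}(x,c)\le\tilde{\bar j}_{t+1}(x,c)$ in ${\cal L}$. Monotonicity of $M_t^*$ (Definition~\ref{defn:memory_risk_aggregator}(i)) and nondecreasingness of $W_t^*$ in $v$ then give
\[
U_t^{*\pi}(x)\le W_t^*(x,\pi_t(x),M_t^*(x,\tilde{\bar j}_{t+1}(x,\pi_t(x))))\le\bar J_t(x),
\]
because $\pi_t(x)\in{\cal A}_t(x)$.

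Together these show that $J_t^*=\bar J_t$ and that the supremum is attained by $\bar\pi$. Substituting $\tilde{\bar j}_{t+1}=\tilde j^*_{t+1}$ then yields \eqref{eq:Bellman}.

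\textbf{Role of the factorization.} Assumption~\ref{assu:Markov_feasibility} and the factorization are needed only so that ${\cal A}_t$ is well defined on ${\cal X}_t$ and so that $U_t^{*\pi}$ obeys \eqref{eq:memory_recursive_policy}. The latter follows from Definition~\ref{defn:PA_recursive}, applied to the plan generated by $\pi$ from $x_t$. Relating this to full-history policies is deferred to Theorem~\ref{thm:DP_verification}.

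\textbf{Main obstacle.} The delicate point is the joint continuity needed for Berge at each stage, specifically the composition through $M_t^*$ on ${\cal L}$. We handle it by recording continuity of $\bar J_{t+1}$ as part of the induction hypothesis and using the finite-dimensional sup-norm topology on ${\cal L}$. Everything else is routine monotone comparison.
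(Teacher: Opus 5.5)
Your proposal is correct and follows essentially the same route as the paper's proof. Both build candidate value functions backward and use Berge's theorem for continuity and attainment, fix one arbitrary argmax selector $\bar\pi$, bound $U_t^{*\pi}$ for every $\pi\in\Pi$ via monotonicity of $M_t^*$ and $W_t^*$, and show that $\bar\pi$ attains the bound, which identifies the candidates with $J_t^*$ and yields \eqref{eq:Bellman}.
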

\noindent
Theorem~\ref{thm:DP_Bellman} gives a Bellman recursion for $\{J_t^*\}_{t=0}^T$.
We next verify that any Bellman selector from Eq.~\eqref{eq:Bellman} induces a full-history policy that is optimal among all feasible history-dependent policies.

Let $\{J_t^*\}_{t=0}^T$ be the optimal value functions from Theorem~\ref{thm:DP_Bellman}. The theorem guarantees existence of at least one Bellman selector $\bar\pi\in\Pi$ satisfying:
\begin{subequations}
\label{eq:Bellman_selector}
\begin{align}
    \bar{\pi}_T(x_T) \in & \arg\max_{c\in{\cal A}_T(x_T)} U_{T+1}^*(\Gamma_T(x_T, c, \star)),\, \forall x_T\in{\cal X}_T,\\
    \bar{\pi}_t(x_t) \in & \arg\max_{c \in {\cal A}_t(x_t)} W_t^*(x_t, c, M_t^*(x_t, \tilde{j}_{t+1}^*(x_t, c))),\, \forall x_t \in {\cal X}_t,\, t \in [T-1].
\end{align}
\end{subequations}
The corresponding induced full-history policy $\bar{\varphi}\in\Phi$ is then defined by $\bar{\varphi}_t(h_t)=\bar{\pi}_t(\sigma_t(h_t))$, for all $h_t \in {\cal H}_t$ and $t \in [T]$.

\begin{thm}[Verification]
\label{thm:DP_verification}
Suppose $(\mathfrak{X},{\cal V}^*)$, with summary maps $\{\sigma_t\}_{t=0}^{T+1}$, factorizes a compatible utility system ${\bf U}$ for ${\cal P}$, and Assumptions~\ref{assu:Markov_feasibility} and~\ref{assu:DP_regular} hold. Let $\bar\pi\in\Pi$ be any Bellman selector satisfying Eq.~\eqref{eq:Bellman_selector}, and let $\bar\varphi$ be its induced full-history policy. Then, for every $\varphi\in\Phi$,
\[
U_t^\varphi(h_t)\leq J_t^*(\sigma_t(h_t))=U_t^{\bar\varphi}(h_t),\, \forall h_t\in{\cal H}_t,\ t\in[T].
\]
\end{thm}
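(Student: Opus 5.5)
We prove the two relations in the statement separately, both by backward induction on $t$ from $T$ down to $0$. The main tools are the factorization identities of Definition~\ref{defn:memory_representation}, the recursive evaluation Eq.~\eqref{eq:recursive_utility_policy}, and Theorem~\ref{thm:DP_Bellman}. First we transfer the recursion for $U_t^\varphi$ to the PA state. Because $(\mathfrak X,{\cal V}^*)$ factorizes ${\bf U}$, we have $U_t^\varphi(h_t)=U_t(h_t,f_t^\varphi[h_t])=U_t^*(\sigma_t(h_t),f_t^\varphi[h_t])$. Unfolding Definition~\ref{defn:PA_recursive} one step, and using transition consistency $\sigma_{t+1}(\iota_t(h_t,c,s))=\Gamma_t(\sigma_t(h_t),c,s)$ together with the fact that the tail of $f_t^\varphi[h_t]$ at $s$ is $f_{t+1}^\varphi[\iota_t(h_t,c,s)]$, we obtain for $t\le T-1$
\[
U_t^\varphi(h_t)=W_t^*\bigl(\sigma_t(h_t),c,M_t^*(\sigma_t(h_t),\tilde u_{t+1}^\varphi(h_t,c))\bigr),\qquad c=\varphi_t(h_t).
\]
For $t=T$ the identity is $U_T^\varphi(h_T)=U_{T+1}^*(\Gamma_T(\sigma_T(h_T),c,\star))$.

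\emph{Upper bound.} The claim is $U_t^\varphi(h_t)\le J_t^*(\sigma_t(h_t))$. At $t=T$ this is immediate: $c=\varphi_T(h_T)\in{\cal A}_T(h_T)={\cal A}_T(\sigma_T(h_T))$ by Assumption~\ref{assu:Markov_feasibility}, and Theorem~\ref{thm:DP_Bellman} identifies $J_T^*$ as the maximum over this set. For the inductive step, the hypothesis gives $[\tilde u_{t+1}^\varphi(h_t,c)](s)\le J_{t+1}^*(\Gamma_t(\sigma_t(h_t),c,s))=[\tilde j_{t+1}^*(\sigma_t(h_t),c)](s)$ for every $s$. Monotonicity of $M_t^*$ (Definition~\ref{defn:memory_risk_aggregator}(i)) preserves this pointwise inequality, and monotonicity of $W_t^*$ in $v$ then yields
\[
U_t^\varphi(h_t)\le W_t^*\bigl(x_t,c,M_t^*(x_t,\tilde j_{t+1}^*(x_t,c))\bigr),\qquad x_t=\sigma_t(h_t).
\]
Since $c\in{\cal A}_t(x_t)$ by Markov feasibility, the Bellman equation Eq.~\eqref{eq:Bellman} bounds the right side by $J_t^*(x_t)$.

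\emph{Attainment.} The claim is $U_t^{\bar\varphi}(h_t)=J_t^*(\sigma_t(h_t))$, again by backward induction using the same one-step identity with $c=\bar\pi_t(\sigma_t(h_t))$. Feasibility of $\bar\varphi$ follows from Assumption~\ref{assu:Markov_feasibility}, since $\bar\pi_t(\sigma_t(h_t))\in{\cal A}_t(\sigma_t(h_t))={\cal A}_t(h_t)$; hence $\bar\varphi\in\Phi$. In the inductive step, the hypothesis gives $\tilde u_{t+1}^{\bar\varphi}(h_t,c)=\tilde j_{t+1}^*(\sigma_t(h_t),c)$ exactly. Then $U_t^{\bar\varphi}(h_t)=W_t^*(x_t,c,M_t^*(x_t,\tilde j_{t+1}^*(x_t,c)))$, which equals $J_t^*(x_t)$ because $c$ lies in the argmax in Eq.~\eqref{eq:Bellman_selector}. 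This argument works for any Bellman selector $\bar\pi$, not only the distinguished one produced by Theorem~\ref{thm:DP_Bellman}, since only the argmax property and the Bellman equation are used.

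\emph{Expected obstacle.} The delicate step is the one-step transfer identity: we must check carefully that the induced plan $f_t^\varphi[h_t]$ decomposes as $(\varphi_t(h_t),\{f_{t+1}^\varphi[\iota_t(h_t,\varphi_t(h_t),s)]\}_s)$, and that factorization applies to the continuation utilities at the extended histories. This is exactly where transition consistency enters, replacing $\sigma_{t+1}(\iota_t(\cdot))$ by $\Gamma_t(\sigma_t(\cdot))$. Everything else is monotonicity and the Bellman equation already established in Theorem~\ref{thm:DP_Bellman}. Note also that the suprema in the definition of $J_t$ need not be matched separately: the upper bound combined with attainment shows $J_t(h_t)=J_t^*(\sigma_t(h_t))$ as a byproduct.
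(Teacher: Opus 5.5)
Your proposal is correct and follows essentially the same route as the paper's proof. Both use a one-step policy-value identity from utility factorization and transition consistency, a backward-induction upper bound via monotonicity of $M_t^*$ and $W_t^*$ together with Markov feasibility and the Bellman equation, a feasibility check $\bar\pi_t(\sigma_t(h_t))\in{\cal A}_t(h_t)$, and a backward-induction attainment argument that uses only the argmax property of the selector, with $J_t=J_t^*\circ\sigma_t$ obtained as a byproduct.
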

\noindent
By Theorem~\ref{thm:DP_verification}, the induced policy $\bar{\varphi}$ is optimal among all policies in $\Phi$.
In particular, the full-history optimal value functions satisfy the factorization $J_t(h_t)=J_t^*(\sigma_t(h_t))$ for all $h_t\in{\cal H}_t$ and $t\in[T]$.

\section{Belief and Taste Separation}
\label{sec:separation}

In general, the effects of beliefs and tastes are entangled in the recursive representation of Eq.~\eqref{eq:memory_recursive}. The time and risk aggregators, and transition maps, are all allowed to depend on $x_t$.
When both aggregators depend on $x_t$, observed behavior might not be enough to determine whether a state variable acts through beliefs, tastes, or both.
The related identification problem between state-dependent utility and subjective probability is well known; see Karni et al.~\cite{karni1983state}.
Separation of beliefs and tastes is desirable from an economic viewpoint, where risk and ambiguity preferences are assigned to the belief coordinate and time preferences to the taste coordinate.
We develop a separated preference-augmented (SPA) state system in this section.
This separation improves interpretation and may facilitate empirical identification.

\subsection{Rectangularity}

We begin by imposing additional structure on the canonical quotient PA-state system from Section~\ref{sec:PA}. Rectangularity requires each quotient space ${\cal X}_t={\cal H}_t/\odot_t$ to have product structure between the physical state space and an explicit preference-memory state space.

\begin{defn}[Rectangular canonical PA state]
\label{defn:memory_rectangular_canonical}
The canonical PA-state system is rectangular if there exist memory state spaces $\{{\cal M}_t\}_{t=0}^{T+1}$ and mappings $\{\kappa_t\}_{t=0}^{T+1}$ such that:
\begin{enumerate}
\item For all $t\in[T+1]$, ${\cal M}_t$ is a compact metrizable space and $\kappa_t : {\cal X}_t \rightarrow {\cal S}_t \times {\cal M}_t$ is a homeomorphism that satisfies $\operatorname{pr}_{{\cal S}_t}\circ\kappa_t=\varsigma_t$. Then, we can identify ${\cal X}_t$ with ${\cal S}_t \times {\cal M}_t$.
\item For $t\in[T]$, a rectangular PA state is a pair $x_t=(s_t,m_t)\in{\cal X}_t$, where $s_t$ is the physical Markov state and $m_t$ is the preference-memory state. The terminal rectangular PA state is $x_{T+1}=(\star,m_{T+1})\in{\cal X}_{T+1}$.
\end{enumerate}
\end{defn}
\noindent
The physical Markov state and preference memory play different roles. The physical state forms the fixed environmental base for prediction and may also enter feasibility, and $\varsigma_t$ is the coordinate projection onto the physical state.
The preference-memory component $m_t\in{\cal M}_t$ summarizes the additional history dependence of recursive evaluation through preferences only. This rectangularization makes explicit how $x_t$ is an augmentation of $s_t$.

Definition~\ref{defn:memory_rectangular_canonical} explicitly disentangles the effect of the physical state from the preference memory.
We refer to this construction as the rectangular PA-state system.
Under this homeomorphism, we identify ${\cal X}_t$ with
${\cal S}_t\times{\cal M}_t$, where ${\cal M}_t$ is independent of the physical state. We say a representation satisfying Definition~\ref{defn:memory_rectangular_canonical} is rectangular for this reason.
For all $t\in[T]$, let $\Gamma_t^m:{\cal X}_t\times{\cal C}\times{\cal S}_{t+1}\to{\cal M}_{t+1}$ be the memory update defined by
\[
\Gamma_t^m(x_t,c,s') \triangleq\operatorname{pr}_{{\cal M}_{t+1}}\left(\kappa_{t+1}(\Gamma_t(x_t,c,s'))\right),\, \forall x_t \in {\cal X}_t, c \in {\cal C},\, s' \in {\cal S}_{t+1}.
\]
The maps $\{\Gamma_t^m\}_{t=0}^{T}$ are derived from the original $\{\Gamma_t\}_{t=0}^{T}$ and update the preference-memory state only.
Under the coordinate identification ${\cal X}_t\cong{\cal S}_t\times{\cal M}_t$, the canonical PA transition can be written as
\[
\Gamma_t((s_t,m_t),c,s') = (s',\Gamma_t^m((s_t,m_t),c,s')).
\]

\begin{assumption}[Rectangularity]
\label{assu:rectangular}
The canonical PA-state system is rectangular.
\end{assumption}
\noindent
Assumption~\ref{assu:rectangular} concerns the shape of the canonical quotient. It was not needed for the representation, minimality, or DP results from Section~\ref{sec:PA} and Section~\ref{sec:DP}, which were all developed directly on the raw canonical quotient.
Under a chosen rectangularization of the canonical PA state, we can further decompose memory into belief and taste coordinates.
Definition~\ref{defn:aggregator_belief_taste_equivalence} below compares $M_t^*(s,m,\cdot)$ and $W_t^*(s,m,\cdot,\cdot)$ across all physical states for a fixed memory. This is a counterfactual comparison, which is well defined only when memory values are identified across states through the product structure ${\cal X}_t={\cal S}_t\times{\cal M}_t$.

Since ${\cal S}_t$ is finite, ${\cal X}_t$ is the disjoint union of the clopen fibers ${\cal X}_t(s)\triangleq\{\sigma_t(h_t):\sigma_t^s(h_t)=s\}$. The desired rectangularity holds if and only if these fibers are homeomorphic to a common compact metrizable set ${\cal M}_t$.
A sufficient condition is that the fibers $\{{\cal X}_t(s)\}_{s\in{\cal S}_t}$ admit a common parameterization by a compact metrizable space ${\cal M}_t$, with a parameter range independent of $s$. The directly specified PA and SPA state systems in
Section~\ref{sec:examples} are rectangular by construction. This does not, by itself, establish rectangularity of the canonical PA quotient of the induced preferences unless the displayed state system is also a canonical factorization.
The fiber identifications ${\cal X}_t(s)\cong{\cal M}_t$ for each $s \in {\cal S}_t$ constitute a choice of which memory values at different physical states are to be regarded as the same. The subsequent separation is relative to that choice.

\subsection{Definition}

We can modify the definition of a PA-state system to separately account for beliefs and tastes.

\begin{defn}[Separated PA-state system]
\label{defn:memory_separated}
A separated preference-augmented state system is a tuple
\[
\mathfrak{X}^\dagger = (\{{\cal Y}_t\}_{t=0}^{T+1}, \{{\cal Z}_t\}_{t=0}^{T+1}, \{{\cal X}_t^\dagger\}_{t=0}^{T+1}, \{\varsigma_t^\dagger\}_{t=0}^{T+1}, \{\Gamma_t^\dagger\}_{t=0}^T)
\]
of belief and taste state spaces $\{{\cal Y}_t\}_{t=0}^{T+1}$ and $\{{\cal Z}_t\}_{t=0}^{T+1}$, SPA-state spaces $\{{\cal X}_t^\dagger\}_{t=0}^{T+1}$, physical readouts $\{\varsigma_t^\dagger\}_{t=0}^{T+1}$, and SPA transition maps $\{\Gamma_t^\dagger\}_{t=0}^T$ such that:
\begin{enumerate}
\item For all $t \in [T+1]$, ${\cal Y}_t$ and ${\cal Z}_t$ are Hausdorff topological spaces, and ${\cal X}_t^\dagger \subseteq {\cal S}_t \times {\cal Y}_t \times {\cal Z}_t$ is a non-empty compact metrizable subspace.
\item For all $t \in [T]$, an SPA state is a triple $\xi_t = (s_t,y_t,z_t) \in {\cal X}_t^\dagger$. The terminal SPA state is $\xi_{T+1}=(\star,y_{T+1},z_{T+1})\in{\cal X}_{T+1}^\dagger$.
\item For $\xi_t=(s_t,y_t,z_t)\in{\cal X}_t^\dagger$, define the physical-state map $\varsigma_t^\dagger(\xi_t)=s_t$.
\item For all $t \in [T+1]$, define $r_t^y : {\cal X}_t^\dagger \rightarrow {\cal S}_t \times {\cal Y}_t$ by $r_t^y(\xi_t)\triangleq(s_t,y_t)$ and $r_t^z : {\cal X}_t^\dagger \rightarrow {\cal S}_t \times {\cal Z}_t$ by $r_t^z(\xi_t)\triangleq(s_t,z_t)$. Let ${\cal X}_t^{\dagger,y}\triangleq r_t^y({\cal X}_t^\dagger)\subseteq{\cal S}_t\times{\cal Y}_t$ and ${\cal X}_t^{\dagger,z}\triangleq r_t^z({\cal X}_t^\dagger)\subseteq{\cal S}_t\times{\cal Z}_t$.
\item For each $t\in[T]$, $\Gamma_t^\dagger:{\cal X}_t^\dagger\times{\cal C}\times{\cal S}_{t+1} \to {\cal X}_{t+1}^\dagger $ is a continuous transition map that satisfies $\varsigma_{t+1}^\dagger(\Gamma_t^\dagger(\xi_t,c,s'))=s'$ for all $\xi_t \in {\cal X}_t^\dagger$, $c \in {\cal C}$, and $s' \in {\cal S}_{t+1}$.
\end{enumerate}
\end{defn}
\noindent
The state space ${\cal X}_t^\dagger \subseteq {\cal S}_t \times {\cal Y}_t \times {\cal Z}_t$ specifies the admissible physical, belief, and taste triples.
The belief state might more appropriately be called the belief/risk-evaluation state, since it can also encode probabilistic beliefs, ambiguity sets, and risk measures; for brevity we keep belief state.
The physical readout $\varsigma_t^\dagger$ is automatically continuous as the restriction of the first-coordinate projection.

The SPA transition map is
\begin{equation}
\label{eq:transition_separated}
    \xi_{t+1}=\Gamma_t^\dagger(\xi_t,c_t,s_{t+1}),\, \forall t\in[T].
\end{equation}
Although $\Gamma_t^\dagger$ is the primitive transition, we write its belief and taste coordinate maps as
\[
\Gamma_t^\dagger(\xi_t,c,s') = (s', \Gamma_t^y(\xi_t,c,s'), \Gamma_t^z(\xi_t,c,s')),\, \forall \xi_t \in {\cal X}_t^\dagger, c \in {\cal C}, s' \in {\cal S}_{t+1},
\]
where $\Gamma_t^y:{\cal X}_t^\dagger\times{\cal C}\times{\cal S}_{t+1}\to{\cal Y}_{t+1}$ and $\Gamma_t^z:{\cal X}_t^\dagger\times{\cal C}\times{\cal S}_{t+1}\to{\cal Z}_{t+1}$ are continuous coordinate update maps for all $t \in [T]$.

We call the SPA transition \emph{decoupled} if, for each $t \in [T]$, there exist continuous maps $\bar\Gamma_t^y: {\cal X}_t^{\dagger,y}\times{\cal C}\times{\cal S}_{t+1} \to{\cal Y}_{t+1}$ and $\bar\Gamma_t^z:{\cal X}_t^{\dagger,z}\times{\cal C}\times{\cal S}_{t+1}\to{\cal Z}_{t+1}$ such that
\[
\Gamma_t^y(\xi_t,c,s')=\bar\Gamma_t^y(r_t^y(\xi_t),c,s'),\,\Gamma_t^z(\xi_t,c,s')=\bar\Gamma_t^z(r_t^z(\xi_t),c,s'),\, \forall \xi_t \in {\cal X}_t^\dagger, c \in {\cal C}, s' \in {\cal S}_{t+1}.
\]
In this case, the belief update does not depend directly on the taste coordinate and the taste update does not depend directly on the belief coordinate. Both belief and taste states may still depend on the physical state, the common consumption, and the common next state.
The admissible SPA state space and transition map jointly restrict the belief--taste combinations and trajectories that can arise.

We modify the original Definition~\ref{defn:PA_recursive} for a recursive structure to account for separated beliefs and tastes, starting with the SPA time and risk aggregators.

\begin{defn}[SPA time aggregator]
\label{defn:memory_time_aggregator_separated}
An \emph{SPA time aggregator} for period $t\in[T-1]$ is a function $W_t^\dagger:{\cal X}_t^{\dagger,z}\times{\cal C}\times\Re\to\Re$ such that, for every $(s_t,z_t,c)\in{\cal X}_t^{\dagger,z}\times{\cal C}$, the mapping $v\to W_t^\dagger(s_t,z_t,c,v)$ is nondecreasing.
\end{defn}

\begin{defn}[SPA risk aggregator]
\label{defn:memory_risk_aggregator_separated}
An \emph{SPA risk aggregator} for period $t\in[T-1]$ is a function $M_t^\dagger:{\cal X}_t^{\dagger,y}\times{\cal L}\to\Re$ that satisfies: (i) if $\tilde{u}\leq\tilde{v}$, then $M_t^\dagger(s_t,y_t,\tilde{u})\leq M_t^\dagger(s_t,y_t,\tilde{v})$ for all $(s_t,y_t)\in{\cal X}_t^{\dagger,y}$; and (ii) $M_t^\dagger(s_t,y_t,{\bf v})=v$ for all $v\in\Re$ and $(s_t,y_t)\in{\cal X}_t^{\dagger,y}$.
\end{defn}

The SPA recursive structure below is defined with respect to a fixed SPA-state system.

\begin{defn}[SPA recursive structure]
\label{defn:SPA_recursive}
Let $\mathfrak{X}^\dagger$ be an SPA-state system. An {\it SPA recursive structure} on
$\mathfrak{X}^\dagger$ is a tuple
\[
{\cal V}^\dagger = (\{W_t^\dagger\}_{t=0}^{T-1}, \{M_t^\dagger\}_{t=0}^{T-1}, V_{T+1}^\dagger)
\]
where:
\begin{enumerate}
    \item $W_t^\dagger:{\cal X}_t^{\dagger,z}\times{\cal C}\times\Re\to\Re$ is a jointly continuous SPA time aggregator for all $t\in[T-1]$.
    \item $M_t^\dagger:{\cal X}_t^{\dagger,y}\times{\cal L}\to\Re$ is a jointly continuous SPA risk aggregator for all $t\in[T-1]$.
    \item $V_{T+1}^\dagger : {\cal X}_{T+1}^\dagger \rightarrow \Re$ is continuous.
\end{enumerate}
The pair $(\mathfrak{X}^\dagger, {\cal V}^\dagger)$ generates a utility system $\{U_t^\dagger\}_{t=0}^{T+1}$ where $U_{T+1}^\dagger = V_{T+1}^\dagger$ and $U_t^\dagger : {\cal X}_t^\dagger \times {\cal F}_t \rightarrow \Re$ are defined by:
\begin{subequations}
\label{eq:memory_recursive_separated}
\begin{align}
U_T^\dagger(\xi_T,c) = & U_{T+1}^\dagger(\Gamma_T^\dagger(\xi_T,c,\star)),\, \forall \xi_T \in {\cal X}_T^\dagger, c \in {\cal C},\\
U_t^\dagger(\xi_t,f) = & W_t^\dagger(s_t,z_t,c,M_t^\dagger(s_t,y_t,\tilde u_{t+1}^\dagger(\xi_t,f))),\, \forall \xi_t \in {\cal X}_t^\dagger, f = (c, f_+) \in {\cal F}_t, t \in [T-1],
\end{align}
\end{subequations}
where the separated next period continuation utility $\tilde{u}_{t+1}^\dagger(\xi_t, f) \in {\cal L}$ for $t \in [T-1]$ is defined by:
\[
[\tilde u_{t+1}^\dagger(\xi_t,f)](s)\triangleq U_{t+1}^\dagger(\Gamma_t^\dagger(\xi_t,c,s),f_+(s)),\, \forall s \in {\cal S}.
\]
\end{defn}

An SPA recursive structure represents ${\cal P}$ when there are belief and taste summary maps that connect back to the original utility system ${\bf U}$.

\begin{defn}[Separated recursive factorization]
\label{defn:memory_representation_separated}
Let $\mathfrak{X}^\dagger$ be an SPA-state system, ${\cal V}^\dagger$ be an SPA recursive structure on $\mathfrak{X}^\dagger$, and let $\{U_t^\dagger\}_{t=0}^{T+1}$ be the utilities generated by $(\mathfrak{X}^\dagger, {\cal V}^\dagger)$.
Then $(\mathfrak{X}^\dagger, {\cal V}^\dagger)$ \emph{factorizes} a compatible utility system ${\bf U}$ if, for all $t \in [T+1]$, there exist continuous summary maps $\sigma_t^y:{\cal H}_t\to{\cal Y}_t$ and $\sigma_t^z:{\cal H}_t\to{\cal Z}_t$ such that:
\[\sigma_t^\dagger(h_t)\triangleq(\sigma_t^s(h_t),\sigma_t^y(h_t),\sigma_t^z(h_t)) \in {\cal X}_t^\dagger,\, \forall h_t \in {\cal H}_t. 
\]

(i) (Utility factorization) The generated utilities $\{U_t^\dagger\}_{t=0}^{T+1}$ satisfy:
\begin{subequations}
\label{eq:factorization_separated}
\begin{align}
U_{T+1}(h_{T+1})= & U_{T+1}^\dagger(\sigma_{T+1}^\dagger(h_{T+1})),\, \forall h_{T+1} \in {\cal H}_{T+1},\\
U_t(h_t, f) = & U_t^\dagger(\sigma_t^\dagger(h_t),f),\, \forall h_t \in {\cal H}_t,\, f \in {\cal F}_t,\, t \in [T].
\end{align}
\end{subequations}

(ii) (Transition consistency) The summary maps are consistent with the transition maps:
\begin{equation}
\label{eq:transition_consistency_separated}
\sigma_{t+1}^\dagger(\iota_t(h_t,c,s)) =  \Gamma_t^\dagger(\sigma_t^\dagger(h_t),c,s),\, \forall h_t\in{\cal H}_t,\, 
c\in{\cal C},\, s \in {\cal S}_{t+1}, t\in[T].
\end{equation}
\end{defn}
\noindent
We note the parallel between
Definitions~\ref{defn:memory_representation}
and~\ref{defn:memory_representation_separated}: both impose utility
factorization and transition consistency.
When $(\mathfrak{X}^\dagger,{\cal V}^\dagger)$ factorizes a compatible utility system ${\bf U}$, it represents ${\cal P}$.

\subsection{Construction}

We now construct SPA belief and taste coordinates from a selected PA factorization on the canonical PA-state system.
This construction is based on equivalence relations for beliefs and tastes (rather than equivalence of total preferences as in the previous Section~\ref{sec:PA}).
Fix a PA factorization $(\mathfrak X^*,{\cal V}^*)$ of the compatible utility system ${\bf U}$ on the canonical PA-state system supplied by Theorem~\ref{thm:recursive_memory}, together with a rectangularization and the selected global aggregators in ${\cal V}^*$.
The PA-state system $\mathfrak X^*$ is canonical; the recursive structure ${\cal V}^*$ is selected because the global extensions may not be unique.
We define belief and taste equivalence in terms of this ${\cal V}^*$.

\begin{defn}[Belief and taste equivalence]
\label{defn:aggregator_belief_taste_equivalence}
(i) For $t\in[T-1]$, $m,m'\in{\cal M}_t$ are \emph{belief equivalent} $m\sim_t^y m'$ if $M_t^*(s,m,\tilde u)=M_t^*(s,m',\tilde u)$ for every $s\in{\cal S}_t$ and $\tilde u\in{\cal L}$.
For $t\in\{T,T+1\}$, $m\sim_t^y m'$ for all $m,m'\in{\cal M}_t$.

(ii) For $t\in[T-1]$, $m,m'\in{\cal M}_t$ are \emph{taste equivalent} $m\sim_t^z m'$ if $W_t^*(s,m,c,v)=W_t^*(s,m',c,v)$ for every $s\in{\cal S}_t$, $c\in{\cal C}$, and $v\in\Re$.
For $t\in\{T,T+1\}$, $m \sim_t^z m'$ if and only if $m=m'$.
\end{defn}
\noindent
There are no aggregators for periods $t \in [T, T+1]$, so the above equivalences are terminal conventions rather than aggregator-induced equivalences. Since there is no further uncertainty, the belief coordinate is inactive; all remaining terminal memory is retained in the taste coordinate.

While the canonical PA state is derived endogenously from ${\cal P}$, Definition~\ref{defn:aggregator_belief_taste_equivalence} implicitly depends on the selected global extensions $\{M_t^*\}_{t=0}^{T-1}$ and $\{W_t^*\}_{t=0}^{T-1}$ because the belief and taste quotients are defined globally.
The resulting belief/taste separation is canonical relative to the chosen rectangularization and the selected global aggregator extensions.

The next lemma shows that these are closed equivalence relations, so their quotient spaces are compact and metrizable.

\begin{lem}[Closedness of aggregator-induced equivalence]
\label{lem:closedness_aggregator_equivalence}
The relations $\{\sim_t^y\}_{t=0}^{T+1}$ and
$\{\sim_t^z\}_{t=0}^{T+1}$ are closed equivalence relations.
\end{lem}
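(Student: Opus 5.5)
Equivalence is immediate, since each relation is defined by equality of a fixed family of functions (or, at the terminal periods, by the trivial or diagonal relation). Reflexivity, symmetry and transitivity are inherited from equality in $\Re$. For $t\in\{T,T+1\}$, $\sim_t^y$ is all of ${\cal M}_t\times{\cal M}_t$ and $\sim_t^z$ is the diagonal. Both are closed: the first trivially, and the second because ${\cal M}_t$ is compact metrizable, hence Hausdorff. So the substantive work is closedness for $t\in[T-1]$.

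For closedness at $t\in[T-1]$, write the belief relation as an intersection
\[
\sim_t^y=\bigcap_{s\in{\cal S}_t}\bigcap_{\tilde u\in{\cal L}}\bigl\{(m,m')\in{\cal M}_t\times{\cal M}_t: M_t^*(s,m,\tilde u)=M_t^*(s,m',\tilde u)\bigr\}.
\]
Under Assumption~\ref{assu:rectangular} we identify ${\cal X}_t$ with ${\cal S}_t\times{\cal M}_t$ through the homeomorphism $\kappa_t$. Then, for fixed $(s,\tilde u)$, the map $m\mapsto M_t^*(s,m,\tilde u)=M_t^*(\kappa_t^{-1}(s,m),\tilde u)$ is continuous, since $M_t^*$ is jointly continuous by Definition~\ref{defn:PA_recursive} and $\kappa_t^{-1}$ and $m\mapsto(s,m)$ are continuous. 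Hence
\[
(m,m')\mapsto M_t^*(s,m,\tilde u)-M_t^*(s,m',\tilde u)
\]
is continuous on ${\cal M}_t\times{\cal M}_t$, and each set in the intersection is the preimage of the closed set $\{0\}$. An arbitrary intersection of closed sets is closed, so $\sim_t^y$ is closed.

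The taste relation is handled identically. Index the intersection by $(s,c,v)\in{\cal S}_t\times{\cal C}\times\Re$ and use joint continuity of $W_t^*$ in the memory coordinate.

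The only point requiring care is that the aggregators are defined on ${\cal X}_t$, not on ${\cal S}_t\times{\cal M}_t$. Continuity in $m$ must therefore be transported through $\kappa_t^{-1}$, and the statement is relative to the chosen rectangularization. Beyond that, no step is hard: pointwise continuity suffices, so no uniformity over the non-compact index sets ${\cal L}$ and $\Re$ is needed.

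Finally, a closed equivalence relation on the compact metrizable space ${\cal M}_t$ yields a compact Hausdorff quotient. This is the consequence the lemma is used for, via the same quotient result as in Lemma~\ref{lem:quotient_topology}.
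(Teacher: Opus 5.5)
Your proof is correct and follows the paper's argument. Equivalence comes from equality of aggregator values. For $t\in[T-1]$, closedness follows by writing each relation as an intersection of preimages of $\{0\}$ under the continuous maps $(m,m')\mapsto M_t^*(s,m,\tilde u)-M_t^*(s,m',\tilde u)$ (respectively the analogous maps built from $W_t^*$). At $t\in\{T,T+1\}$, the relations are the full product and the diagonal of a Hausdorff space. Your explicit transport of continuity through $\kappa_t^{-1}$ is a detail the paper leaves implicit.
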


We now define the aggregator-induced separated belief and taste states, as a quotient of the existing rectangular PA state.

\begin{defn}[Separated quotient maps and admissible separated states]
\label{defn:admissible_separated_states}
(i) For each $t\in[T+1]$, let ${\cal Y}_t={\cal M}_t/\sim_t^y$ and ${\cal Z}_t={\cal M}_t/\sim_t^z$, and let $p_t^y:{\cal M}_t\to{\cal Y}_t$ and $p_t^z:{\cal M}_t\to{\cal Z}_t$ denote the canonical quotient maps with respect to $\sim_t^y$ and $\sim_t^z$, respectively.

(ii) For each $t\in[T+1]$, let $\Xi_t:{\cal X}_t\to{\cal S}_t\times{\cal Y}_t\times{\cal Z}_t$ be defined by $\Xi_t(s_t,m_t)=(s_t,p_t^y(m_t),p_t^z(m_t))$. The SPA-state space is ${\cal X}_t^\dagger\triangleq\Xi_t({\cal X}_t)$.

(iii) For each $t\in[T+1]$, define $q_t^y:{\cal X}_t\to{\cal S}_t\times{\cal Y}_t$ and $q_t^z:{\cal X}_t\to{\cal S}_t\times{\cal Z}_t$ by $q_t^y(s_t,m_t)=(s_t,p_t^y(m_t))$ and $q_t^z(s_t,m_t)=(s_t,p_t^z(m_t))$. Since ${\cal X}_t^\dagger=\Xi_t({\cal X}_t)$, we have ${\cal X}_t^{\dagger,y}=q_t^y({\cal X}_t)$ and ${\cal X}_t^{\dagger,z}=q_t^z({\cal X}_t)$.
\end{defn}
\noindent
Since the belief and taste equivalence relations are closed by Lemma~\ref{lem:closedness_aggregator_equivalence} and each ${\cal M}_t$ is compact metrizable, the quotient spaces ${\cal Y}_t$ and ${\cal Z}_t$ are compact metrizable by Theorem~\ref{thm:closed_quotient}.
The canonical belief and taste summary maps are derived from the objects of Definition~\ref{defn:admissible_separated_states}. The belief summary map is $\sigma_t^y\triangleq\operatorname{pr}_{{\cal Y}_t}\circ\Xi_t\circ\sigma_t$ and the taste summary map is $\sigma_t^z\triangleq\operatorname{pr}_{{\cal Z}_t}\circ\Xi_t\circ\sigma_t$ for all $t\in[T+1]$, all of which are continuous as compositions of continuous maps. Since $\varsigma_t\circ\sigma_t=\sigma_t^s$ and $\Xi_t$ is the identity on ${\cal S}_t$ in the first coordinate, we have $\Xi_t\circ\sigma_t=(\sigma_t^s,\sigma_t^y,\sigma_t^z)$. The overall canonical SPA summary of Definition~\ref{defn:memory_representation_separated} is then $\sigma_t^\dagger=\Xi_t\circ\sigma_t$.

Under the following assumption, the belief and taste coordinates jointly recover the original PA state.
In this case, the triple of physical, belief, and taste states carries the same information about preferences as the original PA state.

\begin{assumption}[Aggregator exhaustiveness]
\label{assu:exhaustiveness}
For every $t\in[T+1]$, if $m,m'\in{\cal M}_t$ satisfy $m\sim_t^y m'$ and $m\sim_t^z m'$, then $m=m'$.
\end{assumption}
\noindent
Assumption~\ref{assu:exhaustiveness} requires that no variation in preference memory be invisible to both aggregators.

\begin{lem}[Identification of SPA states]
\label{lem:E_identification}
Suppose Assumption~\ref{assu:exhaustiveness} holds. Then $\Xi_t:{\cal X}_t\to{\cal X}_t^\dagger$ is a homeomorphism for every $t\in[T+1]$.
\end{lem}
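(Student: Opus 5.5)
The plan is to show that $\Xi_t$ is a continuous bijection from the compact space ${\cal X}_t$ onto the Hausdorff space ${\cal X}_t^\dagger$. The standard compact-to-Hausdorff argument then gives that its inverse is continuous.

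\emph{Continuity.} We use the rectangularization of Assumption~\ref{assu:rectangular} to identify ${\cal X}_t$ with ${\cal S}_t\times{\cal M}_t$. Under this identification, $\Xi_t$ is the product of ${\rm id}_{{\cal S}_t}$ with the map $m\mapsto(p_t^y(m),p_t^z(m))$. The quotient maps $p_t^y$ and $p_t^z$ are continuous by construction, so $\Xi_t$ is continuous into ${\cal S}_t\times{\cal Y}_t\times{\cal Z}_t$. Corestricting to ${\cal X}_t^\dagger=\Xi_t({\cal X}_t)$ with the subspace topology preserves continuity.

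\emph{Bijectivity.} Surjectivity onto ${\cal X}_t^\dagger$ holds by definition. For injectivity, suppose $\Xi_t(s,m)=\Xi_t(s',m')$. Comparing first coordinates gives $s=s'$. The remaining coordinates give $p_t^y(m)=p_t^y(m')$ and $p_t^z(m)=p_t^z(m')$, that is, $m\sim_t^y m'$ and $m\sim_t^z m'$. Assumption~\ref{assu:exhaustiveness} then yields $m=m'$, so $(s,m)=(s',m')$.

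\emph{Hausdorffness of the target.} By Lemma~\ref{lem:closedness_aggregator_equivalence}, $\sim_t^y$ and $\sim_t^z$ are closed equivalence relations on the compact metrizable space ${\cal M}_t$. Theorem~\ref{thm:closed_quotient} therefore makes ${\cal Y}_t$ and ${\cal Z}_t$ compact metrizable, hence Hausdorff. The space ${\cal S}_t$ is finite and discrete. The product ${\cal S}_t\times{\cal Y}_t\times{\cal Z}_t$ is thus Hausdorff (indeed metrizable), and so is its subspace ${\cal X}_t^\dagger$. This subspace is also compact, as the continuous image of the compact space ${\cal X}_t$.

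\emph{Conclusion.} A continuous bijection from a compact space to a Hausdorff space is closed: closed subsets of ${\cal X}_t$ are compact, their images are compact, and compact sets in a Hausdorff space are closed. Hence $\Xi_t^{-1}$ is continuous and $\Xi_t$ is a homeomorphism.

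The only genuinely substantive step is the Hausdorff property of the quotients, which rests on the closedness supplied by Lemma~\ref{lem:closedness_aggregator_equivalence}. The rest is bookkeeping. One should check that the terminal conventions $t\in\{T,T+1\}$ of Definition~\ref{defn:aggregator_belief_taste_equivalence} also fall under this argument. There $\sim_t^z$ is equality, so $p_t^z$ is a homeomorphism and ${\cal Y}_t$ is a point, and injectivity is immediate even without appealing to exhaustiveness.
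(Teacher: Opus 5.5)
Your proposal is correct and follows essentially the same route as the paper: continuity of $\Xi_t$ from the continuous quotient maps $p_t^y,p_t^z$, injectivity from Assumption~\ref{assu:exhaustiveness}, Hausdorffness of the target via Lemma~\ref{lem:closedness_aggregator_equivalence} and Theorem~\ref{thm:closed_quotient}, and then the compact-to-Hausdorff criterion (Theorem~\ref{thm:compact_to_hausdorff}). Your separate check of the terminal conventions for $t\in\{T,T+1\}$ is a correct extra detail; the paper does not need it because it applies exhaustiveness uniformly for all $t\in[T+1]$.
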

\noindent
This lemma establishes that the SPA triple $(s_t,y_t,z_t)$ is a complete representation of the canonical PA state, so passing to separated coordinates does not change evaluation of plans.
Assumption~\ref{assu:exhaustiveness} is exactly injectivity of the mapping $\Xi_t$.
This identification is also part of the construction of the canonical SPA transition.

\begin{defn}[Canonical SPA transition]
\label{defn:canonical_separated_transitions}
Suppose Assumption~\ref{assu:exhaustiveness} holds.
For $t\in[T]$, let $\Gamma_t^\dagger:{\cal X}_t^\dagger\times{\cal C}\times{\cal S}_{t+1}\to{\cal X}_{t+1}^\dagger$ be defined by $\Gamma_t^\dagger(\xi_t,c,s)\triangleq\Xi_{t+1}(\Gamma_t(x_t,c,s))$ for $x_t=\Xi_t^{-1}(\xi_t)$ and $s \in {\cal S}_{t+1}$.
\end{defn}
\noindent
Under this construction, the mappings $\{\Gamma_t^\dagger\}_{t=0}^T$ satisfy Eq.~\eqref{eq:transition_separated}.

\subsection{Main Result}

Now we show that a separated recursive representation exists once the canonical SPA state is established.
The following result can be viewed as a lossless reparameterization of the PA state.

\begin{thm}[SPA recursive representation]
\label{thm:memory_recursive_separated}
Fix a compatible utility system ${\bf U}$ for ${\cal P}$, and let $(\mathfrak{X}^*,{\cal V}^*)$ be a PA factorization of ${\bf U}$ on the canonical PA-state system supplied by Theorem~\ref{thm:recursive_memory}. Suppose Assumption~\ref{assu:rectangular} holds, and fix a rectangularization of $\mathfrak{X}^*$ together with the selected global aggregator extensions in ${\cal V}^*$. If Assumption~\ref{assu:exhaustiveness} holds, then there exists an SPA-state system $\mathfrak{X}^\dagger$ and an SPA recursive structure ${\cal V}^\dagger$ on $\mathfrak{X}^\dagger$ such that $(\mathfrak{X}^\dagger,{\cal V}^\dagger)$ factorizes the same compatible utility system ${\bf U}$. Consequently, $(\mathfrak{X}^\dagger,{\cal V}^\dagger)$ represents ${\cal P}$.
\end{thm}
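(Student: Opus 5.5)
The plan is to treat the SPA structure as a lossless reparameterization of the canonical PA factorization $(\mathfrak X^*,{\cal V}^*)$. We transport every object through the homeomorphisms $\Xi_t$ of Lemma~\ref{lem:E_identification}. The one real issue is checking that the transported aggregators depend only on the belief coordinate or only on the taste coordinate, which is what the belief and taste equivalences are designed to ensure.

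\emph{Step 1: the SPA-state system.} Take ${\cal Y}_t={\cal M}_t/\sim_t^y$, ${\cal Z}_t={\cal M}_t/\sim_t^z$ and ${\cal X}_t^\dagger=\Xi_t({\cal X}_t)$ as in Definition~\ref{defn:admissible_separated_states}.
\begin{itemize}
\item By Lemma~\ref{lem:closedness_aggregator_equivalence} and Theorem~\ref{thm:closed_quotient}, ${\cal Y}_t$ and ${\cal Z}_t$ are compact metrizable, hence Hausdorff.
\item ${\cal X}_t^\dagger$ is a nonempty compact metrizable subspace, because it is homeomorphic to ${\cal X}_t$ by Lemma~\ref{lem:E_identification}. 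Under the rectangular identification, $\Xi_t=({\rm id},p_t^y,p_t^z)$ is continuous.
\item The transitions $\Gamma_t^\dagger=\Xi_{t+1}\circ\Gamma_t\circ(\Xi_t^{-1}\times{\rm id}\times{\rm id})$ of Definition~\ref{defn:canonical_separated_transitions} are continuous as compositions of continuous maps.
\item The readout identity holds because $\Xi_{t+1}$ preserves the physical coordinate and $\varsigma_{t+1}(\Gamma_t(x,c,s'))=s'$ by Lemma~\ref{lem:quotient_transition_continuity}.
\end{itemize}

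\emph{Step 2: the separated aggregators.} For $(s,y)\in{\cal X}_t^{\dagger,y}$, pick any $m$ with $p_t^y(m)=y$ and $(s,m)\in{\cal X}_t$, and set $M_t^\dagger(s,y,\tilde u)\triangleq M_t^*(s,m,\tilde u)$. Define $W_t^\dagger(s,z,c,v)\triangleq W_t^*(s,m,c,v)$ in the same way, and set $V_{T+1}^\dagger\triangleq V_{T+1}^*\circ\Xi_{T+1}^{-1}$.
\begin{itemize}
\item Well-definedness is exactly Definition~\ref{defn:aggregator_belief_taste_equivalence}: belief-equivalent memories give identical $M_t^*(s,\cdot,\cdot)$ for every $s$, and taste-equivalent memories give identical $W_t^*$. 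Rectangularity matters here, since it makes ${\cal M}_t$ a common memory space across physical states.
\item Monotonicity and constant normalization are inherited directly from $M_t^*$ and $W_t^*$.
\end{itemize}

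\emph{Step 3: joint continuity (the main obstacle).} I would use the quotient structure. The map $q_t^y\times{\rm id}_{\cal L}:{\cal X}_t\times{\cal L}\to{\cal X}_t^{\dagger,y}\times{\cal L}$ is a continuous surjection, and $M_t^*=M_t^\dagger\circ(q_t^y\times{\rm id})$ is continuous.
\begin{itemize}
\item It suffices to show $q_t^y\times{\rm id}$ is a quotient map. The map $q_t^y$ is a closed map from a compact space onto a Hausdorff space, but a product of a quotient map with ${\rm id}_{\cal L}$ need not be quotient in general.
\item Since ${\cal L}\cong\Re^{\cal S}$ is locally compact Hausdorff, Whitehead's theorem gives that the product of a quotient map with the identity on a locally compact Hausdorff space is quotient. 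Alternatively, note that $q_t^y\times{\rm id}$ is a perfect map (closed with compact fibers) and hence quotient.
\item The same argument applies to $W_t^\dagger$ on ${\cal X}_t^{\dagger,z}\times{\cal C}\times\Re$.
\end{itemize}
If this step resists, a fallback is sequential: take a convergent sequence in ${\cal X}_t^{\dagger,y}\times{\cal L}$, lift it to ${\cal X}_t$ using compactness, extract a convergent subsequence, and use that every limit of the lifts maps to the target point.

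\emph{Step 4: factorization.} Let $\sigma_t^\dagger=\Xi_t\circ\sigma_t$.
\begin{itemize}
\item Transition consistency follows from Eq.~\eqref{eq:transition_consistency} for $\sigma_t$ together with the definition of $\Gamma_t^\dagger$.
\item By backward induction on $t$, I would prove $U_t^\dagger(\Xi_t(x),f)=U_t^*(x,f)$ for all $x\in{\cal X}_t$ and $f\in{\cal F}_t$.
\item At $T+1$ and $T$ this holds by the choice of $V_{T+1}^\dagger$ and the definition of $\Gamma_T^\dagger$.
\item For $t\le T-1$, the induction hypothesis gives $\tilde u_{t+1}^\dagger(\Xi_t(x),f)=\tilde u_{t+1}^*(x,f)$. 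Step 2 then gives $M_t^\dagger(q_t^y(x),\cdot)=M_t^*(x,\cdot)$ and $W_t^\dagger(q_t^z(x),\cdot)=W_t^*(x,\cdot)$, so the two recursions coincide.
\end{itemize}
Combining this with the factorization of ${\bf U}$ by $(\mathfrak X^*,{\cal V}^*)$ from Theorem~\ref{thm:recursive_memory} yields Eq.~\eqref{eq:factorization_separated}. Representation of ${\cal P}$ follows as noted after Definition~\ref{defn:memory_representation_separated}.
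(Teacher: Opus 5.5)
Your proposal is correct and follows essentially the same route as the paper. You descend $M_t^*$ and $W_t^*$ through $q_t^y\times{\rm id}_{\cal L}$ and $q_t^z\times{\rm id}_{{\cal C}\times\Re}$, which are quotient maps by Whitehead's theorem and on whose fibers the aggregators are constant by the belief and taste equivalences; you transport the transitions and terminal primitive through the homeomorphisms $\Xi_t$ of Lemma~\ref{lem:E_identification}; and you prove $U_t^\dagger(\Xi_t(x_t),f)=U_t^*(x_t,f)$ by backward induction before composing with $\sigma_t$. Your perfect-map alternative for the quotient property is also valid, but not needed.
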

\noindent
Theorem~\ref{thm:memory_recursive_separated} maps the selected PA factorization on the canonical PA-state system into an SPA factorization with distinct belief and taste coordinates. Under Assumption~\ref{assu:exhaustiveness}, the joint coordinate map is a homeomorphism, so the SPA state is a lossless reparameterization of the canonical PA state. Any remaining interaction appears through the admissible SPA-state space and transition maps, rather than through cross-dependence of the two aggregators.

\begin{rem}[Constructive use of SPA states]
In the constructive direction, we may specify $\mathfrak{X}^\dagger$ and ${\cal V}^\dagger$ satisfying Definitions~\ref{defn:memory_separated} and~\ref{defn:SPA_recursive}, respectively.
Choose $\xi_0\in{\cal X}_0^\dagger$ with $\varsigma_0^\dagger(\xi_0)=\bar s_0$, then define $\widehat\sigma_0^\dagger(\bar s_0)=\xi_0$ and recursively $\widehat\sigma_{t+1}^\dagger(\iota_t(h_t,c,s')) = \Gamma_t^\dagger(\widehat\sigma_t^\dagger(h_t),c,s')$.
Continuity of $\{\widehat\sigma_t^\dagger\}$ and the identities $\varsigma_t^\dagger \circ \widehat\sigma_t^\dagger = \sigma_t^s$ follow from the conditions on $\{\Gamma_t^\dagger\}_{t=0}^T$. Define the induced full-history utility system $\widehat{\bf U}=\{\widehat U_t\}_{t=0}^{T+1}$ by:
\begin{align*}
    \widehat U_t(h_t,f) = & U_t^\dagger(\widehat\sigma_t^\dagger(h_t),f),\, \forall h_t \in {\cal H}_t,\, f \in {\cal F}_t,\, t\in[T],\\
    \widehat U_{T+1}(h_{T+1}) = & U_{T+1}^\dagger (\widehat\sigma_{T+1}^\dagger(h_{T+1})),\, \forall h_{T+1} \in {\cal H}_{T+1}.
\end{align*}
Then, $(\mathfrak X^\dagger,{\cal V}^\dagger)$ is a factorization of $\widehat{\bf U}$ on reachable states. Assumptions~\ref{assu:rectangular} and~\ref{assu:exhaustiveness} are needed to derive the SPA representation from the canonical PA one; they are not needed for the constructive direction.
\end{rem}

\subsection{Separated Dynamic Programming}

We can analogously develop a Bellman recursion for an MDP with respect to $(\mathfrak{X}^\dagger, {\cal V}^\dagger)$.
We impose the following modified DP regularity conditions.
As in the PA case, we take the separated summary maps to be onto. If not, replace each ${\cal X}_t^\dagger$ by the compact reachable image $\sigma_t^\dagger({\cal H}_t)$ and restrict the SPA transition accordingly.

\begin{assumption}[Separated Markov feasibility]
\label{assu:separated_Markov_feasibility}
If $\sigma_t^\dagger(h_t)=\sigma_t^\dagger(h_t')$, then ${\cal A}_t(h_t)={\cal A}_t(h_t')$, and we write ${\cal A}_t^\dagger(\xi_t) \triangleq {\cal A}_t(h_t)$ for all $h_t \in (\sigma_t^\dagger)^{-1}(\xi_t)$.
\end{assumption}

\begin{assumption}[Separated DP regularity]
\label{assu:DP_separated_regular}
For all $t\in[T]$, the correspondence ${\cal A}_t^\dagger:{\cal X}_t^\dagger\rightrightarrows{\cal C}$ has nonempty compact values and is continuous.
\end{assumption}
\noindent
In many examples, feasibility depends only on $(s_t,z_t)$, in which case we write ${\cal A}_t^\dagger(s_t,z_t)$.

\begin{defn}[SPA Markov policy]
\label{defn:separated_Markov_policy}
An \emph{SPA Markov policy} is a sequence $\pi^\dagger = \{\pi_t^\dagger\}_{t=0}^T$, where $\pi_t^\dagger : {\cal X}_t^\dagger \rightarrow {\cal C}$ such that $\pi_t^\dagger(\xi_t) \in {\cal A}_t^\dagger(\xi_t)$ for all $\xi_t\in{\cal X}_t^\dagger$ and $t\in[T]$.
Let $\Pi^\dagger = \prod_{t=0}^T {\rm Sel}({\cal A}_t^\dagger)$ denote the class of separated feasible Markov policies.
\end{defn}

We have the following Bellman recursion for the separated case.
Let $U_t^{\dagger\pi^\dagger} : {\cal X}_t^\dagger \rightarrow \Re$ be the period $t \in [T+1]$ policy utility function of a Markov policy $\pi^\dagger \in \Pi^\dagger$ on the separated state, defined recursively by $U_{T+1}^{\dagger\pi^\dagger} \triangleq U_{T+1}^\dagger$ and 
\begin{subequations}
\label{eq:memory_recursive_policy_separated}
\begin{align}
U_T^{\dagger\pi^\dagger}(\xi_T) = & U_{T+1}^{\dagger\pi^\dagger}(\Gamma_T^\dagger(\xi_T, \pi_T^\dagger(\xi_T),\star)),\, \forall \xi_T \in {\cal X}_T^\dagger,\\
U_t^{\dagger\pi^\dagger}(\xi_t) = & W_t^\dagger(s_t,z_t, \pi_t^\dagger(\xi_t), M_t^\dagger(s_t,y_t, \tilde{u}_{t+1}^{\dagger\pi^\dagger}(\xi_t, \pi_t^\dagger(\xi_t)))),\, \forall \xi_t \in {\cal X}_t^\dagger, t \in [T-1],
\end{align}
\end{subequations}
where $\tilde{u}_{t+1}^{\dagger\pi^\dagger}(\xi_t, c) \in {\cal L}$ is defined by $[\tilde{u}_{t+1}^{\dagger\pi^\dagger}(\xi_t, c)](s) \triangleq U_{t+1}^{\dagger\pi^\dagger}(\Gamma_t^\dagger(\xi_t, c, s))$ for all $s \in {\cal S}$.
Let $J_t^\dagger : {\cal X}_t^\dagger \rightarrow \Re$ be the separated period $t \in [T]$ optimal value function, defined by:
\[
J_t^\dagger(\xi_t) = \sup_{\pi^\dagger \in \Pi^\dagger} U_t^{\dagger \pi^\dagger}(\xi_t),\, \forall \xi_t \in {\cal X}_t^\dagger, t \in [T].
\]
The random continuation utility $\tilde{j}_{t+1}^\dagger(\xi_t, c)$ for $t \in [T-1]$ on the separated belief and taste state space is defined by $[\tilde{j}_{t+1}^\dagger(\xi_t, c)](s) \triangleq J_{t+1}^\dagger(\Gamma_t^\dagger(\xi_t, c, s))$ for all $s \in {\cal S}$.

\begin{cor}[Bellman recursion, separated]
\label{cor:DP_separated}
Suppose $(\mathfrak{X}^\dagger,{\cal V}^\dagger)$, with summary maps $\{\sigma_t^\dagger\}_{t=0}^{T+1}$, factorizes a compatible utility system ${\bf U}$ for ${\cal P}$, and Assumptions~\ref{assu:separated_Markov_feasibility} and~\ref{assu:DP_separated_regular} hold.
Then $\{J_t^\dagger\}_{t=0}^T$ satisfy:
\begin{subequations}
\label{eq:Bellman_separated}
\begin{align}
J_T^\dagger(\xi_T) = & \max_{c \in {\cal A}_T^\dagger(\xi_T)} U_{T+1}^\dagger(\Gamma_T^\dagger(\xi_T,c,\star)),\, \forall \xi_T \in {\cal X}_T^\dagger,\\
J_t^\dagger(\xi_t) = & \max_{c \in {\cal A}_t^\dagger(\xi_t)} W_t^\dagger(s_t,z_t, c, M_t^\dagger(s_t,y_t, \tilde{j}_{t+1}^\dagger(\xi_t, c))),\, \forall \xi_t \in {\cal X}_t^\dagger,\, t \in [T-1].
\end{align}
\end{subequations}
The maxima are attained. There exists a single $\bar\pi^\dagger\in\Pi^\dagger$ which attains the displayed maxima at every state and time, and satisfies $U_t^{\dagger\bar\pi^\dagger}(\xi_t)=J_t^\dagger(\xi_t)$ for every $\xi_t\in{\cal X}_t^\dagger$ and $t\in[T]$. Moreover, each $J_t^\dagger$ is continuous.
\end{cor}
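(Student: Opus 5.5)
The plan is to reduce Corollary~\ref{cor:DP_separated} to Theorem~\ref{thm:DP_Bellman}. I will not redo the dynamic programming argument. Instead, I will recognize the separated system as a PA-state system and its separated recursive structure as a PA recursive structure.

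\emph{Step 1: recast the SPA data as PA data.} Set ${\cal X}_t\triangleq{\cal X}_t^\dagger$, $\varsigma_t\triangleq\varsigma_t^\dagger$ and $\Gamma_t\triangleq\Gamma_t^\dagger$. By Definition~\ref{defn:memory_separated}, ${\cal X}_t^\dagger$ is nonempty, compact and metrizable. The readout $\varsigma_t^\dagger$ is continuous as a restriction of a coordinate projection. The map $\Gamma_t^\dagger$ is continuous and satisfies the readout identity. So Definition~\ref{defn:memory} holds.

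Next define $W_t^*(\xi_t,c,v)\triangleq W_t^\dagger(r_t^z(\xi_t),c,v)$ and $M_t^*(\xi_t,\tilde u)\triangleq M_t^\dagger(r_t^y(\xi_t),\tilde u)$, and set $V_{T+1}^*\triangleq V_{T+1}^\dagger$.
\begin{itemize}
\item The maps $r_t^y,r_t^z$ are continuous, since they are restrictions of coordinate projections onto their images ${\cal X}_t^{\dagger,y}$ and ${\cal X}_t^{\dagger,z}$ with the subspace topology.
\item Hence $W_t^*$ and $M_t^*$ are jointly continuous as compositions.
\item Monotonicity in $v$, monotonicity in $\tilde u$, and constant normalization transfer pointwise from Definitions~\ref{defn:memory_time_aggregator_separated} and~\ref{defn:memory_risk_aggregator_separated}.
\end{itemize}
Thus ${\cal V}^*$ is a PA recursive structure in the sense of Definition~\ref{defn:PA_recursive}. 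Comparing Eq.~\eqref{eq:memory_recursive} with Eq.~\eqref{eq:memory_recursive_separated} by backward induction on $t$ shows that the generated utilities coincide: $U_t^*=U_t^\dagger$.

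\emph{Step 2: transfer the factorization and assumptions.} Take the summary maps $\sigma_t\triangleq\sigma_t^\dagger$.
\begin{itemize}
\item These are continuous, since their components are continuous maps into the product and the image lies in ${\cal X}_t^\dagger$.
\item They satisfy $\varsigma_t\circ\sigma_t=\sigma_t^s$ by construction.
\item Utility factorization and transition consistency in Definition~\ref{defn:memory_representation} are exactly Eqs.~\eqref{eq:factorization_separated} and~\eqref{eq:transition_consistency_separated}.
\item The ontoness convention of Section~\ref{sec:DP} is the one adopted for the separated case.
\item Assumptions~\ref{assu:separated_Markov_feasibility} and~\ref{assu:DP_separated_regular} are literally Assumptions~\ref{assu:Markov_feasibility} and~\ref{assu:DP_regular} for this system, with ${\cal A}_t={\cal A}_t^\dagger$.
\end{itemize}
Consequently $\Pi=\Pi^\dagger$, and by the same backward induction $U_t^{*\pi}=U_t^{\dagger\pi}$ for every policy, so $J_t^*=J_t^\dagger$ and $\tilde j_{t+1}^*=\tilde j_{t+1}^\dagger$.

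\emph{Step 3: conclude.} Applying Theorem~\ref{thm:DP_Bellman} gives the Bellman recursion Eq.~\eqref{eq:Bellman}. Substituting the definitions of $W_t^*$ and $M_t^*$ turns this recursion into Eq.~\eqref{eq:Bellman_separated}. The same theorem supplies attainment of the maxima, a single selector $\bar\pi^\dagger\triangleq\bar\pi$ with $U_t^{\dagger\bar\pi^\dagger}=J_t^\dagger$, and continuity of each $J_t^\dagger$.

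The only point needing care is topological: the aggregators are defined on the projected spaces ${\cal X}_t^{\dagger,y}$ and ${\cal X}_t^{\dagger,z}$, where ${\cal Y}_t$ and ${\cal Z}_t$ are merely Hausdorff. I will verify that composing with $r_t^y$ and $r_t^z$ preserves joint continuity, which holds because these are continuous maps into their images. No compactness or metrizability of the projections is needed, since all the compactness used in Theorem~\ref{thm:DP_Bellman} lives on ${\cal X}_t^\dagger$ itself.
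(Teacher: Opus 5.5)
Your reduction is correct, but it is not how the paper argues. The paper proves Corollary~\ref{cor:DP_separated} by rerunning the dynamic-programming argument directly on ${\cal X}_t^\dagger$. It defines candidate values $V_t^\dagger$ by backward recursion, gets continuity and attainment from Theorem~\ref{thm:berge}, and picks a pointwise selector. It then proves the upper bound and simultaneous attainment by backward induction, essentially copying the proof of Theorem~\ref{thm:DP_Bellman} in separated notation.

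You instead observe that an SPA factorization is already a PA factorization on $(\{{\cal X}_t^\dagger\},\{\varsigma_t^\dagger\},\{\Gamma_t^\dagger\})$. The aggregators are $W_t^\dagger\circ(r_t^z\times{\rm id}_{\cal C}\times{\rm id}_{\Re})$ and $M_t^\dagger\circ(r_t^y\times{\rm id}_{\cal L})$, so the corollary becomes a literal special case of Theorem~\ref{thm:DP_Bellman}. The checks you list are the right ones:
\begin{itemize}
\item continuity of $r_t^y,r_t^z$ into the subspaces ${\cal X}_t^{\dagger,y},{\cal X}_t^{\dagger,z}$;
\item transfer of monotonicity and normalization;
\item agreement of generated utilities and policy utilities by backward induction;
\item the identification $\Pi=\Pi^\dagger$ under the common ontoness convention.
\end{itemize}

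What your route buys is economy and a structural point. It avoids duplicating the argument and makes explicit that separation only restricts how the aggregators depend on the state. The same translation would also give Corollary~\ref{cor:DP_verification_separated} directly from Theorem~\ref{thm:DP_verification}. The paper's route is self-contained and never forms the composite aggregators, at the cost of repetition.

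The factorization hypothesis plays no role in either Bellman argument. That argument uses only continuity of $\Gamma_t^\dagger$ and the aggregators, plus Assumption~\ref{assu:DP_separated_regular}, with Markov feasibility serving to make ${\cal A}_t^\dagger$ well defined. So checking factorization in your Step~2 is needed only to match the stated hypotheses of Theorem~\ref{thm:DP_Bellman}.
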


The proof of Corollary~\ref{cor:DP_separated} shows that there exists at least one SPA selector $\bar\pi^\dagger\in\Pi^\dagger$ satisfying:
\begin{subequations}
\label{eq:Bellman_selector_separated}
\begin{align}
\bar\pi_T^\dagger(\xi_T) \in & \arg\max_{c\in{\cal A}_T^\dagger(\xi_T)} U_{T+1}^\dagger(\Gamma_T^\dagger(\xi_T,c,\star)),\, \forall \xi_T\in{\cal X}_T^\dagger,\\
\bar\pi_t^\dagger(\xi_t) \in & \arg\max_{c\in{\cal A}_t^\dagger(\xi_t)} W_t^\dagger(s_t,z_t,c,M_t^\dagger(s_t,y_t,\tilde{j}_{t+1}^\dagger(\xi_t,c))),\, \forall \xi_t\in{\cal X}_t^\dagger,\, t\in[T-1].
\end{align}
\end{subequations}
Any Bellman selector for Eq.~\eqref{eq:Bellman_selector_separated} induces a history-dependent policy $\bar\varphi \in \Phi$ through the separated summary maps, defined by $\bar\varphi_t(h_t)=\bar\pi_t^\dagger(\sigma_t^\dagger(h_t))$ for all $h_t \in {\cal H}_t$ and $t \in [T]$.

\begin{cor}[Verification, separated]
\label{cor:DP_verification_separated}
Suppose $(\mathfrak{X}^\dagger,{\cal V}^\dagger)$, with summary maps $\{\sigma_t^\dagger\}_{t=0}^{T+1}$, factorizes a compatible utility system ${\bf U}$ for ${\cal P}$, and Assumptions~\ref{assu:separated_Markov_feasibility} and~\ref{assu:DP_separated_regular} hold.
Let $\bar{\pi}^\dagger \in \Pi^\dagger$ be any Bellman selector satisfying Eq.~\eqref{eq:Bellman_selector_separated}, and let $\bar{\varphi}$ be its induced full-history policy.
Then, for every $\varphi \in \Phi$, 
\[
U_t^\varphi(h_t)\leq J_t^\dagger(\sigma_t^\dagger(h_t))=U_t^{\bar\varphi}(h_t),\, \forall h_t\in{\cal H}_t,\, t\in[T].
\]
\end{cor}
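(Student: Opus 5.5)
The plan is to reduce Corollary~\ref{cor:DP_verification_separated} to Theorem~\ref{thm:DP_verification}. The separated factorization is itself a PA factorization in disguise, and the separated Bellman objects coincide with the PA ones.

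First, from $(\mathfrak X^\dagger,{\cal V}^\dagger)$ I build a PA-state system $\mathfrak X$ with ${\cal X}_t\triangleq{\cal X}_t^\dagger$, $\varsigma_t\triangleq\varsigma_t^\dagger$ and $\Gamma_t\triangleq\Gamma_t^\dagger$. Definition~\ref{defn:memory_separated} already gives compactness, metrizability, continuity and the readout identity, so $\mathfrak X$ satisfies Definition~\ref{defn:memory}. On it I define the PA aggregators
\[
W_t^*(\xi_t,c,v)\triangleq W_t^\dagger(r_t^z(\xi_t),c,v),\qquad M_t^*(\xi_t,\tilde u)\triangleq M_t^\dagger(r_t^y(\xi_t),\tilde u),
\]
together with $V_{T+1}^*\triangleq V_{T+1}^\dagger$. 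Each is jointly continuous, being a composition with the continuous restricted projections $r_t^y$ and $r_t^z$. Monotonicity and constant normalization transfer verbatim from Definitions~\ref{defn:memory_time_aggregator_separated} and~\ref{defn:memory_risk_aggregator_separated}. A backward induction on Eq.~\eqref{eq:memory_recursive} versus Eq.~\eqref{eq:memory_recursive_separated} shows that the generated utilities satisfy $U_t^*=U_t^\dagger$ for all $t$. Taking summary maps $\sigma_t\triangleq\sigma_t^\dagger$, which are continuous as tuples of continuous maps and satisfy $\varsigma_t\circ\sigma_t=\sigma_t^s$, conditions (i)--(ii) of Definition~\ref{defn:memory_representation} are exactly Eqs.~\eqref{eq:factorization_separated} and~\eqref{eq:transition_consistency_separated}. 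Hence $(\mathfrak X,{\cal V}^*)$ factorizes ${\bf U}$. Under this identification, Assumptions~\ref{assu:separated_Markov_feasibility} and~\ref{assu:DP_separated_regular} become Assumptions~\ref{assu:Markov_feasibility} and~\ref{assu:DP_regular}, with ${\cal A}_t={\cal A}_t^\dagger$. The standing convention that summary maps are onto carries over as well.

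Next I match the dynamic-programming objects. Since $\Pi=\Pi^\dagger$ as sets of selectors, and the policy recursions in Eqs.~\eqref{eq:memory_recursive_policy} and~\eqref{eq:memory_recursive_policy_separated} agree term by term, backward induction gives $U_t^{*\pi}=U_t^{\dagger\pi}$ for all policies. Therefore $J_t^*=J_t^\dagger$ and $\tilde j_{t+1}^*=\tilde j_{t+1}^\dagger$. The argmax sets in Eq.~\eqref{eq:Bellman_selector} and Eq.~\eqref{eq:Bellman_selector_separated} then coincide, so $\bar\pi^\dagger$ is a PA Bellman selector. Its induced full-history policy $\bar\varphi_t=\bar\pi_t^\dagger\circ\sigma_t^\dagger$ is the same object under both definitions, and it is feasible because of separated Markov feasibility.

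Finally, applying Theorem~\ref{thm:DP_verification} yields $U_t^\varphi(h_t)\le J_t^*(\sigma_t(h_t))=U_t^{\bar\varphi}(h_t)$, which is the claim after substituting $J_t^*=J_t^\dagger$ and $\sigma_t=\sigma_t^\dagger$. I expect the only delicate point to be checking that the aggregators, which are defined on the projected sets ${\cal X}_t^{\dagger,y}$ and ${\cal X}_t^{\dagger,z}$, lift to jointly continuous functions on ${\cal X}_t^\dagger\times\cdots$. If the reachable-image restriction is used, this also requires checking that all objects are restricted consistently. In both cases the issue reduces to continuity of the restricted coordinate projections and to $\Gamma_t^\dagger$ mapping reachable states to reachable states.
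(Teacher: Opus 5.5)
Your reduction is correct, but it follows a different route from the paper.

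\textbf{The paper's route.} The paper proves Corollary~\ref{cor:DP_verification_separated} directly in separated coordinates. It first writes the policy-value identity $U_t^\varphi(h_t)=W_t^\dagger(s_t,z_t,\varphi_t(h_t),M_t^\dagger(s_t,y_t,\tilde u_{t+1}^\varphi(h_t,\varphi_t(h_t))))$ and checks feasibility of $\bar\varphi$ via Assumption~\ref{assu:separated_Markov_feasibility}. It then reruns two backward inductions: an upper bound, using monotonicity of $M_t^\dagger$ and $W_t^\dagger$ together with Eq.~\eqref{eq:Bellman_separated} from Corollary~\ref{cor:DP_separated}, and attainment by the selector. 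In effect it repeats the proof of Theorem~\ref{thm:DP_verification} line by line.

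\textbf{Your route.} You observe that an SPA factorization is a PA factorization whose aggregators factor through $r_t^y$ and $r_t^z$. You check that $\Pi$, $U_t^{*\pi}$, $J_t^*$, $\tilde j_{t+1}^*$, the argmax sets and the induced policy all coincide with their separated counterparts. You then invoke Theorem~\ref{thm:DP_verification}, which rests on Theorem~\ref{thm:DP_Bellman} rather than Corollary~\ref{cor:DP_separated}.

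\textbf{Comparison.} Your argument avoids duplicating the induction and makes explicit that the separated DP results are special cases of the PA ones, which is what justifies calling them corollaries. The cost is the embedding bookkeeping. The paper's argument is self-contained and needs no identification step.

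\textbf{On the point you flagged as delicate.} It is immediate. The maps $r_t^y$ and $r_t^z$ are restrictions of coordinate projections, and ${\cal X}_t^{\dagger,y}=r_t^y({\cal X}_t^\dagger)$ and ${\cal X}_t^{\dagger,z}=r_t^z({\cal X}_t^\dagger)$ carry the subspace topology. So $r_t^y$ and $r_t^z$ are continuous into these sets, and the lifted aggregators are jointly continuous. Since the paper takes the separated summary maps to be onto, no reachable-image restriction arises either.
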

\noindent
It follows that $\bar\varphi$ is optimal in $\Phi$.
Solving the value problems in Eq.~\eqref{eq:Bellman_separated} may be simplified by the separated structure of the time and risk aggregators.

\section{Examples}
\label{sec:examples}

This section develops constructive examples of economically interpretable PA or SPA-state systems and aggregators that satisfy the conditions of the recursive and DP results.
These examples are categorized into five groups. We first discuss history-independent models where the preference-memory component is trivial. Next, we discuss SPA preferences with either active belief or taste coordinates (and the other inactive). Then we consider SPA preferences with decoupled transitions followed by SPA models with coupled transitions. Finally, we consider fully entangled PA models.

We let
\[
\mathbb{E}_{s_t}[\tilde{u}] \triangleq \sum_{s' \in {\cal S}} q_t(s' \vert s_t) \tilde{u}(s'),\, \forall \tilde{u} \in {\cal L},
\]
denote expectation with respect to the physical transition kernel.
We also let $u : I \rightarrow \Re$ be an instantaneous consumption utility on a compact interval $I$ which contains the effective domains needed below, and $\beta \in (0, 1]$ be a discount factor.

The displayed time and risk aggregators and non-terminal memory updates apply for periods $t\in[T-1]$. Let $\bar U_T : {\cal X}_T \times {\cal C} \rightarrow \Re$ be the desired continuous terminal utility which can depend on the period $T$ PA state $x_T$ and consumption $c_T$.
Then, we take ${\cal X}_{T+1}=\{\star\}\times{\cal X}_T\times{\cal C}$, $\Gamma_T(x_T,c,\star) = (\star,x_T,c)$, and $V_{T+1}^*(\star,x_T,c) = \bar U_T(x_T,c)$.
For an analogous SPA implementation, let $\bar U_T^\dagger : {\cal X}_T^\dagger \times {\cal C} \rightarrow \Re$ be the desired continuous terminal utility. Then, we take ${\cal X}_{T+1}^\dagger = \{\star\}\times\{\ast\}\times ({\cal X}_T^\dagger\times{\cal C})$, $\Gamma_T^\dagger(\xi_T,c,\star)=(\star,\ast,(\xi_T,c))$, and $V_{T+1}^\dagger(\star,\ast,(\xi_T,c))=\bar U_T^\dagger(\xi_T,c)$.

Starred primitives refer to PA preferences and daggered primitives refer to SPA preferences.
Inactive belief or taste coordinates are denoted by $\ast$, and inactive aggregator arguments are suppressed.
Some displayed formulas are effective-domain specifications, for example when logarithms, powers, or ratios are used. We assume the hypotheses of our extension results hold, so we may select global continuous monotone extensions where needed.

\subsection{History Independence}

The preference-memory component is inactive in history-independent models, so ${\cal M}_t = \{\ast\}$ and $x_t = (s_t, \ast)$ for all $t \in [T]$. 
The transition map is $\Gamma_t((s_t,\ast),c,s')=(s',\ast)$ for all $t \in [T-1]$.
The time and risk aggregators only depend on the current physical Markov state, and are written $W_t^*(s_t, \cdot)$ and $M_t^*(s_t, \cdot)$ for all $t \in [T-1]$.
These examples are trivially PA preferences, but they also automatically satisfy separation of beliefs and tastes.

\begin{example}[Discounted expected utility]
The time aggregator $W_t^*(c, v) = u(c) + \beta v$ and risk aggregator $M_t^*(s_t, \tilde{u}) = \mathbb{E}_{s_t}[\tilde{u}]$ recover discounted expected utility.
\end{example}

\begin{example}[Epstein--Zin]
Let $\rho \in (0,1)$ be the intertemporal substitution parameter and $\gamma \in (0,1)$ be the risk-aversion parameter.
Take the CES time aggregator $W_t^*(c, v) = [(1-\beta) c^\rho + \beta v^\rho]^{1/\rho}$ and the power mean CRRA risk aggregator $M_t^*(s_t, \tilde{u}) = (\mathbb{E}_{s_t}[ \tilde{u}^\gamma ])^{1/\gamma}$ to recover Epstein-Zin preferences.
\end{example}

\begin{example}[Fixed-prior smooth ambiguity]
This example considers ambiguity over subjective transition models.
Let $\Theta$ be a compact set of transition models, and suppose $\theta \to p_\theta(\cdot \vert s_t)$ is continuous.
Each $\theta \in \Theta$ indexes a subjective candidate transition kernel $p_\theta(\cdot \vert s_t) \in \Delta({\cal S})$.
For each $t \in [T-1]$, let $\nu_t$ be a prior distribution over $\Theta$.
Let $\phi : \Re \rightarrow \Re$ model DM's ambiguity attitude, and $\psi : \Re \rightarrow \Re$ model DM's risk attitude, where both $\phi$ and $\psi$ are strictly increasing homeomorphisms with relevant domains.
Let $\langle p_\theta(\cdot \vert s_t),\psi(\tilde{u})\rangle \triangleq \sum_{s' \in {\cal S}} p_\theta(s' \vert s_t) \psi(\tilde{u}(s'))$ denote expectation with respect to $p_\theta(\cdot \vert s_t)$.
Then, take the time aggregator $W_t^*(c, v) = u(c) + \beta v$ and the risk aggregator
\[
M_t^*(s_t, \tilde{u}) = \phi^{-1}\!\left(\int_{\Theta}
\phi\!\left(\psi^{-1}\!\left(\langle p_\theta(\cdot \vert s_t),\psi(\tilde{u})\rangle\right)\right)
d\nu_t(\theta) \right),
\]
to recover recursive smooth ambiguity preferences over subjective transition models (see \cite{klibanoff2009recursive}). No preference-memory state is required since the priors $\{\nu_t\}_{t=0}^{T-1}$ are fixed.
\end{example}

\begin{example}[Markov risk measures]
In this example, an adversary can perturb the reference transition kernels $q_t(\cdot \vert s_t)$.
Let $l_t : {\cal S} \rightarrow \Re_{\geq 0}$ be the adversary's perturbation density with respect to $q_t(\cdot \vert s_t)$ (corresponding to the Radon--Nikodym derivative), subject to the constraint $\mathbb{E}_{s_t}[l_t] = 1$. The distorted probability density due to $l_t$ is then $p_t(s' \vert s_t) = l_t(s') q_t(s' \vert s_t)$ for all $s' \in {\cal S}$.
Let $\Delta_t(s_t) \triangleq \{l_t \in \Re_{\geq 0}^{\cal S} : \mathbb{E}_{s_t}[l_t] = 1\}$ denote the set of admissible densities in period $t$ in state $s_t$.
Let ${\cal U}_t(s_t)\subseteq\Delta_t(s_t)$ be a nonempty compact convex set of admissible densities representing the adversary's uncertainty set.
Then, take the time aggregator $W_t^*(c, v) = c + v$ and the risk aggregator $M_t^*(s_t, \tilde{u}) = \inf_{l_t \in {\cal U}_t(s_t)} \mathbb{E}_{s_t} [l_t \tilde{u}]$ for $t \in [T-1]$ to recover nested Markov risk measures (see \cite{ruszczynski2010risk}).
\end{example}

\subsection{Pure Beliefs}

The taste state is inactive in all the examples in this subsection, so ${\cal Z}_t=\{\ast\}$ and $\xi_t=(s_t,y_t,\ast)$ for all $t \in [T]$.

\begin{example}[Bayesian learning]
\emph{State system.} Let $\Theta=\{\theta_1,\ldots,\theta_J\}$ index a finite family of full-support subjective transition models $p_\theta(\cdot\vert s_t)$, possibly misspecified relative to the physical kernel $q_t$.
The belief coordinate is the posterior distribution $y_t \in \Delta(\Theta)$ where $y_t(\theta)=\Pr(\theta\vert h_t)$ for all $\theta \in \Theta$.
The separated PA state is $\xi_t=(s_t,y_t,\ast)\in{\cal S}_t\times\Delta(\Theta)\times\{\ast\}$.
After observing $s_{t+1}=s'$, we update $y_t$ by Bayes' rule to get:
\[
[\Gamma_t^y(s_t,y_t,c_t,s')](\theta) = \frac{ p_\theta(s'\vert s_t)y_t(\theta)}{\sum_{\theta'\in\Theta}p_{\theta'}(s'\vert s_t)y_t(\theta')},\, \forall \theta \in \Theta.
\]
Then the overall transition map is $\Gamma_t^\dagger((s_t,y_t,\ast),c_t,s') = \left( s', \Gamma_t^y(s_t,y_t,c_t,s'), \ast \right)$.

\emph{Aggregators.} We can take the standard additive time aggregator $W_t^\dagger(c,v)=u(c)+\beta v$.
To represent smooth ambiguity over the candidate transition models, let $\phi$ and $\psi$ be strictly increasing homeomorphisms on the relevant domains, and define
\[
M_t^\dagger(s_t, y_t, \tilde{u}) = \phi^{-1}\!\left( \sum_{\theta \in \Theta}
\phi\!\left(\psi^{-1}\!\left(\langle p_\theta(\cdot \vert s_t),
\psi(\tilde{u})\rangle\right)\right) y_t(\theta) \right),
\]
which evaluates smooth ambiguity under the current posterior (see \cite{ju2012ambiguity}).
\end{example}

\begin{example}[Hidden-regime filtering]
This example is a finite-state SPA representation of hidden-regime beliefs. The physical state process remains governed by $q_t(\cdot\vert s_t)$. The decision maker nevertheless evaluates uncertainty using a subjective hidden-regime model.

\emph{State system.}
We suppose the regimes are $\zeta_t\in\{0,1\}$, where $\zeta_t=1$ is good and $\zeta_t=0$ is bad.
The hidden regime is not part of the physical state; it is a subjective model used to evaluate transition uncertainty.
The belief state is $y_t=\Pr(\zeta_t=1\vert h_t)\in{\cal Y}\triangleq[0,1]$, and the SPA state is $\xi_t=(s_t,y_t,\ast)\in{\cal S}_t\times{\cal Y}\times\{\ast\}$.

The regime transition matrix $P=(P_{ij})_{i,j\in\{0,1\}}$ is known, where $P_{ij} = {\rm Pr}(\zeta_{t+1} = j \vert \zeta_t = i) > 0$. Conditional on the next regime $i\in\{0,1\}$ and current observed state $s_t$, DM assigns next-state probabilities $p_i(\cdot \vert s_t) \in \Delta({\cal S})$ (which have full support) for $i \in \{0,1\}$.
Given $y_t$, the forecast probability of the good regime in period $t+1$ is $\widehat y_{t+1} = P_{11}y_t+P_{01}(1-y_t)$.
After observing $s_{t+1}=s'$, Bayes' rule gives
\[
\Gamma_t^y(s_t,y_t,c_t,s') = \frac{ \widehat y_{t+1}p_1(s'\vert s_t) }{ \widehat y_{t+1}p_1(s'\vert s_t) + (1-\widehat y_{t+1})p_0(s'\vert s_t)},
\]
where the denominator is strictly positive by the full support assumption.
The overall SPA transition map is then $\Gamma_t^\dagger((s_t,y_t,\ast),c_t,s') = \left( s', \Gamma_t^y(s_t,y_t,c_t,s'), \ast \right)$.

\emph{Aggregators.}
Take the standard $W_t^\dagger(c,v)=u(c)+\beta v$. The predictive distribution of the next observed state is
\[
\widehat p(s'\vert s_t,y_t) \triangleq \widehat y_{t+1}p_1(s'\vert s_t) + (1-\widehat y_{t+1})p_0(s'\vert s_t),\, \forall s' \in {\cal S},
\]
and we take the aggregator $M_t^\dagger(s_t,y_t,\tilde u) = \sum_{s' \in {\cal S}}\widehat p(s'\vert s_t,y_t)\tilde u(s')$.
The dependence on the hidden regime enters only through the belief coordinate $y_t$ and the subjective risk aggregator $M_t^\dagger$.
\end{example}

\begin{example}[Multiplier preferences]
This example recovers the multiplier preferences of \cite{hansen2001robust}.
Let ${\cal S}=\{0,1\}$ where $s_t=1$ is good and $s_t=0$ is bad, and let $y_t \in {\cal Y} = [0,1]$ be DM's confidence.
The confidence transition map is $y_{t+1} = \Gamma_t^y(y_t, s_{t+1}) = \alpha s_{t+1} + (1-\alpha) y_t$ for $\alpha \in [0,1]$.

We again take the time aggregator $W_t^\dagger(c, v) = u(c) + \beta v$. Let $\gamma:{\cal Y} \to \Re_{>0}$ be continuous and increasing. Then the multiplier-based risk aggregator is:
\[
M_t^\dagger(s_t, y_t, \tilde{u}) = -\gamma(y_t) \ln \mathbb{E}_{s_t}[\exp(-\tilde{u}/\gamma(y_t))].
\]
Higher confidence raises ambiguity tolerance: as $\gamma(y_t) \rightarrow \infty$, we approach a risk-neutral aggregator, while $\gamma(y_t) \rightarrow 0$ approaches the minimum continuation utility over the support of $q_t(\cdot\vert s_t)$.
\end{example}

\subsection{Pure Tastes}

In all examples in this subsection, the belief state is inactive at nonterminal dates so ${\cal Y}_t=\{\ast\}$ and $\xi_t=(s_t,\ast,z_t)$.
First let $z_t$ be the habit stock, then fix a compact interval ${\cal Z}$ and suppose feasible consumption keeps $z_t\in{\cal Z}$.
We initialize with $z_0 > 0$, and the normalized habit transition map is $\Gamma_t^z(z_t,c_t) = \alpha z_t+(1-\alpha)c_t$, for persistence parameter $\alpha \in [0,1]$.
The overall transition map is then $\Gamma_t^\dagger((s_t,\ast,z_t),c_t,s') = (s',\ast,\Gamma_t^z(z_t,c_t))$.

\begin{example}[Additive habit]
Take $W_t^\dagger(z_t, c, v) = u(c - z_t) + \beta v$ and $M_t^\dagger(s_t, \tilde{u}) = \mathbb{E}_{s_t}[\tilde{u}]$.
The instantaneous utility from consumption depends on the additive difference from the habit stock.
\end{example}

\begin{example}[Ratio habit]
For a ratio habit, we take $W_t^\dagger(z_t, c, v) = u(c/z_t) + \beta v$ for $c, z_t > 0$, and $M_t^\dagger(s_t, \tilde{u}) = (\mathbb{E}_{s_t}[\tilde{u}^\gamma])^{1/\gamma}$.
The instantaneous utility from consumption depends on the ratio with respect to the habit stock.
\end{example}

\begin{example}[State-dependent habit]
This example models state-dependent habit formation.
Suppose the taste transition map is now $z_{t+1} = \Gamma_t^z(z_t, c_t, s_{t+1}) = \alpha(s_{t+1}) z_t + (1 - \alpha(s_{t+1})) c_t$, where $\alpha(s_{t+1}) \in (0,1)$ reflects state-dependent persistence, e.g., habits form faster during a crisis.
We then take the time aggregator $W_t^\dagger(z_t,c,v)=u(c-z_t)+\beta v$ and the risk aggregator $M_t^\dagger(s_t, \tilde{u}) = \mathbb{E}_{s_t}[\tilde{u}]$.
\end{example}

\begin{example}[Relative consumption]
In this example, the evolution of the habit stock depends on a state-dependent consumption benchmark, such as an aggregate economic measure.
Let $\tilde{C}(s_t)$ be the state-dependent benchmark, and $z_{t+1} = \Gamma_t^z(z_t, c_t, s_{t+1}) = \alpha z_t + (1-\alpha)(c_t - \tilde{C}(s_{t+1}))$.
Let $g : {\cal Z} \rightarrow \Re$ be a continuous function that measures additional utility from the relative standing captured by $z_t$.
We can then take $W_t^\dagger(z_t, c, v) = u(c) + g(z_t) + \beta v$ and $M_t^\dagger(s_t, \tilde{u}) = (\mathbb{E}_{s_t}[\tilde{u}^\gamma])^{1/\gamma}$ for $\gamma \in (0,1)$.
\end{example}

\begin{example}[Durable stock]
For this example, let $z_t$ be a durable stock with unnormalized transition map $z_{t+1} = \Gamma_t^z(z_t, c_t) = \alpha z_t + c_t$.
We take the Cobb-Douglas time aggregator $W_t^\dagger(z_t, c, v) = u(c^{\theta} z_t^{1-\theta}) + \beta v$ for $c, z_t > 0$ and $\theta \in (0,1)$, and the risk aggregator $M_t^\dagger(s_t, \tilde{u}) = \mathbb{E}_{s_t}[\tilde{u}]$.
The stock of durable goods increases utility from present consumption. 
\end{example}

For the next example, let $z_t$ denote wealth.
Wealth often appears as a physical constraint, e.g., as a budget constraint which does not affect tastes. Here we emphasize wealth as a taste state.
Let $\tilde{R} \in {\cal L}$ be a random return rate (e.g., market returns) and $\tilde{Y} \in {\cal L}$ be random income.
The wealth is initialized with $z_0 > 0$, and the transition map is:
\begin{equation}
\label{eq:wealth}
z_{t+1} = \Gamma_t^z(z_t, c_t, s_{t+1}) = \tilde{R}(s_{t+1}) (z_t - c_t) + \tilde{Y}(s_{t+1}),\, \forall t \in [T-1].
\end{equation}
At date $T$, use the standing terminal convention with terminal payoff $\bar U_T^\dagger((s_T,\ast,z_T),c_T) = u_T(c_T,z_T-c_T)$.
In the wealth examples throughout the rest of this section, we suppose returns are bounded and positive, income is bounded, and $0\le c_t \leq z_t$.

\begin{example}[Wealth-dependent utility]
We can take the modified EZ time aggregator
\[
W_t^\dagger(z_t, c, v) = [(1-\beta) (c^{1-\theta} z_t^\theta)^\rho + \beta v^\rho]^{1/\rho}
\]
for $\theta \in (0,1)$, where wealth increases the enjoyment of current consumption.
The alternative EZ time aggregator $W_t^\dagger(z_t, c, v) = [(1-\beta) c^\rho + \beta (v^{1-\theta} z_t^\theta)^\rho]^{1/\rho}$ means wealth increases 
future continuation utility.
We can then take the standard risk aggregator $M_t^\dagger(s_t, \tilde{u}) = \mathbb{E}_{s_t}[\tilde{u}]$.
\end{example}

\subsection{SPA with Decoupled Transitions}

These examples have both active belief and taste states, with decoupled transitions.

\begin{example}[Sentiment and habit]
This example combines sentiment and habit stock with separated aggregators and decoupled transitions.

\emph{State system.}
Let ${\cal S}=\{0,1\}$, where $s=1$ is a normal state and $s=0$ is a crash. Let $y_t\in[0,1]$ be the sentiment and let $z_t$ be the habit stock.
The transition maps are $\Gamma_t^y(y_t,s_{t+1}) = \alpha_1 s_{t+1}+(1-\alpha_1)y_t$ for $\alpha_1\in(0,1)$, and $\Gamma_t^z(z_t,c_t) = \alpha_2 z_t+(1-\alpha_2)c_t$ for $\alpha_2\in(0,1)$.

\emph{Aggregators.}
Take the taste-dependent time aggregator $W_t^\dagger(z_t,c,v)=u(c-z_t)+\beta v$.
Let $p:[0,1]\to[0,1]$ be continuous and increasing, where $p(y_t)$ is the subjective probability of the normal state $s=1$. Then define the risk aggregator $M_t^\dagger(y_t,\tilde{u}) = p(y_t)\tilde{u}(1) + (1-p(y_t))\tilde{u}(0)$.
The dependence on the next habit and sentiment states enters through the continuation utility
\[
[\tilde{u}_{t+1}^\dagger(\xi_t,f)](s)=U_{t+1}^\dagger(\Gamma_t^\dagger(\xi_t,c,s),f_+(s)),\, \forall s \in {\cal S},
\]
where $\Gamma_t^\dagger(\xi_t,c,s) = (s, \Gamma_t^y(y_t,s), \Gamma_t^z(z_t,c))$, but not directly through the risk aggregator.
\end{example}

\begin{example}[Sentiment and wealth]
This example combines wealth and robustness with separated aggregators, and decoupled transitions.

\emph{State system.} Let ${\cal S}=\{0,1\}$ where $s_t = 1$ is a market boom and $s_t = 0$ is a market crash. Let $y_t \in {\cal Y}=[0,1]$ denote the sentiment and $z_t \in {\cal Z} =[\underline z,\overline z] \subset \Re_{>0}$ denote the cumulative wealth.
The transition maps are $y_{t+1} = \Gamma_t^y(y_t, s_{t+1}) = \alpha s_{t+1} + (1-\alpha)y_t$ for the sentiment and $z_{t+1} = \Gamma_t^z(z_t, c_t, s_{t+1}) = R(s_{t+1})(z_t - c_t) + Y(s_{t+1})$ for the wealth.

\emph{Aggregators.}
We can take the taste-dependent time aggregator $W_t^\dagger(z_t, c, v) = u(c^\theta z_t^{1-\theta}) + \beta v$.
Let $\gamma : {\cal Y} \rightarrow \Re_{>0}$ be a continuous and increasing ambiguity tolerance function (as sentiment improves, DM becomes less ambiguity-averse).
Then take the risk aggregator $M_t^\dagger(s_t, y_t, \tilde{u}) = - \gamma(y_t) \ln \mathbb{E}_{s_t} [\exp{(-\tilde{u}/\gamma(y_t))}]$.
The belief and wealth states are correlated through the common state, but neither update depends directly on the other preference coordinate and they are separated through the time and risk aggregators.
\end{example}

\subsection{SPA with Coupled Transitions}

These examples have separated belief and taste aggregators, with coupled belief and taste transitions.
For a compact interval $I=[\underline a,\overline a]$, let $\Pi_I(r)\triangleq\min\{\overline a,\max\{\underline a,r\}\}$ for $r \in \Re$ denote clipping onto $I$.

\begin{example}[Wealth-driven confidence]
Let ${\cal Y}=[\underline y,\overline y]\subset \Re_{>0}$ and ${\cal Z}=[\underline z,\overline z]\subset \Re_{>0}$.
The belief state $y_t$ is confidence, and the taste state $z_t$ is wealth.
Wealth evolves according to $z_{t+1} = \Gamma_t^z(z_t, c_t, s_{t+1}) = (z_t - c_t) R(s_{t+1})$, so the expected wealth is
\[
\bar z_{t+1}(s_t,z_t,c_t) \triangleq \sum_{s' \in{\cal S}} q_t(s' \vert s_t) \Gamma_t^z(z_t,c_t,s').
\]
Confidence follows the dynamic:
\[
y_{t+1} = \Gamma_t^y(\xi_t,c_t,s') = \Pi_{\cal Y} \left[ \exp\left( \alpha\log y_t + (1-\alpha) \frac{\Gamma_t^z(z_t,c_t,s')- \bar z_{t+1}(s_t,z_t,c_t)}{z_t} \right) \right],
\]
which depends on the realized wealth relative to expected wealth.
We can then take the time aggregator $W_t^\dagger(z_t, c, v) = u(c^\theta z_t^{1-\theta})+\beta v$ for $\theta \in (0,1)$.
Let $\gamma : {\cal Y} \rightarrow \Re_{>0}$ be continuous and increasing, and define $M_t^\dagger(s_t, y_t, \tilde{u}) = -\gamma(y_t) \ln \mathbb{E}_{s_t}[\exp(-\tilde{u}/\gamma(y_t))]$.
The confidence transition includes an adjustment for the realized relative change in wealth (and thus depends on wealth, so the transitions are coupled). 
\end{example}

\begin{example}[Optimism-driven aspirations]
\emph{State system.}
Let ${\cal S}=\{0,1\}$, where $1$ is a favorable state. Let ${\cal Y}=[0,1]$ and ${\cal Z}=[\underline z,\overline z]$, and write $\xi_t=(s_t,y_t,z_t)\in{\cal S}_t\times{\cal Y}\times{\cal Z}$.
The belief coordinate $y_t$ is optimism, interpreted as the subjective probability of the favorable state. The taste coordinate $z_t$ is a reference stock.

Let $\alpha_1, \alpha_2 \in[0,1]$ and $\eta\geq0$. Define $\Gamma_t^y(\xi_t,c_t,s') = \alpha_1 y_t+(1-\alpha_1)s'$ and
\[
\Gamma_t^z(\xi_t,c_t,s') = \Pi_{\cal Z} \left( \alpha_2 z_t+(1-\alpha_2)c_t+\eta y_t\right),
\]
so the overall transition is $\Gamma_t^\dagger(\xi_t,c_t,s') = \left( s', \Gamma_t^y(\xi_t,c_t,s'), \Gamma_t^z(\xi_t,c_t,s') \right)$.

\emph{Aggregators.}
Let $W_t^\dagger(z_t,c,v) = u(c)-\lambda z_t+\beta v$ for $\lambda \geq 0$, and let $M_t^\dagger(s_t,y_t,\tilde u) = y_t\tilde u(1)+(1-y_t)\tilde u(0)$.
The aggregators are separated: $W_t^\dagger$ depends on the taste coordinate $z_t$ while $M_t^\dagger$ depends on the belief coordinate $y_t$. The transition maps are coupled, however, since the belief state $y_t$ affects the evolution of the taste state $z_{t+1}$.
Optimism raises future aspirations, which then lower future felicity through the reference stock.
\end{example}

\subsection{Entangled PA}

We conclude our taxonomy by discussing fully entangled preferences.

\begin{example}[Anxiety]
\emph{State system.}
Let ${\cal S} = \{0, 1\}$ where $s_t = 1$ reflects normal conditions and $s_t = 0$ reflects disaster.
Let the preference-memory state $m_t \in [0,1]$ be the subjective probability of a disaster (anxiety), which evolves according to:
\[
m_{t+1} = \Gamma_t^m(x_t, c_t, s_{t+1}) = \alpha\, m_t + (1-\alpha) (1 - s_{t+1}),
\]
for $\alpha \in (0,1)$.

\emph{Aggregators.}
We let $\lambda > 0$ measure the effect of anxiety on utility.
Then we take the time aggregator $W_t^*(m_t, c, v) = u(c) - \lambda m_t^2 + \beta v$ and the risk aggregator $M_t^*(m_t, \tilde{u}) = m_t \tilde{u}(0) + (1 - m_t) \tilde{u}(1)$.
These preferences are entangled because $m_t$ appears in both the time and risk aggregators.
Observed behavior may reflect either pessimistic subjective beliefs through $M_t^*$, or a direct utility cost of anxiety through $W_t^*$.
\end{example}

\begin{example}[Wealth target]
This example is based on a wealth target, where risk sensitivity changes above and below the target.

\emph{State system.} Let $m_t$ be the current wealth, and  $m_{t+1} = \Gamma_t^m(m_t, c_t, s_{t+1}) = (m_t - c_t) R(s_{t+1})$ be the wealth transition.

\emph{Aggregators.}
Let $\eta_\kappa(m)=\bar{\eta} \tanh(\kappa(m - \bar{m}))$ for $\bar{\eta} > 0$ and $\kappa > 0$.
We take the wealth-dependent time aggregator $W_t^*(m_t,c,v)=u(c^\theta m_t^{1-\theta})+\beta v$ for $\theta\in(0,1)$ on $m_t > 0$ and $c > 0$.
The smooth wealth-dependent entropic aggregator is
\[
M_t^*(s_t,m_t,\tilde{u})=
\begin{cases}
-\dfrac{1}{\eta_\kappa(m_t)}
\log \mathbb E_{s_t}[\exp(-\eta_\kappa(m_t)\tilde{u})],
& \eta_\kappa(m_t)\neq 0,\\
\mathbb E_{s_t}[\tilde{u}],
& \eta_\kappa(m_t)=0.
\end{cases}
\]
Risk attitudes change depending on the exogenous wealth benchmark $\bar m$.
Below the target, $\eta_\kappa(m)<0$ drives risk-seeking behavior; above the target, $\eta_\kappa(m)>0$ drives risk-averse behavior. 
Preferences are entangled because wealth affects both the time aggregator and the risk aggregator.
\end{example}

\begin{example}[Wealth-dependent ambiguity]
In this example, wealth changes ambiguity tolerance monotonically.
Let $m_t$ be the wealth with transition map $m_{t+1} = \Gamma_t^m(m_t, c_t, s_{t+1}) = (m_t - c_t) R(s_{t+1})$.
Let $\gamma : {\cal M} \rightarrow \Re_{>0}$ be a continuous and increasing ambiguity tolerance function.
We take the time aggregator $W_t^*(m_t, c, v) = u(c^\theta m_t^{1-\theta}) + \beta v$ for $\theta \in (0,1)$ and the risk aggregator $M_t^*(s_t, m_t, \tilde{u}) = - \gamma(m_t) \ln(\mathbb{E}_{s_t}[\exp{(-\tilde{u}/\gamma(m_t))}])$.
Preferences are entangled because wealth enters current utility through $W_t^*$ and ambiguity tolerance through $M_t^*$.
\end{example}

\begin{example}[Campbell-Cochrane]
This is a finite-state external-habit version of the Campbell--Cochrane model \cite{campbell1999force}. We augment the external-habit specification with a surplus-dependent risk aggregator.

\emph{State system.} Let ${\cal S}$ be a finite set of growth states. Then $x_t=(s_t,m_t)\in{\cal S}_t\times{\cal M}$ where $m_t$ is the log surplus ratio and ${\cal M}\triangleq[\underline m,\overline m]$ is compact.
For the transitions, let $g:{\cal S}\to[\underline g,\overline g]$, where $g(s_{t+1})$ is the log consumption growth in state $s_{t+1}$.
Let $m^\star\in{\cal M}$ be the steady-state log surplus ratio, and let $\lambda:{\cal M}\to\Re_{>0}$ be continuous and decreasing.
Then
\[
\Gamma_t^m(m_t,c_t,s') = \alpha m_t+(1-\alpha)m^\star+\lambda(m_t)g(s'),
\]
where $\alpha \in (0,1)$ and we assume the invariance $\Gamma_t^m({\cal M},{\cal C},{\cal S})\subseteq{\cal M}$ holds for all $t \in [T-1]$.

\emph{Aggregators.}
Suppose ${\cal C}\subseteq[0,\bar c]$, and let $u$ be continuous and increasing on $[0,\bar c\,e^{\overline m}]$. 
Take the time aggregator $W_t^*(m_t,c,v) = u(c e^{m_t})+\beta v$ where the surplus ratio scales the utility of current consumption.
Let $\tau:{\cal M}\to[\underline\tau,\overline\tau]\subset \Re_{>0}$ be continuous and increasing where larger values of $\tau(m_t)$ correspond to greater risk tolerance, and define the risk aggregator
\[
M_t^*(s_t,m_t,\tilde u) = -\tau(m_t) \log \left( \sum_{s'\in{\cal S}} q_t(s'\vert s_t) \exp\!\left(-\frac{\tilde u(s')}{\tau(m_t)}\right) \right).
\]
Risk attitudes depend on the surplus; a higher surplus makes the decision-maker less risk averse.
The surplus $m_t$ affects both current felicity and risk sensitivity, so this is an entangled PA representation rather than a separated SPA representation.
\end{example}

\section{Conclusion}
\label{sec:conclusion}

This paper developed a recursive representation for history-dependent preferences over uncertain finite-horizon consumption streams.
We identified conditions under which the history dependence compresses into a canonical PA-state system.
We further showed that the canonical system is the coarsest among reachable recursive factorizations of the selected compatible utility system.
Under Markov feasibility and standard DP compactness and continuity assumptions on feasible consumption, we obtained a Bellman recursion on the PA state and verified that a PA Bellman selector induces an optimal full-history policy.
Under additional rectangularity and exhaustiveness conditions, the preference-memory component separates into distinct belief and taste coordinates in both the recursive representation and the Bellman recursion.
The unrestricted PA representation permits entangled beliefs and tastes; separation of beliefs and tastes requires additional structure.
Our examples organize PA and SPA preference models into a taxonomy ranging from history-independent to fully entangled.

Our framework can be used in two complementary directions: reductive and constructive. In the reductive direction, it derives a canonical PA-state system by quotienting with respect to an equivalence relation on histories. Under additional rectangularity and exhaustiveness conditions, it derives separated belief and taste coordinates to produce an SPA-state system.
In the constructive direction, it generates recursive preferences from a chosen state-transition system and aggregators (as in the examples).
The corresponding Bellman and verification results apply whenever feasibility is Markov with respect to the selected PA or SPA state and the DP regularity conditions hold.

The full-history, PA, and SPA formulations differ in what the state records, how it evolves, and which coordinates enter the time and risk aggregators.
They nevertheless share a common abstract structure: a state--transition system together with a recursive structure on it. We developed the mathematical machinery at this abstract level and applied it to the full-history, PA, and SPA preferences to obtain our main results.
Natural extensions include forward-looking targets, continuous-time formulations, and infinite-horizon models with limited memory.

\bibliographystyle{plain}
\bibliography{References}

\appendix

\section{Mathematical Background}
\label{app:background}

This section collects supporting results for ease of reference.

\subsection{Continuous Utility Representation}

We establish existence of compatible utility systems by using the following result.

\begin{thm}[Debreu's representation theorem]
\label{thm:debreu}
Let $X$ be a second-countable topological space and $\succsim$ a weak order on $X$ with closed upper and lower contour sets. Then there is a continuous $u:X\to\Re$ such that $x\succsim x'$ if and only if $u(x)\ge u(x')$ \citep{debreu1954representation}.
\end{thm}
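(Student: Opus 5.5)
The proof I would give is the classical two-stage argument. First build a (possibly discontinuous) utility $u_0$ from a countable base. Then repair continuity by composing with a strictly increasing real function supplied by Debreu's gap lemma.

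\emph{Stage 1 (order-preserving $u_0$).} Fix a countable base $\{B_n\}_{n\ge1}$ of $X$. For each $x\in X$ let $L(x)\triangleq\{y:x\succ y\}$. Since the upper contour set $\{y:y\succsim x\}$ is closed and $\succsim$ is complete, $L(x)$ is its complement, so $L(x)$ is open. Define
\[
u_0(x)\triangleq\sum_{n:\,B_n\subseteq L(x)}2^{-n}.
\]
If $x\succsim x'$, transitivity gives $L(x')\subseteq L(x)$, hence $u_0(x)\ge u_0(x')$. If $x\succ x'$, then $x'\in L(x)$, and openness gives some $B_n$ with $x'\in B_n\subseteq L(x)$. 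This $B_n$ is not contained in $L(x')$, because $x'\notin L(x')$. Hence $u_0(x)>u_0(x')$. Completeness then shows that $u_0$ represents $\succsim$.

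\emph{Stage 2 (gap lemma).} I would prove Debreu's gap lemma: for every $S\subseteq\Re$ there is a strictly increasing $g:S\to\Re$ such that every nondegenerate gap of $g(S)$ is an open interval. A gap here is a maximal interval of $\Re\setminus g(S)$ that has points of $g(S)$ on both sides. This is the step I expect to be the main obstacle. I would first try the standard construction.
\begin{enumerate}
\item Enumerate the countably many nondegenerate gaps of $S$ (more precisely, of its closure), including the "closed" gaps whose endpoints both lie in $S$.
\item Build $g$ as a Cantor-function-style distribution function: place weight only on a countable dense subset of $S$, and assign zero mass to every gap.
\item This forces each gap of $S$ to be collapsed or rendered open. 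Points of $S$ bounding a gap from both sides get distinct values, and those values are adjacent in $g(S)$ only across an open gap.
\end{enumerate}
Care is needed with points of $S$ that are endpoints of two gaps, and with the countability bookkeeping.

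\emph{Stage 3 (continuity of $u=g\circ u_0$).} Set $u=g\circ u_0$; it still represents $\succsim$. It suffices to show that $u^{-1}((a,\infty))$ and $u^{-1}((-\infty,a))$ are open for every $a$. I would split into three cases.
\begin{enumerate}
\item If $a=u(x_a)$ for some $x_a$, then $u^{-1}((a,\infty))=\{x:x\succ x_a\}$, which is open by closedness of the lower contour set.
\item If $a\notin u(X)$ and some value of $u(X)$ lies arbitrarily close above $a$, write $u^{-1}((a,\infty))$ as a union of strict upper sets $\{x:x\succ x_b\}$ over $u(x_b)>a$. This union is open.
\item Otherwise $a$ lies in a gap of $u(X)$. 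By the gap lemma that gap is open with endpoint $u(x_b)\in u(X)$, so $u^{-1}((a,\infty))=\{x:x\succsim x_b\}$. The same set equals $\{x:x\succ x_c\}$ for the lower endpoint value $u(x_c)$, and is therefore open.
\end{enumerate}
The lower sets are handled symmetrically. Hence $u$ is continuous and represents $\succsim$.
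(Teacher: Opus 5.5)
Your Stages 1 and 3 follow Debreu's classical route and are essentially correct. Stage 3 should also dispose of levels $a$ lying below or above all of $u(X)$, where the preimage is $X$ or $\emptyset$. The paper only quotes this theorem, so everything rests on your Stage 2, and there the proposal has a genuine gap: the open gap lemma is not proved, and the construction you sketch would fail.

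Take $g(s)=\mu((-\infty,s])$ with $\mu$ a weighted sum of point masses on a countable dense $D\subseteq S$. Then $g$ jumps at each $d\in D$. Suppose $d$ is a limit of points of $S$ from the left. Density of $D$ puts mass in every $(s,d)$, so $g(d^-)=\sup\{g(s):s\in S,\ s<d\}$ is not attained. Hence $[g(d^-),g(d))$ is a half-open gap of $g(S)$, created by the construction itself. For example, $S=[0,1]$ has no gaps at all, yet with $D=\mathbb{Q}\cap[0,1]$ the set $g(S)$ has a half-open gap at every rational in $(0,1]$. Using $\mu((-\infty,s))$ instead just moves the defect to the other side. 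In Stage 3, a level $a$ inside such a gap gives $u^{-1}((a,\infty))=\{x:x\succsim x_d\}$. This set is closed and, for connected $X$, not open, because there is no lower endpoint in $u(X)$ that would let you rewrite it as a strict upper set.

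The missing idea is what the map $g$ is allowed to do:
\begin{enumerate}
\item It may jump only across genuine open gaps $(a,b)$ with $a,b\in S$.
\item It must be constant on the closure of each gap that has an endpoint outside $S$.
\item It must never jump at a point that $S$ approaches from the side of the jump.
\end{enumerate}
Strict monotonicity between points of $S$ that are not separated by an open gap must therefore come from a continuous, non-atomic component. Building that component is the actual content of the open gap lemma, and point masses cannot supply it. Lebesgue length outside the bad gaps does not suffice either. For instance, let $S$ be the Cantor set minus the endpoints of its removed intervals. Then every gap of $S$ is a closed complementary interval, these have full total measure, and the correct $g$ is the Cantor function.

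Either carry out such a construction in detail, or invoke Debreu's open gap lemma as a black box, just as the paper cites the whole theorem. Separately, your enumeration step has the roles reversed. Gaps whose endpoints both lie in $S$ are already open and must be kept, with $g(a)<g(b)$. The gaps to collapse are those with an endpoint outside $S$.
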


\subsection{Quotient Spaces}

A surjection $q:X\to Y$ is a quotient map if $U\subseteq Y$ is open exactly when $q^{-1}(U)$ is open in $X$. An equivalence relation on $X$ is closed if it is closed as a subset of $X\times X$.

\begin{thm}[Universal property of quotient maps]
\label{thm:quotient_universal_property}
Let $q:X\to Y$ be a quotient map and let $Z$ be a topological space. If $f:X\to Z$ is constant on the fibers of $q$, then there exists a unique map $\bar f:Y\to Z$ such that $f=\bar f\circ q$. Moreover, $\bar f$ is continuous if and only if $f$ is continuous \cite[Theorem~22.2]{munkres2000topology}.
\end{thm}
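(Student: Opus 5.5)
The plan is to define $\bar f$ pointwise using surjectivity of $q$, check that it is well defined and unique, and then read off continuity directly from the definition of a quotient map. No earlier result of the paper is needed; only the definition of a quotient map stated just above is used.

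\emph{Existence and uniqueness.} Since $q:X\to Y$ is a surjection, each $y\in Y$ has a nonempty fiber $q^{-1}(y)$. I will set $\bar f(y)\triangleq f(x)$ for any $x\in q^{-1}(y)$. This does not depend on the choice of $x$, because $f$ is constant on the fibers of $q$. By construction, $\bar f(q(x))=f(x)$ for every $x\in X$, that is, $f=\bar f\circ q$. For uniqueness, suppose $g:Y\to Z$ also satisfies $f=g\circ q$. For any $y\in Y$, pick $x$ with $q(x)=y$; then $g(y)=f(x)=\bar f(y)$. Hence $g=\bar f$.

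\emph{Continuity equivalence.} If $\bar f$ is continuous, then $f=\bar f\circ q$ is continuous as a composition of continuous maps; here $q$ is continuous because it is a quotient map. Conversely, suppose $f$ is continuous and let $V\subseteq Z$ be open. Then
\[
q^{-1}\bigl(\bar f^{-1}(V)\bigr)=(\bar f\circ q)^{-1}(V)=f^{-1}(V),
\]
which is open in $X$. By the defining property of a quotient map, a set $U\subseteq Y$ is open exactly when $q^{-1}(U)$ is open. Taking $U=\bar f^{-1}(V)$, it follows that $\bar f^{-1}(V)$ is open in $Y$, so $\bar f$ is continuous.

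The argument is routine and I do not expect a real obstacle. The only step that needs any care is the converse direction of the continuity claim. There one must use the full \emph{if and only if} in the definition of a quotient map, and not merely continuity of $q$, in order to transfer openness from $X$ back to $Y$.
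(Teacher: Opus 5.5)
Your proposal is correct. You define $\bar f$ fiberwise, get uniqueness from surjectivity of $q$, and use the ``if'' half of the quotient-map definition to transfer openness of $f^{-1}(V)=q^{-1}(\bar f^{-1}(V))$ back to $Y$; this is exactly the standard argument. The paper gives no proof of its own and simply cites Munkres' Theorem 22.2, whose proof proceeds in essentially this way.
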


\begin{thm}[Quotients by closed equivalence relations]
\label{thm:closed_quotient}
Let $X$ be a compact Hausdorff space and $R\subseteq X\times X$ be a closed equivalence relation. Then the quotient $X/R$ is compact Hausdorff and the projection $q:X\to X/R$ is a closed map. If, in addition, $X$ is metrizable, then $X/R$ is metrizable \citep{engelking1989general}.
\end{thm}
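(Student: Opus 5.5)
The plan is to prove the three conclusions in order: closedness of $q$ first, then Hausdorffness (which uses closedness), then metrizability (which uses both). Compactness is immediate, since $X/R=q(X)$ is a continuous image of a compact space.

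\emph{Closedness of $q$.} For a closed set $F\subseteq X$, its saturation is
\[
R[F]\triangleq\{y\in X: (x,y)\in R \text{ for some } x\in F\}=\operatorname{pr}_2\bigl(R\cap(F\times X)\bigr).
\]
The set $R\cap(F\times X)$ is closed in the compact space $X\times X$, hence compact. Its image under the continuous projection $\operatorname{pr}_2$ is therefore compact, hence closed in the Hausdorff space $X$. Since $q^{-1}(q(F))=R[F]$ is closed, the definition of the quotient topology gives that $q(F)$ is closed.

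\emph{Hausdorff property.} Each class $[x]=R[\{x\}]$ is closed by the previous step. Take $q(x)\neq q(y)$. Then $[x]$ and $[y]$ are disjoint closed sets. Since $X$ is compact Hausdorff, it is normal, so they have disjoint open neighbourhoods $U\supseteq[x]$ and $V\supseteq[y]$. Shrink these to saturated open sets by setting
\[
U'\triangleq X\setminus R[X\setminus U], \qquad V'\triangleq X\setminus R[X\setminus V].
\]
Each is open because $R[\cdot]$ of a closed set is closed. Each is saturated, and we have $[x]\subseteq U'\subseteq U$ and $[y]\subseteq V'\subseteq V$. Consequently $q(U')$ and $q(V')$ are open, since their preimages are $U'$ and $V'$. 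They are also disjoint and separate $q(x)$ from $q(y)$.

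\emph{Metrizability.} Assume $X$ is metrizable. A compact metrizable space is second countable, so fix a countable base ${\cal B}$ of $X$ and close it under finite unions; it remains countable. For each $B\in{\cal B}$, set $O_B\triangleq (X/R)\setminus q(X\setminus B)$. This set is open because $q$ is closed, and $q^{-1}(O_B)\subseteq B$. We claim $\{O_B\}_{B\in{\cal B}}$ is a countable base of $X/R$. Given $y\in W$ with $W$ open in $X/R$, the fiber $q^{-1}(y)$ is compact, being closed in $X$, and it lies inside the open set $q^{-1}(W)$. Cover the fiber by finitely many basis sets contained in $q^{-1}(W)$, and let $B$ be their union. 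Then $q^{-1}(y)\subseteq B$, so $y\in O_B$. Moreover $O_B\subseteq q(B)\subseteq W$. Thus $X/R$ is compact, Hausdorff and second countable, hence regular, and Urysohn's metrization theorem gives metrizability.

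The main point needing care is the base argument: compactness of the fibers is what allows each fiber to be enclosed in a single member of the countable family.
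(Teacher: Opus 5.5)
Your proof is correct. The paper gives no argument of its own and only cites Engelking, so the relevant comparison is with the standard textbook proof, which yours follows: $q$ is closed because $\operatorname{pr}_2(R\cap(F\times X))$ is compact; $X/R$ is Hausdorff because disjoint normal neighbourhoods of two classes shrink to saturated open sets; and $X/R$ is second countable because a closed map with compact fibres carries a countable base closed under finite unions to a countable base, after which Urysohn's metrization theorem applies.
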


\begin{thm}[Whitehead's product theorem]
\label{thm:whitehead}
If $q:X\to Y$ is a quotient map and $Z$ is a locally compact Hausdorff space, then $q\times\mathrm{id}_Z:X\times Z\to Y\times Z$ is a quotient map \citep{engelking1989general}.
\end{thm}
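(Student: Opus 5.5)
The plan is to verify the quotient property directly from the definition. The map $q\times\mathrm{id}_Z$ is a continuous surjection because $q$ is. It therefore suffices to show the following: whenever $W\subseteq Y\times Z$ has open preimage $O\triangleq(q\times\mathrm{id}_Z)^{-1}(W)$ in $X\times Z$, the set $W$ is itself open. I will show that every point $(y_0,z_0)\in W$ has a product neighborhood $G\times V\subseteq W$ with $G$ open in $Y$ and $V$ open in $Z$.

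First, record that $O$ is saturated. If $q(x)=q(x')$, then $(x,z)\in O$ if and only if $(x',z)\in O$, since membership depends only on $(q(x),z)$.

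Second, choose the $Z$-neighborhood. Fix $(y_0,z_0)\in W$ and pick $x_0$ with $q(x_0)=y_0$. The slice $\{z:(x_0,z)\in O\}$ is open in $Z$ and contains $z_0$. Because $Z$ is locally compact Hausdorff, there is an open $V\ni z_0$ whose closure $\bar V$ is compact and contained in this slice. Hence $\{x_0\}\times\bar V\subseteq O$.

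Third, define
\[
A\triangleq\{x\in X:\{x\}\times\bar V\subseteq O\}.
\]
Then $x_0\in A$, and by the saturation noted above $A=q^{-1}(q(A))$. It remains to show that $A$ is open in $X$. Take $x\in A$. The compact set $\{x\}\times\bar V$ lies in the open set $O$. By the tube lemma, which uses compactness of $\bar V$, there is an open $N\ni x$ with $N\times\bar V\subseteq O$, so $N\subseteq A$. This tube-lemma step is the only place where compactness of $\bar V$ is genuinely used. It is the main point to get right, since it is exactly why local compactness of $Z$ is needed and why the result can fail without it.

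Finally, since $A$ is open and saturated and $q$ is a quotient map, $G\triangleq q(A)$ is open in $Y$. By construction $y_0\in G$ and $G\times V\subseteq W$: for $(q(x),z)$ with $x\in A$ and $z\in V$, we have $(x,z)\in O$, so $(q(x),z)\in W$. Thus $W$ is open, and $q\times\mathrm{id}_Z$ is a quotient map.
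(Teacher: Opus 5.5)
Your proof is correct and complete. It is the classical argument for Whitehead's theorem: take a compact-closure neighborhood $\bar V$ inside the open slice, form the saturated set $A$, show $A$ is open via the tube lemma, and push it forward through the quotient map. The paper gives no proof of its own; it simply imports the result from Engelking as background, so there is nothing in the paper to compare against beyond that citation.
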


\begin{thm}[Compact-to-Hausdorff quotient criterion]
\label{thm:compact_to_hausdorff}
Let $X$ be compact, let $Y$ be Hausdorff, and let $f:X\to Y$ be continuous. Then $f$ is a closed map. In particular, if $f$ is
surjective, then $f$ is a quotient map. If $f$ is bijective, then $f$
is a homeomorphism, see \cite[Secs.~22 and~26]{munkres2000topology}.
\end{thm}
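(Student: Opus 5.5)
The plan is to derive all three assertions of Theorem~\ref{thm:compact_to_hausdorff} from one fact: a continuous map from a compact space into a Hausdorff space sends closed sets to closed sets. The quotient-map and homeomorphism claims then follow by short set-theoretic arguments that use only surjectivity or bijectivity.

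\emph{Step 1: $f$ is closed.} Let $F\subseteq X$ be closed. A closed subset of a compact space is compact, so $F$ is compact. The continuous image $f(F)$ is compact in $Y$. A compact subset of a Hausdorff space is closed: for each $y\notin f(F)$, separate $y$ from each point of $f(F)$ by disjoint open sets, take a finite subcover of $f(F)$, and intersect the finitely many neighborhoods of $y$. Hence $f(F)$ is closed. This is the only step that uses the hypotheses in an essential way, and I expect it to be the main point of the proof, though it is standard.

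\emph{Step 2: surjective implies quotient.} Suppose $f$ is surjective. Continuity already shows that $U$ open implies $f^{-1}(U)$ open, so only the converse is needed. Suppose $f^{-1}(U)$ is open. Then $X\setminus f^{-1}(U)=f^{-1}(Y\setminus U)$ is closed. By Step~1 its image is closed, and surjectivity gives $f(f^{-1}(Y\setminus U))=Y\setminus U$. Therefore $U$ is open, and $f$ is a quotient map.

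\emph{Step 3: bijective implies homeomorphism.} Suppose $f$ is bijective. The inverse $g=f^{-1}$ satisfies $g^{-1}(F)=f(F)$ for every closed $F\subseteq X$, and $f(F)$ is closed by Step~1. So $g$ is continuous, and $f$ is a homeomorphism. Alternatively, a bijective quotient map is a homeomorphism, since its open sets correspond exactly under $f$.
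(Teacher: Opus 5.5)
Your proof is correct and is the standard argument: closed subsets of a compact space are compact, continuous images of compact sets are compact, and compact subsets of a Hausdorff space are closed, so $f$ is closed; the quotient and homeomorphism claims then follow from surjectivity and bijectivity. The paper gives no proof of its own and simply cites the relevant sections of Munkres, which argue along essentially these lines.
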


\subsection{Parametric Optimization}

We use this optimization result in the Bellman recursion to show continuity of the optimal value functions and nonemptiness of the argmax correspondence, from which an optimal Markov policy can be selected.

\begin{defn}[Continuous correspondence]
Let $C:\Theta\rightrightarrows X$ be a correspondence.

(i) $C$ is upper hemicontinuous if, for every $\theta\in\Theta$ and every open set $O\subseteq X$ with $C(\theta)\subseteq O$, there is a neighborhood $N$ of $\theta$ such that $C(\theta')\subseteq O$ for all
$\theta'\in N$.

(ii) $C$ is lower hemicontinuous if, for every $\theta\in\Theta$ and every open set $O\subseteq X$ with $C(\theta)\cap O\neq\emptyset$, there is a neighborhood $N$ of $\theta$ such that $C(\theta')\cap O\neq\emptyset$ for all $\theta'\in N$.

(iii) $C$ is continuous if it is both upper and lower hemicontinuous.
\end{defn}

\begin{thm}[Berge's maximum theorem]
\label{thm:berge}
Let $\Theta$ and $X$ be topological spaces, let $f:X\times\Theta\to\Re$ be
continuous, and let $C:\Theta\rightrightarrows X$ be a continuous correspondence with nonempty compact values. Then the value function $f^*(\theta)\triangleq\max_{x\in C(\theta)}f(x,\theta)$ is continuous, and the argmax correspondence $C^*(\theta)\triangleq\{x\in C(\theta):f(x,\theta)=f^*(\theta)\}$ is nonempty, compact-valued, and upper hemicontinuous
\citep{aliprantis2006infinite,berge1963topological}.
\end{thm}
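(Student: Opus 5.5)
The plan is to prove the three conclusions in order: nonemptiness and compactness of the argmax, continuity of $f^*$ through its two semicontinuity halves, and upper hemicontinuity of $C^*$. Since $\Theta$ and $X$ are arbitrary topological spaces, I will avoid sequences and argue with open covers and neighborhoods, or with nets where that is cleaner.

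\emph{Nonemptiness and compactness.} For fixed $\theta$, the map $x\mapsto f(x,\theta)$ is continuous on the nonempty compact set $C(\theta)$. By the extreme value theorem the maximum $f^*(\theta)$ is attained, so $C^*(\theta)\neq\emptyset$. The set $C^*(\theta)=C(\theta)\cap\{x: f(x,\theta)\ge f^*(\theta)\}$ is a closed subset of a compact set, hence compact.

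\emph{Upper semicontinuity of $f^*$.} Fix $\theta_0$ and $\varepsilon>0$. For each $x\in C(\theta_0)$, joint continuity of $f$ gives open $O_x\ni x$ and $N_x\ni\theta_0$ with $f<f^*(\theta_0)+\varepsilon$ on $O_x\times N_x$. By compactness of $C(\theta_0)$, finitely many $O_{x_i}$ cover it; let $O=\bigcup_i O_{x_i}$ and $N'=\bigcap_i N_{x_i}$. Upper hemicontinuity gives a neighborhood $N''$ of $\theta_0$ with $C(\theta)\subseteq O$ for $\theta\in N''$. Then for $\theta\in N'\cap N''$ each $x\in C(\theta)$ lies in some $O_{x_i}$ with $\theta\in N_{x_i}$, so $f^*(\theta)<f^*(\theta_0)+\varepsilon$.

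\emph{Lower semicontinuity of $f^*$.} Pick $x_0\in C^*(\theta_0)$. Continuity gives open $O\ni x_0$ and $N\ni\theta_0$ with $f>f^*(\theta_0)-\varepsilon$ on $O\times N$. Lower hemicontinuity gives a neighborhood $N'''$ with $C(\theta)\cap O\neq\emptyset$ for $\theta\in N'''$. Hence for $\theta\in N\cap N'''$ we get $f^*(\theta)>f^*(\theta_0)-\varepsilon$. Together the two halves give continuity of $f^*$.

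\emph{Upper hemicontinuity of $C^*$.} I expect this to be the main obstacle, because there is no metric and so no subsequence extraction. Fix $\theta_0$ and an open $G\supseteq C^*(\theta_0)$. The set $K=C(\theta_0)\setminus G$ is compact, and on $K$ we have $f(\cdot,\theta_0)<f^*(\theta_0)$. By compactness there is $\delta>0$ with $f(x,\theta_0)<f^*(\theta_0)-2\delta$ on $K$.

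Now repeat the tube argument from the upper semicontinuity step, this time applied to $K$ with threshold $f^*(\theta_0)-2\delta$. This yields an open $O_K\supseteq K$ and a neighborhood $N_1$ of $\theta_0$ with $f<f^*(\theta_0)-\delta$ on $O_K\times N_1$. Next, $G\cup O_K\supseteq C(\theta_0)$ is open, so upper hemicontinuity of $C$ gives $N_2$ with $C(\theta)\subseteq G\cup O_K$ for $\theta\in N_2$. Continuity of $f^*$, already proved, gives $N_3$ with $f^*(\theta)>f^*(\theta_0)-\delta$ for $\theta\in N_3$.

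For $\theta\in N_1\cap N_2\cap N_3$, any $x\in C^*(\theta)$ has $f(x,\theta)=f^*(\theta)>f^*(\theta_0)-\delta$. Such an $x$ cannot lie in $O_K$, so it lies in $G$. Thus $C^*(\theta)\subseteq G$, which proves upper hemicontinuity.
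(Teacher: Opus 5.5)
Your proof is correct and complete. The paper gives no proof of its own and simply cites Aliprantis--Border and Berge, and your argument is the standard one behind those references, written with open covers and neighborhoods rather than sequences, so it holds for arbitrary topological spaces as the statement requires. In particular, your upper hemicontinuity step inlines the usual ``closed graph intersected with a compact-valued u.h.c.\ correspondence'' lemma: you separate the compact set $C(\theta_0)\setminus G$ from the near-optimal region by a finite tube, then use u.h.c.\ of $C$ and lower semicontinuity of $f^*$. Every step checks out, including the degenerate case $C(\theta_0)\subseteq G$.
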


\section{General Recursive Structure}
\label{app:general}

This section develops the machinery for our recursive representation on a general state--transition system. We apply this result twice, first for full-history preferences, then for preferences on a PA state.

\subsection{Abstract Primitives}

\begin{defn}[State--transition system]
\label{defn:state-transition}
A \emph{state--transition system} is a tuple ${\cal G}=(\{{\cal K}_t\}_{t=0}^{T+1},\{G_t\}_{t=0}^{T})$ where:

(i) For each $t\in[T+1]$, ${\cal K}_t$ is a non-empty compact metrizable space.

(ii) For each $t\in[T]$, $G_t:{\cal K}_t\times{\cal C}\times{\cal S}_{t+1}\to{\cal K}_{t+1}$ is a continuous transition map.
\end{defn}
\noindent
We continue to treat the initial state as fixed ${\cal S}_0 = \{\bar{s}_0\}$ and the terminal state ${\cal S}_{T+1} = \{\star\}$ as a dummy, so the terminal transition is always $G_T(k_T, c, \star)$.
Every full-history, PA, or SPA state system has an underlying abstract state--transition system (omitting the physical readout). This system is ${\cal G}^\mathfrak{H} = (\{{\cal H}_t\}, \{\iota_t\})$ for full-history preferences, ${\cal G}^\mathfrak{X} = (\{{\cal X}_t\}, \{\Gamma_t\})$ for PA preferences, and ${\cal G}^{\mathfrak{X}^\dagger} = (\{{\cal X}_t^\dagger\}, \{\Gamma_t^\dagger\})$.

For all $t \in [T]$, the plan spaces are ${\cal F}_t$ and the grand domains are ${\cal D}_t\triangleq{\cal K}_t\times{\cal F}_t$, which now reflect the abstract state space ${\cal K}_t$.

\begin{defn}[Grand preference]
\label{defn:abs_preference}
A \emph{grand preference} on ${\cal G}$ is a tuple ${\cal P}=(\{\succeq_{(t)}\}_{t=0}^T,V_{T+1})$ where:

(i) For each $t \in [T]$, $\succeq_{(t)}$ is a binary relation on ${\cal D}_t$.

(ii) $V_{T+1}:{\cal K}_{T+1}\to\Re$ is continuous.
\end{defn}
\noindent
The conditional preferences are defined in the usual way.

\begin{defn}[Conditional preferences]
\label{defn:abs_conditional_preference}
For each $k_t \in {\cal K}_t$, the conditional preference $\succeq_{k_t}$ is defined on ${\cal F}_t$ by $f \succeq_{k_t} g$ if and only if $(k_t, f) \succeq_{(t)} (k_t, g)$, for all $f, g \in {\cal F}_t$.
The conditional preferences are the collection $\{\succeq_{k_t}\}$.
\end{defn}

\begin{defn}[Compatible utility system]
\label{defn:abs_utility_system}
A utility system is a family ${\bf U}=\{U_t\}_{t=0}^{T+1}$, where $U_t:{\cal K}_t\times{\cal F}_t\to\Re$ is continuous for each $t\in[T]$, and $U_{T+1}:{\cal K}_{T+1}\to\Re$ is continuous.
The utility system ${\bf U}$ is compatible with ${\cal P}$ if: $U_{T+1}=V_{T+1}$; $U_T(k_T,c)=U_{T+1}(G_T(k_T,c,\star))$ for all $k_T\in{\cal K}_T$ and $c\in{\cal C}$; and $U_t(k_t,f)\geq U_t(k_t',g)$ if and only if $(k_t,f)\succeq_{(t)}(k_t',g)$ for all $k_t, k_t' \in {\cal K}_t$, $f, g \in {\cal F}_t$, and $t\in[T]$.
\end{defn}
\noindent
The slices of a compatible utility system ${\bf U}$ represent the conditional preferences.

\begin{defn}[Global time aggregator on an abstract state space]
\label{defn:abstract_time_aggregator}
For $t\in[T-1]$, a \emph{global time aggregator} on ${\cal K}_t$ is a function $W_t:{\cal K}_t\times{\cal C}\times\Re\to\Re$ such that $v\to W_t(k_t,c,v)$ is nondecreasing for every $(k_t,c)\in{\cal K}_t\times{\cal C}$.
\end{defn}

Recall that ${\cal L}$ is the space of bounded random variables $\tilde{u} : {\cal S} \rightarrow \Re$ on ${\cal S}$.
The space ${\cal L}\equiv \Re^{\cal S}$ is equipped with the pointwise order and the supremum norm, and ${\bf v}\in{\cal L}$ denotes the constant vector with value $v$.

\begin{defn}[Global risk aggregator on an abstract state space]
\label{defn:abstract_risk_aggregator}
For $t\in[T-1]$, a \emph{global risk aggregator} on ${\cal K}_t$ is a function $M_t:{\cal K}_t\times{\cal L}\to\Re$ such that: (i) if $\tilde{u}\leq\tilde{v}$, then $M_t(k_t,\tilde{u})\leq M_t(k_t,\tilde{v})$ for every $k_t\in{\cal K}_t$; and (ii) $M_t(k_t,{\bf v})=v$ for all $k_t\in{\cal K}_t$ and $v\in\Re$.
\end{defn}
\noindent
Monotonicity and constant normalization imply that every global risk aggregator satisfies the inequalities:
\[
\min_s \tilde{u}(s) \leq M_t(k_t,\tilde{u}) \leq \max_s \tilde{u}(s),\, \forall k_t\in{\cal K}_t,\, \tilde{u} \in {\cal L},
\]
and so is internal.

Next we define a general recursive structure for the abstract state--transition system.

\begin{defn}[General recursive structure]
Let ${\cal G}=(\{{\cal K}_t\}_{t=0}^{T+1},\{G_t\}_{t=0}^{T})$ be a state--transition system.
A \emph{general recursive structure} on ${\cal G}$ is a tuple ${\cal V}=(\{W_t\}_{t=0}^{T-1},\{M_t\}_{t=0}^{T-1},V_{T+1}^{\cal V})$ where:
\begin{enumerate}
    \item $W_t:{\cal K}_t\times{\cal C}\times\Re\to\Re$ is a jointly continuous global time aggregator for all $t \in [T-1]$.
    \item $M_t:{\cal K}_t \times {\cal L} \to \Re$ is a jointly continuous global risk aggregator for all $t \in [T-1]$.
    \item $V_{T+1}^{\cal V} : {\cal K}_{T+1} \rightarrow \Re$ is continuous.
\end{enumerate}
The pair $({\cal G}, {\cal V})$ generates a utility system ${\bf U}^{\cal V} = \{U_t^{\cal V}\}_{t=0}^{T+1}$ according to $U_{T+1}^{\cal V} = V_{T+1}^{\cal V}$ and:
\begin{align*}
U_T^{\cal V}(k_T,c) = & U_{T+1}^{\cal V}(G_T(k_T,c,\star)),\, \forall k_T \in {\cal K}_T,\, c \in {\cal C},\\
U_t^{\cal V}(k_t,f) = & W_t\big(k_t,c,M_t(k_t,\tilde{u}_{t+1}^{\cal V}(k_t,f))\big),\, \forall k_t \in {\cal K}_t,\, f = (c, f_+) \in {\cal F}_t,\, t\in[T-1],
\end{align*}
where $\tilde{u}_{t+1}^{\cal V}(k_t, f) \in {\cal L}$ is defined by $[\tilde{u}_{t+1}^{\cal V}(k_t, f)](s) \triangleq U_{t+1}^{\cal V}(G_t(k_t, c, s), f_+(s))$ for all $s \in {\cal S}$.
\end{defn}

\begin{defn}[General representation]
The pair $({\cal G}, {\cal V})$ represents ${\cal P}$ if its generated utility system ${\bf U}^{\cal V}$ is compatible with ${\cal P}$.
\end{defn}

\subsection{Abstract Axioms}

We now state Axiom~\ref{axiom:weak_order}--Axiom~\ref{axiom:weak_separability} for ${\cal G}$.

\begin{axiom}[Weak order]\label{axiom:abs_weak_order}
For each $t\in[T]$, $\succeq_{(t)}$ is complete and transitive on ${\cal D}_t$.
\end{axiom}

\begin{axiom}[Continuity]\label{axiom:abs_continuity}
For each $t\in[T]$ and $(k_t, f) \in {\cal K}_t \times {\cal F}_t$, the sets $\{(k_t', g) : (k_t', g) \succeq_{(t)} (k_t, f)\}$ and $\{(k_t', g) : (k_t', g) \preceq_{(t)} (k_t, f)\}$ are closed in ${\cal D}_t$.
\end{axiom}

\begin{axiom}[Dynamic consistency]\label{axiom:abs_dynamic_consistency}
For all $t\in[T-1]$, $k_t\in{\cal K}_t$, and $c \in {\cal C}$, if
\[
(G_t(k_t,c,s),f_+(s))\succeq_{(t+1)}(G_t(k_t,c,s),g_+(s)),\, \forall s\in{\cal S},
\]
then $(k_t,(c,f_+))\succeq_{(t)}(k_t,(c,g_+))$ for all $f=(c,f_+),\,g=(c,g_+)\in{\cal F}_t$.
\end{axiom}
\noindent
Applying Axiom~\ref{axiom:abs_dynamic_consistency} in both directions gives the indifference version. For any $t \in [T-1]$, $k_t \in {\cal K}_t$, and $f, g \in {\cal F}_t$ with $f = (c, f_+)$ and $g = (c, g_+)$, if $(G_t(k_t, c, s), f_+(s)) \sim_{(t+1)} (G_t(k_t, c, s), g_+(s))$ for all $s \in {\cal S}$, then $(k_t, f) \sim_{(t)} (k_t, g)$.

\begin{axiom}[Terminal compatibility]\label{axiom:abs_terminal}
For all $k_T,k_T'\in{\cal K}_T$ and $c,c'\in{\cal C}$, $(k_T,c)\succeq_{(T)}(k_T',c')$ if and only if $V_{T+1}(G_T(k_T,c,\star))\ge V_{T+1}(G_T(k_T',c',\star))$.
\end{axiom}

\begin{axiom}[Compensated consumption monotonicity]\label{axiom:abs_monotonicity}
For $t\in[T-1]$, $k_t\in{\cal K}_t$, and $c>c'$, let $f_+,g_+\in({\cal F}_{t+1})^{\cal S}$. If $(G_t(k_t,c,s),f_+(s))\sim_{(t+1)}(G_t(k_t,c',s),g_+(s))$ for all $s\in{\cal S}$, then $(k_t,(c,f_+))\succ_{(t)}(k_t,(c',g_+))$.
\end{axiom}

\begin{axiom}[Weak separability]\label{axiom:abs_separability}
For $t\in[T-1]$, $k_t\in{\cal K}_t$, and $c,c'\in{\cal C}$, let $f_+,g_+,f_+',g_+'\in({\cal F}_{t+1})^{\cal S}$. If $(G_t(k_t,c,s),f_+(s))\sim_{(t+1)}(G_t(k_t,c',s),f_+'(s))$ and $(G_t(k_t,c,s),g_+(s))\sim_{(t+1)}(G_t(k_t,c',s),g_+'(s))$ for all $s\in{\cal S}$, then $(k_t,(c,f_+))\succeq_{(t)}(k_t,(c,g_+))$ if and only if $(k_t,(c',f_+'))\succeq_{(t)}(k_t,(c',g_+'))$.
\end{axiom}

\subsection{Existence of Compatible Utility System}

We establish the existence of a compatible utility system ${\bf U}$ in the following lemma.

\begin{lem}[Existence of a compatible utility system]
\label{lem:existence}
If Axiom~\ref{axiom:abs_weak_order}, Axiom~\ref{axiom:abs_continuity}, and Axiom~\ref{axiom:abs_terminal} hold, then there exists a compatible utility system for ${\cal P}$.
\end{lem}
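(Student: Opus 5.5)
The plan is to apply Debreu's representation theorem (Theorem~\ref{thm:debreu}) period by period, and then fix the cardinal scale at period $T$ using terminal compatibility.

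\emph{Step 1 (second countability).} For each $t\in[T]$, the domain ${\cal D}_t={\cal K}_t\times{\cal F}_t$ is compact metrizable. ${\cal K}_t$ is compact metrizable by Definition~\ref{defn:state-transition}. ${\cal F}_t$ is a finite product of copies of the compact interval ${\cal C}$, indexed by the finitely many future state sequences. A compact metrizable space is separable, hence second countable.

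\emph{Step 2 (periods $t\in[T-1]$).} By Axiom~\ref{axiom:abs_weak_order}, $\succeq_{(t)}$ is a weak order. By Axiom~\ref{axiom:abs_continuity}, its upper and lower contour sets are closed. Theorem~\ref{thm:debreu} then yields a continuous $U_t:{\cal D}_t\to\Re$ representing $\succeq_{(t)}$ across all pairs $(k_t,f)$ and $(k_t',g)$. This is exactly the cross-state representation required in Definition~\ref{defn:abs_utility_system}. No cardinal constraint is imposed at these periods, so any Debreu utility works.

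\emph{Step 3 (period $T$ and the terminal value).} Here the compatibility definition pins $U_T$ down. Set $U_{T+1}\triangleq V_{T+1}$, which is continuous by Definition~\ref{defn:abs_preference}. Define
\[
U_T(k_T,c)\triangleq V_{T+1}(G_T(k_T,c,\star)).
\]
Since ${\cal F}_T={\cal C}$, this is a function on ${\cal D}_T$. It is continuous as the composition of the continuous $G_T$ (Definition~\ref{defn:state-transition}) and the continuous $V_{T+1}$. Axiom~\ref{axiom:abs_terminal} states precisely that $(k_T,c)\succeq_{(T)}(k_T',c')$ if and only if $U_T(k_T,c)\ge U_T(k_T',c')$. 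Hence $U_T$ represents $\succeq_{(T)}$ and satisfies the terminal identity, and Debreu is not needed at $t=T$.

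\emph{Conclusion and main obstacle.} Collecting $\{U_t\}_{t=0}^{T+1}$ verifies every clause of Definition~\ref{defn:abs_utility_system}. The only step requiring care is Step 1, confirming second countability of ${\cal D}_t$. This reduces to observing that ${\cal F}_t$ is homeomorphic to ${\cal C}^{N_t}$ with $N_t=\sum_{j=0}^{T-t}|{\cal S}|^j$, and that products of finitely many compact metrizable spaces are compact metrizable. Everything else is direct.
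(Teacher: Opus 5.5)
Your proposal is correct and matches the paper's proof step for step. Like the paper, you establish second countability of the compact metrizable grand domains, apply Debreu's theorem for $t\in[T-1]$, and set $U_{T+1}=V_{T+1}$ and $U_T(k_T,c)=V_{T+1}(G_T(k_T,c,\star))$, with terminal compatibility supplying the representation of $\succeq_{(T)}$.
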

\begin{proof}
\emph{Step 1: Grand domains are second countable.}
Each ${\cal K}_t$ is a compact metric space by assumption. Each ${\cal F}_t$ is compact and metrizable by backward induction starting from ${\cal F}_{T+1}=\{\emptyset\}$ using ${\cal F}_t={\cal C}\times({\cal F}_{t+1})^{\cal S}$ and finiteness of ${\cal S}$. Each grand domain ${\cal D}_t={\cal K}_t\times{\cal F}_t$ is then compact metrizable, and so is second countable.

\emph{Step 2: Representation for periods $[T-1]$.}
For each $t\in[T-1]$, $\succeq_{(t)}$ is a continuous weak order by Axiom~\ref{axiom:abs_weak_order} and Axiom~\ref{axiom:abs_continuity}. A continuous weak order on a second countable topological space admits a continuous utility representation by Theorem~\ref{thm:debreu}. Specifically, there exists a continuous function $U_t:{\cal D}_t\to\Re$ such that
\[
U_t(k_t,f)\geq U_t(k_t',g)\iff (k_t,f)\succeq_{(t)}(k_t',g),\, \forall (k_t,f),(k_t',g)\in{\cal D}_t.
\]

\emph{Step 3: Representation for terminal period.}
Set $U_{T+1}\triangleq V_{T+1}$, which is continuous by Definition~\ref{defn:abs_preference}. Then define $U_T:{\cal D}_T\to\Re$ by
\[
U_T(k_T,c) \triangleq V_{T+1}(G_T(k_T,c,\star)) = U_{T+1}(G_T(k_T,c,\star)),\, \forall k_T \in {\cal K}_T,\, c \in {\cal C}.
\]
The function $U_T$ is continuous as the composition of the continuous map $(k_T,c) \to G_T(k_T,c,\star)$ with the continuous utility function $V_{T+1}$. By Axiom~\ref{axiom:abs_terminal}, $(k_T,c)\succeq_{(T)}(k_T',c')$ if and only if $V_{T+1}(G_T(k_T,c,\star))\geq V_{T+1}(G_T(k_T',c',\star))$, so $U_T$ represents $\succeq_{(T)}$ on ${\cal D}_T$.

\emph{Step 4: Compatibility.}
We initialized the recursion with $U_{T+1}=V_{T+1}$ and
\[
U_T(k_T,c)=U_{T+1}(G_T(k_T,c,\star)),\, \forall k_T\in{\cal K}_T,\, c\in{\cal C}.
\]
Each $U_t$ for $t\in[T]$ is continuous and represents $\succeq_{(t)}$. Then, ${\bf U}$ is compatible with ${\cal P}$.
\end{proof}

\subsection{Reduction}

The arguments of the utility functions $\{U_t\}_{t=0}^T$ are life paths in ${\cal K}_t \times {\cal F}_t$ which combine the abstract state and the continuation plan. Plans in ${\cal F}_t$ are complex trees of future consumption choices and are difficult to work with directly, especially for long planning horizons. As the first step of our recursive decomposition, we reduce preferences over plans to preferences over their next-period continuation utilities. Then all payoff-relevant continuation information for the one-step recursion is summarized by its next-period continuation utilities. This step leads to the existence of a general aggregator of current consumption and continuation utilities. In the second step of our decomposition, we refine this general aggregator into separate time and risk aggregators.

Next we define the sets of attainable continuation-utility vectors, where the risk preferences are behaviorally identified.

\begin{defn}[Attainable one-step utility vectors]
\label{defn:abs_attainable_vectors}
Fix a compatible utility system ${\bf U}$.

(i) For all $t\in[T-1]$, $k_t \in {\cal K}_t$, and $f=(c,f_+)\in{\cal F}_t$, the \emph{one-step continuation-utility vector} $\tilde{u}_{t+1}(k_t,f)\in{\cal L}$ is defined by $[\tilde{u}_{t+1}(k_t,f)](s) \triangleq U_{t+1}(G_t(k_t,c,s),f_+(s))$ for all $s \in {\cal S}$.

(ii) For all $t \in [T-1]$, $k_t \in {\cal K}_t$, and $c \in {\cal C}$, let $\Lambda_t^{\bf U}(k_t, c) \triangleq \{\tilde{u}_{t+1}(k_t,(c,f_+)) : f_+\in({\cal F}_{t+1})^{\cal S}\}$, $\Lambda_t^{\bf U}(k_t) \triangleq \bigcup_{c\in{\cal C}}\Lambda_t^{\bf U}(k_t, c)$, and $\Lambda_t^{\bf U} \triangleq \bigcup_{k_t \in {\cal K}_t} \Lambda_t^{\bf U}(k_t)$.
\end{defn}

The next proposition shows that we can evaluate a plan $f = (c, f_+)$ based solely on the current period consumption $c$ and the next period continuation utilities (rather than the entirety of $f_+$).

\begin{prop}[Reduction to one-step continuation utilities]
\label{prop:reduction}
Suppose ${\cal P}$ satisfies Axiom~\ref{axiom:abs_dynamic_consistency}, and ${\bf U}$ is a compatible utility system. Then, for all $t\in[T-1]$ and $k_t \in {\cal K}_t$, there exists an effective aggregator $F_t^0(k_t, \cdot, \cdot)$ defined on the attainable domain $\{(c,\tilde{u}):c\in{\cal C},\tilde{u}\in\Lambda_t^{\bf U}(k_t, c)\}$ such that
\begin{equation}
\label{eq:effective_reduction}
U_t(k_t,f)=F_t^0(k_t, c,\tilde{u}_{t+1}(k_t,f)),\, \forall f=(c,f_+) \in {\cal F}_t.
\end{equation}
Moreover, $F_t^0(k_t, c,\cdot)$ is increasing with respect to pointwise dominance on its attainable domain.
\end{prop}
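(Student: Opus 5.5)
The plan is to define $F_t^0$ by choosing any plan that realizes the given vector and then show the choice does not matter. Fix $t\in[T-1]$ and $k_t\in{\cal K}_t$. For each $c\in{\cal C}$ and $\tilde u\in\Lambda_t^{\bf U}(k_t,c)$, pick some $f_+$ with $\tilde u_{t+1}(k_t,(c,f_+))=\tilde u$, which is possible by the definition of $\Lambda_t^{\bf U}(k_t,c)$. Then set $F_t^0(k_t,c,\tilde u)\triangleq U_t(k_t,(c,f_+))$. Once this is shown to be well defined, Eq.~\eqref{eq:effective_reduction} holds by construction.

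\emph{Well-definedness.} Suppose $f=(c,f_+)$ and $g=(c,g_+)$ have the same head and $\tilde u_{t+1}(k_t,f)=\tilde u_{t+1}(k_t,g)$. Then for every $s\in{\cal S}$,
\[
U_{t+1}(G_t(k_t,c,s),f_+(s))=U_{t+1}(G_t(k_t,c,s),g_+(s)).
\]
Compatibility of ${\bf U}$ turns this into
\[
(G_t(k_t,c,s),f_+(s))\sim_{(t+1)}(G_t(k_t,c,s),g_+(s)),\quad \forall s\in{\cal S}.
\]
The indifference version of Axiom~\ref{axiom:abs_dynamic_consistency} then gives $(k_t,f)\sim_{(t)}(k_t,g)$. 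Compatibility once more gives $U_t(k_t,f)=U_t(k_t,g)$.

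One point needs care at $t=T-1$. There $U_{t+1}=U_T$ is defined through $V_{T+1}$ rather than as an arbitrary Debreu representation. Compatibility (Definition~\ref{defn:abs_utility_system}) still states that $U_T$ represents $\succeq_{(T)}$, so the step from equal utilities to indifference goes through unchanged. The comparison is always between plans with the same current consumption $c$, because the axiom only applies to common heads. This is exactly why the domain of $F_t^0(k_t,c,\cdot)$ is indexed by $c$.

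\emph{Monotonicity.} Take $\tilde u\le\tilde v$, both in $\Lambda_t^{\bf U}(k_t,c)$, realized by $(c,f_+)$ and $(c,g_+)$ respectively. Pointwise, $U_{t+1}(G_t(k_t,c,s),f_+(s))\le U_{t+1}(G_t(k_t,c,s),g_+(s))$ for every $s$. By compatibility this is $(G_t(k_t,c,s),g_+(s))\succeq_{(t+1)}(G_t(k_t,c,s),f_+(s))$ for all $s$. Axiom~\ref{axiom:abs_dynamic_consistency} then yields $(k_t,(c,g_+))\succeq_{(t)}(k_t,(c,f_+))$, that is, $F_t^0(k_t,c,\tilde v)\ge F_t^0(k_t,c,\tilde u)$.

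I read ``increasing'' as weak (nondecreasing) monotonicity. Strict monotonicity would need a strict version of dynamic consistency, which the axiom does not provide.

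\emph{Main obstacle.} The only subtle step is well-definedness, specifically checking that the indifference form of dynamic consistency is legitimately available. It follows by applying Axiom~\ref{axiom:abs_dynamic_consistency} in both directions, as noted after its statement. The rest is bookkeeping.
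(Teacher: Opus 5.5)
Your proposal is correct and follows the paper's proof essentially step for step: you define $F_t^0$ through any plan realizing $\tilde u$ at the same head $c$, obtain well-definedness from compatibility plus the indifference form of dynamic consistency, and obtain monotonicity by applying the axiom directly. Your reading of ``increasing'' as nondecreasing matches what the paper actually proves.
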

\begin{proof}
\emph{Step 1: Define effective aggregator $F_t^0$.}
Fix $t\in[T-1]$, $k_t \in {\cal K}_t$, and $c\in{\cal C}$, and let $\tilde{u}\in\Lambda_t^{\bf U}(k_t, c)$. By definition of $\Lambda_t^{\bf U}(k_t, c)$, there exists some $f_+\in({\cal F}_{t+1})^{\cal S}$ such that $\tilde{u}(s)=U_{t+1}(G_t(k_t, c,s),f_+(s))$ for all $s\in{\cal S}$. Define $f=(c,f_+)$, and set
\begin{equation}
\label{eq:reduction}
F_t^0(k_t, c, \tilde{u}) \triangleq U_t(k_t, f).
\end{equation}

\emph{Step 2: Show $F_t^0$ is well-defined.}
The construction of $F_t^0$ in Eq.~\eqref{eq:reduction} is well-defined. Suppose there is another continuation plan $g_+\in({\cal F}_{t+1})^{\cal S}$ such that $\tilde{u}(s)=U_{t+1}(G_t(k_t, c,s),g_+(s))$ for all $s\in{\cal S}$, and let $g=(c,g_+)$. Since ${\bf U}$ is compatible with ${\cal P}$, equality of period $t+1$ utilities implies $(G_t(k_t, c,s),f_+(s))\sim_{(t+1)}(G_t(k_t, c,s),g_+(s))$ for all $s\in{\cal S}$. Then $(k_t, f)\sim_{(t)}(k_t, g)$ by the indifference version of Axiom~\ref{axiom:abs_dynamic_consistency} and $U_t(k_t, f)=U_t(k_t, g)$ since $U_t$ represents $\succeq_{(t)}$. This shows that the construction of $F_t^0(k_t, c,\tilde{u})$ does not depend on the particular continuation plan used to realize $\tilde{u}$.

\emph{Step 3: Verify recursion.}
Let $f=(c,f_+)\in{\cal F}_t$ and $\tilde{u}_{t+1}(k_t, f)\in{\cal L}$. Then $\tilde{u}_{t+1}(k_t, f)\in\Lambda_t^{\bf U}(k_t, c)$, and by construction in Eq.~\eqref{eq:reduction} we have $U_t(k_t, f)=F_t^0(k_t, c,\tilde{u}_{t+1}(k_t, f))$.

\emph{Step 4: Establish monotonicity of $F_t^0$.}
We show $F_t^0$ is monotone in the attainable continuation-utility vector. Fix $c\in{\cal C}$, choose $\tilde{u},\tilde{v}\in\Lambda_t^{\bf U}(k_t, c)$ such that $\tilde{u} \geq \tilde{v}$, and let $f=(c,f_+)$ and $g=(c,g_+)$ realize $\tilde{u}$ and $\tilde{v}$, respectively.
Then $U_{t+1}(G_t(k_t, c,s),f_+(s))\geq U_{t+1}(G_t(k_t, c,s),g_+(s))$ for all $s\in{\cal S}$, which implies
\[
(G_t(k_t, c,s),f_+(s))\succeq_{(t+1)}(G_t(k_t, c,s),g_+(s)),\, \forall s \in {\cal S},
\]
since ${\bf U}$ is compatible with ${\cal P}$. By Axiom~\ref{axiom:abs_dynamic_consistency}, it follows that $(k_t, f)\succeq_{(t)}(k_t, g)$ and therefore
\[
F_t^0(k_t, c,\tilde{u}) = U_t(k_t, f) \geq U_t(k_t, g) = F_t^0(k_t, c,\tilde{v}).
\]
Hence $F_t^0(k_t, c,\cdot)$ is increasing with respect to pointwise dominance on $\Lambda_t^{\bf U}(k_t, c)$.
\end{proof}

\subsection{Induced Risk Comparison}

Eq.~\eqref{eq:effective_reduction} gives a recursion for the utility through the aggregators $\{F_t^0\}_{t=0}^{T-1}$.
However, both current period consumption and continuation utility are coupled arbitrarily inside $\{F_t^0\}_{t=0}^{T-1}$. The next step is to separate the utility of $f = (c, f_+)$ into two channels, which causes distinct time and risk aggregators to emerge.
To extract the time and risk aggregators from $\{F_t^0\}_{t=0}^{T-1}$, we first define an induced risk comparison over attainable continuation utilities in ${\cal L}$.
For each $t\in[T-1]$ and $k_t\in{\cal K}_t$, define
\[
{\cal J}_t^{\bf U}(k_t)
\triangleq
\left\{
(\tilde u,\tilde{v})\in
\Lambda_t^{\bf U}(k_t)\times\Lambda_t^{\bf U}(k_t):
\exists c\in{\cal C}
\text{ such that }
\tilde u,\tilde{v}\in\Lambda_t^{\bf U}(k_t,c)
\right\},
\]
to be the set of jointly attainable pairs in $\Lambda_t^{\bf U}(k_t,c)$.

\begin{defn}
\label{defn:induced}
Let ${\bf U}$ be a compatible utility system.
For each $t\in[T-1]$, $k_t\in{\cal K}_t$, and $(\tilde u,\tilde{v})\in{\cal J}_t^{\bf U}(k_t)$, write $\tilde u\succeq_{k_t}^{\mathrm{risk}}\tilde{v}$ if there exist plans $f=(c,f_+)$ and $g=(c,g_+)$ for the same current consumption $c \in {\cal C}$ that realize $\tilde u$ and $\tilde{v}$, respectively, and satisfy $(k_t,f)\succeq_{(t)}(k_t,g)$.
\end{defn}
\noindent
We next confirm that the risk comparison in Definition~\ref{defn:induced} is independent of the continuation plans used to realize $\tilde{u}$ and $\tilde{v}$ by Axiom~\ref{axiom:abs_dynamic_consistency}.
We specified that $\succeq_{k_t}^{\mathrm{risk}}$ is only defined over jointly attainable pairs in $\Lambda_t^{\bf U}(k_t)$.
Two continuation utility vectors may be jointly realized at more than one current consumption level. Axiom~\ref{axiom:abs_separability} handles this case so $\succeq_{k_t}^{\mathrm{risk}}$ is independent of the current consumption.

\begin{lem}
\label{lem:induced}
Suppose ${\cal P}$ satisfies Axiom~\ref{axiom:abs_dynamic_consistency} and Axiom~\ref{axiom:abs_separability}, and ${\bf U}$ is a compatible utility system. For all $t\in[T-1]$ and $k_t \in {\cal K}_t$, the induced risk comparison $\succeq_{k_t}^{\mathrm{risk}}$ is well-defined on ${\cal J}_t^{\bf U}(k_t)$.
\end{lem}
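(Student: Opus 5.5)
The claim is that whether $\tilde u\succeq_{k_t}^{\mathrm{risk}}\tilde v$ holds does not depend on which common consumption $c$ is used, nor on which plans realize $\tilde u$ and $\tilde v$. I would split this into two steps: independence of the realizing plans for fixed $c$, then independence of $c$. For the precise meaning of well-definedness, I would show that whenever $(\tilde u,\tilde v)\in{\cal J}_t^{\bf U}(k_t)$, the following holds: if some pair $f=(c,f_+)$, $g=(c,g_+)$ realizing $\tilde u,\tilde v$ satisfies $(k_t,f)\succeq_{(t)}(k_t,g)$, then \emph{every} pair $f'=(c',f_+')$, $g'=(c',g_+')$ realizing $\tilde u,\tilde v$ at any admissible $c'$ satisfies $(k_t,f')\succeq_{(t)}(k_t,g')$.

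\emph{Step 1: plan independence at fixed $c$.} Fix $c$ and two realizations $f=(c,f_+)$, $\hat f=(c,\hat f_+)$ of $\tilde u$. Then
\[
U_{t+1}(G_t(k_t,c,s),f_+(s))=\tilde u(s)=U_{t+1}(G_t(k_t,c,s),\hat f_+(s))
\]
for all $s$. By compatibility this gives $\sim_{(t+1)}$ statewise, and the indifference version of Axiom~\ref{axiom:abs_dynamic_consistency} then gives $(k_t,f)\sim_{(t)}(k_t,\hat f)$. The same argument applies to realizations of $\tilde v$. By transitivity of $\succeq_{(t)}$ (compatibility makes it represented by $U_t$, hence a weak order), the comparison $(k_t,f)\succeq_{(t)}(k_t,g)$ depends only on $(c,\tilde u,\tilde v)$. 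Equivalently, it is $F_t^0(k_t,c,\tilde u)\ge F_t^0(k_t,c,\tilde v)$ by Proposition~\ref{prop:reduction}.

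\emph{Step 2: independence of $c$.} Suppose $\tilde u,\tilde v\in\Lambda_t^{\bf U}(k_t,c)\cap\Lambda_t^{\bf U}(k_t,c')$. Choose realizations $f_+,g_+$ at $c$ and $f_+',g_+'$ at $c'$. For every $s$ we then have
\[
U_{t+1}(G_t(k_t,c,s),f_+(s))=\tilde u(s)=U_{t+1}(G_t(k_t,c',s),f_+'(s)),
\]
and by compatibility these two life paths are $\sim_{(t+1)}$-indifferent. The analogous indifference holds for $g_+,g_+'$ via $\tilde v$. These are exactly the hypotheses of Axiom~\ref{axiom:abs_separability}, which yields $(k_t,(c,f_+))\succeq_{(t)}(k_t,(c,g_+))$ if and only if $(k_t,(c',f_+'))\succeq_{(t)}(k_t,(c',g_+'))$. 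Combined with Step 1, this proves the claim.

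The only delicate point is that the comparison in Axiom~\ref{axiom:abs_separability} is across different histories $G_t(k_t,c,s)$ and $G_t(k_t,c',s)$. This is legitimate precisely because compatibility is a cross-history representation: $U_{t+1}$ ranks all of ${\cal D}_{t+1}$ by $\succeq_{(t+1)}$, so equal utility values give cross-history indifference. I expect no other obstacle, since the result is a direct bookkeeping consequence of the two axioms.
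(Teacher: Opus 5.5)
Your proposal is correct and follows the paper's proof essentially step for step: independence of the realizing plans at fixed $c$ via compatibility, the indifference version of Axiom~\ref{axiom:abs_dynamic_consistency}, and transitivity, then independence of $c$ via compatibility (which gives the needed cross-history indifference) and Axiom~\ref{axiom:abs_separability}. Your remark that transitivity of $\succeq_{(t)}$ follows because $U_t$ represents it is a point the paper leaves implicit; it is needed because the weak-order axiom is not among the lemma's hypotheses.
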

\begin{proof}
\emph{Step 1: Independence of realizing plans.}
Fix $t\in[T-1]$, $k_t \in {\cal K}_t$, and $\tilde{u},\tilde{v}\in\Lambda_t^{\bf U}(k_t)$.
Suppose $f=(c,f_+)$ and $g=(c,g_+)$ realize $\tilde{u}$ and $\tilde{v}$, respectively, at the same current consumption $c \in {\cal C}$. Suppose also that $f'=(c,f_+')$ and $g'=(c,g_+')$ realize $\tilde{u}$ and $\tilde{v}$, respectively, at the same current consumption $c$. Then we have $U_{t+1}(G_t(k_t, c,s),f_+(s))=U_{t+1}(G_t(k_t, c,s),f_+'(s))=\tilde{u}(s)$ and $U_{t+1}(G_t(k_t, c,s),g_+(s))=U_{t+1}(G_t(k_t, c,s),g_+'(s))=\tilde{v}(s)$ for all $s\in{\cal S}$.

Since ${\bf U}$ is compatible with ${\cal P}$, equality of the period $t+1$ utilities implies
\[
(G_t(k_t, c,s),f_+(s))\sim_{(t+1)}(G_t(k_t, c,s),f_+'(s)),\, \forall s\in{\cal S},
\]
and
\[
(G_t(k_t, c,s),g_+(s))\sim_{(t+1)}(G_t(k_t, c,s),g_+'(s)),\, \forall s\in{\cal S}.
\]
By the indifference version of Axiom~\ref{axiom:abs_dynamic_consistency}, it follows that $(k_t, f)\sim_{(t)}(k_t, f')$ and $(k_t, g)\sim_{(t)}(k_t, g')$. By transitivity of $\succeq_{(t)}$, $(k_t, f)\succeq_{(t)}(k_t, g)$ if and only if $(k_t, f')\succeq_{(t)}(k_t, g')$. This reasoning shows that the induced risk comparison between $\tilde{u}$ and $\tilde{v}$ does not depend on the particular continuation plans used to realize them at the current consumption $c$.

\emph{Step 2: Independence of current consumption.}
Suppose $\tilde{u}$ and $\tilde{v}$ are jointly attainable at both $c, c' \in {\cal C}$. Let $f=(c,f_+)$ and $g=(c,g_+)$ realize $\tilde{u}$ and $\tilde{v}$ at $c$, and let $f'=(c',f_+')$ and $g'=(c',g_+')$ realize them at $c'$.
Since ${\bf U}$ is compatible with ${\cal P}$, equality of the realized continuation utilities implies:
\begin{align*}
(G_t(k_t, c,s),f_+(s)) \sim_{(t+1)} & (G_t(k_t, c',s),f_+'(s)),\, \forall s\in{\cal S},\\
(G_t(k_t, c,s),g_+(s)) \sim_{(t+1)} & (G_t(k_t, c',s),g_+'(s)),\, \forall s\in{\cal S}.
\end{align*}
By Axiom~\ref{axiom:abs_separability}, $(k_t,(c,f_+))\succeq_{(t)}(k_t,(c,g_+))$ if and only if $(k_t,(c',f_+'))\succeq_{(t)}(k_t,(c',g_+'))$. It follows that the induced risk comparison between $\tilde{u}$ and $\tilde{v}$ is independent of the current consumption at which the pair is realized.
\end{proof}

\subsection{Separation}

The risk comparison $\succeq_{k_t}^{\mathrm{risk}}$ need not be complete on all of $\Lambda_t^{\bf U}(k_t)$.
The following Assumption~\ref{assu:abs_CE_richness} generalizes our richness assumption for the abstract setting.
It supplies a continuous, normalized, internal, and monotone certainty-equivalent completion of the partially identified $\succeq_{k_t}^{\mathrm{risk}}$ to the full effective domain.
Let
\[
D_t^M \triangleq \left\{ (k_t,\tilde{u}) \in {\cal K}_t \times {\cal L} : \tilde{u}\in\Lambda_t^{\bf U}(k_t) \right\}
\]
be the effective domain of the period $t \in [T-1]$ risk aggregator.

\begin{assumption}[Certainty-equivalent richness for ${\cal G}$]
\label{assu:abs_CE_richness}
Fix a compatible utility system ${\bf U}$. For each $t\in[T-1]$ there is a jointly continuous functional $M_t^0 : D_t^M \rightarrow \Re$, such that for each $k_t\in{\cal K}_t$:

(i) \textnormal{(Certainty equivalent)} $M_t^0(k_t,\cdot)$ extends and represents $\succeq_{k_t}^{\mathrm{risk}}$ on jointly attainable pairs: for any plans $f=(c,f_+)$ and $g=(c,g_+)$ with the same current consumption, $f\succeq_{k_t}g$ if and only if $M_t^0(k_t,\tilde{u}_{t+1}(k_t,f))\ge M_t^0(k_t,\tilde{u}_{t+1}(k_t,g))$, and $M_t^0(k_t,{\bf v})=v$ whenever ${\bf v} \in \Lambda_t^{\bf U}(k_t)$.

(ii) \textnormal{(Internality)}
$\min_{s \in {\cal S}}\tilde{u}(s)\le M_t^0(k_t,\tilde{u})\le\max_{s \in {\cal S}}\tilde{u}(s)$ for all $\tilde{u} \in \Lambda_t^{\bf U}(k_t)$.

(iii) \textnormal{(Deterministic solvability)} Fix $f=(c,f_+)\in{\cal F}_t$ and let $v=M_t^0(k_t,\tilde{u}_{t+1}(k_t,f))$, then there exists $(c,g_+)\in{\cal F}_t$ such that ${\bf v} = \tilde{u}_{t+1}(k_t, (c,g_+))$.

(iv) \textnormal{(Monotone completion)} If $\tilde{u},\tilde{v}\in\Lambda_t^{\bf U}(k_t)$ and $\tilde{u}\leq\tilde{v}$, then $M_t^0(k_t,\tilde{u}) \leq M_t^0(k_t,\tilde{v})$.
\end{assumption}

We now separate the general aggregators $\{F_t^0\}_{t=0}^{T-1}$ into time and risk aggregators in the proposition below.
Assumption~\ref{assu:abs_CE_richness} supplies the effective risk aggregators $\{M_t^0\}_{t=0}^{T-1}$.
For fixed $t \in [T-1]$, $k_t \in {\cal K}_t$, and $c\in{\cal C}$, let ${\cal I}_t^{\bf U}(k_t, c) \triangleq M_t^0(k_t,\Lambda_t^{\bf U}(k_t, c))$ be the induced effective scalar domain of attainable certainty equivalents. Then, let
\[
D_t^W\triangleq\left\{(k_t,c,v) \in {\cal K}_t \times {\cal C} \times \Re : v\in{\cal I}_t^{\bf U}(k_t,c) \right\},
\]
be the effective domain of the period $t \in [T-1]$ time aggregator.

\begin{prop}[Separation of time and risk]
\label{prop:separation}
Suppose ${\cal P}$ satisfies Axiom~\ref{axiom:abs_dynamic_consistency}, Axiom~\ref{axiom:abs_monotonicity}, and Axiom~\ref{axiom:abs_separability}. Fix a compatible utility system ${\bf U}$ satisfying Assumption~\ref{assu:abs_CE_richness}. Then, for all $t\in[T-1]$, there exists an effective time aggregator $W_t^0 : D_t^W \to \Re$ such that, for all $f=(c,f_+)\in{\cal F}_t$,
\[
U_t(k_t, f)=W_t^0(k_t, c,M_t^0(k_t,\tilde{u}_{t+1}(k_t, f))),\, \forall k_t \in {\cal K}_t.
\]
Moreover, for all $k_t \in {\cal K}_t$ and $c \in {\cal C}$, if $v \geq v'$ for $v,v'\in{\cal I}_t^{\bf U}(k_t, c)$, then $W_t^0(k_t,c,v)\geq W_t^0(k_t,c,v')$.
For all $k_t \in {\cal K}_t$, if $c>c'$ and $v \in {\cal I}_t^{\bf U}(k_t, c) \cap {\cal I}_t^{\bf U}(k_t, c')$, then $W_t^0(k_t,c,v)>W_t^0(k_t,c',v)$.
\end{prop}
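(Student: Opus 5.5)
The plan is to define $W_t^0$ directly on $D_t^W$ by choosing a deterministic realization. Fix $t\in[T-1]$, $k_t\in{\cal K}_t$, $c\in{\cal C}$, and $v\in{\cal I}_t^{\bf U}(k_t,c)$. Then $v=M_t^0(k_t,\tilde u_{t+1}(k_t,f))$ for some $f=(c,f_+)$. By deterministic solvability (Assumption~\ref{assu:abs_CE_richness}(iii)) there is a plan $(c,g_+)$ with $\tilde u_{t+1}(k_t,(c,g_+))={\bf v}$. In particular ${\bf v}\in\Lambda_t^{\bf U}(k_t,c)$, and we set
\[
W_t^0(k_t,c,v)\triangleq F_t^0(k_t,c,{\bf v})=U_t(k_t,(c,g_+)),
\]
with $F_t^0$ from Proposition~\ref{prop:reduction}. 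This is well defined: $F_t^0$ depends only on the attainable vector (Proposition~\ref{prop:reduction}, Step 2), and ${\bf v}$ is determined by $v$.

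\emph{Representation identity.} Take an arbitrary $f=(c,f_+)$ and put $\tilde u=\tilde u_{t+1}(k_t,f)$ and $v=M_t^0(k_t,\tilde u)$. Let $g=(c,g_+)$ realize ${\bf v}$. Both $\tilde u$ and ${\bf v}$ lie in $\Lambda_t^{\bf U}(k_t,c)$, so they are realized at the same consumption $c$. By constant normalization, $M_t^0(k_t,{\bf v})=v=M_t^0(k_t,\tilde u)$. Assumption~\ref{assu:abs_CE_richness}(i) then gives both $f\succeq_{k_t}g$ and $g\succeq_{k_t}f$, hence $(k_t,f)\sim_{(t)}(k_t,g)$. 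Compatibility of ${\bf U}$ yields
\[
U_t(k_t,f)=U_t(k_t,g)=W_t^0(k_t,c,v).
\]

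\emph{Monotonicity in $v$.} For $v\ge v'$ in ${\cal I}_t^{\bf U}(k_t,c)$, both constants ${\bf v}$ and ${\bf v}'$ are attainable at $c$, and ${\bf v}\ge{\bf v}'$ pointwise. The monotonicity of $F_t^0(k_t,c,\cdot)$ from Proposition~\ref{prop:reduction} gives $W_t^0(k_t,c,v)\ge W_t^0(k_t,c,v')$. Alternatively, this follows from (i) together with the constant normalization.

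\emph{Strict monotonicity in $c$.} This is the step needing the compensated axiom. Let $c>c'$ and $v\in{\cal I}_t^{\bf U}(k_t,c)\cap{\cal I}_t^{\bf U}(k_t,c')$. Choose $g_+$ realizing ${\bf v}$ at $c$ and $g_+'$ realizing ${\bf v}$ at $c'$. Then for every $s$,
\[
U_{t+1}(G_t(k_t,c,s),g_+(s))=v=U_{t+1}(G_t(k_t,c',s),g_+'(s)),
\]
so by compatibility these two life paths are $\sim_{(t+1)}$-indifferent for all $s$. Axiom~\ref{axiom:abs_monotonicity} gives $(k_t,(c,g_+))\succ_{(t)}(k_t,(c',g_+'))$, and therefore $W_t^0(k_t,c,v)>W_t^0(k_t,c',v)$.

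\emph{Main obstacle.} The delicate point is making sure every comparison invoked in Assumption~\ref{assu:abs_CE_richness}(i) is between plans sharing the same current consumption. The construction handles this by always realizing the constant vector at the given $c$, which is exactly what solvability provides.

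Axiom~\ref{axiom:abs_separability} is not strictly needed for these claims. It only underlies the consistency of the risk ranking that $M_t^0$ represents (Lemma~\ref{lem:induced}).
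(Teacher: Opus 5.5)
Your proof is correct and follows the paper's argument essentially step for step. You define $W_t^0$ by realizing the constant vector ${\bf v}$ at the same $c$ via deterministic solvability, with well-definedness and monotonicity coming from dynamic consistency; you route these two properties through $F_t^0$ of Proposition~\ref{prop:reduction}, whereas the paper re-derives them directly, which is the same argument. As in the paper, the decomposition follows from the certainty-equivalent clause plus normalization, and strict monotonicity follows from Axiom~\ref{axiom:abs_monotonicity}.
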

\begin{proof}
\emph{Step 1: Fix the effective risk aggregator.} By Assumption~\ref{assu:abs_CE_richness}, let $M_t^0:D_t^M\to\Re$ be the jointly continuous certainty-equivalent completion. For any fixed $k_t\in{\cal K}_t$, its restriction $M_t^0(k_t,\cdot)$ represents the completed induced risk comparison on $\Lambda_t^{\bf U}(k_t)$.

\emph{Step 2: Construct $W_t^0$.}
We now define $W_t^0$ on $D_t^W$. Fix $k_t \in {\cal K}_t$ and $(c, v) \in {\cal C} \times \Re$ with $v\in{\cal I}_t^{\bf U}(k_t, c)$. Then, there exists $\tilde{u}\in\Lambda_t^{\bf U}(k_t, c)$ such that $v=M_t^0(k_t,\tilde{u})$. By deterministic solvability, ${\bf v}\in\Lambda_t^{\bf U}(k_t, c)$ so there exists $a_+\in({\cal F}_{t+1})^{\cal S}$ such that
\[
U_{t+1}(G_t(k_t, c,s),a_+(s))=v,\, \forall s\in{\cal S}.
\]
Let $a^{c,v}=(c,a_+) \in {\cal F}_t$ be the plan that realizes $(c, {\bf v})$, and define
\begin{equation}
\label{eq:time_aggregator_construction}
    W_t^0(k_t,c,v)\triangleq U_t(k_t,a^{c,v}).
\end{equation}
The construction in Eq.~\eqref{eq:time_aggregator_construction} is well-defined, because it is independent of the plan used to realize ${\bf v}$. If $a^{c,v}=(c,a_+)$ and $b^{c,v}=(c,b_+)$ both realize $(c,{\bf v})$, then
\[
U_{t+1}(G_t(k_t, c,s),a_+(s))=U_{t+1}(G_t(k_t, c,s),b_+(s)),\, \forall s \in {\cal S}.
\]
Compatibility of ${\bf U}$ and the indifference version of Axiom~\ref{axiom:abs_dynamic_consistency} then imply $(k_t,a^{c,v})\sim_{(t)}(k_t,b^{c,v})$, which means $U_t(k_t,a^{c,v})=U_t(k_t,b^{c,v})$.

\emph{Step 3: Show monotonicity in continuation value.}
Fix $c\in{\cal C}$ and let $v\geq v'$ for $v,v'\in{\cal I}_t^{\bf U}(k_t, c)$.
By deterministic solvability, we can choose plans $a^{c,v}=(c,a_+)$ and $a^{c,v'}=(c,a_+')$ that realize $(c,{\bf v})$ and $(c,{\bf v'})$, respectively, so that
\[
U_{t+1}(G_t(k_t, c,s),a_+(s)) = v \geq v' = U_{t+1}(G_t(k_t, c,s),a_+'(s)),\, \forall s\in{\cal S}.
\]
By Axiom~\ref{axiom:abs_dynamic_consistency} it follows that $(k_t,a^{c,v})\succeq_{(t)}(k_t,a^{c,v'})$ and so $W_t^0(k_t,c,v)\geq W_t^0(k_t,c,v')$ by Eq.~\eqref{eq:time_aggregator_construction}.

\emph{Step 4: Show compensated monotonicity.}
Fix $c>c'$ and suppose $v\in{\cal I}_t^{\bf U}(k_t, c)\cap{\cal I}_t^{\bf U}(k_t, c')$. 
Then, choose plans $a^{c,v}=(c,a_+)$ and $a^{c',v}=(c',a_+')$ that realize $(c,{\bf v})$ and $(c',{\bf v})$, respectively. By compatibility of ${\bf U}$, we have
\[
U_{t+1}(G_t(k_t, c,s),a_+(s)) = U_{t+1}(G_t(k_t, c',s),a_+'(s)) = v,\, \forall s \in {\cal S}.
\]
By Axiom~\ref{axiom:abs_monotonicity}, we have $(k_t,a^{c,v})\succ_{(t)}(k_t,a^{c',v})$ (since the continuation utilities are equal) and therefore $W_t^0(k_t, c,v)>W_t^0(k_t, c',v)$ by Eq.~\eqref{eq:time_aggregator_construction}.

\emph{Step 5: Verify separation.}
Let $f=(c,f_+)\in{\cal F}_t$, set $\tilde{u} = \tilde{u}_{t+1}(k_t, f) \in \Lambda_t^{\bf U}(k_t, c)$, and let $v=M_t^0(k_t,\tilde{u})$.
By deterministic solvability, ${\bf v}\in\Lambda_t^{\bf U}(k_t, c)$, so there is a plan $a^{c,v} \in {\cal F}_t$ realizing $(c,{\bf v})$. Since $M_t^0(k_t,\tilde u)=v=M_t^0(k_t,{\bf v})$, and since $f=(c,f_+)$ and $a^{c,v}=(c,a_+)$ have the same current consumption $c$, the certainty-equivalent representation in Assumption~\ref{assu:abs_CE_richness}(i) implies $(k_t,f)\sim_{(t)}(k_t,a^{c,v})$.
Therefore, $U_t(k_t,f)=U_t(k_t,a^{c,v})=W_t^0(k_t,c,v)$.
Substituting $v=M_t^0(k_t,\tilde{u}_{t+1}(k_t, f))$ yields the conclusion $U_t(k_t, f)=W_t^0(k_t, c,M_t^0(k_t,\tilde{u}_{t+1}(k_t, f)))$.
\end{proof}
\noindent
The aggregators $\{M_t^0\}_{t=0}^{T-1}$ represent the induced risk comparison on the effective domain, and $\{W_t^0\}_{t=0}^{T-1}$ represent deterministic tradeoffs between current consumption and certainty-equivalent continuation utility.

Joint continuity of $M_t^0$ on $D_t^M$ is part of Assumption~\ref{assu:abs_CE_richness}. We establish joint continuity of $W_t^0$ on $D_t^W$ in the following lemma.

\begin{lem}[Joint continuity of the effective time aggregator]
\label{lem:effective_time_continuity}
Suppose Assumption~\ref{assu:abs_CE_richness} holds.
For each $t\in[T-1]$, $D_t^W$ is compact and the effective time aggregator $W_t^0:D_t^W\to\Re$ from Proposition~\ref{prop:separation} is jointly continuous.
\end{lem}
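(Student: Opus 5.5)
The natural route is to write $D_t^W$ as a continuous image of a compact set. Then I would show that $W_t^0$ factors through that image via a continuous map that is constant on fibers. Quotient arguments (Theorem~\ref{thm:compact_to_hausdorff}) then transfer continuity.

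First I set up the compact parametrization. Let
\[
E_t \triangleq \{(k_t,c,f_+)\} = {\cal K}_t\times{\cal C}\times({\cal F}_{t+1})^{\cal S},
\]
which is compact metrizable. The map $\Psi_t(k_t,c,f_+) \triangleq \tilde u_{t+1}(k_t,(c,f_+))$ is continuous, since $U_{t+1}$ and $G_t$ are continuous and ${\cal S}$ is finite. Hence the set $\{(k_t,c,\Psi_t(k_t,c,f_+))\}$ is compact. It is contained in $\{(k_t,c,\tilde u): \tilde u\in\Lambda_t^{\bf U}(k_t)\}$, and it lies in $D_t^M$ after forgetting $c$. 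Composing with the jointly continuous $M_t^0$ gives a continuous map
\[
\Phi_t:E_t\to{\cal K}_t\times{\cal C}\times\Re,\qquad \Phi_t(k_t,c,f_+) = \bigl(k_t,c,M_t^0(k_t,\Psi_t(k_t,c,f_+))\bigr).
\]
By definition of ${\cal I}_t^{\bf U}(k_t,c)$, its image is exactly $D_t^W$. So $D_t^W$ is compact, being a continuous image of a compact space, and it is metrizable as a subspace of a metrizable space.

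Second, I relate $W_t^0$ to $\Phi_t$. Proposition~\ref{prop:separation} gives
\[
W_t^0(\Phi_t(k_t,c,f_+)) = U_t(k_t,(c,f_+))
\]
for every $(k_t,c,f_+)\in E_t$. The right-hand side is continuous on $E_t$, since $U_t$ is continuous and $(c,f_+)\mapsto(c,f_+)\in{\cal F}_t$ is a homeomorphism. Thus $W_t^0\circ\Phi_t$ is continuous.

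Third, I transfer continuity. Since $E_t$ is compact and $D_t^W$ is Hausdorff, Theorem~\ref{thm:compact_to_hausdorff} shows that the surjection $\Phi_t:E_t\to D_t^W$ is closed, hence a quotient map. $W_t^0$ is a function on $D_t^W$, so $W_t^0\circ\Phi_t$ is automatically constant on the fibers of $\Phi_t$. Theorem~\ref{thm:quotient_universal_property} then gives continuity of $W_t^0$. As a backup, I can argue directly with closed sets: for closed $F\subseteq\Re$,
\[
(W_t^0)^{-1}(F) = \Phi_t\bigl((W_t^0\circ\Phi_t)^{-1}(F)\bigr),
\]
which is the image of a closed, hence compact, set and is therefore closed.

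The main obstacle is the first step: checking that $\Phi_t$ is well-defined and continuous into the right space. This requires that $\Psi_t(k_t,c,f_+)$ always lies in $\Lambda_t^{\bf U}(k_t)$, so that $M_t^0$ can be applied. It also requires that joint continuity of $M_t^0$ on $D_t^M$ be used with the subspace topology, which holds because $(k_t,f_+,c)\mapsto(k_t,\Psi_t)$ is continuous into $D_t^M$. Once this is in place, the rest is routine.
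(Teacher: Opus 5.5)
Your proposal is correct and follows the paper's proof: the same compact parameter space ${\cal K}_t\times{\cal C}\times({\cal F}_{t+1})^{\cal S}$, the same continuous surjection onto $D_t^W$ (giving compactness), a quotient map by Theorem~\ref{thm:compact_to_hausdorff}, and continuity via Theorem~\ref{thm:quotient_universal_property}. The only cosmetic difference is in how fiber constancy is obtained. The paper re-checks it for $U_t$ using the certainty-equivalent representation and then identifies the induced map with $W_t^0$. You read it directly off the identity $W_t^0\circ\Phi_t=U_t(k_t,(c,f_+))$ from Proposition~\ref{prop:separation}, which is legitimate because that proposition already did this work.
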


\begin{proof}
Fix $t \in [T-1]$ and define ${\cal Q}_t \triangleq {\cal K}_t \times {\cal C} \times ({\cal F}_{t+1})^{\cal S}$, which is compact because ${\cal K}_t$, ${\cal C}$, and ${\cal F}_{t+1}$ are compact and ${\cal S}$ is finite.
Then define the mapping $\Psi_t : {\cal K}_t\times{\cal C}\times({\cal F}_{t+1})^{\cal S} \to{\cal L}$ by
\[
[\Psi_t(k_t,c,f_+)](s) \triangleq U_{t+1}(G_t(k_t,c,s),f_+(s)),\, \forall s\in{\cal S}.
\]
Since ${\cal S}$ is finite and $G_t$ and $U_{t+1}$ are continuous, $\Psi_t$ is continuous.

Next, define $p_t:{\cal Q}_t\to D_t^W$ by
\[
p_t(k_t, c,f_+) = (k_t,c,M_t^0(k_t,\Psi_t(k_t, c, f_+))).
\]
Since $\Psi_t$ is continuous and $M_t^0$ is jointly continuous, the map $p_t$ is continuous. It is also onto $D_t^W$ by the definition of ${\cal I}_t^{\bf U}(k_t, c)$, so $D_t^W=p_t({\cal Q}_t)$ is compact. Since $D_t^W \subseteq {\cal K}_t\times{\cal C}\times\Re$, it is additionally Hausdorff. Then $p_t$ is a continuous surjection from the compact space ${\cal Q}_t$ onto the Hausdorff space $D_t^W$, so it is a quotient map by Theorem~\ref{thm:compact_to_hausdorff}.

Now define $\mathscr U_t:{\cal Q}_t\to\Re$ by $\mathscr U_t(k_t, c,f_+) \triangleq U_t(k_t,(c,f_+))$, which is continuous by continuity of $U_t$.
Suppose $p_t(k_t,c,f_+) = p_t(k_t',c',g_+)$, then $k_t=k_t'$, $c=c'$, and $M_t^0(k_t,\Psi_t(k_t,c,f_+))=M_t^0(k_t,\Psi_t(k_t,c,g_+))$.
The two continuation-utility vectors $\Psi_t(k_t,c,f_+)$ and $\Psi_t(k_t,c,g_+)$ are jointly attainable at the same current consumption $c$. Since $M_t^0(k_t,\cdot)$ represents $\succeq_{k_t}^{\mathrm{risk}}$ on jointly attainable pairs, equality of the certainty equivalents implies $(k_t,(c,f_+))\sim_{(t)}(k_t,(c,g_+))$.
Then $\mathscr U_t(k_t,c,f_+) = \mathscr U_t(k_t,c,g_+)$ by compatibility of ${\bf U}$, which shows that $\mathscr U_t$ is constant on the fibers of $p_t$.

Because $p_t$ is a quotient map, by Theorem~\ref{thm:quotient_universal_property} there is a unique jointly continuous function $\overline{W}_t^0:D_t^W\to\Re$ such that $\mathscr U_t = \overline{W}_t^0\circ p_t$, explicitly
\[
U_t(k_t,(c,f_+)) = \overline{W}_t^0 (k_t, c, M_t^0(k_t,\Psi_t(k_t,c,f_+))).
\]
Proposition~\ref{prop:separation} gives the same recursion with $W_t^0$. Since $p_t$ is onto $D_t^W$, the factor through $p_t$ is unique, and $W_t^0=\overline{W}_t^0$ on $D_t^W$.
We conclude that $W_t^0$ is jointly continuous.
\end{proof}

\subsection{Continuous Monotone Extensions}

The next step is to find continuous and monotone global extensions of the behaviorally identified effective aggregators.
To proceed, Lemma~\ref{lem:graph_compact} below establishes compactness of the effective domains.

\begin{lem}
\label{lem:graph_compact}
Let ${\cal K}$ and ${\cal Q}$ be compact metric spaces and $g:{\cal K}\times{\cal Q}\to\Re^{N}$ be a continuous function. Then, the set $\{(k,g(k,q)):k\in{\cal K},\,q\in{\cal Q}\} \subseteq {\cal K}\times\Re^{N}$ is compact.
\end{lem}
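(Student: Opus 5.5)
The set in question is the image of the compact space ${\cal K}\times{\cal Q}$ under the map $\Phi:{\cal K}\times{\cal Q}\to{\cal K}\times\Re^N$ defined by $\Phi(k,q)\triangleq(k,g(k,q))$. I will show that $\Phi$ is continuous and then use the fact that continuous images of compact sets are compact.

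First, ${\cal K}\times{\cal Q}$ is compact. It is a product of two compact metric spaces, so Tychonoff's theorem applies (or, since we are in metric spaces, one can argue directly by sequential compactness: extract a convergent subsequence in the first coordinate and then a further one in the second).

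Second, $\Phi$ is continuous. Each coordinate of $\Phi$ is continuous: the first is the projection $(k,q)\mapsto k$, and the second is $g$, which is continuous by hypothesis. A map into a product space is continuous exactly when each of its coordinates is, so $\Phi$ is continuous.

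Therefore $\{(k,g(k,q)):k\in{\cal K},\,q\in{\cal Q}\}=\Phi({\cal K}\times{\cal Q})$ is compact in ${\cal K}\times\Re^N$. It is also closed, because ${\cal K}\times\Re^N$ is metrizable and hence Hausdorff. I expect no real obstacle here. The only point to keep straight is that the compactness comes from the domain ${\cal K}\times{\cal Q}$, not from boundedness of $g$ alone. In the intended applications, ${\cal Q}$ is taken to be ${\cal C}\times({\cal F}_{t+1})^{\cal S}$ and $g$ is $\Psi_t$ (or its composition with $M_t^0$), which yields compactness of $D_t^M$ and $D_t^W$.
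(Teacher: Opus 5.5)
Your proposal is correct and follows the paper's proof: the paper also defines $F(k,q)=(k,g(k,q))$, notes that it is continuous on the compact space ${\cal K}\times{\cal Q}$, and concludes that its image is compact. Your extra remarks, that the image is closed and how the lemma yields compactness of $D_t^M$ and $D_t^W$, are accurate but not needed for the lemma itself.
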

\begin{proof}
Define $F:{\cal K}\times{\cal Q}\to{\cal K}\times\Re^N$ by $F(k,q)=(k,g(k,q))$.
The map $F$ is continuous, and ${\cal K}\times{\cal Q}$ is compact. Then
\[
F({\cal K}\times{\cal Q}) = \{(k,g(k,q)):k\in{\cal K},\,q\in{\cal Q}\}
\]
is compact as the continuous image of a compact set.
\end{proof}

We select sets ${\cal K}={\cal K}_t$ and ${\cal Q}={\cal C}\times({\cal F}_{t+1})^{\cal S}$, and the mapping $g : {\cal K} \times {\cal Q} \rightarrow {\cal L}$ defined by
\[
[g(k_t,c,f_+)](s) \triangleq U_{t+1}(G_t(k_t,c,s),f_+(s)),\, \forall s \in {\cal S},
\]
which is continuous because $G_t$ and $U_{t+1}$ are continuous and ${\cal S}$ is finite.
Applying Lemma~\ref{lem:graph_compact} with these selections shows that
\[
D_t^M=\{(k_t,g(k_t,c,f_+)):k_t\in{\cal K}_t,\ c\in{\cal C},\ f_+\in({\cal F}_{t+1})^{\cal S}\}
\]
is compact. Similarly, the map $(k_t,c,f_+)\to(k_t,c,M_t^0(k_t,g(k_t,c,f_+))) : {\cal K}_t\times{\cal C}\times({\cal F}_{t+1})^{\cal S} \rightarrow {\cal K}_t\times{\cal C}\times\Re$ is continuous, and so its image $D_t^W$ is compact.

\begin{defn}[Fiber-isotone]
\label{defn:fiber-isotone}
A function $\phi:D\subseteq{\cal K}\times\Re^N\to\Re$ is \emph{fiber-isotone} if $\phi(k,p)\leq\phi(k,p')$ whenever $(k,p),(k,p')\in D$ and $p\leq p'$ coordinatewise.
\end{defn}

The next result provides monotone continuous global extensions.

\begin{thm}[Fiberwise monotone extension]
\label{thm:monotone_extension}
Let ${\cal K}$ be a compact metric space, let $P=\Re^N$ with the coordinatewise order, and let $D\subseteq{\cal K}\times P$ be nonempty and compact. If $\phi:D\to\Re$ is continuous and fiber-isotone, then there exists a continuous fiber-isotone extension $\Phi:{\cal K}\times P\to\Re$ satisfying
\[
\min_{(k,p) \in D} \phi(k,p) \leq \Phi \leq \max_{(k,p) \in D} \phi(k,p).
\]
\end{thm}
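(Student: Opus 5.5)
The plan is a McShane--Whitney type envelope, adapted to the order so that the extension is automatically fiber-isotone. Fix a compatible metric $d$ on ${\cal K}$ and the sup norm $\|\cdot\|$ on $\Re^N$, and write $r^+\triangleq\max\{r,0\}$ coordinatewise. For $(k,p),(k',p')\in{\cal K}\times P$ define the asymmetric ``order gap''
\[
\delta\big((k,p),(k',p')\big)\triangleq d(k,k')+\|(p'-p)^+\|,
\]
which vanishes when $k=k'$ and $p'\le p$. Let $m\triangleq\min_D\phi$ and $\bar M\triangleq\max_D\phi$; both exist since $D$ is compact and $\phi$ is continuous.

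The first step is to build an order modulus:
\[
\omega(r)\triangleq\sup\big\{\phi(k',p')-\phi(k,p):(k,p),(k',p')\in D,\ \delta((k,p),(k',p'))\le r\big\}, \quad r\ge 0 .
\]
This function is nondecreasing, satisfies $0\le \omega\le \bar M-m$, and has $\omega(0)=0$. To see $\omega(0)=0$, note that the sup includes the pair of a point with itself, and any pair with $\delta=0$ has $k=k'$ and $p'\le p$, so fiber-isotonicity gives $\phi(k',p')\le\phi(k,p)$.

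The key claim, and the step I expect to be the main obstacle, is that $\omega(r)\to 0$ as $r\downarrow 0$. I would argue by contradiction with compactness. Suppose there are $\varepsilon>0$ and pairs in $D$ with $\delta\to0$ but $\phi(k_n',p_n')-\phi(k_n,p_n)\ge\varepsilon$. Pass to a convergent subsequence in $D\times D$. The limit pair satisfies $k=k'$ and $(p'-p)^+=0$, that is, $p'\le p$. Continuity of $\phi$ on $D$ then gives $\phi(k,p')-\phi(k,p)\ge\varepsilon$, which contradicts fiber-isotonicity. Next I would replace $\omega$ by a continuous nondecreasing majorant $\bar\omega$ with $\bar\omega(0)=0$, for instance $\bar\omega(r)\triangleq\frac1r\int_r^{2r}\omega(s)\,ds$ for $r>0$ and $\bar\omega(0)=0$. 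Since $\omega$ is nondecreasing, $\omega(r)\le\bar\omega(r)\le\omega(2r)$. This $\bar\omega$ is bounded and uniformly continuous on $[0,\infty)$.

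Next define
\[
\Phi_0(k,p)\triangleq\sup_{(k',p')\in D}\Big[\phi(k',p')-\bar\omega\big(\delta((k,p),(k',p'))\big)\Big],
\qquad \Phi\triangleq\min\{\max\{\Phi_0,m\},\bar M\}.
\]
I would then verify four properties.

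\emph{Extension.} For $(k,p)\in D$, taking $(k',p')=(k,p)$ gives $\Phi_0\ge\phi(k,p)$. The definition of $\omega\le\bar\omega$ gives $\phi(k',p')-\bar\omega(\delta)\le\phi(k,p)$ for every term, so $\Phi_0=\phi$ on $D$. Because $m\le\phi\le\bar M$, the clipping does not alter these values.

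\emph{Fiber-isotonicity.} If $p\le\tilde p$, then $(p'-\tilde p)^+\le(p'-p)^+$, so $\delta$ decreases. Since $\bar\omega$ is nondecreasing, each term in the sup increases, hence so does $\Phi_0$. Clipping preserves this monotonicity.

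\emph{Continuity.} The map $(k,p)\mapsto\delta((k,p),(k',p'))$ is $1$-Lipschitz uniformly in $(k',p')$, and $\bar\omega$ is uniformly continuous. The family of functions inside the sup is therefore uniformly equicontinuous and bounded above by $\bar M$. Its supremum is then finite and continuous, and clipping is continuous.

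\emph{Bounds.} The inequality $m\le\Phi\le\bar M$ holds by construction of the clipping.
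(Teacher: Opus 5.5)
Your proof is correct, and it takes a genuinely different route from the paper's.

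The paper does not construct the extension explicitly. It passes to the compact set $E={\cal K}\times R$, where $R=[\alpha,\beta]^N$ is a box whose interior contains the $P$-projection of $D$. It notes that the fiber order on $E$ is a closed preorder, so $E$ is normally preordered (Minguzzi, Theorem~2.4). It then applies Minguzzi's Nachbin-type extension theorem for continuous isotone maps on compact subsets (Theorem~3.4) to the rescaled function $(\phi-a)/(b-a)$. Finally it precomposes with the isotone coordinatewise truncation $P\to R$.

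You replace that external theorem with an order-adapted McShane--Whitney envelope. The only nontrivial input is then your compactness lemma $\omega(r)\to 0$, which is exactly where continuity of $\phi$ on compact $D$ and fiber-isotonicity interact. Your route buys several things:
\begin{itemize}
\item it is self-contained and gives an explicit formula;
\item it gives a quantitative modulus of continuity for $\Phi$, namely that of $\bar\omega$, since $\delta$ is $1$-Lipschitz in its first argument;
\item it gives isotonicity on all of the non-compact ${\cal K}\times P$ without truncating to a box.
\end{itemize}
Also note that $\Phi_0\le\max_D\phi$ holds automatically, so only the lower clip is actually needed.

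The paper's route buys brevity and generality in the order structure. It needs only a closed preorder on a compact space, whereas yours relies on the specific gap function $\delta$ built from the metric and the product order.

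Two details deserve a line each in your write-up. Monotonicity of $\bar\omega$ follows from writing $\bar\omega(r)=\int_1^2\omega(ru)\,du$. Uniform continuity of $\bar\omega$ on $[0,\infty)$ follows because $\delta$ is bounded on the compact set $D\times D$, so $\omega$, and hence $\bar\omega$, is eventually constant.
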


\begin{proof}
Let $R=[\alpha,\beta]^N\subset P$ be a compact box, such that the interior of $R$ contains the projection of $D$ onto $P$. Let $E={\cal K}\times R$ be equipped with the preorder $\preceq$ defined by the product of the equality order on ${\cal K}$ and the coordinatewise order on $R$:
\[
(k,p)\preceq(k',p') \iff k=k', p\leq p'.
\]
Since $E$ is compact and $\preceq$ is closed, $(E,\preceq)$ is normally preordered \cite[Theorem~2.4]{minguzzi2013normally}.

Let $a = \min_D\phi$ and $b = \max_D\phi$.
If $a=b$, then we can take the constant function $\Phi\equiv a$ as the global extension. Otherwise, suppose $a<b$, and define $\bar\phi:D\to[0,1]$ by $\bar\phi=(\phi-a)/(b-a)$. The function $\bar\phi$ is continuous and isotone on the compact subspace $D\subseteq E$. By \cite[Theorem~3.4]{minguzzi2013normally}, there exists a continuous isotone extension $\bar\Phi:E\to[0,1]$. Define $\widehat\Phi:E\to[a,b]$ by $\widehat\Phi=a+(b-a)\bar\Phi$. Then $\widehat\Phi$ is continuous, isotone, agrees with $\phi$ on $D$, and satisfies $a\leq\widehat\Phi\leq b$.

To conclude, define the truncation operator $r:P\to R$ by $r(p)_n=\max\{\alpha,\min\{\beta,p_n\}\}$ for $n=1,\ldots,N$. The map $r$ is continuous and isotone, and $r(x)=x$ for all $x \in R$. Then let $\Phi:{\cal K}\times P\to\Re$ be defined by $\Phi(k,p)=\widehat\Phi(k,r(p))$. The function $\Phi$ is the desired extension: it is continuous, fiber-isotone, satisfies $a\leq\Phi\leq b$, and agrees with $\phi$ on $D$.
\end{proof}

Now we use Theorem~\ref{thm:monotone_extension} to obtain global extensions of the effective time and risk aggregators.

\begin{cor}[Global risk aggregator]
\label{cor:risk_extension}
Suppose $M_t^0:D_t^M\to\Re$ is jointly continuous, fiber-isotone, internal, and normalized. Then there is a jointly continuous global risk aggregator $M_t:{\cal K}_t\times{\cal L}\to\Re$ that agrees with $M_t^0$ on $D_t^M$.
\end{cor}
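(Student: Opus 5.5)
The plan is to first apply Theorem~\ref{thm:monotone_extension} to obtain a continuous fiber-isotone extension of $M_t^0$. That extension need not be normalized on constants off the effective domain. I would then restore normalization with a clamping step that uses internality to leave $D_t^M$ untouched.

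\emph{Step 1 (extension).} Take ${\cal K}={\cal K}_t$, $P={\cal L}\cong\Re^{\cal S}$ with the coordinatewise order, and $D=D_t^M$. The set $D_t^M$ is nonempty and compact by Lemma~\ref{lem:graph_compact} and the discussion following it. By hypothesis, $M_t^0$ is continuous and fiber-isotone on $D_t^M$. Theorem~\ref{thm:monotone_extension} then yields a continuous fiber-isotone $\Phi_t:{\cal K}_t\times{\cal L}\to\Re$ with $\Phi_t=M_t^0$ on $D_t^M$.

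\emph{Step 2 (normalization by clamping).} Define
\[
M_t(k_t,\tilde u)\triangleq \min\Big\{\max_{s\in{\cal S}}\tilde u(s),\ \max\big\{\min_{s\in{\cal S}}\tilde u(s),\ \Phi_t(k_t,\tilde u)\big\}\Big\}.
\]
I would then check four properties in turn.

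\begin{enumerate}
\item \emph{Continuity.} The maps $\tilde u\mapsto\min_s\tilde u(s)$ and $\tilde u\mapsto\max_s\tilde u(s)$ are continuous in the sup norm. Pointwise max and min of continuous functions are continuous, so $M_t$ is jointly continuous.
\item \emph{Monotonicity.} If $\tilde u\le\tilde v$, then $\min_s\tilde u\le\min_s\tilde v$, $\max_s\tilde u\le\max_s\tilde v$, and $\Phi_t(k_t,\tilde u)\le\Phi_t(k_t,\tilde v)$. Since max and min are isotone in each argument, $M_t(k_t,\tilde u)\le M_t(k_t,\tilde v)$.
\item \emph{Normalization.} For a constant ${\bf v}$, the clamp interval is $[v,v]$, so $M_t(k_t,{\bf v})=v$ for every $k_t$ and $v$, regardless of the value of $\Phi_t$.
\item \emph{Agreement on the effective domain.} For $(k_t,\tilde u)\in D_t^M$, internality of $M_t^0$ gives $\min_s\tilde u\le \Phi_t(k_t,\tilde u)=M_t^0(k_t,\tilde u)\le\max_s\tilde u$. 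The clamp is therefore inactive, and $M_t=M_t^0$ on $D_t^M$.
\end{enumerate}

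Together these show that $M_t$ is a jointly continuous global risk aggregator in the sense of Definition~\ref{defn:abstract_risk_aggregator} that extends $M_t^0$.

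The only real obstacle is that Theorem~\ref{thm:monotone_extension} controls values only by the global bounds $\min_D\phi$ and $\max_D\phi$. It gives no constant normalization off $D_t^M$, since a constant ${\bf v}$ generally lies outside $\Lambda_t^{\bf U}(k_t)$. The clamping step is the device that handles this. Its validity rests on two facts: clamping between two isotone continuous functions preserves isotonicity and continuity, and internality makes the clamp invisible on the effective domain.
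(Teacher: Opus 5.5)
Your proof is correct, and it takes a different route from the paper.

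\textbf{The paper's route.} It first enlarges the domain to $D_t^M\cup C_t^M$, where $C_t^M$ consists of the constant vectors ${\bf v}$ with $v\in[\underline U,\overline U]$. It checks continuity on this union by the pasting lemma, and fiber-isotonicity across the two pieces using internality for the cross comparisons. Only then does it apply Theorem~\ref{thm:monotone_extension}. The resulting bounded $\Phi$ is already normalized on constants inside $[\underline U,\overline U]$ and saturates outside that interval. Normalization on all of $\Re$ is then restored by adding the averaged ramp $\frac{1}{|{\cal S}|}\sum_{s}\rho(\tilde u(s))$, which vanishes on the box ${\cal B}=[\underline U,\overline U]^{\cal S}$.

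\textbf{Your route.} You extend $M_t^0$ by itself and then clamp $\Phi_t$ into $[\min_s\tilde u(s),\max_s\tilde u(s)]$. This forces $M_t(k_t,{\bf v})=v$ for every constant at once. Internality is used only to see that the clamp is inactive on $D_t^M$.

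\textbf{What each buys.} Your argument is shorter: it needs no augmented domain, pasting, cross-case check, or ramp correction. It does not use the range bounds supplied by Theorem~\ref{thm:monotone_extension}, and the global aggregator is internal by construction. The paper's construction has the mild feature that $M_t$ coincides with the single isotone extension $\Phi$ on all of ${\cal K}_t\times{\cal B}$. Your clamp, by contrast, may alter $\Phi_t$ at non-attainable points inside that box. By Proposition~\ref{prop:extension_invariance} this has no effect on the represented utilities. However, as Remark~\ref{rem:extension_invariance} notes, the choice of extension does feed into the belief/taste quotients of Section~\ref{sec:separation}.
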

\begin{proof}
\emph{Step 1: Bound effective continuation utilities and add constant vectors.}
The set $D_t^M$ is nonempty since ${\cal K}_t$, ${\cal C}$, and ${\cal F}_{t+1}$ are nonempty. It is compact by Lemma~\ref{lem:graph_compact}. Since ${\cal S}$ is finite, the upper and lower bounds
\[
\underline U\triangleq\min\{\tilde{u}(s):(k_t,\tilde{u})\in D_t^M, s\in{\cal S}\},\, \overline U\triangleq\max\{\tilde{u}(s):(k_t,\tilde{u})\in D_t^M, s\in{\cal S}\},
\]
are well-defined.
Then let ${\cal B}\triangleq[\underline U,\overline U]^{\cal S}$, so that $D_t^M\subseteq{\cal K}_t\times{\cal B}$, and define
\[
C_t^M\triangleq\{(k_t,{\bf v}) \in {\cal K}_t \times {\cal L} : k_t\in{\cal K}_t, v\in[\underline U,\overline U]\},
\]
which is compact.

\emph{Step 2: Extend $M_t^0$ to compact augmented domain.}
Define $\phi:D_t^M\cup C_t^M\to\Re$ by $\phi(k_t,\tilde{u})=M_t^0(k_t,\tilde{u})$ on $D_t^M$ and $\phi(k_t,{\bf v})=v$ on $C_t^M$.
Both definitions give continuous functions, and they coincide on $D_t^M \cap C_t^M$ by normalization of $M_t^0$ on its effective domain. Since $D_t^M$ and $C_t^M$ are closed and the restriction of $\phi$ to each set is continuous, $\phi$ is continuous on $D_t^M\cup C_t^M$ by the pasting lemma. Internality of $M_t^0$ implies $\underline U\leq\phi\leq\overline U$, and the constants ${\bf \underline U}$ and ${\bf \overline U}$ in $C_t^M$ show that the utilities $\underline U$ and $\overline U$ are attained.

\emph{Step 3: Verify fiber isotonicity on the augmented domain.}
Let $(k_t, \tilde{u}), (k_t, \tilde{u}') \in D_t^M\cup C_t^M$.
If both $(k_t, \tilde{u}), (k_t, \tilde{u}') \in D_t^M$, the conclusion follows from fiber isotonicity of $M_t^0$. If both $(k_t, \tilde{u}), (k_t, \tilde{u}') \in C_t^M$, it follows from the usual order on constants. For the cross cases, if $\tilde{u}\leq{\bf v}$, then $M_t^0(k_t,\tilde{u})\leq\max_s\tilde{u}(s)\leq v$ by internality; if ${\bf v}\leq\tilde{u}$, then similarly $v\leq\min_s\tilde{u}(s)\leq M_t^0(k_t,\tilde{u})$ by internality. This reasoning shows that $\phi$ is fiber-isotone on $D_t^M\cup C_t^M$.

\emph{Step 4: Apply monotone extension theorem.}
By Theorem~\ref{thm:monotone_extension}, there is a continuous fiber-isotone extension $\Phi:{\cal K}_t\times{\cal L}\to[\underline U,\overline U]$ that agrees with $\phi$ on $D_t^M\cup C_t^M$. For $v\geq\overline U$, isotonicity and boundedness give $\overline U=\Phi(k_t,{\bf \overline U})\leq\Phi(k_t,{\bf v})\leq\overline U$, so $\Phi(k_t,{\bf v})=\overline U$. Similarly, $\Phi(k_t,{\bf v})=\underline U$ whenever $v\leq\underline U$.

\emph{Step 5: Normalization.}
Define the function $\rho:\Re\to\Re$ by $\rho(v)=(v-\overline U)_+-(\underline U-v)_+$. This function is continuous and nondecreasing, equals zero on $[\underline U,\overline U]$, equals $v-\overline U$ for $v\geq\overline U$, and equals $v-\underline U$ for $v\leq\underline U$. Then define $M_t : {\cal K}_t \times {\cal L} \rightarrow \Re$ via:
\[
M_t(k_t,\tilde{u})\triangleq\Phi(k_t,\tilde{u})+\frac{1}{|{\cal S}|}\sum_{s\in{\cal S}}\rho(\tilde{u}(s)).
\]

\emph{Step 6: Verify $M_t$ is a risk aggregator.}
The function $M_t$ is jointly continuous as the sum of continuous functions. It is isotone under pointwise dominance because both $\Phi(k_t,\cdot)$ and $\rho(\cdot)$ are isotone.
For every constant vector ${\bf v}$, if $v\in[\underline U,\overline U]$, then $\Phi(k_t,{\bf v})=v$ and $\rho(v)=0$. If $v\geq\overline U$, then $\Phi(k_t,{\bf v})=\overline U$ and $\rho(v)=v-\overline U$. If $v\leq\underline U$, then $\Phi(k_t,{\bf v})=\underline U$ and $\rho(v)=v-\underline U$. Then, $M_t(k_t,{\bf v})=v$ for every $v\in\Re$.

Finally, if $(k_t,\tilde u)\in D_t^M$, then $\tilde u\in{\cal B}$, so $\rho(\tilde u(s))=0$ for every $s\in{\cal S}$. Since $\Phi=\phi=M_t^0$ on $D_t^M$, we have $M_t=M_t^0$ on $D_t^M$. We conclude that $M_t$ is a jointly continuous, normalized risk aggregator extending $M_t^0$.
\end{proof}

\begin{cor}[Global time-aggregator extension]
\label{cor:time_extension}
(i) The effective domain $D_t^W$ is compact.

(ii) Suppose $W_t^0:D_t^W\to\Re$ is jointly continuous and fiber-isotone in its continuation value: whenever $(k_t,c,v),(k_t,c,v')\in D_t^W$ and $v \leq v'$, we have $W_t^0(k_t,c,v)\leq W_t^0(k_t,c,v')$.
Then there exists a jointly continuous time aggregator $W_t:{\cal K}_t\times{\cal C}\times\Re\to\Re$ that agrees with $W_t^0$ on $D_t^W$. If $W_t^0$ satisfies compensated strict monotonicity in current consumption on $D_t^W$, then $W_t$ preserves that property on $D_t^W$.
\end{cor}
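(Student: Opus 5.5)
Part (i) has essentially been established just before the statement. The map $(k_t,c,f_+)\mapsto(k_t,c,M_t^0(k_t,g(k_t,c,f_+)))$ is continuous on the compact space ${\cal K}_t\times{\cal C}\times({\cal F}_{t+1})^{\cal S}$. By the definition of ${\cal I}_t^{\bf U}(k_t,c)$, this map is onto $D_t^W$, so $D_t^W$ is compact as a continuous image. Equivalently, we can cite Lemma~\ref{lem:effective_time_continuity}.

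For (ii), I will apply Theorem~\ref{thm:monotone_extension}. The compact base is $\widehat{\cal K}\triangleq{\cal K}_t\times{\cal C}$, which is compact metric, with $N=1$ and $P=\Re$. In this notation $D_t^W$ is a nonempty compact subset of $\widehat{\cal K}\times\Re$. Fiber-isotonicity in the sense of Definition~\ref{defn:fiber-isotone} is exactly the hypothesis: when $(k_t,c)$ is fixed, $v\le v'$ implies $W_t^0(k_t,c,v)\le W_t^0(k_t,c,v')$. Theorem~\ref{thm:monotone_extension} then gives a continuous $W_t:{\cal K}_t\times{\cal C}\times\Re\to\Re$ with the following properties:
\begin{itemize}
\item $W_t$ agrees with $W_t^0$ on $D_t^W$;
\item $W_t$ is nondecreasing in $v$ for every fixed $(k_t,c)$;
\item $W_t$ is bounded between $\min W_t^0$ and $\max W_t^0$.
\end{itemize}
By Definition~\ref{defn:abstract_time_aggregator}, this is a jointly continuous global time aggregator. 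Unlike the risk case in Corollary~\ref{cor:risk_extension}, no normalization correction is needed, because time aggregators carry no constant-normalization requirement.

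The preservation claim follows from the agreement on $D_t^W$. Suppose $c>c'$ and both $(k_t,c,v)$ and $(k_t,c',v)$ lie in $D_t^W$. Then $W_t(k_t,c,v)=W_t^0(k_t,c,v)>W_t^0(k_t,c',v)=W_t(k_t,c',v)$, so the strict inequality holds for $W_t$.

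I expect the only delicate point to be checking that Theorem~\ref{thm:monotone_extension} applies with the order placed only on the $v$-coordinate. The base ${\cal C}$ must be treated as part of the unordered compact parameter space, not as an ordered coordinate. Otherwise the theorem would demand monotonicity in $c$, which holds only in the compensated sense and only on $D_t^W$. Since the preorder in that theorem's proof is equality on ${\cal K}$ times the coordinatewise order on $R$, absorbing ${\cal C}$ into ${\cal K}$ is legitimate. I do not claim strict monotonicity in $c$ off $D_t^W$, and the statement does not require it.
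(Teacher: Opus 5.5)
Your proposal is correct and matches the paper's proof: $D_t^W$ is compact as the continuous image of ${\cal K}_t\times{\cal C}\times({\cal F}_{t+1})^{\cal S}$, Theorem~\ref{thm:monotone_extension} is then applied with base ${\cal K}_t\times{\cal C}$ and $P=\Re$, and compensated strict monotonicity carries over because the extension agrees with $W_t^0$ on $D_t^W$. Your point that ${\cal C}$ must go into the unordered base is exactly what the paper's choice $\widehat{\cal K}_t={\cal K}_t\times{\cal C}$ does.
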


\begin{proof}
Set $\widehat{\cal K}_t\triangleq{\cal K}_t\times{\cal C}$. Since ${\cal K}_t$ and ${\cal C}$ are compact metric, $\widehat{\cal K}_t$ is compact metric, and the joint effective domain $D_t^W$ is a compact subset of $\widehat{\cal K}_t\times\Re$.

Under the assumptions on $W_t^0:D_t^W\to\Re$, Theorem~\ref{thm:monotone_extension}, applied with ${\cal K}=\widehat{\cal K}_t$, $P=\Re$, and $D=D_t^W$, gives a jointly continuous fiber-isotone extension $\overline{W}_t:
\widehat{\cal K}_t\times\Re\to\Re$ that agrees with $W_t^0$ on $D_t^W$. Define $W_t(k_t,c,v) = \overline{W}_t((k_t,c),v)$, then $W_t$ is jointly continuous, nondecreasing in $v$, and agrees with $W_t^0$ on $D_t^W$.
Finally, any compensated strict comparison in current consumption is preserved on $D_t^W$ where $W_t=W_t^0$.
\end{proof}

\subsection{Abstract Recursive Representation}

Our abstract representation result for the general state--transition system ${\cal G}$ is next.

\begin{thm}[Abstract recursive representation]
\label{thm:abstract_recursive}
Let ${\cal G}$ be a state--transition system, let ${\cal P}$ be a grand preference on ${\cal G}$, and let ${\bf U}$ be a compatible utility system for ${\cal P}$. Suppose that ${\cal P}$ satisfies Axioms~\ref{axiom:abs_weak_order}--\ref{axiom:abs_separability} and that ${\bf U}$ satisfies Assumption~\ref{assu:abs_CE_richness}. Then there exists a general recursive structure ${\cal V}$ on ${\cal G}$ whose generated utility system equals ${\bf U}$. Consequently, $({\cal G},{\cal V})$ represents ${\cal P}$.
\end{thm}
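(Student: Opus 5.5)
The plan is to assemble the ingredients already established in this appendix and then close the argument with a backward-induction verification. For each $t\in[T-1]$, Assumption~\ref{assu:abs_CE_richness} supplies the jointly continuous effective risk aggregator $M_t^0$ on $D_t^M$. Proposition~\ref{prop:separation}, which uses Axioms~\ref{axiom:abs_dynamic_consistency}, \ref{axiom:abs_monotonicity}, and \ref{axiom:abs_separability}, supplies the effective time aggregator $W_t^0$ on $D_t^W$ with
\[
U_t(k_t,f)=W_t^0\big(k_t,c,M_t^0(k_t,\tilde u_{t+1}(k_t,f))\big).
\]
Lemma~\ref{lem:effective_time_continuity} shows that $W_t^0$ is jointly continuous on the compact set $D_t^W$, and Proposition~\ref{prop:separation} shows that it is nondecreasing in $v$ on fibers. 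We also need $M_t^0$ to be fiber-isotone, internal, and normalized on $D_t^M$. These properties are exactly items (iv), (ii), and (i) of Assumption~\ref{assu:abs_CE_richness}. Normalization is required only where ${\bf v}\in\Lambda_t^{\bf U}(k_t)$, which is precisely the set $D_t^M\cap C_t^M$ where Corollary~\ref{cor:risk_extension} needs it.

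Next, I will apply Corollary~\ref{cor:risk_extension} to obtain a jointly continuous global risk aggregator $M_t$ agreeing with $M_t^0$ on $D_t^M$. I will apply Corollary~\ref{cor:time_extension} to obtain a jointly continuous global time aggregator $W_t$ agreeing with $W_t^0$ on $D_t^W$. I will set $V_{T+1}^{\cal V}\triangleq U_{T+1}=V_{T+1}$, which is continuous. This defines ${\cal V}=(\{W_t\},\{M_t\},V_{T+1}^{\cal V})$ as a general recursive structure on ${\cal G}$.

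The remaining task is to show ${\bf U}^{\cal V}={\bf U}$, by backward induction on $t$.
\begin{itemize}
\item \emph{Base cases.} At $t=T+1$ the two systems agree by definition. At $t=T$, $U_T^{\cal V}(k_T,c)=U_{T+1}(G_T(k_T,c,\star))=U_T(k_T,c)$ by compatibility of ${\bf U}$.
\item \emph{Inductive step.} Assume $U_{t+1}^{\cal V}=U_{t+1}$. Then $\tilde u_{t+1}^{\cal V}(k_t,f)=\tilde u_{t+1}(k_t,f)$, and this vector lies in $\Lambda_t^{\bf U}(k_t,c)$. Hence $(k_t,\tilde u_{t+1}(k_t,f))\in D_t^M$, so $M_t$ coincides with $M_t^0$ there. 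The resulting value $v=M_t^0(k_t,\tilde u_{t+1}(k_t,f))$ lies in ${\cal I}_t^{\bf U}(k_t,c)$, so $(k_t,c,v)\in D_t^W$ and $W_t$ coincides with $W_t^0$ there. Proposition~\ref{prop:separation} then gives $U_t^{\cal V}(k_t,f)=U_t(k_t,f)$.
\end{itemize}
Because ${\bf U}$ is compatible with ${\cal P}$ and ${\bf U}^{\cal V}={\bf U}$, the generated system is compatible, so $({\cal G},{\cal V})$ represents ${\cal P}$.

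The main obstacle is bookkeeping rather than new mathematics. The key check is that every argument actually fed to the global aggregators along the recursion lies in the effective domains $D_t^M$ and $D_t^W$, so that the arbitrary extensions never matter. This check relies on the inductive hypothesis, which identifies generated continuation vectors with attainable ones. A secondary check is that the hypotheses of Corollary~\ref{cor:risk_extension} match the partial normalization in Assumption~\ref{assu:abs_CE_richness}(i). I expect this to go through, since the pasting step in that corollary only requires agreement on $D_t^M\cap C_t^M$.
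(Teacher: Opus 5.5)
Your proposal is correct and follows the paper's proof essentially step for step. It builds the effective aggregators from Assumption~\ref{assu:abs_CE_richness} and Proposition~\ref{prop:separation}, gets continuity from Lemma~\ref{lem:effective_time_continuity}, extends via Corollaries~\ref{cor:risk_extension} and~\ref{cor:time_extension}, and closes with a backward induction showing the global aggregators are only ever evaluated on $D_t^M$ and $D_t^W$. Your extra check, that the partial normalization in Assumption~\ref{assu:abs_CE_richness}(i) is exactly what the pasting on $D_t^M\cap C_t^M$ needs, is correct and is a detail the paper leaves implicit.
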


\begin{proof}
\emph{Step 1: Define recursive structure.}
By Proposition~\ref{prop:reduction}, each $U_t$ just depends on current consumption and the continuation-utility vector.
The induced risk comparison is well defined by Lemma~\ref{lem:induced}.
Assumption~\ref{assu:abs_CE_richness} supplies $M_t^0$.
Proposition~\ref{prop:separation} constructs $W_t^0$, and Lemma~\ref{lem:effective_time_continuity} gives joint continuity of $W_t^0$.
Corollaries~\ref{cor:risk_extension} and \ref{cor:time_extension} extend $M_t^0$ and $W_t^0$ to global continuous monotone aggregators $M_t$ and $W_t$.
We then set ${\cal V} = (\{W_t\}_{t=0}^{T-1}, \{M_t\}_{t=0}^{T-1}, V_{T+1}^{\cal V})$ where $V_{T+1}^{\cal V}\triangleq U_{T+1}$.

\emph{Step 2: Utility equivalence.}
We show that the utility system generated by $({\cal G},{\cal V})$ equals the fixed compatible system ${\bf U}$.

\indpart{Step 2A: Terminal step}
By construction, $U_{T+1}^{\cal V}=U_{T+1}$. Then,
\[
U_T^{\cal V}(k_T,c)=U_{T+1}(G_T(k_T,c,\star))=U_T(k_T,c)
\]
by compatibility of ${\bf U}$. Since $U_T$ represents $\succeq_{(T)}$ by Axiom~\ref{axiom:abs_terminal}, so does $U_T^{\cal V}$.

\indpart{Step 2B: Backward induction step}
For $t \in [T-1]$, suppose $U_{t+1}^{\cal V}=U_{t+1}$ on ${\cal D}_{t+1}$. Fix $(k_t,f)\in{\cal D}_t$ with $f=(c,f_+)$. By the inductive
hypothesis, we have
\[
[\tilde{u}_{t+1}^{\cal V}(k_t,f)](s)=U_{t+1}^{\cal V}(G_t(k_t,c,s),f_+(s))=U_{t+1}(G_t(k_t,c,s),f_+(s))=[\tilde{u}_{t+1}(k_t,f)](s),
\]
for all $s\in{\cal S}$. Then, $\tilde{u}_{t+1}^{\cal V}(k_t,f) \in \Lambda_t^{\bf U}(k_t,c)$ is realized by $f_+$ at consumption $c$.
Since $(k_t,\tilde u_{t+1}^{\cal V}(k_t,f))\in D_t^M$, we have $M_t\big(k_t,\tilde{u}_{t+1}^{\cal V}(k_t,f)\big)=M_t^0\big(k_t,\tilde{u}_{t+1}(k_t,f)\big) \triangleq v$ by Corollary~\ref{cor:risk_extension}.
Then, $v\in{\cal I}_t^{\bf U}(k_t,c)$ by definition, so $(k_t,c,v)\in D_t^W$. By Corollary~\ref{cor:time_extension}, $W_t(k_t,c,v)=W_t^0(k_t,c,v)$ on $D_t^W$. Combining these equalities with the decomposition of Proposition~\ref{prop:separation}, we obtain
\[
U_t^{\cal V}(k_t,f)=W_t\big(k_t,c,M_t(k_t,\tilde{u}_{t+1}^{\cal V}(k_t,f))\big)=W_t^0\big(k_t,c,M_t^0(k_t,\tilde{u}_{t+1}(k_t,f))\big)=U_t(k_t,f).
\]
It follows that $U_t^{\cal V}=U_t$ on ${\cal D}_t$. Since $U_t$ represents $\succeq_{(t)}$ by compatibility of ${\bf U}$, so does $U_t^{\cal V}$.

\indpart{Step 2C: Conclusion} By backward induction, $U_{T+1}^{\cal V}=U_{T+1}$ on ${\cal K}_{T+1}$ and $U_t^{\cal V}=U_t$ on ${\cal D}_t$ for all $t \in [T]$.
We conclude that the utility system generated by $({\cal G},{\cal V})$ equals ${\bf U}$, and so $({\cal G},{\cal V})$ represents ${\cal P}$.
\end{proof}

We conclude this section by verifying that the evaluation of every plan is independent of how the effective aggregators are extended outside their effective domains.

\begin{prop}[Extension invariance]
\label{prop:extension_invariance}
Fix a compatible utility system ${\bf U}$, and effective aggregators $\{W_t^0\}_{t=0}^{T-1}$ and $\{M_t^0\}_{t=0}^{T-1}$ satisfying the decomposition in Proposition~\ref{prop:separation}. Let
\[
{\cal V}=(\{W_t\}_{t=0}^{T-1},\{M_t\}_{t=0}^{T-1},V_{T+1}^{\cal V}),\, \overline{\cal V}=(\{\overline{W}_t\}_{t=0}^{T-1},\{\overline{M}_t\}_{t=0}^{T-1},V_{T+1}^{\overline{\cal V}})
\]
be two general recursive structures on ${\cal G}$ where $V_{T+1}^{\cal V} = V_{T+1}^{\overline{\cal V}} = U_{T+1}$ and $W_t=\overline{W}_t=W_t^0$ on $D_t^W$ and $M_t=\overline{M}_t=M_t^0$ on $D_t^M$ for every $t\in[T-1]$.
Then $U_{T+1}^{\cal V}=U_{T+1}^{\overline{\cal V}}=U_{T+1}$ on ${\cal K}_{T+1}$ and $U_t^{\cal V}=U_t^{\overline{\cal V}}=U_t$ on ${\cal D}_t$ for every $t\in[T]$.
\end{prop}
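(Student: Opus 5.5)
We argue by backward induction on $t$, showing simultaneously that $U_t^{\cal V}=U_t$ and $U_t^{\overline{\cal V}}=U_t$. The point is that the two structures agree on the effective domains, and every argument at which the generated utilities evaluate an aggregator lies in those domains. This repeats Step 2 of the proof of Theorem~\ref{thm:abstract_recursive}, using only agreement on $D_t^M$ and $D_t^W$ and never the particular extensions built in Corollaries~\ref{cor:risk_extension} and~\ref{cor:time_extension}. It therefore suffices to treat ${\cal V}$; the same argument applies verbatim to $\overline{\cal V}$.

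\emph{Base case.} By hypothesis $U_{T+1}^{\cal V}=V_{T+1}^{\cal V}=U_{T+1}$. The generated recursion then gives
\[
U_T^{\cal V}(k_T,c)=U_{T+1}(G_T(k_T,c,\star))=U_T(k_T,c)
\]
by compatibility of ${\bf U}$.

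\emph{Inductive step.} Fix $t\in[T-1]$, assume $U_{t+1}^{\cal V}=U_{t+1}$ on ${\cal D}_{t+1}$, and take $(k_t,f)$ with $f=(c,f_+)$.
\begin{enumerate}
\item By the inductive hypothesis, $\tilde u_{t+1}^{\cal V}(k_t,f)=\tilde u_{t+1}(k_t,f)$. This vector is realized by $f_+$ at consumption $c$, so it lies in $\Lambda_t^{\bf U}(k_t,c)$, and hence $(k_t,\tilde u_{t+1}^{\cal V}(k_t,f))\in D_t^M$.
\item Since $M_t=M_t^0$ on $D_t^M$, we get $M_t(k_t,\tilde u_{t+1}^{\cal V}(k_t,f))=M_t^0(k_t,\tilde u_{t+1}(k_t,f))=:v$.
\item By definition, $v\in{\cal I}_t^{\bf U}(k_t,c)$, so $(k_t,c,v)\in D_t^W$. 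Since $W_t=W_t^0$ there, $W_t(k_t,c,v)=W_t^0(k_t,c,v)$.
\item The decomposition of Proposition~\ref{prop:separation}, which we assume holds for the fixed $W_t^0,M_t^0$, gives $W_t^0(k_t,c,v)=U_t(k_t,f)$.
\end{enumerate}
Combining these steps yields $U_t^{\cal V}(k_t,f)=U_t(k_t,f)$, which closes the induction.

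The only point needing care is the domain membership in the first and third steps. The continuation vector fed into $M_t$ must be the \emph{true} attainable vector, not an arbitrary element of ${\cal L}$, and this is exactly what the inductive hypothesis supplies. Likewise, the scalar fed into $W_t$ must lie in ${\cal I}_t^{\bf U}(k_t,c)$ for the same $c$, not merely in $\bigcup_c{\cal I}_t^{\bf U}(k_t,c)$. This holds because the $M_t^0$-image is taken over $\Lambda_t^{\bf U}(k_t,c)$ at the current consumption. Given these two memberships, the values of the aggregators off the effective domains never enter the recursion.
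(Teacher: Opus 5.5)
Your proposal is correct and follows essentially the same route as the paper's proof: backward induction from the terminal step, with the inductive hypothesis placing $(k_t,\tilde u_{t+1}(k_t,f))$ in $D_t^M$ and $(k_t,c,v)$ in $D_t^W$, so that both structures reduce to $W_t^0\bigl(k_t,c,M_t^0(k_t,\tilde u_{t+1}(k_t,f))\bigr)=U_t(k_t,f)$. The paper carries ${\cal V}$ and $\overline{\cal V}$ side by side in the induction rather than saying ``verbatim for $\overline{\cal V}$,'' but this is only a presentational difference.
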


\begin{proof}
We proceed by backward induction.

\indpart{Terminal step} At $T+1$, both ${\cal V}$ and $\overline{\cal V}$ use the same terminal utility, so $U_{T+1}^{\cal V}=U_{T+1}^{\overline{\cal V}}=U_{T+1}$. For period $T$, by terminal compatibility we have
\[
U_T^{\cal V}(k_T,c)=U_T^{\overline{\cal V}}(k_T,c)=U_{T+1}(G_T(k_T,c,\star))=U_T(k_T,c).
\]

\indpart{Backward induction step} Fix $t\in[T-1]$, and suppose $U_{t+1}^{\cal V}=U_{t+1}^{\overline{\cal V}}=U_{t+1}$ on ${\cal D}_{t+1}$. For $k_t\in{\cal K}_t$ and $f=(c,f_+)\in{\cal F}_t$, we then have the equalities:
\[
\tilde{u}_{t+1}^{\cal V}(k_t,f)=\tilde{u}_{t+1}^{\overline{\cal V}}(k_t,f)=\tilde{u}_{t+1}(k_t,f)\in\Lambda_t^{\bf U}(k_t,c).
\]
Since $(k_t,\tilde u_{t+1}(k_t,f))\in D_t^M$, and both risk aggregators agree with $M_t^0$ on $D_t^M$, we have
\[
M_t(k_t,\tilde{u}_{t+1}^{\cal V}(k_t,f))=\overline{M}_t(k_t,\tilde{u}_{t+1}^{\overline{\cal V}}(k_t,f))=M_t^0(k_t,\tilde{u}_{t+1}(k_t,f))\triangleq v.
\]
Then, by definition we have $(k_t,c,v)\in D_t^W$. Both time aggregators agree with $W_t^0$ on $D_t^W$, and Proposition~\ref{prop:separation} gives
\[
U_t^{\cal V}(k_t,f)=W_t(k_t,c,v)=W_t^0(k_t,c,v)=U_t(k_t,f).
\]
Similarly, we obtain $U_t^{\overline{\cal V}}(k_t,f) = U_t(k_t,f)$.

\indpart{Conclusion} This reasoning shows that $U_{T+1}^{\cal V}=U_{T+1}^{\overline{\cal V}}=U_{T+1}$ and $U_t^{\cal V}=U_t^{\overline{\cal V}}=U_t$ for all $t \in [T]$.
\end{proof}

\begin{rem}[Extension invariance]
\label{rem:extension_invariance}
Corollaries~\ref{cor:risk_extension} and~\ref{cor:time_extension} provide existence of global extensions, not their uniqueness. Any jointly continuous, monotone extension of the effective aggregators is admissible, and the recursive structure selects one such pair. On the effective domains, the aggregators are uniquely determined by ${\bf U}$: normalization and the certainty-equivalent representation determine $M_t^0$, and Eq.~\eqref{eq:time_aggregator_construction} then determines $W_t^0$. The recursion of Theorem~\ref{thm:abstract_recursive} evaluates the global aggregators only at attainable arguments where all admissible extensions agree with the effective aggregators. Consequently, the represented preferences, attainable-state value functions, and the set of optimal policies for the original feasible problem are invariant to the selected global extensions. However, the belief and taste decomposition of Section~\ref{sec:separation} does depend on the choice of global extensions.
\end{rem}

\section{Proofs for Section~\ref{sec:history} (Full History)}

\subsection{Proof of Theorem~\ref{thm:recursive_utility}}

Let ${\cal G}^{\cal H}=(\{{\cal H}_t\}_{t=0}^{T+1},\{\iota_t\}_{t=0}^T)$ correspond to full-history preferences. Each space of histories ${\cal H}_t$ is a compact metric space and each history concatenation mapping $\iota_t$ is continuous, so ${\cal G}^{\cal H}$ is a state--transition system.
Axiom~\ref{axiom:weak_order}--Axiom~\ref{axiom:weak_separability} are exactly Axiom~\ref{axiom:abs_weak_order}--Axiom~\ref{axiom:abs_separability} stated for ${\cal G}^{\cal H}$, and Assumption~\ref{assu:CE_richness} is Assumption~\ref{assu:abs_CE_richness} stated for ${\cal G}^{\cal H}$.
Applying Theorem~\ref{thm:abstract_recursive} to ${\cal P}$ and the fixed compatible utility system ${\bf U}$ on ${\cal G}^{\cal H}$ yields a general recursive structure whose generated utility system equals ${\bf U}$. Together with the physical-state projections $\{\sigma_t^s\}$, this is a full-history recursive structure on $\mathfrak H$. We conclude that $(\mathfrak H,{\cal V})$ generates ${\bf U}$ and represents ${\cal P}$.

\section{Proofs for Section~\ref{sec:PA} (PA state)}

\subsection{Proof of Lemma~\ref{lem:odot_equivalence}}

We prove the claim by backward induction.

\indpart{Terminal step} For $t=T+1$, the relation $\odot_{T+1}$ is defined in terms of equality of terminal utility: $h_{T+1} \odot_{T+1} h_{T+1}'$ if and only if $U_{T+1}(h_{T+1}) = U_{T+1}(h_{T+1}')$. Equality is reflexive, symmetric, and transitive, so $\odot_{T+1}$ is an equivalence relation.

\indpart{Backward induction step} Fix $t\in[T]$, and suppose $\odot_{t+1}$ is an equivalence relation. For reflexivity of $\odot_t$, choose $h_t\in{\cal H}_t$. We have $U_t(h_t,f)=U_t(h_t,f)$ for all $f\in{\cal F}_t$, and the induction hypothesis gives $\iota_t(h_t,c,s) \odot_{t+1} \iota_t(h_t,c,s)$ for all $c\in{\cal C}$ and $s\in{\cal S}_{t+1}$. Then $h_t$ is equivalent to itself $h_t \odot_t h_t$.

For symmetry of $\odot_t$, suppose $h_t \odot_t h_t'$. Then $U_t(h_t,f)=U_t(h_t',f)$ and so $U_t(h_t',f)=U_t(h_t,f)$ for all $f\in{\cal F}_t$. We also have $\iota_t(h_t,c,s) \odot_{t+1} \iota_t(h_t',c,s)$ for all $c\in{\cal C}$ and $s\in{\cal S}_{t+1}$. By symmetry of $\odot_{t+1}$, this is equivalent to $\iota_t(h_t',c,s) \odot_{t+1} \iota_t(h_t,c,s)$ for all $c\in{\cal C}$ and $s\in{\cal S}_{t+1}$. We also have $\sigma_t^s(h_t)=\sigma_t^s(h_t')$ implies $\sigma_t^s(h_t')=\sigma_t^s(h_t)$, so $h_t' \odot_t h_t$.

For transitivity of $\odot_t$, suppose $h_t \odot_t h_t'$ and $h_t' \odot_t h_t''$. Then $U_t(h_t,f)=U_t(h_t',f)=U_t(h_t'',f)$ for all $f\in{\cal F}_t$. We also have $\iota_t(h_t,c,s) \odot_{t+1} \iota_t(h_t',c,s)$ and $\iota_t(h_t',c,s) \odot_{t+1} \iota_t(h_t'',c,s)$, for all $c\in{\cal C}$ and $s\in{\cal S}_{t+1}$.
By transitivity of $\odot_{t+1}$, we have $\iota_t(h_t,c,s) \odot_{t+1} \iota_t(h_t'',c,s)$.
Then, $\sigma_t^s(h_t)=\sigma_t^s(h_t')$ and $\sigma_t^s(h_t')=\sigma_t^s(h_t'')$ imply $\sigma_t^s(h_t)=\sigma_t^s(h_t'')$, and so $h_t \odot_t h_t''$.

\indpart{Conclusion} We conclude that $\odot_t$ is an equivalence relation. By backward induction, $\odot_t$ is an equivalence relation for all $t\in[T+1]$.

\subsection{Proof of Lemma~\ref{lem:quotient_topology}}

\emph{Step 1: Closedness of the canonical equivalence relations.}
We show that $\odot_t$ is closed for all $t\in[T+1]$ by backward induction.

\indpart{Step 1A: Terminal step} For the terminal period $t=T+1$, by definition we have
\[
\odot_{T+1} = \{(h_{T+1},h_{T+1}')\in{\cal H}_{T+1}\times{\cal H}_{T+1}: U_{T+1}(h_{T+1})=U_{T+1}(h_{T+1}')\}.
\]
Since $U_{T+1}$ is continuous, the map $(h_{T+1},h_{T+1}')\to U_{T+1}(h_{T+1})-U_{T+1}(h_{T+1}')$ is continuous. Then $\odot_{T+1}$ is the inverse image of the closed set $\{0\}$ under this continuous function, and so is closed by continuity.

\indpart{Step 1B: Backward induction step} Fix $t\in[T]$, and suppose $\odot_{t+1}$ is closed. Define
\[
B_t \triangleq \{(h_t,h_t')\in{\cal H}_t\times{\cal H}_t: U_t(h_t,f)=U_t(h_t',f),\, \forall f\in{\cal F}_t\}.
\]
For each fixed $f\in{\cal F}_t$, the map $(h_t,h_t')\to U_t(h_t,f)-U_t(h_t',f)$ is continuous by continuity of $U_t$. Each set $\{(h_t,h_t') : U_t(h_t,f)=U_t(h_t',f)\}$ is closed, and so
\[
B_t = \bigcap_{f\in{\cal F}_t} \{(h_t,h_t'): U_t(h_t,f)=U_t(h_t',f)\},
\]
is closed as an intersection of closed sets.
Next define
\[
C_t \triangleq \{(h_t,h_t')\in{\cal H}_t\times{\cal H}_t: (\iota_t(h_t,c,s),\iota_t(h_t',c,s)) \in \odot_{t+1},\, \forall c\in{\cal C},\, s\in{\cal S}_{t+1}\}.
\]
For fixed $(c,s)\in{\cal C}\times{\cal S}_{t+1}$, the map $\Psi_{t,c,s}:{\cal H}_t\times{\cal H}_t \to {\cal H}_{t+1}\times{\cal H}_{t+1}$ defined by $\Psi_{t,c,s}(h_t,h_t') = (\iota_t(h_t,c,s),\iota_t(h_t',c,s))$ is continuous by continuity of $\iota_t$. Since $\odot_{t+1}$ is closed by the induction hypothesis, the inverse image $\Psi_{t,c,s}^{-1}(\odot_{t+1})$ is closed and therefore $C_t = \bigcap_{(c,s) \in {\cal C} \times {\cal S}_{t+1}} \Psi_{t,c,s}^{-1}(\odot_{t+1})$ is closed as an intersection of closed sets.
Finally, let $P_t\triangleq\{(h_t,h_t')\in{\cal H}_t\times{\cal H}_t:\sigma_t^s(h_t)=\sigma_t^s(h_t')\}$ for the physical readout. Then $P_t=(\sigma_t^s\times\sigma_t^s)^{-1}({\rm diag}({\cal S}_t))$ is closed since the diagonal ${\rm diag}({\cal S}_t)\triangleq\{(s,s):s\in{\cal S}_t\}$ is closed and $\sigma_t^s$ is continuous.
We have $\odot_t=B_t\cap C_t \cap P_t$ by the definition of $\odot_t$, which is closed since $B_t$, $C_t$, and $P_t$ are all closed.

\indpart{Step 1C: Conclusion} By backward induction, $\odot_t$ is closed for all $t \in [T+1]$.

\emph{Step 2: Compactness and metrizability of the canonical quotient space.}
This claim is a direct application of Theorem~\ref{thm:closed_quotient}, since $\{\odot_t\}_{t=0}^{T+1}$ are closed.

\emph{Step 3: Quotient projection and readout map.}
The canonical projection $\sigma_t:{\cal H}_t\to{\cal X}_t$ is continuous and a quotient map by definition of the quotient topology. Further, it is closed by Theorem~\ref{thm:closed_quotient}. Since $\sigma_t^s$ is continuous and constant on $\odot_t$-classes, the universal property of quotient maps gives a unique continuous map $\varsigma_t:{\cal X}_t\to{\cal S}_t$ such that $\varsigma_t\circ\sigma_t=\sigma_t^s$.

\subsection{Proof of Lemma~\ref{lem:quotient_transition_continuity}}

We will construct unique maps $\Gamma_t:{\cal X}_t\times{\cal C}\times{\cal S}_{t+1}\to{\cal X}_{t+1}$ such that
\[
\Gamma_t(\sigma_t(h_t),c,s) = \sigma_{t+1}(\iota_t(h_t,c,s)), \forall h_t\in{\cal H}_t,\, c\in{\cal C},\, s\in{\cal S}_{t+1},\, t \in [T],   
\]
and
\[
\varsigma_{t+1}(\Gamma_t(x_t,c,s))=s, \forall x_t\in{\cal X}_t,\, c\in{\cal C},\, s\in{\cal S}_{t+1},\, t \in [T].
\]
Then, we will verify that they are continuous.

The one-step history extension map $\iota_t:{\cal H}_t\times{\cal C}\times{\cal S}_{t+1}\to{\cal H}_{t+1}$ is continuous since it is a concatenation. Next consider the map $\sigma_{t+1} \circ \iota_t : {\cal H}_t\times{\cal C}\times{\cal S}_{t+1} \to {\cal X}_{t+1}$.
By definition of the canonical projection and forward stability, $\sigma_t(h_t)=\sigma_t(h_t')$ implies $h_t\odot_t h_t'$ and $\iota_t(h_t,c,s)\odot_{t+1}\iota_t(h_t',c,s)$ for all $c\in{\cal C}$ and $s\in{\cal S}_{t+1}$. Hence $\sigma_{t+1}(\iota_t(h_t,c,s)) = \sigma_{t+1}(\iota_t(h_t',c,s))$ and so $\sigma_{t+1}\circ \iota_t$ is constant on the fibers of $\sigma_t\times{\rm id}_{\cal C}\times{\rm id}_{{\cal S}_{t+1}}$.
Then, there is a unique map $\Gamma_t:{\cal X}_t\times{\cal C}\times{\cal S}_{t+1}\to{\cal X}_{t+1}$ satisfying $\Gamma_t(\sigma_t(h_t),c,s) = \sigma_{t+1}(\iota_t(h_t,c,s))$.
For the terminal period $t=T$, we have $\iota_T(h_T,c,\star)=(h_T,c)$, and the terminal transition is $\Gamma_T(x_T,c,\star)$.

Next, we note that $\sigma_t$ is a quotient map and ${\cal C}\times{\cal S}_{t+1}$ is compact Hausdorff. By Theorem~\ref{thm:whitehead}, $\sigma_t\times{\rm id}_{\cal C}\times{\rm id}_{{\cal S}_{t+1}}$ is a quotient map. In addition, we have $\Gamma_t\circ (\sigma_t\times{\rm id}_{\cal C}\times{\rm id}_{{\cal S}_{t+1}}) = \sigma_{t+1}\circ \iota_t$.
The RHS of this equality is continuous, and so $\Gamma_t$ is continuous by Theorem~\ref{thm:quotient_universal_property}.

Finally, the physical readout identity follows because
\[
\varsigma_{t+1}(\Gamma_t(\sigma_t(h_t),c,s))=\varsigma_{t+1}(\sigma_{t+1}(\iota_t(h_t,c,s)))=\sigma_{t+1}^s(\iota_t(h_t,c,s))=s,
\]
and $\sigma_t$ is surjective.

\subsection{Proof of Lemma~\ref{lem:quotient_utilities}}

\emph{Step 1: Well-defined.}
For the terminal period $t = T+1$, if $[h_{T+1}]=[h_{T+1}']$ then $U_{T+1}(h_{T+1})=U_{T+1}(h_{T+1}')$ by definition, so $U_{T+1}^\circ(\sigma_{T+1}(h_{T+1})) \triangleq U_{T+1}(h_{T+1})$ is automatically well-defined.

We now show that $U_t^\circ$ is well-defined for $t\in[T]$.
Suppose $h_t,h_t'\in{\cal H}_t$ satisfy $h_t\odot_t h_t'$, so $(h_t,f)\sim_{(t)}(h_t',f)$ for all $f\in{\cal F}_t$ by definition of $\odot_t$.
Since $U_t$ represents $\succeq_{(t)}$ on the grand domain ${\cal D}_t$, this indifference is equivalent to $U_t(h_t,f)=U_t(h_t',f)$ for all $f\in{\cal F}_t$.
Therefore, the value $U_t^\circ(\sigma_t(h_t),f)\triangleq U_t(h_t,f)$ does not depend on the representative history $h_t$ of the canonical equivalence class.

\emph{Step 2: Continuity.}
The terminal factored utility $U_{T+1}^\circ$ is continuous because $U_{T+1}^\circ\circ\sigma_{T+1} = U_{T+1}$, $U_{T+1}$ is continuous, and $\sigma_{T+1}$ is a quotient map.
For periods $t\in[T]$, $\sigma_t\times{\rm id}_{{\cal F}_t}$ is a quotient map by Theorem~\ref{thm:whitehead} since ${\cal F}_t$ is compact Hausdorff.
We then have that $U_t^\circ$ is continuous by Theorem~\ref{thm:quotient_universal_property} because $U_t^\circ\circ(\sigma_t\times{\rm id}_{{\cal F}_t})=U_t$, $U_t$ is continuous, and $\sigma_t \times {\rm id}_{{\cal F}_t}$ is a quotient map.

\subsection{Proof of Theorem~\ref{thm:recursive_memory}}

The quotient utilities ${\bf U}^\circ = \{U_t^\circ\}_{t=0}^{T+1}$ induce preferences on the PA-state system, defined as follows.

\begin{defn}
\label{defn:induced_preference}
The \emph{induced grand preference} is a tuple ${\cal P}^\circ = (\{\succeq_{(t)}^\circ\}_{t=0}^T, V_{T+1}^\circ)$ where:

(i) For $t \in [T]$, $\succeq_{(t)}^\circ$ is a binary relation on ${\cal X}_t \times {\cal F}_t$ defined by $(x_t, f) \succeq_{(t)}^\circ (x_t', g)$ if and only if $U_t^\circ(x_t,f)\geq U_t^\circ(x_t',g)$.

(ii) $V_{T+1}^\circ\triangleq U_{T+1}^\circ$.
\end{defn}
\noindent
The conditional preference $\succeq_{x_t}^\circ$ is defined by $f \succeq_{x_t}^\circ g$ if and only if $(x_t, f) \succeq_{(t)}^\circ (x_t, g)$.

The canonical PA-state system is well-behaved: the canonical projections $\{\sigma_t\}_{t=0}^{T+1}$ are continuous by Lemma~\ref{lem:quotient_topology}, and the transitions are consistent by Lemma~\ref{lem:quotient_transition_continuity}.
Then, the quotient utilities induce a well-defined grand preference on the canonical PA-state system, which inherits all of our behavioral axioms from ${\cal P}$.

\begin{lem}[Axiom inheritance]
\label{lem:axiom_inheritance}
If ${\cal P}$ satisfies Axiom~\ref{axiom:weak_order}--Axiom~\ref{axiom:weak_separability}, then the canonical quotient preference ${\cal P}^\circ = (\{\succeq_{(t)}^\circ\}_{t=0}^T,V_{T+1}^\circ)$ satisfies Axiom~\ref{axiom:abs_weak_order}--Axiom~\ref{axiom:abs_separability} on the underlying state--transition system ${\cal G}^{\cal X}=(\{{\cal X}_t\}_{t=0}^{T+1},\{\Gamma_t\}_{t=0}^T)$.
\end{lem}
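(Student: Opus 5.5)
The plan is to transfer each axiom from ${\cal P}$ to ${\cal P}^\circ$ through the quotient identities already established. Three facts do all the work. First, $U_t^\circ(\sigma_t(h_t),f)=U_t(h_t,f)$ and $U_{T+1}^\circ\circ\sigma_{T+1}=U_{T+1}$ (Lemma~\ref{lem:quotient_utilities}). Second, $\Gamma_t(\sigma_t(h_t),c,s)=\sigma_{t+1}(\iota_t(h_t,c,s))$ (Lemma~\ref{lem:quotient_transition_continuity}). Third, $\sigma_t$ is surjective. Combining them gives, for every $x_t=\sigma_t(h_t)$,
\[
U_{t+1}^\circ(\Gamma_t(x_t,c,s),g)=U_{t+1}(\iota_t(h_t,c,s),g).
\]
Hence a statement about $\succeq^\circ$ at quotient states can always be rewritten as the corresponding statement about $\succeq$ at representative histories. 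Because $U_t$ represents $\succeq_{(t)}$, every indifference or strict preference in ${\cal P}^\circ$ then translates verbatim into one in ${\cal P}$.

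The axioms are handled in order.

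\emph{Weak order and continuity.} $\succeq^\circ_{(t)}$ is defined by a real-valued function, so it is a weak order. Its upper and lower contour sets are preimages of closed half-lines under $U_t^\circ$, which is continuous by Lemma~\ref{lem:quotient_utilities}, so they are closed.

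\emph{Terminal compatibility.} We need $U_T^\circ(x_T,c)=U_{T+1}^\circ(\Gamma_T(x_T,c,\star))$. Choosing a representative $h_T$, this is $U_T(h_T,c)=U_{T+1}(\iota_T(h_T,c,\star))$, which is the compatibility of ${\bf U}$. Since $\succeq^\circ_{(T)}$ is defined by $U_T^\circ$, the axiom follows.

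\emph{Dynamic consistency, compensated monotonicity, weak separability.} Fix $x_t$ and pick a representative $h_t$ with $\sigma_t(h_t)=x_t$. Each hypothesis at $\Gamma_t(x_t,c,s)$ is a comparison $U_{t+1}^\circ(\Gamma_t(x_t,c,s),f_+(s))\ \square\ U_{t+1}^\circ(\Gamma_t(x_t,c',s),g_+(s))$ with $\square\in\{\ge,=\}$. By the display above it becomes the same comparison for $U_{t+1}$ at $\iota_t(h_t,c,s)$ and $\iota_t(h_t,c',s)$, i.e.\ the hypothesis of the original axiom at $h_t$. Applying Axiom~\ref{axiom:dynamic_consistency}, \ref{axiom:monotonicity}, or \ref{axiom:weak_separability} yields a conclusion about $U_t(h_t,\cdot)$, which equals $U_t^\circ(x_t,\cdot)$.

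The main point of care, though not a real obstacle, is well-definedness. The conclusion must not depend on the choice of representative $h_t$. It does not: both sides are expressed through $U_t^\circ$ and $U_{t+1}^\circ$, which are already well defined. Compensated monotonicity and weak separability compare different consumptions $c\neq c'$ from the same $x_t$, so one must make sure a single representative $h_t$ serves both. This holds because the transition identity is valid for every $c$ with the same $h_t$.
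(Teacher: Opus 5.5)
Your proposal is correct and follows essentially the same representative-history argument as the paper. You rewrite each quotient comparison via $U_t^\circ(\sigma_t(h_t),f)=U_t(h_t,f)$ and $\Gamma_t(\sigma_t(h_t),c,s)=\sigma_{t+1}(\iota_t(h_t,c,s))$, apply the original axiom at a representative $h_t$, and descend the conclusion; your write-up is if anything more explicit than the paper's, notably on closed contour sets and on using a single $h_t$ for both $c$ and $c'$.
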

\begin{proof}
Each comparison of the induced preferences at $x_t$ is determined by the comparison at any $h_t\in\sigma_t^{-1}(x_t)$, independent of the representative by Lemma~\ref{lem:quotient_utilities}.
For example, if $x_t=\sigma_t(h_t)$, then every comparison at $x_t$ is represented by the corresponding comparison at $h_t$.
Weak order and continuity of the representing $U_t^\circ$ by Lemma~\ref{lem:quotient_utilities} then descend to the quotient.

For dynamic consistency, fix $x_t=\sigma_t(h_t)$ and suppose
\[
(\Gamma_t(x_t,c,s),f_+(s))\succeq_{(t+1)}^\circ(\Gamma_t(x_t,c,s),g_+(s)),\, \forall s\in{\cal S}.
\]
By transition consistency, $\Gamma_t(x_t,c,s) = \sigma_{t+1}(\iota_t(h_t,c,s))$, so the quotient comparisons are equivalent to
\[
(\iota_t(h_t,c,s),f_+(s))\succeq_{(t+1)}(\iota_t(h_t,c,s),g_+(s)),\, \forall s\in{\cal S}.
\]
Axiom~\ref{axiom:dynamic_consistency} gives $(h_t,(c,f_+))\succeq_{(t)}(h_t,(c,g_+))$,
which descends to $(x_t,(c,f_+))\succeq_{(t)}^\circ(x_t,(c,g_+))$.

The same representative-history argument proves compensated monotonicity and weak separability. Terminal compatibility descends to the quotient because $U_T^\circ(x_T,c)=U_{T+1}^\circ(\Gamma_T(x_T,c,\star))=V_{T+1}^\circ(\Gamma_T(x_T,c,\star))$.
\end{proof}

We now show that Assumption~\ref{assu:CE_richness} descends to the quotient, so we do not have to separately assume certainty equivalent richness on the quotient.

\begin{lem}[Descent of certainty-equivalent richness]
\label{lem:CE_descent}
If ${\bf U}$ is a compatible utility system satisfying Assumption~\ref{assu:CE_richness} on ${\cal G}^{\cal H}=(\{{\cal H}_t\}_{t=0}^{T+1},\{\iota_t\}_{t=0}^T)$, then the quotient utility system ${\bf U}^{\circ}$ satisfies Assumption~\ref{assu:abs_CE_richness} on ${\cal G}^{\cal X}=(\{{\cal X}_t\}_{t=0}^{T+1},\{\Gamma_t\}_{t=0}^T)$.
\end{lem}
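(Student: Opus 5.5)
The strategy is to push the full-history effective risk aggregator $M_t^0$ down to the quotient, exactly as Lemma~\ref{lem:quotient_utilities} pushed $U_t$ down. The first step identifies the attainable sets. For $x_t=\sigma_t(h_t)$ and any $f=(c,f_+)$, the quotient continuation vector satisfies
\[
[\tilde u^\circ_{t+1}(x_t,f)](s)=U^\circ_{t+1}(\Gamma_t(x_t,c,s),f_+(s))=U_{t+1}(\iota_t(h_t,c,s),f_+(s)).
\]
This uses transition consistency (Lemma~\ref{lem:quotient_transition_continuity}) together with the definition of $U^\circ_{t+1}$. Hence $\tilde u^\circ_{t+1}(x_t,f)=\tilde u_{t+1}(h_t,f)$. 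It follows that $\Lambda_t^{{\bf U}^\circ}(x_t,c)=\Lambda_t^{\bf U}(h_t,c)$ for every representative $h_t$, and likewise for $\Lambda_t^{{\bf U}^\circ}(x_t)$. In particular, the quotient effective domain satisfies
\[
D_t^{M,\circ}=(\sigma_t\times{\rm id}_{\cal L})(D_t^M).
\]

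The second step defines $M_t^{0,\circ}(\sigma_t(h_t),\tilde u)\triangleq M_t^0(h_t,\tilde u)$ for $\tilde u\in\Lambda_t^{\bf U}(h_t)$, and shows this is well defined. Take $h_t\odot_t h_t'$ and $\tilde u$ in the common attainable set. Choose a realizing $f_+$ at some $c$, so that $\tilde u=\tilde u_{t+1}(h_t,(c,f_+))$. Because $\mathbf v$ and $\tilde u$ are jointly attainable at $c$ only through deterministic solvability, I will argue as follows. By (iii), ${\bf v}$ with $v=M_t^0(h_t,\tilde u)$ is attained at $c$ by some $g_+$, and (i) gives $(h_t,(c,f_+))\sim_{(t)}(h_t,(c,g_+))$. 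Since $U_t(h_t,\cdot)=U_t(h_t',\cdot)$ and the one-step vectors coincide, the same indifference holds at $h_t'$. Applying (i) and normalization at $h_t'$ then yields $M_t^0(h_t',\tilde u)=M_t^0(h_t',{\bf v})=v$. Once this is in place, properties (i)–(iv) transfer verbatim to $M_t^{0,\circ}$, because every statement at $x_t$ is a statement at a representative $h_t$ involving identical vectors and identical utilities.

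The main obstacle is joint continuity of $M_t^{0,\circ}$ on $D_t^{M,\circ}$. I would prove it with a quotient argument. The map $\sigma_t\times{\rm id}_{\cal L}$ restricted to $D_t^M$ is a continuous surjection onto $D_t^{M,\circ}$. The domain $D_t^M$ is compact by Lemma~\ref{lem:graph_compact}, and the target is Hausdorff, being a subset of the metrizable space ${\cal X}_t\times{\cal L}$ (Lemma~\ref{lem:quotient_topology}). Theorem~\ref{thm:compact_to_hausdorff} therefore makes this map a quotient map. The function $M_t^0$ is continuous on $D_t^M$ and constant on the fibers by the well-definedness step. Theorem~\ref{thm:quotient_universal_property} then gives continuity of $M_t^{0,\circ}$. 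This route avoids Whitehead's theorem, which would not apply directly here because $D_t^M$ is not a product.

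Finally, I will check that the compatibility of ${\bf U}^\circ$ with ${\cal P}^\circ$ and the terminal identity, which are already established in Lemma~\ref{lem:axiom_inheritance}, make ${\bf U}^\circ$ a legitimate compatible utility system on ${\cal G}^{\cal X}$. Together with the previous steps, this completes the verification of Assumption~\ref{assu:abs_CE_richness} for ${\bf U}^\circ$.
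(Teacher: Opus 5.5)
Your proposal is correct and follows essentially the same route as the paper: equality of the attainable sets across representatives, well-definedness of the descended certainty equivalent via deterministic solvability plus the certainty-equivalent representation and normalization at a second representative, and joint continuity via the compact-to-Hausdorff quotient map $(\sigma_t\times{\rm id}_{\cal L})|_{D_t^M}$ combined with the universal property. The only difference is cosmetic. You obtain $\tilde u^\circ_{t+1}(x_t,f)=\tilde u_{t+1}(h_t,f)$ directly from transition consistency and the definition of $U^\circ_{t+1}$, whereas the paper argues via forward stability and Lemma~\ref{lem:quotient_utilities}; the two come to the same thing.
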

\begin{proof}
We fix $t\in[T-1]$ for all of the following steps, and then conclude that the argument holds for any $t\in[T-1]$.

\emph{Step 1: Equality of effective domains.}
Fix $x_t\in{\cal X}_t$ and $h_t,h_t'\in\sigma_t^{-1}(x_t)$, then we have $\Lambda_t^{\bf U}(h_t,c)=\Lambda_t^{\bf U}(h_t',c)$ for all $c\in{\cal C}$. To establish this equality, let $\tilde{u}\in\Lambda_t^{\bf U}(h_t,c)$ and choose $f_+\in({\cal F}_{t+1})^{\cal S}$ such that $\tilde{u}(s)=U_{t+1}(\iota_t(h_t,c,s),f_+(s))$ for all $s\in{\cal S}$. Since $h_t\odot_t h_t'$, forward stability gives $\iota_t(h_t,c,s)\odot_{t+1}\iota_t(h_t',c,s)$ for all $s \in {\cal S}$. Lemma~\ref{lem:quotient_utilities} then implies that $U_{t+1}(\iota_t(h_t,c,s),f_+(s))=U_{t+1}(\iota_t(h_t',c,s),f_+(s))$ for all $s \in {\cal S}$, so $(c,f_+)$ also realizes $\tilde{u}$ at $h_t'$ and we have $\tilde{u} \in \Lambda_t^{\bf U}(h_t',c)$. The reverse inclusion follows symmetrically to give:
\[
\Lambda_t^{{\bf U}^{\circ}}(x_t,c)=\Lambda_t^{\bf U}(h_t,c),\, \Lambda_t^{{\bf U}^{\circ}}(x_t)=\Lambda_t^{\bf U}(h_t),\, \forall h_t\in\sigma_t^{-1}(x_t).
\]

\emph{Step 2: Constant effective certainty equivalent.}
Fix $\tilde{u}\in\Lambda_t^{\bf U}(h_t)$, choose $c\in{\cal C}$ and $f=(c,f_+)\in{\cal F}_t$ realizing $\tilde{u}$ at $h_t$, and let $v=M_t^0(h_t,\tilde{u})$. By deterministic solvability, there exists $g=(c,g_+)\in{\cal F}_t$ such that $(c,g_+)$ realizes ${\bf v}$ at $h_t$. By Step 1, the same plans $f$ and $g$ generate $\tilde{u}$ and ${\bf v}$, respectively, at any $h_t'$ with $h_t' \odot_t h_t$.

Normalization gives $M_t^0(h_t,{\bf v})=v=M_t^0(h_t,\tilde{u})$. Since both $\tilde{u}$ and ${\bf v}$ are jointly attainable at $c$, the certainty-equivalent representation implies $(h_t,f)\sim_{(t)}(h_t,g)$. In addition, $h_t \odot_t h_t'$ implies $(h_t,f)\sim_{(t)}(h_t',f)$ and $(h_t,g)\sim_{(t)}(h_t',g)$, so $(h_t',f) \sim_{(t)} (h_t',g)$ by transitivity. Applying the certainty-equivalent representation at $h_t'$ and using normalization of $M_t^0$ yields $M_t^0(h_t',\tilde{u})=M_t^0(h_t',{\bf v})=v=M_t^0(h_t,\tilde{u})$.
Then, $M_t^0(h_t,\tilde{u})$ is constant on $h_t \in \sigma_t^{-1}(x_t)$.

\emph{Step 3: Define certainty equivalent and prove continuity.}
Define the history-level and quotient-level risk aggregator effective domains by
\[
D_t^{M,H}\triangleq\{(h_t,\tilde{u}):\tilde{u}\in\Lambda_t^{\bf U}(h_t)\},\,
D_t^{M,X}\triangleq\{(x_t,\tilde{u}):\tilde{u}\in\Lambda_t^{{\bf U}^{\circ}}(x_t)\}.
\]
Define $p_t:D_t^{M,H}\to D_t^{M,X}$ by $p_t(h_t,\tilde{u})=(\sigma_t(h_t),\tilde{u})$. By Step 1, $p_t$ is onto. It is also continuous as the restriction of the continuous map $\sigma_t\times{\rm id}_{\cal L}$ to $D_t^{M,H}$. The set $D_t^{M,H}$ is compact by Lemma~\ref{lem:graph_compact}, while $D_t^{M,X}$ is Hausdorff as a subspace of ${\cal X}_t\times{\cal L}$. Then, the continuous surjection $p_t$ is a quotient map.

By Step 2, $M_t^0:D_t^{M,H}\to\Re$ is constant on the fibers of $p_t$. It follows that there is a unique function $M_t^{\circ,0}:D_t^{M,X}\to\Re$ satisfying $M_t^{\circ,0}(\sigma_t(h_t),\tilde{u})=M_t^0(h_t,\tilde{u})$.
Since $M_t^0$ is continuous and $p_t$ is a quotient map, $M_t^{\circ,0}$ is jointly continuous on $D_t^{M,X}$ by Theorem~\ref{thm:quotient_universal_property}.

\emph{Step 4: Certainty-equivalent richness on the quotient.}
Let $f=(c,f_+)$ and $g=(c,g_+)$ have the same current consumption at $x_t$, and choose $h_t\in\sigma_t^{-1}(x_t)$. Then we have
\begin{align*}
    f \succeq_{x_t}^{\circ} g \iff & f\succeq_{h_t}g \iff M_t^0(h_t,\tilde{u}_{t+1}(h_t,f))\geq M_t^0(h_t,\tilde{u}_{t+1}(h_t,g))\\
    \iff & M_t^{\circ,0}(x_t,\tilde{u}_{t+1}^{\circ}(x_t,f)) \geq M_t^{\circ,0}(x_t,\tilde{u}_{t+1}^{\circ}(x_t,g)),
\end{align*}
which shows that $M_t^{\circ,0}$ represents the quotient conditional risk ranking. Normalization and internality then follow from their history-level counterparts.

For deterministic solvability, fix $x_t$ and $f=(c,f_+)$, choose $h_t\in\sigma_t^{-1}(x_t)$, and let
\[
v=M_t^{\circ,0}(x_t,\tilde{u}_{t+1}^{\circ}(x_t,f)).
\]
By history-level deterministic solvability, there is a plan $g=(c,g_+)$ that realizes ${\bf v}$ at $h_t$. By Step 1, $g$ has quotient continuation-utility vector ${\bf v}$ at $x_t$.

Finally, suppose $\tilde{u},\tilde{v}\in\Lambda_t^{{\bf U}^{\circ}}(x_t)$ and $\tilde{u}\leq\tilde{v}$. By Step 1, we have $\tilde{u},\tilde{v} \in \Lambda_t^{\bf U}(h_t)$ for any $h_t\in\sigma_t^{-1}(x_t)$.
Then the history-level monotone completion gives $M_t^0(h_t,\tilde{u})\leq M_t^0(h_t,\tilde{v})$, and so $M_t^{\circ,0}(x_t,\tilde{u})\leq M_t^{\circ,0}(x_t,\tilde{v})$.
We conclude that $M_t^{\circ,0}$ satisfies joint continuity, certainty-equivalent representation, normalization, internality, deterministic solvability, and monotone completion. Since $t\in[T-1]$ was arbitrary, ${\bf U}^{\circ}$ satisfies Assumption~\ref{assu:abs_CE_richness} on ${\cal G}^{\cal X}$.
\end{proof}

\begin{proof}[Proof of Theorem~\ref{thm:recursive_memory}]
Let ${\cal G}^{\cal X}=(\{{\cal X}_t\}_{t=0}^{T+1},\{\Gamma_t\}_{t=0}^T)$ correspond to the canonical PA quotient. The spaces ${\cal X}_t$ are compact and metrizable by Lemma~\ref{lem:quotient_topology}, and all $\Gamma_t$ are continuous by Lemma~\ref{lem:quotient_transition_continuity}, so ${\cal G}^{\cal X}$ is a state--transition system.
By Lemma~\ref{lem:axiom_inheritance}, the quotient preference satisfies Axiom~\ref{axiom:abs_weak_order}--Axiom~\ref{axiom:abs_separability}.
By Lemma~\ref{lem:CE_descent}, the quotient utility system satisfies Assumption~\ref{assu:abs_CE_richness}. Then, we can apply Theorem~\ref{thm:abstract_recursive} to the quotient grand preference and its fixed compatible quotient utility system ${\bf U}^\circ=\{U_t^\circ\}_{t=0}^{T+1}$.
Theorem~\ref{thm:abstract_recursive} yields a recursive structure ${\cal V}^*$ whose generated utility system equals ${\bf U}^\circ$. Together with the physical readout maps, this is a recursive structure on the PA-state system $\mathfrak{X}^*$.
Taking the canonical projections $\{\sigma_t\}_{t=0}^{T+1}$ as summary maps, Lemma~\ref{lem:quotient_utilities} gives utility factorization, Lemma~\ref{lem:quotient_transition_continuity} gives $\sigma_{t+1}(\iota_t(h_t,c,s))=\Gamma_t(\sigma_t(h_t),c,s)$, and the definition of the canonical readout gives $\varsigma_t(\sigma_t(h_t))=\sigma_t^s(h_t)$.
All the clauses of Definition~\ref{defn:memory_representation} hold, and we conclude that $(\mathfrak{X}^*,{\cal V}^*)$ factorizes ${\bf U}$ in the sense of Definition~\ref{defn:memory_representation}.
\end{proof}

\subsection{Proof of Proposition~\ref{prop:minimality}}

\emph{Step 1: Physical-readout consistency.}
Because $(\mathfrak{X}',{\cal V}')$ factorizes ${\bf U}$, by definition its summary maps satisfy
\[
\varsigma_t'(\sigma_t'(h_t))=\sigma_t^s(h_t),\, \forall h_t\in{\cal H}_t,\, t\in[T+1].
\]

\emph{Step 2: Refinement.}
We show by induction that the summary maps $\{\sigma_t'\}_{t=0}^{T+1}$ are a refinement of the canonical projections $\{\sigma_t\}_{t=0}^{T+1}$.

\indpart{Step 2A: Terminal step} For $t=T+1$, suppose $\sigma_{T+1}'(h_{T+1})=\sigma_{T+1}'(h_{T+1}')$. By the terminal utility factorization in Eq.~\eqref{eq:factorization}, we have
\[
U_{T+1}(h_{T+1}) = U_{T+1}'(\sigma_{T+1}'(h_{T+1})) = U_{T+1}'(\sigma_{T+1}'(h_{T+1}')) = U_{T+1}(h_{T+1}'),
\]
so $(h_{T+1},h_{T+1}')\in \odot_{T+1}$.
 
\indpart{Step 2B: Backward induction step} Fix $t\in[T]$, and suppose the claim holds at $t+1$. Take $h_t, h_t' \in {\cal H}_t$ such that $\sigma_t'(h_t)=\sigma_t'(h_t')$, then we verify the three conditions of $\odot_t$.
First, since $\varsigma_t'\circ\sigma_t'=\sigma_t^s$ and $\sigma_t'(h_t)=\sigma_t'(h_t')$, we get $\sigma_t^s(h_t)=\sigma_t^s(h_t')$.
Second, for every $f\in{\cal F}_t$, the utility factorization gives
\[
U_t(h_t,f) = U_t'(\sigma_t'(h_t),f) = U_t'(\sigma_t'(h_t'),f) = U_t(h_t',f).
\]
Since ${\bf U}$ is compatible with ${\cal P}$, this implies $(h_t,f)\sim_{(t)}(h_t',f)$ for all $f\in{\cal F}_t$.
Third, for every $c\in{\cal C}$ and $s\in{\cal S}_{t+1}$, transition consistency Eq.~\eqref{eq:transition_consistency} gives
\[
\sigma_{t+1}'(\iota_t(h_t,c,s)) = \Gamma_t'(\sigma_t'(h_t),c,s) = \Gamma_t'(\sigma_t'(h_t'),c,s) = \sigma_{t+1}'(\iota_t(h_t',c,s)).
\]
By the induction hypothesis, $\iota_t(h_t,c,s)\odot_{t+1}\iota_t(h_t',c,s)$ for all $c\in{\cal C}$ and $s\in{\cal S}_{t+1}$.
All three clauses of Definition~\ref{defn:equivalence} hold, so $(h_t,h_t')\in \odot_t$.

\indpart{Step 2C: Conclusion} By backward induction, we conclude that $\sigma_t'(h_t)=\sigma_t'(h_t')$ implies $h_t\odot_t h_t'$ for all $t\in[T+1]$.
 
\emph{Step 3: Construction of $\theta_t$.}
Fix $t\in[T+1]$. For $x_t'\in\hat{\cal X}_t'$, choose $h_t\in{\cal H}_t$ with $\sigma_t'(h_t)=x_t'$ and define $\theta_t(x_t')\triangleq\sigma_t(h_t)$.
To show that this construction is well-defined, suppose $h_t'\in{\cal H}_t$ also satisfies $\sigma_t'(h_t')=x_t'$. By Step 2, we have $h_t\odot_t h_t'$ which gives $\sigma_t(h_t)=\sigma_t(h_t')$.
Further, we have $\theta_t\circ\sigma_t'=\sigma_t$ on ${\cal H}_t$ by construction.
Surjectivity of $\theta_t$ follows because $\sigma_t$ is onto ${\cal X}_t$.
Any map $\tilde{\theta}_t:\hat{\cal X}_t'\to{\cal X}_t$ with $\tilde{\theta}_t\circ\sigma_t'=\sigma_t$ agrees with $\theta_t$ everywhere on its domain, so it is unique.
 
\emph{Step 4: Continuity of $\theta_t$.}
The mapping $\sigma_t':{\cal H}_t\to\hat{\cal X}_t'$ is a continuous surjection from the compact set ${\cal H}_t$ onto $\hat{\cal X}_t'$, which is Hausdorff as a subspace of the Hausdorff space ${\cal X}_t'$. In addition, $\hat{\cal X}_t'$ is compact as the continuous image of a compact set.
By Theorem~\ref{thm:compact_to_hausdorff}, $\sigma_t'$ is a quotient map.
Then since $\theta_t\circ\sigma_t'=\sigma_t$ is continuous, $\theta_t$ is continuous by Theorem~\ref{thm:quotient_universal_property}.
 
\emph{Step 5: Dynamics.}
Fix $t\in[T]$, $x_t'\in\hat{\cal X}_t'$, $c\in{\cal C}$, and $s\in{\cal S}_{t+1}$, and choose $h_t\in{\cal H}_t$ with $\sigma_t'(h_t)=x_t'$.
Transition consistency Eq.~\eqref{eq:transition_consistency} for $\sigma_t'$ gives $\Gamma_t'(x_t',c,s) = \sigma_{t+1}'(\iota_t(h_t,c,s)) \in \hat{\cal X}_{t+1}'$.
Then,
\[
\theta_{t+1}(\Gamma_t'(x_t',c,s)) = \theta_{t+1}(\sigma_{t+1}'(\iota_t(h_t,c,s))) = \sigma_{t+1}(\iota_t(h_t,c,s)) = \Gamma_t(\sigma_t(h_t),c,s) = \Gamma_t(\theta_t(x_t'),c,s).
\]
 
\emph{Step 6: Factored utilities.}
Fix $t\in[T]$, $x_t'\in\hat{\cal X}_t'$, and $f\in{\cal F}_t$, and choose $h_t\in{\cal H}_t$ with $\sigma_t'(h_t)=x_t'$. Then
\[
U_t'(x_t',f) = U_t(h_t,f) = U_t^\circ(\sigma_t(h_t),f) = U_t^\circ(\theta_t(x_t'),f),
\]
where the first equality is by the utility factorization of ${\cal V}'$ and the second equality is by Lemma~\ref{lem:quotient_utilities}.
The terminal identity $U_{T+1}'(x_{T+1}')=U_{T+1}^\circ(\theta_{T+1}(x_{T+1}'))$ for $x_{T+1}'\in\hat{\cal X}_{T+1}'$ follows similarly.
 
\emph{Step 7: Homeomorphism.} 
Now suppose the additional hypothesis that $h_t\odot_t h_t'$ implies
$\sigma_t'(h_t)=\sigma_t'(h_t')$ holds. We show that $\theta_t$ is injective for each $t\in[T+1]$.
Suppose $\theta_t(x_t')=\theta_t(x_t'')$ for $x_t',x_t''\in\hat{\cal X}_t'$, and choose $h_t,h_t'\in{\cal H}_t$ with $\sigma_t'(h_t)=x_t'$ and $\sigma_t'(h_t')=x_t''$.
Then $\sigma_t(h_t)=\theta_t(x_t')=\theta_t(x_t'')=\sigma_t(h_t')$ since $\theta_t \circ \sigma_t' = \sigma_t$, and so $h_t\odot_t h_t'$.
By the additional hypothesis that $h_t\odot_t h_t'$ implies $\sigma_t'(h_t)=\sigma_t'(h_t')$, we have $x_t'=\sigma_t'(h_t)=\sigma_t'(h_t')=x_t''$, so $\theta_t$ is injective. It follows that $\theta_t$ is a continuous bijection.
Since $\hat{\cal X}_t'$ is compact and ${\cal X}_t$ is Hausdorff by Lemma~\ref{lem:quotient_topology}, $\theta_t$ is a homeomorphism by Theorem~\ref{thm:compact_to_hausdorff}.

\section{Proofs for Section~\ref{sec:DP} (Dynamic Programming)}

\subsection{Proof of Theorem~\ref{thm:DP_Bellman}}

Recall that the aggregators $\{M_t^*\}_{t=0}^{T-1}$ and $\{W_t^*\}_{t=0}^{T-1}$ on the PA state are jointly continuous by definition of ${\cal V}^*$.

\emph{Step 1: Candidate value functions.}
We will construct candidate optimal value functions $\{V_t\}_{t=0}^T$ by backward recursion. For $t = T$, let
\[
V_T(x_T)\triangleq\max_{c\in{\cal A}_T(x_T)}U_{T+1}^*(\Gamma_T(x_T,c,\star)),\, \forall x_T \in {\cal X}_T.
\]
For $t\in[T-1]$, given $V_{t+1}$, define $\tilde{v}_{t+1}(x_t,c) \in {\cal L}$ where $[\tilde{v}_{t+1}(x_t,c)](s)\triangleq V_{t+1}(\Gamma_t(x_t,c,s))$ for all $s \in {\cal S}$, and
\[
V_t(x_t)\triangleq\max_{c\in{\cal A}_t(x_t)}W_t^*(x_t,c,M_t^*(x_t,\tilde{v}_{t+1}(x_t,c))),\, \forall x_t \in {\cal X}_t.
\]

\emph{Step 2: Continuity and attainment of the maxima.}
For $t=T$, the objective
\[
(x_T,c)\to U_{T+1}^*(\Gamma_T(x_T,c,\star))
\]
is continuous. Since ${\cal A}_T$ is a continuous correspondence with nonempty compact values, Theorem~\ref{thm:berge} implies that $V_T$ is continuous and the maximum for each $x_T$ is attained. Next suppose $V_{t+1}$ is continuous. Since ${\cal S}$ is finite and $\Gamma_t$ is continuous, $(x_t,c)\to\tilde{v}_{t+1}(x_t,c)$ is continuous as a map into ${\cal L}\cong\Re^{\cal S}$.
The aggregators $M_t^*$ and $W_t^*$ are jointly continuous, so $V_t$ is continuous and its maximum is attained for each $x_t$ by Theorem~\ref{thm:berge}. By backward induction, these claims hold for every $t\in[T]$.

\emph{Step 3: Optimal PA policy.}
For each $x_T\in{\cal X}_T$, choose
\[
\bar\pi_T(x_T) \in \arg\max_{c\in{\cal A}_T(x_T)}U_{T+1}^*(\Gamma_T(x_T,c,\star)).
\]
For each $t\in[T-1]$ and $x_t\in{\cal X}_t$, choose
\[
\bar\pi_t(x_t)\in\arg\max_{c\in{\cal A}_t(x_t)}W_t^*(x_t,c,M_t^*(x_t,\tilde{v}_{t+1}(x_t,c))).
\]
The set $\Pi=\prod_{t=0}^T{\rm Sel}({\cal A}_t)$ contains all pointwise feasible selectors, so $\bar\pi\in\Pi$.

\emph{Step 4: Upper bound.}
We now upper bound the attainable utility of all Markov policies.

\indpart{Step 4A: Terminal step} At $t=T$, we have
\[
U_T^{*\pi}(x_T)=U_{T+1}^*(\Gamma_T(x_T,\pi_T(x_T),\star))\leq V_T(x_T),
\]
by definition of $V_T(x_T)$.

\indpart{Step 4B: Backward induction step} Fix $t \in [T-1]$, and suppose $U_{t+1}^{*\pi}(x_{t+1})\leq V_{t+1}(x_{t+1})$ for all $\pi\in\Pi$ and $x_{t+1}\in{\cal X}_{t+1}$.
Then fix $\pi\in\Pi$, $x_t\in{\cal X}_t$, and let $c=\pi_t(x_t)$. By the inductive hypothesis, we have
\[
[\tilde{u}_{t+1}^{*\pi}(x_t,c)](s)=U_{t+1}^{*\pi}(\Gamma_t(x_t,c,s))\leq V_{t+1}(\Gamma_t(x_t,c,s))=[\tilde{v}_{t+1}(x_t,c)](s),\, \forall s\in{\cal S},
\]
so $\tilde{u}_{t+1}^{*\pi}(x_t,c)\leq\tilde{v}_{t+1}(x_t,c)$. By monotonicity of $M_t^*(x_t,\cdot)$ and $W_t^*(x_t,c,\cdot)$, we have
\[
U_t^{*\pi}(x_t)\leq W_t^*(x_t,c,M_t^*(x_t,\tilde{v}_{t+1}(x_t,c)))\leq V_t(x_t).
\]

\indpart{Step 4C: Conclusion} By backward induction, we obtain the upper bound $U_t^{*\pi}(x_t)\leq V_t(x_t)$ for all $\pi\in\Pi$, $x_t\in{\cal X}_t$ and $t \in [T]$.

\emph{Step 5: Simultaneous attainment by $\bar\pi$.}
We show that $\bar{\pi}$ attains the optimal value for every state and period by backward induction.

\indpart{Step 5A: Terminal step} At $t=T$, the definition of $\bar\pi_T$ gives $U_T^{*\bar\pi}(x_T)=V_T(x_T)$ for every $x_T \in {\cal X}_T$.

\indpart{Step 5B: Backward induction step} Suppose $U_{t+1}^{*\bar\pi}(x_{t+1})=V_{t+1}(x_{t+1})$ for every $x_{t+1}\in{\cal X}_{t+1}$. For any $x_t\in{\cal X}_t$, let $c=\bar\pi_t(x_t)$. Then
\[
[\tilde{u}_{t+1}^{*\bar\pi}(x_t,c)](s)=V_{t+1}(\Gamma_t(x_t,c,s))=[\tilde{v}_{t+1}(x_t,c)](s),\, \forall s\in{\cal S}.
\]
Since $\bar\pi_t(x_t)$ attains the maximum for $V_t(x_t)$,
\[
U_t^{*\bar\pi}(x_t)=W_t^*(x_t,\bar\pi_t(x_t),M_t^*(x_t,\tilde{v}_{t+1}(x_t,\bar\pi_t(x_t))))=V_t(x_t).
\]

\indpart{Step 5C: Conclusion} By backward induction, $\bar\pi$ attains the maximum in $V_t$ for all states and periods.

\emph{Step 6: Bellman recursion.}
For period $t=T$, by the definition of $J_T^*$ we have
\[
J_T^*(x_T)=\max_{c \in {\cal A}_T(x_T)} U_{T+1}^*(\Gamma_T(x_T, c,\star))=V_T(x_T).
\]
For $t \in [T-1]$, the upper bound and attainment imply
\[
J_t^*(x_t)=\sup_{\pi\in\Pi}U_t^{*\pi}(x_t)=U_t^{*\bar\pi}(x_t)=V_t(x_t).
\]
Substituting $J_t^*=V_t$ into the recursive definition of $V_t$ gives Eq.~\eqref{eq:Bellman}, and $U_t^{*\bar\pi}(x_t)=J_t^*(x_t)$ for every state and period.

\subsection{Proof of Theorem~\ref{thm:DP_verification}}

\emph{Step 1: Policy-value identity.}
Fix $\varphi\in\Phi$ and $f^{\varphi}[h_t]\in{\cal F}_t$, so $U_t^\varphi(h_t)=U_t(h_t,f^{\varphi}[h_t])$. Because $(\mathfrak{X},{\cal V}^*)$ factorizes ${\bf U}$, we also have $U_t^\varphi(h_t)=U_t^*(\sigma_t(h_t),f^{\varphi}[h_t])$.
For period $T$, we have $U_T^\varphi(h_T)=U_{T+1}^*(\Gamma_T(x_T,\varphi_T(h_T),\star))$ for $x_T = \sigma_T(h_T)$.
For all $t\in[T-1]$, let $x_t=\sigma_t(h_t)$ and expand the memory recursion Eq.~\eqref{eq:memory_recursive} with transition consistency Eq.~\eqref{eq:transition_consistency} to obtain,
\[
U_t^\varphi(h_t)=W_t^*(x_t,\varphi_t(h_t),M_t^*(x_t,\tilde{u}_{t+1}^\varphi(h_t,\varphi_t(h_t)))),
\]
where $[\tilde{u}_{t+1}^\varphi(h_t,c)](s)=U_{t+1}^\varphi(\iota_t(h_t,c,s))$ for all
$s\in{\cal S}$.

\emph{Step 2: Feasibility of $\bar{\pi}$.}
Let $x_t=\sigma_t(h_t)$, then
\[
\bar{\varphi}_t(h_t)=\bar{\pi}_t(\sigma_t(h_t)) = \bar{\pi}_t(x_t) \in {\cal A}_t(x_t) = {\cal A}_t(h_t),
\]
by Assumption~\ref{assu:Markov_feasibility}.

\emph{Step 3: Upper bound.}
We show by backward induction that the attainable utility is upper bounded by $\{J_t^*\}_{t=0}^T$.

\indpart{Step 3A: Terminal step} For period $t=T$, fix $h_T\in{\cal H}_T$, let $x_T=\sigma_T(h_T)$, and set $c=\varphi_T(h_T) \in {\cal A}_T(x_T)$. By Step 1, we have $U_T^\varphi(h_T)=U_{T+1}^*(\Gamma_T(x_T, c, \star))$, therefore
\[
U_T^\varphi(h_T)\leq \max_{\hat{c}\in{\cal A}_T(x_T)} U_{T+1}^*(\Gamma_T(x_T,\hat{c},\star))=J_T^*(x_T).
\]

\indpart{Step 3B: Backward induction step} Fix $t \in [T-1]$, and suppose $U_{t+1}^\varphi(h_{t+1})\leq J_{t+1}^*(\sigma_{t+1}(h_{t+1}))$ for all $h_{t+1}\in{\cal H}_{t+1}$. Then fix $h_t\in{\cal H}_t$, let $x_t=\sigma_t(h_t)$, and set $c=\varphi_t(h_t)\in{\cal A}_t(x_t)$. We have
\begin{align*}
[\tilde{u}_{t+1}^\varphi(h_t,c)](s) \leq & J_{t+1}^*(\sigma_{t+1}(\iota_t(h_t,c,s))) \\
= & J_{t+1}^*(\Gamma_t(x_t,c,s)) \\
= & [\tilde{j}_{t+1}^*(x_t,c)](s),\, \forall s \in {\cal S},
\end{align*}
where the inequality follows from the induction hypothesis.
Then, $\tilde{u}_{t+1}^\varphi(h_t,c)\leq\tilde{j}_{t+1}^*(x_t,c)$.

By Step 1 and  monotonicity of $M_t^*$ and $W_t^*$, we obtain
\begin{align*}
U_t^\varphi(h_t)
&= W_t^*(x_t,c,M_t^*(x_t,\tilde{u}_{t+1}^\varphi(h_t,c)))\\
&\leq W_t^*(x_t,c,M_t^*(x_t,\tilde{j}_{t+1}^*(x_t,c)))\\
&\leq \max_{\hat{c}\in{\cal A}_t(x_t)} W_t^*(x_t,\hat{c},M_t^*(x_t,\tilde{j}_{t+1}^*(x_t,\hat{c})))\\
&= J_t^*(x_t).
\end{align*}
The choice of $\varphi\in\Phi$ was arbitrary, so $U_t^\varphi(h_t)\leq J_t^*(\sigma_t(h_t))$ for all $\varphi\in\Phi$.

\indpart{Step 3C: Conclusion} By backward induction, we have $U_t^\varphi(h_t)\leq J_t^*(\sigma_t(h_t))$ for all $\varphi\in\Phi$, $h_t\in{\cal H}_t$, and $t\in[T]$.

\emph{Step 4: Attainment by the Bellman selector.}
We prove that the Bellman selector attains the preceding upper bound by backward induction.

\indpart{Step 4A: Terminal step} For $t=T$, fix $h_T\in{\cal H}_T$ and let $x_T=\sigma_T(h_T)$. Since $\bar{\varphi}_T(h_T)=\bar{\pi}_T(x_T)$ attains the terminal maximum, we have $U_T^{\bar{\varphi}}(h_T)=U_{T+1}^*(\Gamma_T(x_T,\bar{\pi}_T(x_T),\star))=J_T^*(x_T)$.

\indpart{Step 4B: Backward induction step} Fix $t \in [T-1]$, and suppose $U_{t+1}^{\bar{\varphi}}(h_{t+1})=J_{t+1}^*(\sigma_{t+1}(h_{t+1}))$ for all $h_{t+1}\in{\cal H}_{t+1}$. Next choose $h_t\in{\cal H}_t$, let $x_t=\sigma_t(h_t)$, and set $c=\bar{\varphi}_t(h_t)=\bar{\pi}_t(x_t)$. By Step 1 and the induction hypothesis, we have $\tilde{u}_{t+1}^{\bar{\varphi}}(h_t,c)=\tilde{j}_{t+1}^*(x_t,c)$. Consequently,
\begin{align*}
U_t^{\bar{\varphi}}(h_t)
&= W_t^*(x_t,c,M_t^*(x_t,\tilde{u}_{t+1}^{\bar{\varphi}}(h_t,c)))\\
&= W_t^*(x_t,\bar{\pi}_t(x_t),M_t^*(x_t,\tilde{j}_{t+1}^*(x_t,\bar{\pi}_t(x_t))))\\
&= J_t^*(x_t),
\end{align*}
where the last equality follows because $\bar{\pi}_t(x_t)$ attains the maximum for the period $t$ value problem.

\indpart{Step 4C: Conclusion} By backward induction, $U_t^{\bar{\varphi}}(h_t)=J_t^*(\sigma_t(h_t))$ for all $h_t \in {\cal H}_t$ and $t \in [T]$. These equalities combined with the upper bound from Step 3 prove that $\bar{\varphi} \in \Phi$ is optimal.

\section{Proofs for Section~\ref{sec:separation} (Separated Beliefs and Tastes)}

\subsection{Proof of Lemma~\ref{lem:closedness_aggregator_equivalence}}

For $t\in[T-1]$, reflexivity, symmetry, and transitivity of $\sim_t^y$ and $\sim_t^z$ follow immediately because each relation is defined by equality of the corresponding aggregator functions, so they are equivalence relations.

For each $s\in{\cal S}_t$ and $\tilde u\in{\cal L}$, define $F_{s,\tilde u}:{\cal M}_t\times{\cal M}_t\to\Re$ by $F_{s,\tilde u}(m,m')=M_t^*(s,m,\tilde u)-M_t^*(s,m',\tilde u)$, which is continuous by joint continuity of $M_t^*$. Then
\[
\{(m,m'):m\sim_t^y m'\}=\bigcap_{s\in{\cal S}_t,\tilde u\in{\cal L}}F_{s,\tilde u}^{-1}(\{0\})
\]
is closed, and so $\sim_t^y$ is closed.

For each $s\in{\cal S}_t$, $c\in{\cal C}$, and $v\in\Re$, define $G_{s,c,v}:{\cal M}_t\times{\cal M}_t\to\Re$ by $G_{s,c,v}(m,m')=W_t^*(s,m,c,v)-W_t^*(s,m',c,v)$, which is continuous by joint continuity of $W_t^*$. Then
\[
\{(m,m'):m\sim_t^z m'\}=\bigcap_{s \in {\cal S}_t,c \in {\cal C},v \in \Re}G_{s,c,v}^{-1}(\{0\}).
\]
The taste-equivalence relation is also closed as the intersection of closed sets.

For $t\in\{T,T+1\}$, belief equivalence is the universal relation ${\cal M}_t\times{\cal M}_t$, which is closed. Taste equivalence is the diagonal ${\rm diag}({\cal M}_t)\triangleq\{(m_t,m_t):m_t\in{\cal M}_t\}$, which is closed because ${\cal M}_t$ is Hausdorff.

\subsection{Proof of Lemma~\ref{lem:E_identification}}

Fix $t\in[T+1]$. By Lemma~\ref{lem:closedness_aggregator_equivalence}, $\sim_t^y$ and $\sim_t^z$ are closed equivalence relations on ${\cal M}_t$. Since ${\cal M}_t$ is compact metrizable, Theorem~\ref{thm:closed_quotient} implies that ${\cal Y}_t={\cal M}_t/\sim_t^y$ and ${\cal Z}_t={\cal M}_t/\sim_t^z$ are compact metrizable, and that the quotient maps $p_t^y:{\cal M}_t\to{\cal Y}_t$ and $p_t^z:{\cal M}_t\to{\cal Z}_t$ are continuous.

The map $\Xi_t : {\cal S}_t\times{\cal M}_t \rightarrow {\cal S}_t\times{\cal Y}_t\times{\cal Z}_t$ defined by $\Xi_t(s,m)=(s,p_t^y(m),p_t^z(m))$ is continuous because each of its coordinate maps is continuous.
We show that $\Xi_t$ is injective. Suppose $\Xi_t(x_t)=\Xi_t(x_t')$, where $x_t=(s_t,m_t)$ and $x_t'=(s_t',m_t')$. Then $s_t=s_t'$, $p_t^y(m_t)=p_t^y(m_t')$ so $m_t\sim_t^y m_t'$, and $p_t^z(m_t)=p_t^z(m_t')$ so $m_t\sim_t^z m_t'$. By Assumption~\ref{assu:exhaustiveness}, this implies $m_t=m_t'$ and $x_t=x_t'$, so $\Xi_t$ is injective.

Since ${\cal X}_t$ is compact and ${\cal S}_t\times{\cal Y}_t\times{\cal Z}_t$ is Hausdorff, the continuous bijection $\Xi_t:{\cal X}_t\to{\cal X}_t^\dagger$ is a homeomorphism by Theorem~\ref{thm:compact_to_hausdorff}.
By the definition of the canonical belief and taste summary maps, $\Xi_t(\sigma_t(h_t))=(\sigma_t^s(h_t),\sigma_t^y(h_t),\sigma_t^z(h_t))$ for every $h_t\in{\cal H}_t$. Since $\sigma_t$ is onto ${\cal X}_t$, we conclude ${\cal X}_t^\dagger=\Xi_t({\cal X}_t)=\{(\sigma_t^s(h_t),\sigma_t^y(h_t),\sigma_t^z(h_t)):h_t\in{\cal H}_t\}$, with $\sigma_{T+1}^s(h_{T+1})=\star$ at the terminal period.

\subsection{Proof of Theorem~\ref{thm:memory_recursive_separated}}

Let $(\mathfrak X^*,{\cal V}^*)$ be the selected PA factorization on the canonical PA-state system supplied by Theorem~\ref{thm:recursive_memory}. The generated utility functions $\{U_t^*\}$ satisfy the recursion $U_t^*(x_t,f)=W_t^*(x_t,c,M_t^*(x_t,\tilde{u}_{t+1}^*(x_t,f)))$ of Eq.~\eqref{eq:memory_recursive} and the factorization $U_t^*(\sigma_t(h_t),f)=U_t(h_t,f)$.

\emph{Step 1: Construct belief and taste quotients.} By Lemma~\ref{lem:closedness_aggregator_equivalence}, $\sim_t^y$ and $\sim_t^z$ are closed equivalence relations on ${\cal M}_t$. Since ${\cal M}_t$ is compact metrizable, Theorem~\ref{thm:closed_quotient} implies that ${\cal Y}_t={\cal M}_t/\sim_t^y$ and ${\cal Z}_t={\cal M}_t/\sim_t^z$ are compact metrizable, with continuous quotient maps $p_t^y:{\cal M}_t\to{\cal Y}_t$ and $p_t^z:{\cal M}_t\to{\cal Z}_t$.

Fix $t\in[T-1]$. Recall $q_t^y:{\cal X}_t\to{\cal X}_t^{\dagger,y}$ and $q_t^z:{\cal X}_t\to{\cal X}_t^{\dagger,z}$ from Definition~\ref{defn:admissible_separated_states}. Since ${\cal X}_t$ is compact and ${\cal X}_t^{\dagger,y}$ and ${\cal X}_t^{\dagger,z}$ are Hausdorff subspaces, the continuous surjections $q_t^y$ and $q_t^z$ are quotient maps by Theorem~\ref{thm:compact_to_hausdorff}.

\emph{Step 2: Separated risk aggregators.}
We first descend the risk aggregator. 
Define $Q_t^y\triangleq q_t^y\times{\rm id}_{\cal L}:{\cal X}_t\times{\cal L}\to{\cal X}_t^{\dagger,y}\times{\cal L}$. Since ${\cal L}$ is locally compact Hausdorff, Theorem~\ref{thm:whitehead} implies that $Q_t^y$ is a quotient map.

The function $M_t^*:{\cal X}_t\times{\cal L}\to\Re$ is constant on every fiber of $Q_t^y$. Choose $x_t=(s_t,m_t)$ and $x_t'=(s_t',m_t')$, and suppose $Q_t^y(x_t,\tilde u)=Q_t^y(x_t',\tilde u')$. Then $\tilde u=\tilde u'$, $s_t=s_t'$, and $p_t^y(m_t)=p_t^y(m_t')$. It follows that $m_t\sim_t^y m_t'$ and so
\[
M_t^*(s_t,m_t,\tilde u)=M_t^*(s_t,m_t',\tilde u)=M_t^*(s_t',m_t',\tilde u').
\]
By the universal property of quotient maps, there is a unique continuous function $M_t^\dagger:{\cal X}_t^{\dagger,y}\times{\cal L}\to\Re$ satisfying $M_t^\dagger\circ Q_t^y=M_t^*$.
The descended function $M_t^\dagger$ is a risk aggregator. If $\tilde u\leq\tilde{v}$ pointwise and $(s_t,y_t)\in{\cal X}_t^{\dagger,y}$, choose $m_t$ such that $(s_t,m_t)\in{\cal X}_t$ and $p_t^y(m_t)=y_t$. Then
\[
M_t^\dagger(s_t,y_t,\tilde u)=M_t^*(s_t,m_t,\tilde u)\leq M_t^*(s_t,m_t,\tilde{v})=M_t^\dagger(s_t,y_t,\tilde{v}).
\]
For every $v\in\Re$, $M_t^\dagger(s_t,y_t,{\bf v})=M_t^*(s_t,m_t,{\bf v})=v$. Then $M_t^\dagger$ is normalized on constant vectors.

\emph{Step 3: Separated time aggregators.}
We next descend the time aggregator.
Now define $Q_t^z\triangleq q_t^z\times{\rm id}_{{\cal C}\times\Re}:{\cal X}_t\times{\cal C}\times\Re\to{\cal X}_t^{\dagger,z}\times{\cal C}\times\Re$. Since ${\cal C}\times\Re$ is locally compact Hausdorff, Theorem~\ref{thm:whitehead} implies that $Q_t^z$ is a quotient map.
The function $W_t^*:{\cal X}_t\times{\cal C}\times\Re\to\Re$ is constant on every fiber of $Q_t^z$. Choose $x_t=(s_t,m_t)$ and $x_t'=(s_t',m_t')$, and suppose $Q_t^z(x_t,c,v)=Q_t^z(x_t',c',v')$. Then $c=c'$, $v=v'$, $s_t=s_t'$, and $p_t^z(m_t)=p_t^z(m_t')$. It follows that $m_t\sim_t^z m_t'$, and so
\[
W_t^*(s_t,m_t,c,v)=W_t^*(s_t,m_t',c,v)=W_t^*(s_t',m_t',c',v').
\]
By the universal property of quotient maps, there is a unique continuous function $W_t^\dagger:{\cal X}_t^{\dagger,z}\times{\cal C}\times\Re\to\Re$ satisfying $W_t^\dagger\circ Q_t^z=W_t^*$.
The descended function $W_t^\dagger$ is a time aggregator. If $v\leq v'$ and $(s_t,z_t)\in{\cal X}_t^{\dagger,z}$, choose $m_t$ such that $(s_t,m_t)\in{\cal X}_t$ and $p_t^z(m_t)=z_t$. Then
\[
W_t^\dagger(s_t,z_t,c,v)=W_t^*(s_t,m_t,c,v)\leq W_t^*(s_t,m_t,c,v')=W_t^\dagger(s_t,z_t,c,v'),
\]
so $W_t^\dagger$ is nondecreasing in its continuation-value argument.

This procedure constructs the separated time and risk aggregators at every $t\in[T-1]$. Define the terminal primitive $V_{T+1}^\dagger(\xi_{T+1}) \triangleq U_{T+1}^*(\Xi_{T+1}^{-1}(\xi_{T+1}))$ so $U_{T+1}^\dagger=V_{T+1}^\dagger$.
By Lemma~\ref{lem:E_identification}, $\Xi_{T+1}$ is a homeomorphism onto ${\cal X}_{T+1}^\dagger$, so $U_{T+1}^\dagger$ is continuous.

\emph{Step 4: Separated structure and transitions.} By Assumption~\ref{assu:exhaustiveness} and Lemma~\ref{lem:E_identification}, $\Xi_t:{\cal X}_t\to{\cal X}_t^\dagger$ is a homeomorphism, so each $\xi_t\in{\cal X}_t^\dagger$ has a unique pre-image $x_t=\Xi_t^{-1}(\xi_t) \in {\cal X}_t$.
The canonical SPA transitions are defined by
\[
\Gamma_t^\dagger = \Xi_{t+1}\circ\Gamma_t\circ (\Xi_t^{-1}\times{\rm id}_{\cal C}\times{\rm id}_{{\cal S}_{t+1}}).
\]
Since $\Xi_t$ is a homeomorphism, $\Gamma_t^\dagger$ is continuous as the composition of continuous functions.
In addition, $\Gamma_t^\dagger$ satisfies $\Gamma_t^\dagger(\Xi_t(x_t),c,s)=\Xi_{t+1}(\Gamma_t(x_t,c,s))$.
If $\xi_t=\Xi_t(x_t)$, then
\[
\varsigma_{t+1}^\dagger\bigl(\Gamma_t^\dagger(\xi_t,c,s')\bigr) =\varsigma_{t+1}^\dagger\bigl(\Xi_{t+1}(\Gamma_t(x_t,c,s'))\bigr) = \varsigma_{t+1}(\Gamma_t(x_t,c,s')) = s',
\]
which shows the SPA transition preserves the physical-state coordinate.
Then $\mathfrak{X}^\dagger = (\{{\cal Y}_t\},\{{\cal Z}_t\},\{{\cal X}_t^\dagger\},\{\varsigma_t^\dagger\},\{\Gamma_t^\dagger\})$ is an SPA-state system, and the descended aggregators define an SPA recursive structure ${\cal V}^\dagger$ on $\mathfrak{X}^\dagger$. The separated aggregators generate $\{U_t^\dagger\}_{t=0}^{T+1}$ via Eq.~\eqref{eq:memory_recursive_separated}.

\emph{Step 5: Factorization on beliefs/tastes.} We show that the separated utilities $\{U_t^\dagger\}_{t=0}^{T+1}$ are a factorization of $\{U_t^*\}_{t=0}^{T+1}$ by backward induction.

\indpart{Step 5A: Terminal step} At $t=T$,
\begin{align*}
U_T^\dagger(\Xi_T(x_T),c) = & U_{T+1}^\dagger(\Gamma_T^\dagger(\Xi_T(x_T),c,\star))\\
= & U_{T+1}^\dagger(\Xi_{T+1}(\Gamma_T(x_T,c,\star)))\\
= & U_{T+1}^*(\Gamma_T(x_T,c,\star)) = U_T^*(x_T,c).
\end{align*}

\indpart{Step 5B: Backward induction step} For the period $t\in[T-1]$ induction step, suppose the desired factorization holds for period $t+1$. Fix $f=(c,f_+)\in{\cal F}_t$, choose $x_t=(s_t,m_t)$ and let $\xi_t=\Xi_t(x_t)=(s_t,y_t,z_t)$, $y_t=p_t^y(m_t)$, and $z_t=p_t^z(m_t)$. By transition consistency and the inductive hypothesis, for all $s \in {\cal S}$,
\[
[\tilde{u}_{t+1}^\dagger(\xi_t,f)](s)=U_{t+1}^\dagger(\Xi_{t+1}(\Gamma_t(x_t,c,s)),f_+(s))
=U_{t+1}^*(\Gamma_t(x_t,c,s),f_+(s))=[\tilde{u}_{t+1}^*(x_t,f)](s),
\]
so $\tilde{u}_{t+1}^\dagger(\xi_t,f)=\tilde{u}_{t+1}^*(x_t,f)$. Using $M_t^\dagger(s_t,y_t,\cdot)=M_t^*(x_t,\cdot)$, $W_t^\dagger(s_t,z_t,\cdot,\cdot)=W_t^*(x_t,\cdot,\cdot)$, and the PA recursion,
\[
U_t^\dagger(\xi_t,f)=W_t^\dagger(s_t,z_t,c,M_t^\dagger(s_t,y_t,\tilde{u}_{t+1}^*(x_t,f))) = W_t^*(x_t,c,M_t^*(x_t,\tilde{u}_{t+1}^*(x_t,f)))=U_t^*(x_t,f).
\]

\indpart{Step 5C: Conclusion} By backward induction, we conclude that $U_t^\dagger(\Xi_t(x_t),f)=U_t^*(x_t,f)$ for all $x_t \in {\cal X}_t$ and $t \in [T]$.

\emph{Step 6: Representation.} At the terminal period, for $x_{T+1}=\sigma_{T+1}(h_{T+1})$, we have
\[
U_{T+1}^\dagger(\sigma_{T+1}^\dagger(h_{T+1})) = U_{T+1}^\dagger(\Xi_{T+1}(x_{T+1})) = U_{T+1}^*(x_{T+1}) = U_{T+1}(h_{T+1}).
\]
For all $t \in [T]$, we have $U_t^\dagger(\Xi_t(x_t),f)=U_t^*(x_t,f)$ by Step 5, and $\Xi_t\circ\sigma_t = \sigma_t^\dagger = (\sigma_t^s,\sigma_t^y,\sigma_t^z)$.
It follows that
\[
U_t^\dagger(\sigma_t^\dagger(h_t),f)=U_t^\dagger(\Xi_t(\sigma_t(h_t)),f)=U_t^*(\sigma_t(h_t),f)=U_t(h_t,f),
\]
which is Eq.~\eqref{eq:factorization_separated}. Transition consistency Eq.~\eqref{eq:transition_consistency_separated} follows by Lemma~\ref{lem:quotient_transition_continuity} and the identity $\Gamma_t^\dagger\circ(\Xi_t\times{\rm id}_{\cal C}\times{\rm id}_{{\cal S}_{t+1}}) = \Xi_{t+1}\circ\Gamma_t$ obtained in Step 4. We conclude that $(\mathfrak{X}^\dagger,{\cal V}^\dagger)$ factorizes ${\bf U}$ and represents ${\cal P}$.

\subsection{Proof of Corollary~\ref{cor:DP_separated}}

\emph{Step 1: Define candidate separated value functions.}
Let
\[
V_T^\dagger(\xi_T)\triangleq\max_{c\in{\cal A}_T^\dagger(\xi_T)}U_{T+1}^\dagger(\Gamma_T^\dagger(\xi_T,c,\star)),\, \forall \xi_T \in {\cal X}_T^\dagger.
\]
For $t\in[T-1]$, given $V_{t+1}^\dagger$, define $\tilde{v}_{t+1}^\dagger(\xi_t,c) \in {\cal L}$ by $[\tilde{v}_{t+1}^\dagger(\xi_t,c)](s)\triangleq V_{t+1}^\dagger(\Gamma_t^\dagger(\xi_t,c,s))$ for all $s \in {\cal S}$, and
\[
V_t^\dagger(\xi_t)\triangleq\max_{c\in{\cal A}_t^\dagger(\xi_t)}W_t^\dagger(s_t,z_t,c,M_t^\dagger(s_t,y_t,\tilde{v}_{t+1}^\dagger(\xi_t,c))),\, \forall \xi_t \in {\cal X}_t^\dagger.
\]

\emph{Step 2: Establish continuity and attainment.}
The terminal objective is continuous as the composition of continuous functions. Since ${\cal A}_T^\dagger$ is a continuous correspondence with nonempty compact values, Theorem~\ref{thm:berge} implies that $V_T^\dagger$ is continuous and the maximum is attained in every state. Then for $t \in [T-1]$ suppose $V_{t+1}^\dagger$ is continuous. Finiteness of ${\cal S}$ and continuity of $\Gamma_t^\dagger$ imply that $(\xi_t,c)\to\tilde{v}_{t+1}^\dagger(\xi_t,c)$ is continuous. Joint continuity of $M_t^\dagger$ and $W_t^\dagger$, with Theorem~\ref{thm:berge}, implies that $V_t^\dagger$ is continuous and its maximum is attained in every state.

\emph{Step 3: Optimal SPA policy.}
For each $\xi_T \in {\cal X}_T^\dagger$, choose
\[
\bar\pi_T^\dagger(\xi_T) \in \arg\max_{c\in{\cal A}_T^\dagger(\xi_T)}U_{T+1}^\dagger(\Gamma_T^\dagger(\xi_T,c,\star)).
\] 
Then, for each $t\in[T-1]$ and $\xi_t=(s_t,y_t,z_t) \in {\cal X}_t^\dagger$, choose
\[
\bar\pi_t^\dagger(\xi_t)\in\arg\max_{c\in{\cal A}_t^\dagger(\xi_t)}W_t^\dagger(s_t,z_t,c,M_t^\dagger(s_t,y_t,\tilde{v}_{t+1}^\dagger(\xi_t,c))).
\]
The set $\Pi^\dagger=\prod_{t=0}^T{\rm Sel}({\cal A}_t^\dagger)$ contains all pointwise feasible selectors, so $\bar\pi^\dagger\in\Pi^\dagger$.

\emph{Step 4: Upper bound.}
We show that the attainable utility is upper bounded by the separated optimal value functions.

\indpart{Step 4A: Terminal step} For period $t=T$,
\[
U_T^{\dagger\pi^\dagger}(\xi_T)=U_{T+1}^\dagger(\Gamma_T^\dagger(\xi_T,\pi_T^\dagger(\xi_T),\star))\leq V_T^\dagger(\xi_T).
\]

\indpart{Step 4B: Backward induction step} Fix $t \in [T-1]$, and suppose the upper bound holds at $t+1$ for every state and policy. Fix $\xi_t=(s_t,y_t,z_t)$ and set $c=\pi_t^\dagger(\xi_t)$, then
\[
[\tilde{u}_{t+1}^{\dagger\pi^\dagger}(\xi_t,c)](s)=U_{t+1}^{\dagger\pi^\dagger}(\Gamma_t^\dagger(\xi_t,c,s))\leq V_{t+1}^\dagger(\Gamma_t^\dagger(\xi_t,c,s))=[\tilde{v}_{t+1}^\dagger(\xi_t,c)](s),\, \forall s\in{\cal S}.
\]
By monotonicity of $M_t^\dagger(s_t,y_t,\cdot)$ and $W_t^\dagger(s_t,z_t,c,\cdot)$, we then have
\[
U_t^{\dagger\pi^\dagger}(\xi_t)\leq W_t^\dagger(s_t,z_t,c,M_t^\dagger(s_t,y_t,\tilde{v}_{t+1}^\dagger(\xi_t,c)))\leq V_t^\dagger(\xi_t).
\]

\indpart{Step 4C: Conclusion} By backward induction, $U_t^{\dagger\pi^\dagger}(\xi_t)\leq V_t^\dagger(\xi_t)$ for every $\pi^\dagger\in\Pi^\dagger$, $\xi_t\in{\cal X}_t^\dagger$, and $t\in[T]$.

\emph{Step 5: Simultaneous attainment by $\bar\pi^\dagger$.}
The policy $\bar\pi^\dagger$ attains $V_t^\dagger$ simultaneously from every separated state.

\indpart{Step 5A: Terminal step} The terminal equality follows from the definition of $\bar\pi_T^\dagger$.

\indpart{Step 5B: Backward induction step} For $t \in [T-1]$, suppose $U_{t+1}^{\dagger\bar\pi^\dagger}(\xi_{t+1})=V_{t+1}^\dagger(\xi_{t+1})$ for every $\xi_{t+1}\in{\cal X}_{t+1}^\dagger$. For $\xi_t=(s_t,y_t,z_t)$, let $c=\bar\pi_t^\dagger(\xi_t)$, then $\tilde{u}_{t+1}^{\dagger\bar\pi^\dagger}(\xi_t,c)=\tilde{v}_{t+1}^\dagger(\xi_t,c)$. By choice of $\bar\pi_t^\dagger(\xi_t)$,
\[
U_t^{\dagger\bar\pi^\dagger}(\xi_t)=W_t^\dagger(s_t,z_t,\bar\pi_t^\dagger(\xi_t),M_t^\dagger(s_t,y_t,\tilde{v}_{t+1}^\dagger(\xi_t,\bar\pi_t^\dagger(\xi_t))))=V_t^\dagger(\xi_t).
\]

\indpart{Step 5C: Conclusion} By backward induction, $U_t^{\dagger\bar\pi^\dagger}(\xi_t)=V_t^\dagger(\xi_t)$ for every $\xi_t\in{\cal X}_t^\dagger$ and $t\in[T]$.

\emph{Step 6: Bellman recursion.}
To conclude, we have
\[
J_t^\dagger(\xi_t)=\sup_{\pi^\dagger\in\Pi^\dagger}U_t^{\dagger\pi^\dagger}(\xi_t)=U_t^{\dagger\bar\pi^\dagger}(\xi_t)=V_t^\dagger(\xi_t),\, \forall \xi_t \in {\cal X}_t^\dagger,\, t \in [T].
\]
Substituting $J_t^\dagger=V_t^\dagger$ into the recursion defining $V_t^\dagger$ yields Eq.~\eqref{eq:Bellman_separated}, and $\bar\pi^\dagger$ simultaneously attains the separated value function from every state.

\subsection{Proof of Corollary~\ref{cor:DP_verification_separated}}

\emph{Step 1: Policy-value identity.}
Fix $\varphi\in\Phi$ and let $f^{\varphi}[h_t]\in{\cal F}_t$ so $U_t^\varphi(h_t)=U_t(h_t,f^{\varphi}[h_t])$. Because $(\mathfrak{X}^\dagger,{\cal V}^\dagger)$ factorizes ${\bf U}$, Eq.~\eqref{eq:factorization_separated} gives $U_t^\varphi(h_t)=U_t^\dagger(\xi_t,f^{\varphi}[h_t])$ for $\xi_t=\sigma_t^\dagger(h_t)=(s_t,y_t,z_t)$.
For $t =T$, we have $U_T^\varphi(h_T)=U_{T+1}^\dagger(\Gamma_T^\dagger(\sigma_T^\dagger(h_T),\varphi_T(h_T),\star))$.
For all $t\in[T-1]$, expanding the separated recursion Eq.~\eqref{eq:memory_recursive_separated} and using transition consistency Eq.~\eqref{eq:transition_consistency_separated} yields
\[
U_t^\varphi(h_t)=W_t^\dagger(s_t,z_t,\varphi_t(h_t),M_t^\dagger(s_t,y_t,\tilde{u}_{t+1}^\varphi(h_t,\varphi_t(h_t)))).
\]

\emph{Step 2: Feasibility of $\bar\varphi$.}
For fixed $h_t\in{\cal H}_t$, we have $\bar\varphi_t(h_t)=\bar\pi_t^\dagger(\xi_t)\in {\cal A}_t^\dagger(\xi_t) = {\cal A}_t(h_t)$ by Assumption~\ref{assu:separated_Markov_feasibility}. Then $\bar\varphi\in\Phi$.

\emph{Step 3: Upper bound.}
We show that the separated optimal value functions upper bound the attainable utility.

\indpart{Step 3A: Terminal step} At $t=T$, set $c=\varphi_T(h_T)\in{\cal A}_T^\dagger(\xi_T)$. By Step 1, we have
\[
U_T^\varphi(h_T)=U_{T+1}^\dagger(\Gamma_T^\dagger(\xi_T,c,\star))\leq\max_{\hat{c}\in{\cal A}_T^\dagger(\xi_T)}U_{T+1}^\dagger(\Gamma_T^\dagger(\xi_T,\hat{c},\star))=J_T^\dagger(\xi_T),
\]
by Eq.~\eqref{eq:Bellman_separated}.

\indpart{Step 3B: Backward induction step.} Fix $t \in [T-1]$, suppose the bound holds for period $t+1$, and set $c=\varphi_t(h_t)\in{\cal A}_t^\dagger(\xi_t)$. For each $s\in{\cal S}$, transition consistency Eq.~\eqref{eq:transition_consistency_separated} gives $\sigma_{t+1}^\dagger(\iota_t(h_t,c,s))=\Gamma_t^\dagger(\xi_t,c,s)$,
so by the induction hypothesis
\[
[\tilde{u}_{t+1}^\varphi(h_t,c)](s)=U_{t+1}^\varphi(\iota_t(h_t,c,s))\leq J_{t+1}^\dagger(\Gamma_t^\dagger(\xi_t,c,s))=[\tilde{j}_{t+1}^\dagger(\xi_t,c)](s).
\]
It follows that $\tilde{u}_{t+1}^\varphi(h_t,c)\leq\tilde{j}_{t+1}^\dagger(\xi_t,c)$. By Step 1, and monotonicity of $M_t^\dagger(s_t,y_t,\cdot)$ and $W_t^\dagger(s_t,z_t,c,\cdot)$, we obtain
\begin{align*}
U_t^\varphi(h_t) & = W_t^\dagger\big(s_t,z_t,c,M_t^\dagger(s_t,y_t,\tilde{u}_{t+1}^\varphi(h_t,c))\big)\\
& \leq W_t^\dagger\big(s_t,z_t,c,M_t^\dagger(s_t,y_t,\tilde{j}_{t+1}^\dagger(\xi_t,c))\big)\\
& \leq \max_{\hat{c}\in{\cal A}_t^\dagger(\xi_t)} W_t^\dagger\big(s_t,z_t,\hat{c},M_t^\dagger(s_t,y_t,\tilde{j}_{t+1}^\dagger(\xi_t,\hat{c}))\big)= J_t^\dagger(\xi_t),
\end{align*}
where the last equality is by Eq.~\eqref{eq:Bellman_separated}. Since $\varphi$ was arbitrary, we have $U_t^\varphi(h_t)\leq J_t^\dagger(\xi_t)$ for all $\varphi\in\Phi$, $h_t\in{\cal H}_t$, and $t\in[T]$.

\indpart{Step 3C: Conclusion} By backward induction, $U_t^\varphi(h_t)\leq J_t^\dagger(\sigma_t^\dagger(h_t))$ for all $\varphi\in\Phi$, $h_t\in{\cal H}_t$, and $t\in[T]$.
 
\emph{Step 4: Attainment by the Bellman selector.}
We now show that the optimal value is attained by backward induction.

\indpart{Step 4A: Terminal step} For $t=T$ and $h_T \in {\cal H}_T$, $\bar\varphi_T(h_T)=\bar\pi_T^\dagger(\xi_T)$ attains the terminal maximum, so
\[
U_T^{\bar\varphi}(h_T)=U_{T+1}^\dagger(\Gamma_T^\dagger(\xi_T,\bar\pi_T^\dagger(\xi_T),\star))=J_T^\dagger(\xi_T).
\]

\indpart{Step 4B: Backward induction step} Fix $t \in [T-1]$, suppose the equality holds for period $t+1$, and set $c=\bar\varphi_t(h_t)=\bar\pi_t^\dagger(\xi_t)$. Transition consistency and the induction hypothesis give $\tilde{u}_{t+1}^{\bar\varphi}(h_t,c)=\tilde{j}_{t+1}^\dagger(\xi_t,c)$. By Step 1,
\[
U_t^{\bar\varphi}(h_t) = W_t^\dagger(s_t,z_t,\bar\pi_t^\dagger(\xi_t),M_t^\dagger(s_t,y_t,\tilde{j}_{t+1}^\dagger(\xi_t,\bar\pi_t^\dagger(\xi_t)))) = J_t^\dagger(\xi_t),
\]
where the second equality holds because $\bar\pi_t^\dagger(\xi_t)$ attains the maximum in Eq.~\eqref{eq:Bellman_separated}.

\indpart{Step 4C: Conclusion} By backward induction, $U_t^{\bar\varphi}(h_t)=J_t^\dagger(\sigma_t^\dagger(h_t))$ for all $h_t\in{\cal H}_t$ and $t\in[T]$.
These equalities combined with the upper bound from Step 3 prove that $\bar{\varphi} \in \Phi$ is optimal.

\section{Glossary of Notation}
\label{sec:glossary}

This section collects notation for ease of reference.

\begingroup
\small
\renewcommand{\arraystretch}{1.16}
\setlength{\tabcolsep}{5pt}

\begin{center}
\captionof{table}{Time, physical states, and primitive spaces}
\label{tab:notation_primitives}
\begin{tabular}{p{0.27\textwidth} p{0.66\textwidth}}
\textbf{Symbol} & \textbf{Description} \\
\hline
$[T]$ & Time index set $\{0,1,\ldots,T\}$. \\
$[n_1:n_2]$ & Integer index set $\{n_1,n_1+1,\ldots,n_2\}$. \\
$\bar s_0$ & Fixed initial physical state. \\
${\cal S}$ & Finite nonterminal physical state space. \\
${\cal S}_t$ & Physical state space at period $t$: ${\cal S}_0=\{\bar s_0\}$, ${\cal S}_t={\cal S}$ for $t=1,\ldots,T$, and ${\cal S}_{T+1}=\{\star\}$. \\
$\star$ & Dummy terminal physical state. \\
$s_t\in{\cal S}_t$ & Physical Markov state observed at the beginning of period $t$. \\
$q_t(\cdot\vert s_t)$ & Reference transition kernel for the physical Markov process, for $t\in[T-1]$. \\
${\cal C}=[0,\bar c]$ & Compact set of admissible nonnegative consumption levels. \\
$c_t\in{\cal C}$ & Consumption chosen during period $t$. \\
${\cal L}\cong\Re^{\cal S}$ & Space of nonterminal state-contingent continuation-utility vectors, equipped with the supremum norm and pointwise order. \\
$\tilde u,\tilde{v}\in{\cal L}$ & Generic continuation-utility vectors. \\
${\bf v}\in{\cal L}$ & Constant continuation-utility vector with value $v$ in every nonterminal state. \\
$\Delta({\cal S})$ & Probability simplex over ${\cal S}$. \\
${\rm id}_{\cal X}$ & Identity map on a set ${\cal X}$. \\
${\rm Sel}({\cal A})$ & Set of selectors $g$ of a correspondence ${\cal A}$, satisfying $g(e)\in{\cal A}(e)$. \\
\end{tabular}
\end{center}

\begin{center}
\captionof{table}{Histories, plans, and full-history objects}
\label{tab:notation_histories_plans}
\begin{tabular}{p{0.27\textwidth} p{0.66\textwidth}}
\textbf{Symbol} & \textbf{Description} \\
\hline
${\cal H}_t$ & Period-$t$ history space. \\
$h_t\in{\cal H}_t$ & History $h_t=(\bar s_0,c_0,\ldots,s_{t-1},c_{t-1},s_t)$, for $t\in[1,T]$. \\
${\cal H}_{T+1}$ & Terminal history space ${\cal H}_T\times{\cal C}$. \\
$\sigma_t^s:{\cal H}_t\to{\cal S}_t$ & Current physical-state projection of a history. \\
$\iota_t$ & One-step history-extension map. For $t\in[T-1]$, $\iota_t(h_t,c,s')=(h_t,c,s')$; for $t=T$, $\iota_T(h_T,c,\star)=(h_T,c)$. \\
$\mathfrak H$ & Full-history state system $(\{{\cal H}_t\},\{\sigma_t^s\},\{\iota_t\})$. \\
${\cal F}_t$ & Space of continuation plans from period $t$. \\
$f=(c,f_+)\in{\cal F}_t$ & Plan decomposed into current consumption $c$ and next-period state-contingent tail $f_+$. \\
$f_+:{\cal S}\to{\cal F}_{t+1}$ & Tail plan assigning a continuation plan to each next nonterminal physical state. \\
$c_{t:T}\in{\cal F}_t$ & Deterministic consumption plan independent of future states. \\
${\cal D}_t={\cal H}_t\times{\cal F}_t$ & Full-history grand domain of life paths. \\
$f_t^\varphi[h_t]$ & Continuation plan induced from history $h_t$ by following a history-dependent policy $\varphi$. \\
\end{tabular}
\end{center}

\begin{center}
\captionof{table}{Preferences and full-history recursive representation}
\label{tab:notation_preferences}
\begin{tabular}{p{0.27\textwidth} p{0.66\textwidth}}
\textbf{Symbol} & \textbf{Description} \\
\hline
${\cal P}$ & Grand preference system $(\{\succeq_{(t)}\}_{t=0}^T,V_{T+1})$. \\
$\succeq_{(t)}$ & Grand preference relation on ${\cal D}_t$. \\
$\succ_{(t)},\sim_{(t)}$ & Strict preference and indifference induced by $\succeq_{(t)}$. \\
$V_{T+1}$ & Primitive terminal value function on ${\cal H}_{T+1}$. \\
$\succeq_{h_t}$ & Conditional preference on ${\cal F}_t$ at history $h_t$. \\
${\bf U}=\{U_t\}_{t=0}^{T+1}$ & Compatible utility system. \\
$U_t:{\cal H}_t\times{\cal F}_t\to\Re$ & Utility representation of $\succeq_{(t)}$, for $t\in[T]$. \\
$U_{T+1}:{\cal H}_{T+1}\to\Re$ & Terminal utility, equal to $V_{T+1}$ under compatibility. \\
$U_{h_t}(\cdot)$ & Conditional utility slice $U_t(h_t,\cdot)$. \\
$W_t(h_t,c,v)$ & Full-history time aggregator. \\
$M_t(h_t,\tilde u)$ & Full-history risk aggregator. \\
${\cal V}$ & Full-history recursive structure on $\mathfrak H$. \\
$V_{T+1}^{\cal V}$ & Terminal primitive in a full-history recursive structure. \\
${\bf U}^{\cal V}=\{U_t^{\cal V}\}$ & Utility system generated by $(\mathfrak H,{\cal V})$. \\
$\tilde u_{t+1}^{\cal V}(h_t,f)$ & Next-period continuation-utility vector generated by $(\mathfrak H,{\cal V})$. \\
$\tilde u_{t+1}(h_t,f)$ & One-step continuation-utility vector generated by a fixed compatible utility system ${\bf U}$. \\
$\Lambda_t^{\bf U}(h_t,c)$ & Attainable one-step continuation-utility vectors at history $h_t$ and current consumption $c$. \\
$\Lambda_t^{\bf U}(h_t)$ & Union $\bigcup_{c\in{\cal C}}\Lambda_t^{\bf U}(h_t,c)$. \\
$\Lambda_t^{\bf U}$ & Union $\bigcup_{h_t\in{\cal H}_t}\Lambda_t^{\bf U}(h_t)$. \\
$D_t^M$ & Effective domain of history-dependent risk evaluation, $\{(h_t,\tilde u):\tilde u\in\Lambda_t^{\bf U}(h_t)\}$. \\
$M_t^0:D_t^M\to\Re$ & Effective certainty-equivalent risk aggregator on $D_t^M$. \\
\end{tabular}
\end{center}

\begin{center}
\captionof{table}{Preference-augmented states and canonical quotients}
\label{tab:notation_pa_states}
\begin{tabular}{p{0.27\textwidth} p{0.66\textwidth}}
\textbf{Symbol} & \textbf{Description} \\
\hline
$\mathfrak{X}$ & Generic PA-state system $(\{{\cal X}_t\},\{\varsigma_t\},\{\Gamma_t\})$. \\
${\cal X}_t$ & PA-state space; in the canonical construction, ${\cal X}_t={\cal H}_t/\odot_t$. \\
$x_t\in{\cal X}_t$ & PA state. \\
$\varsigma_t:{\cal X}_t\to{\cal S}_t$ & PA physical-state readout map. \\
$\Gamma_t:{\cal X}_t\times{\cal C}\times{\cal S}_{t+1}\to{\cal X}_{t+1}$ & PA transition map. \\
$\sigma_t:{\cal H}_t\to{\cal X}_t$ & Summary map from histories to PA states; in the canonical construction, the quotient projection. \\
${\cal V}^*$ & Recursive structure on a PA-state system. \\
$V_{T+1}^*$ & Terminal primitive in a PA recursive structure. \\
$W_t^*(x_t,c,v)$ & PA time aggregator. \\
$M_t^*(x_t,\tilde u)$ & PA risk aggregator. \\
$U_t^*:{\cal X}_t\times{\cal F}_t\to\Re$ & Utility generated by $(\mathfrak{X},{\cal V}^*)$, for $t\in[T]$. \\
$U_{T+1}^*:{\cal X}_{T+1}\to\Re$ & Terminal PA utility generated by $(\mathfrak{X},{\cal V}^*)$. \\
$\odot_t$ & Canonical PA-state equivalence relation on ${\cal H}_t$. \\
$\mathfrak{X}^*$ & Canonical PA-state system. \\
$U_t^\circ:{\cal X}_t\times{\cal F}_t\to\Re$ & Canonical quotient utility induced by $U_t$, for $t\in[T]$. \\
$U_{T+1}^\circ:{\cal X}_{T+1}\to\Re$ & Terminal canonical quotient utility. \\
${\cal X}_t(s)$ & Fiber of the canonical PA state over physical state $s$, $\varsigma_t^{-1}(s)$. \\
${\cal X}_t'$ & PA-state space of an alternative recursive factorization. \\
$\sigma_t'$ & Summary map of an alternative recursive factorization. \\
$\hat{\cal X}_t'=\sigma_t'({\cal H}_t)$ & Reachable part of an alternative PA-state space. \\
$\theta_t:\hat{\cal X}_t'\to{\cal X}_t$ & Memory correspondence map from an alternative factorization to the canonical PA state. \\
\end{tabular}
\end{center}

\begin{center}
\captionof{table}{Rectangular PA states and separated PA states}
\label{tab:notation_spa_states}
\begin{tabular}{p{0.27\textwidth} p{0.66\textwidth}}
\textbf{Symbol} & \textbf{Description} \\
\hline
${\cal M}_t$ & Preference-memory state space under rectangularity. \\
$m_t\in{\cal M}_t$ & Preference-memory coordinate. \\
$\kappa_t:{\cal X}_t\to{\cal S}_t\times{\cal M}_t$ & Rectangularization homeomorphism of the canonical PA state. \\
$x_t=(s_t,m_t)$ & Rectangular PA state under the chosen rectangularization. \\
$\Gamma_t^m$ & Preference-memory component of the PA transition under rectangular coordinates. \\
$\sim_t^y$ & Aggregator-induced belief equivalence relation on ${\cal M}_t$. \\
$\sim_t^z$ & Aggregator-induced taste equivalence relation on ${\cal M}_t$. \\
${\cal Y}_t={\cal M}_t/\sim_t^y$ & Belief memory-coordinate space. \\
${\cal Z}_t={\cal M}_t/\sim_t^z$ & Taste memory-coordinate space. \\
$y_t\in{\cal Y}_t$ & Belief, risk-evaluation, or ambiguity coordinate. \\
$z_t\in{\cal Z}_t$ & Taste, habit, wealth, or reference coordinate. \\
$p_t^y:{\cal M}_t\to{\cal Y}_t$ & Belief quotient map. \\
$p_t^z:{\cal M}_t\to{\cal Z}_t$ & Taste quotient map. \\
$\Xi_t:{\cal X}_t\to{\cal S}_t\times{\cal Y}_t\times{\cal Z}_t$ & SPA-state map, $\Xi_t(s_t,m_t)=(s_t,p_t^y(m_t),p_t^z(m_t))$. \\
${\cal X}_t^\dagger=\Xi_t({\cal X}_t)$ & Separated PA-state space, or SPA-state space. \\
$\xi_t=(s_t,y_t,z_t)\in{\cal X}_t^\dagger$ & SPA state. \\
$\mathfrak{X}^\dagger$ & SPA-state system. \\
$\varsigma_t^\dagger$ & SPA physical-state readout map, $\varsigma_t^\dagger(s_t,y_t,z_t)=s_t$. \\
$r_t^y(\xi_t)=(s_t,y_t)$ & Projection from an SPA state to physical-belief coordinates. \\
$r_t^z(\xi_t)=(s_t,z_t)$ & Projection from an SPA state to physical-taste coordinates. \\
${\cal X}_t^{\dagger,y}$ & Physical-belief domain $r_t^y({\cal X}_t^\dagger)$. \\
${\cal X}_t^{\dagger,z}$ & Physical-taste domain $r_t^z({\cal X}_t^\dagger)$. \\
$q_t^y:{\cal X}_t\to{\cal X}_t^{\dagger,y}$ & Map $q_t^y(s_t,m_t)=(s_t,p_t^y(m_t))$. \\
$q_t^z:{\cal X}_t\to{\cal X}_t^{\dagger,z}$ & Map $q_t^z(s_t,m_t)=(s_t,p_t^z(m_t))$. \\
$\sigma_t^y,\sigma_t^z$ & Canonical belief and taste summary maps from histories. \\
$\sigma_t^\dagger$ & Canonical SPA summary map, $\sigma_t^\dagger=\Xi_t\circ\sigma_t$. \\
$\ast$ & Singleton inactive belief, taste, or memory coordinate. \\
\end{tabular}
\end{center}

\begin{center}
\captionof{table}{SPA recursive structures}
\label{tab:notation_spa_recursive}
\begin{tabular}{p{0.27\textwidth} p{0.66\textwidth}}
\textbf{Symbol} & \textbf{Description} \\
\hline
$\Gamma_t^\dagger:{\cal X}_t^\dagger\times{\cal C}\times{\cal S}_{t+1}\to{\cal X}_{t+1}^\dagger$ & SPA-state transition map. \\
$\Gamma_t^y,\Gamma_t^z$ & Belief and taste components of the SPA transition. \\
${\cal V}^\dagger$ & SPA recursive structure on $\mathfrak{X}^\dagger$. \\
$V_{T+1}^\dagger$ & Terminal primitive in an SPA recursive structure. \\
$W_t^\dagger:{\cal X}_t^{\dagger,z}\times{\cal C}\times\Re\to\Re$ & SPA time aggregator. \\
$M_t^\dagger:{\cal X}_t^{\dagger,y}\times{\cal L}\to\Re$ & SPA risk aggregator. \\
$U_t^\dagger:{\cal X}_t^\dagger\times{\cal F}_t\to\Re$ & Utility generated by $(\mathfrak{X}^\dagger,{\cal V}^\dagger)$, for $t\in[T]$. \\
$U_{T+1}^\dagger:{\cal X}_{T+1}^\dagger\to\Re$ & Terminal separated utility. \\
$\tilde u_{t+1}^\dagger(\xi_t,f)$ & Next-period continuation-utility vector generated by an SPA recursive structure. \\
\end{tabular}
\end{center}

\begin{center}
\captionof{table}{Policies and dynamic programming}
\label{tab:notation_dp}
\begin{tabular}{p{0.27\textwidth} p{0.66\textwidth}}
\textbf{Symbol} & \textbf{Description} \\
\hline
${\cal A}_t:{\cal H}_t\rightrightarrows{\cal C}$ & History-dependent feasible-consumption correspondence. \\
${\cal A}_t(x_t)$ & PA-state feasible-consumption correspondence under Markov feasibility. \\
${\cal A}_t^\dagger(\xi_t)$ & SPA-state feasible-consumption correspondence. \\
$\varphi=(\varphi_t)_{t=0}^T$ & History-dependent policy. \\
$\Phi=\prod_{t=0}^T{\rm Sel}({\cal A}_t)$ & Set of feasible history-dependent policies. \\
$U_t^\varphi(h_t)$ & Utility from following history-dependent policy $\varphi$ starting at $h_t$. \\
$J_t(h_t)$ & Full-history optimal value function. \\
$\pi=(\pi_t)_{t=0}^T$ & PA-state Markov policy. \\
$\Pi=\prod_{t=0}^T{\rm Sel}({\cal A}_t)$ & Set of feasible PA-state Markov policies. \\
$\varphi^\pi$ & History-dependent policy induced by a PA-state Markov policy $\pi$. \\
$U_t^{*\pi}(x_t)$ & Utility from following PA-state policy $\pi$ starting at $x_t$. \\
$\tilde u_{t+1}^{*\pi}(x_t,c)$ & Continuation-utility vector under PA-state policy $\pi$. \\
$J_t^*(x_t)$ & PA-state Bellman value function. \\
$\tilde j_{t+1}^*(x_t,c)$ & Bellman continuation-value vector on the PA state. \\
$\bar\pi$ & PA-state Bellman selector. \\
$\bar\varphi$ & History-dependent policy induced by a PA or SPA Bellman selector. \\
$\pi^\dagger=(\pi_t^\dagger)_{t=0}^T$ & SPA-state Markov policy. \\
$\Pi^\dagger=\prod_{t=0}^T{\rm Sel}({\cal A}_t^\dagger)$ & Set of feasible SPA-state Markov policies. \\
$U_t^{\dagger\pi^\dagger}(\xi_t)$ & Utility from following SPA-state policy $\pi^\dagger$. \\
$\tilde u_{t+1}^{\dagger\pi^\dagger}(\xi_t,c)$ & Continuation-utility vector under SPA-state policy $\pi^\dagger$. \\
$J_t^\dagger(\xi_t)$ & SPA-state Bellman value function. \\
$\tilde j_{t+1}^\dagger(\xi_t,c)$ & Bellman continuation-value vector on the SPA state. \\
$\bar\pi^\dagger$ & SPA-state Bellman selector. \\
\end{tabular}
\end{center}

\begin{center}
\captionof{table}{Common notation in examples}
\label{tab:notation_examples}
\begin{tabular}{p{0.27\textwidth} p{0.66\textwidth}}
\textbf{Symbol} & \textbf{Description} \\
\hline
$\mathbb E_{s_t}[\tilde u]$ & Expectation of $\tilde u$ with respect to the reference kernel $q_t(\cdot\vert s_t)$. \\
$u(c)$ & Instantaneous consumption utility. \\
$\beta\in(0,1]$ & Discount factor. \\
$\alpha,\alpha_1,\alpha_2$ & Persistence or updating parameters, depending on the example. \\
$\rho,\gamma,\theta$ & Curvature, risk, ambiguity, or share parameters, depending on the example. \\
$\phi,\psi$ & Transformations used in smooth ambiguity and related examples. \\
$\Theta$ & Set of candidate transition models in Bayesian or smooth-ambiguity examples. \\
$p_\theta(\cdot\vert s)$ & Candidate transition kernel indexed by model $\theta$. \\
$\nu_t$ & Prior over transition models in the fixed-prior smooth-ambiguity example. \\
$l_t$ & Density perturbation of the reference kernel in Markov risk-measure examples. \\
$\Delta_t(s_t)$ & Set of admissible density perturbations at physical state $s_t$. \\
${\cal U}_t(s_t)$ & Uncertainty set of density perturbations at physical state $s_t$. \\
$\zeta_t$ & Latent regime in hidden-regime examples. \\
$P_{ij}$ & Regime transition probability ${\rm Pr}(\zeta_{t+1}=j\vert\zeta_t=i)$. \\
$\widehat y_{t+1}$ & One-step forecast probability of the good regime. \\
$\widehat p(\cdot\vert s_t,y_t)$ & Predictive distribution over next physical states under hidden-regime beliefs. \\
$R(\cdot),Y(\cdot)$ & Return and income functions in wealth examples. \\
$g(\cdot)$ & Aggregate-growth function in the Campbell--Cochrane example. \\
$m^\star$ & Steady-state memory or surplus value in entangled PA examples. \\
$\bar m$ & Wealth target in the wealth-target example. \\
$\tau(\cdot)$ & Risk-tolerance function in the Campbell--Cochrane example. \\
$\lambda$ & Penalty or sensitivity parameter, depending on the example. \\
$\Pi_I$ &
Clipping map onto a compact interval $I$.\\
\end{tabular}
\end{center}

\endgroup

\end{document}